\newcommand{\pa}{\partial}
\newcommand{\CI}{C^\infty}
\newcommand{\dCI}{\dot C^\infty}
\newcommand{\Hom}{\operatorname{Hom}}
\newcommand{\supp}{\operatorname{supp}}
\newcommand{\Op}{\operatorname{Op}}
\newcommand{\ep}{\epsilon}
\newcommand{\WFsc}{\operatorname{WF}_{\scl}}
\newcommand{\Ran}{\operatorname{Ran}}
\newcommand{\Span}{\operatorname{Span}}
\newcommand{\sX}{\mathsf{X}}
\newcommand{\Id}{\operatorname{Id}}
\newcommand{\id}{\operatorname{id}}
\newcommand{\bl}{{\mathrm b}}
\newcommand{\scl}{{\mathrm{sc}}}
\newcommand{\Psisc}{\Psi_\scl}
\newcommand{\Diff}{\mathrm{Diff}}
\newcommand{\Diffsc}{\Diff_\scl}
\newcommand{\RR}{\mathbb{R}}
\newcommand{\Cx}{\mathbb{C}}
\newcommand{\NN}{\mathbb{N}}
\newcommand{\R}{\mathbb{R}}
\newcommand{\sphere}{\mathbb{S}}
\newcommand{\cS}{\mathcal S}
\newcommand{\cF}{\mathcal F}
\newcommand{\cL}{\mathcal L}
\newcommand{\cU}{\mathcal U}
\newcommand{\cX}{\mathcal X}
\newcommand{\cY}{\mathcal Y}
\newcommand{\cR}{\mathcal R}
\newcommand{\Tb}{{}^{\bl}T}
\newcommand{\Tsc}{{}^{\scl}T}
\newcommand{\Vb}{{\mathcal V}_{\bl}}
\newcommand{\Vsc}{{\mathcal V}_{\scl}}
\newcommand{\scH}{{}^{\scl}H}
\newcommand{\Hsc}{H_{\scl}}
\newcommand{\Hscd}{\dot H_{\scl}}
\newcommand{\Hscb}{\bar H_{\scl}}
\newcommand{\inter}{{\mathrm{int}}}
\newcommand{\Sym}{\mathrm{Sym}}
\newcommand{\be}[1]{\begin{equation}\label{#1}}
\newcommand{\ee}{\end{equation}}
\newcommand{\x}{p}
\newcommand{\X}{P}
\newcommand{\level}{\mathsf{c}}
\newcommand{\foliation}{\mathsf{x}}
\newcommand{\Ajd}{A_{j,\digamma}}
\newcommand{\Njd}{N_{j,\digamma}}
\newcommand{\bo}{\partial M} 
\newcommand{\zero}{^{(0)}}
\renewcommand{\r}[1]{(\ref{#1})} 
\newcommand{\mat}[4]{\left(\begin{array}{cc} #1 &#2\\#3 & #4 
\end{array}\right)}
\renewcommand{\d}{d} 
\newcommand{\dsymm}{\mathrm{d}^{\mathrm{s}}}
\newcommand{\dsymmw}{\mathrm{d}^{\mathrm{s}}_\digamma}
\newcommand{\dsymmY}{\mathrm{d}^{\mathrm{s}}_Y}
\newcommand{\dsymmsc}{\mathrm{d}^{\mathrm{s}}_\scl}
\newcommand{\dsymmscw}{\mathrm{d}^{\mathrm{s}}_{\scl,\digamma}}
\newtheorem{lemma}{Lemma}[section]
\newtheorem{prop}{Proposition}[section]
\newtheorem{proposition}{Proposition}[section]
\newtheorem{thm}{Theorem}[section]
\newtheorem{cor} {Corollary}[section]
\newtheorem*{thm*}{Theorem}
\newtheorem*{prop*}{Proposition}
\newtheorem*{cor*}{Corollary}
\newtheorem*{conj*}{Conjecture}
\numberwithin{equation}{section}
\theoremstyle{remark}
\newtheorem{rem}{Remark}[section]
\newtheorem*{rem*}{Remark}
\theoremstyle{definition}
\newtheorem*{Def*}{Definition}
\title[Local and global boundary rigidity]{Local and global boundary rigidity and the geodesic X-ray
  transform in the normal gauge}
\author[Plamen Stefanov, Gunther Uhlmann and Andras Vasy]{Plamen
  Stefanov, Gunther Uhlmann and Andr\'as Vasy}
\date{Revision: \today}
\address{Department of Mathematics, Purdue University, West Lafayette,
IN 47907-1395, U.S.A.}
\email{stefanov@math.purdue.edu}
\address{Department of Mathematics, University of Washington, 
Seattle, WA 98195-4350, U.S.A., and Institute for Advanced Study,
HKUST, Clear Water Bay, Hong Kong, China}
\email{gunther@math.washington.edu}
\address{Department of Mathematics, Stanford University, Stanford, CA
94305-2125, U.S.A.}
\email{andras@math.stanford.edu}
\thanks{The authors gratefully acknowledge partial support by the National Science Foundation.}
\subjclass{53C24,  53C65, 35R30, 35S05, 53C21}
\begin{document}
\begin{abstract}
In this paper we analyze the local and global boundary rigidity problem for general Riemannian
manifolds with boundary $(M,g)$. 
We show that the boundary distance function, i.e.\
$d_g|_{\pa M\times\pa M}$, known near a point $p\in \pa M$ at which $\pa M$ is strictly convex, determines $g$ in a suitable neighborhood of $p$ in  $M$, up to
the natural diffeomorphism invariance of the problem.

We also consider the closely related lens rigidity problem which is a more natural formulation if the boundary distance is not realized by unique minimizing geodesics. The lens relation measures the point and the direction of exit from $M$  of  geodesics issued from the boundary and the length of the geodesic. 
The lens rigidity problem is whether we can determine the metric up to isometry from the lens relation. We solve the lens rigidity problem under the assumption that there is a function on $M$ with suitable convexity properties
relative to $g$. This can be considered as a complete solution of a problem formulated first by Herglotz in 1905. 
We also prove a semi-global results given semi-global data. 
This shows, for instance, that  simply connected manifolds with  strictly convex boundaries  are lens rigid if the sectional curvature is non-positive or non-negative  or if there are no focal points.

The key tool is the analysis of the geodesic X-ray transform on 2-tensors,
corresponding to a metric $g$, in the normal gauge, such as normal
coordinates relative to a hypersurface, where one also needs to allow 
weights. This is handled by refining and extending our earlier results in the
solenoidal gauge.
\end{abstract} 

\maketitle
\section{Introduction and the main result}\label{sec:intro}  
\textit{Boundary rigidity} is the question whether the knowledge of the
boundary restriction (to $\pa M\times\pa M$) of the
distance function $d_g$ of a
Riemannian metric $g$ on a manifold with boundary $M$ determines $g$,
i.e.\ whether the map $g\mapsto d_g|_{\pa M\times \pa M}$ is
injective. Apart from its intrinsic geometric interest, this question has major real-life implications, especially if also a stability result and a reconstruction procedure  are 
given. Riemannian metrics in such
practical applications represent anisotropic media, for example  a sound  speed which, relative 
to the background Euclidean metric, depends 
on the point, \textit{and the direction} of propagation.  Riemannian metrics in the
conformal class of a fixed background metric represent isotropic wave
speeds. While many objects of interest are isotropic to a good
approximation, this is not always the case: for instance, the inner
core of the Earth exhibits anisotropic behavior, see, e.g., \cite{Creager}, as does muscle
tissue. The restriction of the distance function to the boundary is
then the travel time: the time it takes for waves to travel from one
of the points on the boundary to the other. Recall that most of the
knowledge of the interior of Earth comes from the study of seismic
waves, and in particular travel times of seismic waves; the precise
understanding of the boundary rigidity problem is thus very
interesting from this perspective as well.

There is a natural diffeomorphism invariance of the boundary rigidity problem: if
$\psi$ is a diffeomorphism fixing the boundary pointwise, the boundary distance
functions of $g$ and $\psi^*g$ are the same. Thus, the precise
question is whether  $d_g|_{\pa
  M\times\pa M}$ determines $g$ up to this diffeomorphism invariance, i.e.\ whether there is an isometry
$\psi$ (fixing $\pa M$) between $\hat g$ and $g$ if the distance functions of $\hat 
g$ and $g$ have the same boundary restriction.

There are counterexamples to this problem, and thus one needs some
geometric restrictions. The most common restriction is the {\em
  simplicity} of $(M,g)$: this is the requirement that the boundary is
strictly convex and any two points in $M$ can be joined by a unique
minimizing geodesic. (Everywhere in this paper, strict convexity means a positive second fundamental form.) Michel \cite{Michel} conjectured that compact simple
manifolds with boundary are boundary rigid. In this paper we prove
boundary rigidity or the closely related lens rigidity introduced below in dimensions $n\geq 3$ under a different assumption of the 
existence of a function with strictly  convex level sets. 
 Our assumptions hold for simply connected compact
manifolds with strictly convex boundaries such that the geodesic flow
has no focal points, or if the sectional curvature is negative (or just non-positive) or if the sectional curvature is non-negative, see Corollary~\ref{cor_B}. In particular, we prove boundary rigidity for simple manifolds in those cases, see Corollary~\ref{cor_C}. 
This result extends our earlier analogous
result which was in a fixed conformal class \cite{SUV_localrigidity}; recall
that the fixed conformal class problem has no diffeomorphism invariance issues to deal with. We prove local (near a boundary point), semiglobal and global rigidity results. The manifolds we study can have conjugate points. Contrary to previous results (except  for our conformal result in \cite{SUV_localrigidity}), we do not assume the metrics to be a priori close before we prove that they are isometric. In that sense, our results are global in the metrics; and also local in the data.

The conformal case has a long history. In 1905 and 1907,   Herglotz \cite{Herglotz} and Wiechert and Zoeppritz \cite{WZ} showed that one can recover a radial sound speed $c(r)$ (the metric is $c^{-2}d x^2$) in a ball under the condition 
\be{Her}
(r/c(r))'>0
\ee
by reducing the problem to solving an Abel type of equation. For simple manifolds, recovery of the conformal factor was proven in \cite{Mu2} and \cite{MuRo}, with a stability estimate. We showed in \cite{SUV_localrigidity} that for $n\ge3$, one has local and stable recovery near a strictly convex boundary point and semiglobal and global one under the foliation condition we use here, as well. We also showed there that the Herglotz   and Wiechert and Zoeppritz   condition \r{Her}    is equivalent to requiring  the Euclidean spheres $|x|=\text{const.}$ to be strictly convex in the metric $c^{-2}dx^2$. 

The first two-dimensional results are for non-positively curved surfaces by  Croke \cite{Croke90} and Otal \cite{Otal90}. Boundary rigidity of simple surfaces was  proved in \cite{PestovU}. In higher dimensions, simple Riemannian manifolds
with boundary are boundary rigid under a priori constant
curvature assumptions on the manifold or special symmetries
\cite{BCG}, \cite{Gr}. Several local (in the metric) results near the Euclidean metric are known \cite{SU-MRL}, \cite{CDS}; in  \cite{LassasSU} one of the metrics is close to a flat and the other one has an explicit curvature bound; and in \cite{BI},  one of the metrics is a priori close to the flat one and the other one is arbitrary. The most general result in this direction  (outside a fixed conformal class, the setting
of \cite{SUV_localrigidity}) is the generic local (with respect to the
metric) one proven in \cite{SU-JAMS}, i.e.\ one is asking whether simple 
metrics  with  the same boundary distance
function, a priori close to a given one,  are isometric; the authors give an affirmative answer in a generic case.
Surveys of some of the results can be found in \cite{ Croke04,I, Sh-book, SU-Kawai}.

First we analyze the local boundary rigidity problem for  compact Riemannian
manifolds  $(M,g)$ of dimension $n\geq 3$ with a strictly convex boundary. In fact, compactness is not essential for the local results. 
More precisely, for suitable relatively open $O\subset 
M$, including appropriate small neighborhoods of any given point on
$\pa M$ or all of $\pa M$ if $\pa M$ is compact, we show that if for
two metrics $g_1,g_2$ on $M$, $d_{g_1}|_{U\times U}=d_{g_2}|_{U\times
  U}$ for a suitable open set $U$ containing $O\cap\pa M$, then
$g_1=\psi^*g_2$ on $O$ for some diffeomorphism $\psi$ fixing $\pa M$
(pointwise, as we understand throughout this paper). 

\begin{thm}\label{thm:local-impr}
Suppose that $(M,g)$ is an $n$-dimensional Riemannian manifold with boundary, $n\geq
3$, and assume that $\bo$ is strictly convex at some $p\in\bo$ with respect to each of the two metrics $g$ and $\hat g$. 

(i) If $d_g|_{U\times U}=d_{\hat g}|_{U\times U}$, for some neighborhood  $U$ of $p$  in $\pa M$, then  there is a neighborhood $O$ of $p$ in $M$  and a diffeomorphism $\psi:O\to \psi(O)$ fixing $\pa M\cap O$ pointwise  such
that $g|_O=\psi^*\hat g|_O$.

(ii) Furthermore, if the boundary is  everywhere strictly convex
with respect to each of the two metrics $g$ and $\hat g$ and
$d_g|_{\bo\times\bo}=d_{\hat g}|_{\bo\times\bo}$, then  there is a
neighborhood $O$ of $\bo$ in $M$ and a diffeomorphism $\psi:O\to \psi(O)$ fixing $\pa M\cap O$ pointwise  such
that $g|_O=\psi^*\hat g|_O$.
\end{thm}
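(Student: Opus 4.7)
The plan is to reduce the theorem, via gauge fixing and a pseudolinearization, to a local injectivity statement for a weighted geodesic X-ray transform on symmetric 2-tensors in the \emph{normal gauge}, and then to invoke the main microlocal theorem of the paper.

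\textbf{Gauge fixing.} I first place both $g$ and $\hat g$ in their respective boundary normal (Fermi) coordinates $(t,\loccoord)$ near $p$, so that $g_{00}\equiv\hat g_{00}\equiv 1$ and $g_{0j}\equiv\hat g_{0j}\equiv 0$ in each chart. The composition $\psi_0=\Phi_g\circ\Phi_{\hat g}^{-1}$ of the two chart maps is a diffeomorphism fixing $\pa M$ pointwise; after replacing $\hat g$ by $\psi_0^*\hat g$ I may assume both metrics have normal form in the same $(t,\loccoord)$ chart. The difference $f:=g-\hat g$ is then a symmetric 2-tensor satisfying the \emph{normal gauge condition} $f_{0\mu}\equiv 0$, and the theorem reduces to showing $f\equiv 0$ in a neighborhood of $p$ in $M$ — a statement that contains no remaining diffeomorphism invariance, since the normal gauge already quotients it out.

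\textbf{Pseudolinearization.} With $g_s=(1-s)\hat g+sg$ and $\gamma_s$ the $g_s$-minimizing geodesic from $x$ to $y$, the first variation of energy (applied at the extremal $\gamma_s$) gives the Stefanov-Uhlmann identity
\begin{equation*}
d_g(x,y)^2-d_{\hat g}(x,y)^2=\int_0^1\int_{\gamma_s(x,y)} f(\dot\gamma_s,\dot\gamma_s)\,dt\,ds
\qquad (x,y\in U).
\end{equation*}
The hypothesis $d_g|_{U\times U}=d_{\hat g}|_{U\times U}$ thus becomes the vanishing of an $s$-averaged, weighted geodesic X-ray transform of $f$ over all short boundary-to-boundary geodesics with endpoints in $U$, for the family of metrics $g_s$.

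\textbf{Normal-gauge X-ray inversion and layer stripping.} I then invoke the principal analytic result of the paper: for $c>0$ small, the local geodesic X-ray transform on normal-gauge symmetric 2-tensors supported in $\{\foliation\ge-c\}$, equipped with the appropriate microlocal exponential weights based at the artificial boundary $\{\foliation=-c\}$, is injective. Here $\foliation$ is a defining function for $\pa M$ whose level sets near $\pa M$ are strictly convex in the sense of the paper's foliation hypothesis; existence of such a $\foliation$ near a strictly convex boundary point is elementary. Applying this to $f$, which is in the normal gauge by Step 1 and whose weighted X-ray transform vanishes by Step 2, yields $f\equiv 0$ on a one-sided collar near $p$, giving the local statement. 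For the semiglobal statement I cover the compact $\pa M$ by finitely many such collars, obtaining $f\equiv 0$ on a uniform neighborhood of $\pa M$, and then layer-strip inward along level sets of $\foliation$: once $g=\hat g$ on $\{\foliation\ge\tau\}$ the hypersurface $\{\foliation=\tau\}$ remains strictly convex for both metrics, so the local step can be re-applied with $\{\foliation=\tau\}$ playing the role of the new effective boundary, until the whole desired neighborhood of $\pa M$ is exhausted.

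\textbf{Main obstacle.} The technical heart of the proof is Step 3: injectivity of the normal-gauge X-ray transform with microlocal weights. In the \emph{solenoidal} gauge, injectivity of $I^{(2)}$ modulo potentials follows from the ellipticity of the normal operator $N=I^*I$ on the solenoidal range, as in our earlier local results. The normal gauge, while algebraically natural and automatically compatible with boundary-preserving diffeomorphisms, is not adapted to the symmetric divergence $\dsymm$, so $N$ is not directly elliptic on it. The strategy is to bridge the two gauges by constructing a 1-form $v$ that vanishes on $\{\foliation=-c\}$ and satisfies $\dsymm v=f-f_{\mathrm{sol}}$, where $f_{\mathrm{sol}}$ is the solenoidal projection of $f$; solenoidal inversion then forces $f=\dsymm v$, and the normal-gauge constraints $f_{0\mu}\equiv 0$ become a determined transport-type system for $v$ with zero Cauchy data at $\{\foliation=-c\}$, forcing $v\equiv 0$ and hence $f\equiv 0$. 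Making this Fredholm on the appropriate scattering/b-Sobolev spaces adapted to the artificial boundary, and verifying the required positivity of the principal symbol of the weighted normal operator on normal-gauge tensors, is the main difficulty and where the microlocal refinements over our solenoidal-gauge work \cite{SUV_localrigidity} enter.
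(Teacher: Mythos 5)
Your Step 2 is where the argument breaks down, and it is not the route the paper takes. The interpolation identity $d_g^2-d_{\hat g}^2=\int_0^1\int_{\gamma_s}f(\dot\gamma_s,\dot\gamma_s)\,dt\,ds$ produces integrals of $f$ along geodesics of the whole family $g_s=(1-s)\hat g+sg$, and this is not an object the linear theory of the paper inverts: the normal-gauge results (the analysis of $L_jI$, Theorem~\ref{thm:normal-gauge-2-tensor-Fredholm-est}, Corollary~\ref{cor:normal-gauge-2-tensor-Fredholm}, Theorem~\ref{thm:X-ray-weights}) concern a transform along geodesics of a \emph{single} fixed metric $g$, with matrix microlocal weights close to a multiple of the identity, acting on the tuple $(f_0,f_1,\dots,f_n)$ consisting of the metric difference \emph{and its first derivatives}, as in \eqref{eq:tilde-I-defn}. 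An $s$-average over varying geodesic families cannot be recast as such a weight, and you would in addition need the $g_s$-minimizing geodesics between points of $U$ to stay inside the thin collar where the estimates live; since $g$ and $\hat g$ are not assumed close, strict convexity of $\pa M$ (or concavity of the level sets of $\tilde x$) with respect to each $g_s$ is not available, so even the geometric control fails. The paper instead pseudolinearizes at the level of the Hamilton flows via the Stefanov--Uhlmann identity \eqref{L1}--\eqref{5v}, which yields \eqref{3}--\eqref{4}: a transform along $g$-geodesics only, with weights $A_i^j\to-\tfrac12\delta_i^j$ near tangency by \eqref{7}, involving both $f$ and $\pa_x f$; this is exactly what Theorem~\ref{thm:X-ray-weights} together with Lemma~\ref{lemma:microlocal-small-error} (to absorb the undifferentiated component $u_0$) is designed to handle. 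Your Step 3 as written invokes "the principal analytic result of the paper" for a transform you have not actually produced.

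There are two further gaps in the reduction. First, before any of the linear machinery applies you must arrange that $f$ is supported in $M$: the paper first invokes the Lassas--Sharafutdinov--Uhlmann jet determination (the lemma quoted in Section~\ref{sec:rigidity}) to replace $\hat g$ by a pullback agreeing with $g$ to infinite order on a compact piece of $\pa M$, and then extends both metrics \emph{identically} across $\pa M$; moreover the normal gauge is taken relative to the auxiliary hypersurface $H=\{\foliation=0\}$ (whose level sets are strictly concave from above and which meets $M$ only at $\pa M$), not relative to $\pa M$ itself, because the artificial boundary $\{\tilde x=-c\}$ is where the scattering analysis is set up while $\pa M$ only constrains supports. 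Composing the two Fermi charts, as you do, gives neither the infinite-order agreement at $\pa M$ nor the extension. Second, for the statement about a neighborhood of a compact $\pa M$ the inward layer stripping you propose is both unnecessary and unjustified: interior level sets of a defining function of $\pa M$ need not be strictly convex without the foliation hypothesis (which Theorem~\ref{thm:local-impr} does not assume), and the paper obtains the collar directly from the uniform version of the local result (Theorem~\ref{thm:local-pr}), with layer stripping reserved for the global Theorem~\ref{thm:global}. Your final paragraph on bridging the solenoidal and normal gauges does capture the spirit of Section~\ref{sec:Fredholm}, but there the conclusion is a quantitative estimate \eqref{eq:Fredholm-est} obtained by solving \eqref{eq:normal-gauge-reduction-1} for the potential, not a qualitative uniqueness argument for $v$.
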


This theorem becomes more precise regarding the open sets discussed
above if we consider $M$ (not nece\-ssarily compact) as a subset of a manifold without boundary
$\tilde M$, extend $g$ to $\tilde M$; see Figure~\ref{fig:loc-bdy-1}. 
Our more precise theorem then, to which the above theorem reduces, is the following.

\begin{figure}[ht]
\begin{center}
\includegraphics[scale=0.9]{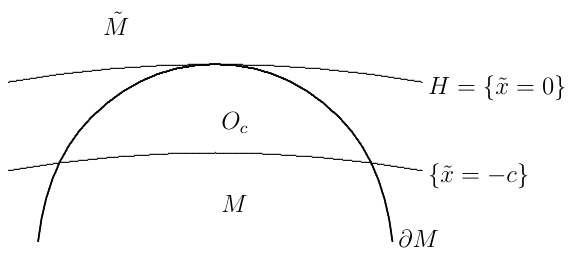}
\end{center}
\caption{The geometry of the local boundary rigidity problem.}
\label{fig:loc-bdy-1}
\end{figure}

\begin{thm}\label{thm:local-pr}
Suppose that $(M,g)$ is an $n$-dimensional Riemannian manifold with
boundary, considered as a domain in $(\tilde M,g)$, $n\geq
3$, $H$ a hypersurface, and $\tilde x$ the signed distance function
from $H$, defined near $H$.
Suppose that $\{\tilde x\geq 0\}\cap
M\subset\pa M$, and for some $\delta>0$, $M\cap \{\tilde x\geq
-\delta\}$ is compact, 
$\pa M$ is strictly convex in $ M\cap \{\tilde x>-\delta\}$, the zero level set
of $\tilde x$ is strictly concave from the superlevel sets in a
neighborhood of $M$.

Suppose also that $\hat g$ is a Riemannian metric on $M$ with
respect to which $\pa M$ is also strictly convex in $M\cap
\{\tilde x>-\delta\}$.

Then there exists $c_0>0$ such that for any $0<c<c_0$, with
$O=O_c=\{\tilde x>-c\}\cap M$, if
$d_g|_{U\times U}=d_{\hat g}|_{U\times U}$ for some open set $U$ in $\pa M$
containing $\overline{\{\tilde x>-c\}\cap \pa M}$, then
there exists a diffeomorphism $\psi:O\to \psi(O)$ fixing $\pa M$ pointwise such
that $g|_O=\psi^*\hat g|_O$. 
\end{thm}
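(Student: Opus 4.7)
The plan is to reduce the theorem to an injectivity statement for the geodesic X-ray transform on symmetric $2$-tensors \emph{in the normal gauge}, and then to prove that injectivity by layer-stripping along the strictly convex foliation $\{\tilde x = \text{const}\}$, in the spirit of our earlier solenoidal-gauge work.

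First, I would use the diffeomorphism invariance to place both metrics in normal gauge relative to $H$. For $g$ this is given; for $\hat g$ one pre-composes with the diffeomorphism $\tilde\psi$ given by the $\hat g$-normal exponential map from $H$, so that $\tilde\psi^*\hat g = d\tilde x^2 + \hat h(\tilde x, y, dy)$, adjusting $\tilde\psi$ if necessary by an identity-on-$\pa M$ diffeomorphism. Both metrics are now represented by tangential tensors $h$ and $\hat h$, and $f := h - \hat h$ lives in the normal gauge (no $d\tilde x$ components). The classical boundary determination argument of Michel and Lassas--Sharafutdinov--Uhlmann, adapted to the local setting under strict convexity as in our previous papers, shows that $d_g|_{U\times U} = d_{\hat g}|_{U\times U}$ forces $f$ to vanish to infinite order on $\pa M\cap U$, so $f$ extends by zero smoothly across $\pa M$ there.

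Next, I would linearize. For the convex combination $g_s = (1-s)\hat g + s g$, $s\in[0,1]$, the first variation of length along $g_s$-geodesics with endpoints held fixed on $\pa M$, combined with the hypothesis $d_g|_{U\times U} = d_{\hat g}|_{U\times U}$, yields that the geodesic X-ray transform (with respect to $g_s$) of $\pa_s g_s = f$ vanishes along every minimizing geodesic with endpoints in $U$. By a continuity-in-$s$ argument using the strict convexity of $\pa M$ which persists through the family, the statement reduces to: the X-ray transform $I_g f$ of a normal-gauge tensor $f$, vanishing to infinite order at $\pa M\cap U$, vanishes along all short $g$-geodesics close to $\pa M\cap U$. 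The core step is then to conclude $f\equiv 0$ on $O$ from this.

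This is the inversion of the normal-gauge X-ray transform by layer stripping. For $0 < c < c_0$ with $c_0$ small, work in Melrose's scattering calculus at the artificial boundary $\{\tilde x = -c\}$, strictly concave from the superlevel sets by hypothesis. Consider a weighted normal operator $N_c = A_c I_g^* \chi_c I_g$, where $\chi_c$ localizes to short geodesics contained in $\{\tilde x > -c\}$. The key microlocal point is that $N_c$, after inclusion of suitable microlocal weights tailored to the normal gauge, is an \emph{elliptic} scattering pseudodifferential operator on the bundle of normal-gauge symmetric $2$-tensors over $\{\tilde x > -c\}$. The scattering-elliptic parametrix combined with the infinite-order vanishing of $f$ at $\pa M$ gives $f \equiv 0$ on $\{\tilde x > -c\}$ for each $0 < c < c_0$; taking $c \uparrow c_0$ and using the foliation yields $f \equiv 0$ on $O = O_{c_0}$, and unwinding the gauge reductions produces the diffeomorphism $\psi$ with $g|_O = \psi^* \hat g|_O$.

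The main obstacle is the microlocal ellipticity in the previous paragraph: in our earlier solenoidal-gauge work the ellipticity of $N_c$ on the full symmetric $2$-tensor bundle was delivered by a standard symbolic calculation, but the normal-gauge constraint leaves a residual cokernel in the principal symbol in precisely those directions that the solenoidal condition would otherwise have killed. The paper's refinement of the solenoidal-gauge calculus has to supply microlocal weights that restore ellipticity on exactly the normal-gauge subspace, and verify that the resulting weighted scattering parametrix still propagates smoothness and vanishing up to $\pa M$ in a way compatible with the layer-stripping iteration. Once this gauge-compatible symbolic analysis is in hand, the rest of the proof is structurally the same as in our earlier papers.
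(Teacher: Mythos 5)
Your overall scaffolding (normal–gauge reduction, infinite-order boundary agreement via Michel/Lassas--Sharafutdinov--Uhlmann, then a linear X-ray problem in the normal gauge solved by scattering-calculus estimates near the artificial boundary $\{\tilde x=-c\}$) is the right shape, but two of your load-bearing steps have genuine gaps. First, the linearization step does not deliver what you claim. Writing $g_s=(1-s)\hat g+s g$ and using the first variation of length, the equality $d_g|_{U\times U}=d_{\hat g}|_{U\times U}$ gives only $\int_0^1 I_{g_s}f(\gamma_s(p,q))\,ds=0$, i.e.\ the vanishing of an $s$-\emph{averaged} transform along the family of $g_s$-minimizers; it does not give $I_{g_s}f=0$ for each fixed $s$, and no ``continuity in $s$'' argument extracts that (nor are the $g_s$-geodesics guaranteed to stay in the region of interest — this is precisely the objection raised in Section~\ref{sec:approach} to linearizing the distance function in a local problem, and the theorem makes no closeness assumption on $g,\hat g$ that could tame the family). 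The paper avoids this entirely via the exact Stefanov--Uhlmann pseudolinearization identity (Lemma~\ref{SU-identity}, Proposition~\ref{pr1}): equality of the partial lens data yields the vanishing of the transform \eqref{eq:tilde-I-defn} along geodesics of $g$ \emph{alone}, at the price of microlocal weights $A^j_i,B_i$ coming from the linearized $\hat g$-flow and of the unknown being the $(n+1)$-tuple $(f_0,f_1,\dots,f_n)$, i.e.\ $f$ \emph{and its first derivatives}, not a single tensor. Your reduction to ``$I_gf=0$ for a normal-gauge $2$-tensor $f$'' therefore does not follow from the hypotheses.

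Second, the microlocal claim you lean on — that suitable weights make the localized normal operator an \emph{elliptic} scattering pseudodifferential operator on the normal-gauge subbundle — is false, and it is not what the paper does. The degeneracy of $\sigma(N_{0,\digamma})$ at fiber infinity at $\xi=0$ is genuine: it vanishes to first order on $\Span\{\eta\}^\perp\otimes_s\Span\{\eta\}$ and to second order on $\Span\{\eta\}\otimes\Span\{\eta\}$, and no scalar or matrix weight in the integrand removes this. The paper's mechanism is different: it augments $L_0$ by the additional component operators ($L_1$, and implicitly the normal-normal data handled through the solenoidal-gauge inversion of \cite{SUV:Tensor} used as a \emph{background} estimate), and after pre/post-multiplication obtains a system with one elliptic block and one block whose principal symbol is an elliptic multiple of $\xi+i\digamma$; the latter is controlled by the radial-point/complex-absorption estimate of Proposition~\ref{prop:not-quite-real-princ-scalar}, yielding Theorem~\ref{thm:normal-gauge-2-tensor-Fredholm-est} and the left inverse of Corollary~\ref{cor:normal-gauge-2-tensor-Fredholm}. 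The microlocal weights then enter not as a gauge-fixing device but as the perturbation forced by pseudolinearization: invertibility survives because the weighted operator is close to the unweighted one (Theorem~\ref{thm:X-ray-weights}, which also needs the integration-by-parts substitute \eqref{eq:weighted-IBP} for the identity $I\circ\dsymm=0$, which fails in the weighted case), and the zeroth-order unknown $f_0$ is absorbed via Lemma~\ref{lemma:microlocal-small-error}. A minor further point: the conclusion of the theorem is for a fixed small $c$, so no layer stripping or limit $c\uparrow c_0$ is needed here — that iteration is the engine of the global Theorem~\ref{thm:global}, not of this local statement.
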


Thus, relative to the level sets of $\tilde x$, the signed distance
function of $H$, we have a very precise statement of where $d_g$ and
$d_{\hat g}$ need to agree on $\pa M$ for us to be able to conclude
their equality, up to a diffeomorphism, on $O=O_c=\{\tilde x>-c\}\cap
M$.

We remark that $c$, thus $O$, can be chosen uniformly for a class of $g$ and $\hat g$ 
with uniformly bounded $C^k$ norms with some $k$.  One can define $C^k$ norms of functions and tensor fields by using a fixed finite atlas or by covariant differentiation w.r.t.\ a fixed metric, as in \cite{LassasSU}. From now on, we measure closeness of metrics or boundedness in $C^k$, $k\gg1$.

The slight enlargement, $U$ of $O\cap\pa M$ plays a role because
we need to extend $\hat g$ to $\tilde M$ in a compatible manner, for 
which we need to recall that if $U$ is an open set in $\pa M$ such 
that $d_g|_{U\times U}=d_{\hat g}|_{U\times U}$ then for any compact 
subset $K$ of $U$ (such as $\overline{O\cap\pa M}$) there is a 
diffeomorphism $\psi_0$ on $M$ such that $\psi_0$ is the identity on a 
neighborhood of $K$ in $\pa M$ and such that $\psi_0^*\hat g$ and 
$g$ agree to infinite order on a neighborhood of $K$ in $M$ \cite{LassasSU, SU-lens}. Replacing $\hat g$ by $\psi_0^*\hat g$, then one 
can extend $\hat g$ to $\tilde M$ in an identical manner with $g$. 
In fact, the diffeomorphism $\psi$ is constructed explicitly: it is locally
given by geodesic normal coordinates of $\hat g$ relative to
$H=\{\tilde x=0\}$; due to the extension process from $M$ to $\tilde
M$, $\psi$ is the identity outside $M$. We refer to section~\ref{sec_7.1} for more details. 

The second problem we study is the \textit{lens rigidity} one.  To define the lens data, we first introduce the manifolds $\partial_\pm SM$, defined as the sets of all vectors $(\x,v)$ with $\x\in\bo$, $v$ unit in  the metric $g$,  and  pointing outside/inside $M$.  We define the \textit{scattering relation}
\[
\cL: \partial_- SM \longrightarrow \partial_+SM 
\]
in the following way: for each $(\x,v)\in \partial_- SM$, $\cL(\x,v)=(q,w)$, where $(q,w)$ are the exit point and  direction, if exist, of the maximal unit speed geodesic $\gamma_{\x,v}$ in the metric $g$, issued from $(\x,v)$. Strict convexity of $\pa M$ is not needed \cite{SU-lens} but it is a convenient assumption for a unambiguous definition of $\cL$, as a continuous map at least, and we assume it from now on. Let 
\[
\ell: \partial_-SM \longrightarrow \R\cup\infty
\]
be its length, possibly infinite. If $\ell<\infty$, we call $M$ non-trapping. The maps $(\cL,\ell)$ together are called \textit{lens relation} (or lens data). 
We identify vectors on $\partial_\pm SM$ with their projections on the unit ball bundle $B\bo$ (each one identifies the other uniquely) 
and think of $\cL$, $\ell$ as defined on the latter with values in itself again, and in $\R\cup\infty$, respectively. With this modification, 
any diffeomorphism fixing $\bo$ pointwise does not change the lens relation.

The {lens rigidity} problem is whether the scattering relation
$\cL$ (and possibly, $\ell$) determine $(M,g)$ up
to an isometry.
The  lens rigidity problem with partial data
is whether we can determine the metric near some $p$ from $\cL$ known
near the unit sphere  $S_p\bo$ considered as a subset of
$\partial_-SM$, i.e., for vectors with base points close to $p$ and
directions pointing into $M$ close to ones tangent to $\bo$, up to an
isometry as above.

Assuming that $\bo$ is strictly convex at $p\in\bo$ with respect to $g$,
the  boundary rigidity and the  lens rigidity problems with partial
data are equivalent: knowing $d=d_g$ near $(p,p)$ is equivalent to knowing
$\cL$ in some neighborhood of  $S_p\partial M$. 
The size of that
neighborhood  depends on a priori bounds of the derivatives of
the metrics with which we work.  This equivalence was first noted by
Michel \cite{Michel}, since the tangential gradients of $d(p,q)$ on
$\bo\times\bo$ give us the tangential projections of $-v$ and $w$, see
also \cite[sec.~3]{SU-lens} and  \cite[sec.~2]{S-Serdica}. Note that  knowledge of $\ell$ may not be  needed for the lens rigidity problem (if $\cL$ is given only, then the problem is called \textit{scattering rigidity} in some works) in some situations.  For example, for simple manifolds,  $\ell$ can be recovered  from either $d$ or $\cL$; and this includes non-degenerate cases of non-strictly convex boundaries,  see for example the proof of  \cite[Theorem~5.2]{SUV_localrigidity}; see \cite{Wen_2015} for a more general result.  Also, in \cite{SUV_localrigidity} it is shown
 that the lens rigidity problem makes sense even if we do not assume a priori knowledge of $g|_{T\bo}$. 

In fact, that relation of the two rigidity problems is used in our proofs  of the first two boundary rigidity theorems. The explicit way we use the equality of $d_g|_{U\times U}$ and $d_{\hat g}|_{U\times U}$ is via the pseudolinearization formula of Stefanov and Uhlmann  \cite{SU-MRL}, see Lemma~\ref{SU-identity}, which relies on the equality of the partial lens  data.

Vargo \cite{V} proved that non-trapping real-analytic manifolds satisfying an additional mild condition are lens rigid.
Croke has shown that if a manifold is lens rigid, a finite quotient of it
is also lens rigid \cite{Croke04}.  He has also shown that the torus is lens
rigid \cite{Croke_scatteringrigidity}. Stefanov and Uhlmann have shown lens rigidity locally near a generic class of non-simple metrics \cite{SU-lens} satisfying an additional microlocal assumption. In a recent work, Guillarmou \cite{Colin14} proved that the lens data
determine the conformal class for Riemannian surfaces with hyperbolic
trapped sets, no conjugate points and strictly convex boundary, and
deformational rigidity in all dimensions under these conditions. 
The only result we know for the lens rigidity problem with incomplete (but not local) data is for real-analytic metric and metric close to them satisfying the microlocal condition in the next sentence \cite{SU-lens}. While in \cite{SU-lens}, the lens relation is assumed to be known on a subset only, the geodesics issued from that subset cover the whole manifold and their conormal bundle is required to cover $T^*M$. In contrast, in this paper, we have localized information.

We then  prove the following global consequence of our local results,
in which (and also below) we {\em assume that each connected component
  of $M$ has non-trivial boundary}, or, which is equivalent in terms of
proving the result, $M$ is connected with non-trivial boundary. As above, we assume $M\subset \tilde M$ with some open $\tilde M$.

\begin{thm}\label{thm:global}
Assume that $(M,g)$ is a compact $n$-dimensional Riemannian manifold, $n\geq 3$, with strictly convex boundary; $\foliation$ is a smooth function with non-vanishing
differential whose level sets are strictly concave from the superlevel sets; and $\{\foliation\geq 0\}\cap M\subset\pa M$. 
Suppose also that $\hat g$ is another Riemannian metric on $M$ so that $\bo$ is strictly convex w.r.t.\ $\hat g$ as well and suppose that
the lens relations of $g$ and $\hat g$ are the same.

Then there exists a diffeomorphism $\psi:M\to M$ fixing $\pa M$ such
that $g=\psi^*\hat g$.
\end{thm}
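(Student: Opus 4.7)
The plan is a layer-stripping continuation argument in the level sets of $\foliation$, using Theorem~\ref{thm:local-impr} (equivalently, Theorem~\ref{thm:local-pr}) as the inductive engine. The first step converts the lens-relation hypothesis into one on boundary distances. Since $\pa M$ is strictly convex for $g$, near any $p\in\pa M$ short geodesics between nearby boundary points are unique minimizers transversal to $\pa M$, and their lengths plus the tangential parts of their endpoint data are encoded in the lens relation $(L,\ell)$. Thus equality of the lens relations forces $d_g = d_{\hat g}$ on a neighborhood of the diagonal in $\pa M\times\pa M$ (strict convexity of $\pa M$ for $\hat g$ can itself be read off from the lens relation). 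Applying Theorem~\ref{thm:local-impr} at each $p\in\pa M$ and patching using compactness of $\pa M$ yields a diffeomorphism $\psi_0$ fixing $\pa M$ with $g = \psi_0^*\hat g$ on a collar $\{\foliation > -c_0\}\cap M$ for some $c_0 > 0$.

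The continuation step sets
\[
c_* = \inf\{c \in [\min_M \foliation,\, 0] \,:\, \exists\, \psi \text{ fixing } \pa M \text{ with } g = \psi^*\hat g \text{ on } \{\foliation > c\}\cap M\},
\]
which is negative by the first step. Assume for contradiction $c_* > \min_M \foliation$. A sequence $c_j\downarrow c_*$ with associated $\psi_j$ (each constructed explicitly in Theorem~\ref{thm:local-pr} via $\hat g$-geodesic normal coordinates relative to level sets of $\foliation$) can be arranged to be consistent, producing a limit $\psi_*$ on $\{\foliation > c_*\}\cap M$. Replacing $\hat g$ by $\psi_*^*\hat g$, we may assume $g = \hat g$ on $\{\foliation > c_*\}\cap M$. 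Choose $\eta>0$ small enough that $H_\eta = \{\foliation = c_* + \eta\}$ is a smooth, strictly-concave-from-above hypersurface in $M$, and regard $M_\eta = M\cap\{\foliation\le c_*+\eta\}$ as a manifold with boundary $\pa M_\eta \supset H_\eta$. For $q,q'\in H_\eta$ sufficiently close, the short minimizing geodesic of $g$ between them stays inside the already-identified collar $\{\foliation > c_*\}$, where $g = \hat g$ literally; hence $d_g = d_{\hat g}$ near the diagonal of $H_\eta\times H_\eta$. Applying Theorem~\ref{thm:local-pr} to $M_\eta$ with $H_\eta$ in the role of the reference hypersurface $H$ (and $\foliation - (c_*+\eta)$ in the role of $\tilde x$) extends the identification to $\{\foliation > c_* - c\}\cap M$ for some $c>0$, contradicting the definition of $c_*$. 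Hence $c_* = \min_M\foliation$, and one final limit produces the desired $\psi$ on all of $M$.

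The principal obstacle is the global coherence of the construction rather than any single analytic step: each local application of Theorem~\ref{thm:local-pr} produces a diffeomorphism defined only in a one-sided neighborhood of the current hypersurface, and these must be glued along the iteration into a single $\psi$ fixing $\pa M$ on all of $M$. Two ingredients address this. First, because the diffeomorphisms are explicit (geodesic normal coordinates of $\hat g$ relative to the hypersurface, extended by the identity outside), pulling back $\hat g$ at each stage allows clean composition and a well-defined passage to the limit. Second, the uniformity statement following Theorem~\ref{thm:local-pr} gives a lower bound on the step size $c$ that depends only on bounded $C^k$-norms of the metrics, and since $M$ is compact such bounds are available uniformly along the iteration, preventing the step size from collapsing as one descends through the foliation. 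The foliation condition itself — strict concavity of level sets from superlevel sets — plays the dual role of making each intermediate $H_\eta$ eligible for Theorem~\ref{thm:local-pr} and of ensuring that the iteration actually exhausts $M$.
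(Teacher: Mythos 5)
Your overall skeleton (layer stripping down the level sets of $\foliation$, with a continuity/inf argument) is the same as the paper's, and your first step (lens data $\Rightarrow$ boundary distances near the diagonal $\Rightarrow$ a collar identification via Theorem~\ref{thm:local-impr}) is fine. The genuine gap is in the continuation step. You feed Theorem~\ref{thm:local-pr} only the equality $d_g=d_{\hat g}$ \emph{near the diagonal} of $H_\eta\times H_\eta$, but the hypothesis of that theorem is equality on $U\times U$ for an open $U\subset\pa M_\eta$ containing $\overline{\{\tilde x>-c\}\cap\pa M_\eta}$ — i.e.\ for \emph{all} pairs in a boundary set that contains essentially all of $H_\eta\cap M$ and a band of the original $\pa M$ below the corner $H_\eta\cap\pa M$. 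For far-apart pairs on $H_\eta$ (and for pairs on the $\pa M$ band, and mixed pairs) the connecting geodesics dive into the region $\{\foliation<c_*\}$ where the metrics are not yet identified, so this hypothesis cannot be verified from the inductive hypothesis. Worse, the near-diagonal equality you do verify only involves geodesics confined to the already-recovered layer $\{c_*<\foliation\le c_*+\eta\}$: it would hold no matter what $g$ and $\hat g$ do below $\{\foliation=c_*\}$, so no theorem whose only data input is that equality can recover the metrics there, and the contradiction with the definition of $c_*$ does not materialize. The data that \emph{is} available at an interior level set is lens-type, not distance-type: since $g=\psi^*\hat g$ on $\{\foliation\ge-\level\}$ and outside $M$, and $L$ and $\ell$ agree at $\pa M$, one matches the two Hamiltonian trajectories with common initial data taken slightly on the known side of $H_\level$ — the exterior/known arcs coincide and each full passage through $M$ is controlled by the (equal) lens relation and travel time — so the scattering relation and length agree for the whole family of geodesics nearly tangent to the level sets (this is exactly the set $D$ in Theorem~\ref{thm:semiglobal}, and it uses $\ell$ and $L$, not $d$). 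The paper then runs the pseudolinearization identity and the normal-gauge, microlocally weighted transform estimates (\eqref{eq:X-ray-weight-mod-0} and Lemma~\ref{lemma:microlocal-small-error}) directly with this lens data; it does not, and cannot, invoke Theorem~\ref{thm:local-pr} as a black box at interior hypersurfaces.

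Secondary points you gloss over, which occupy a substantial part of the paper's proof: the extension of $\psi$ across $H_\level$ is defined by the $\hat g$-normal exponential map, and one must show this extension is injective (the tangency/convexity argument, plus Lemma~\ref{lemma:normal-coord-tube} for uniform collar sizes), that it maps $M$ into $M$ and fixes $\pa M$ also on the un-illuminated part (both argued via the equality of lens relations), and that the step size is uniform because the second fundamental form of $\psi(H_\level)$ for $\hat g$ is read off from the side where the metrics already agree — your appeal to ``uniform $C^k$ bounds by compactness'' does not by itself control $\psi^*\hat g$ along the iteration. Your limiting construction of $\psi_*$ from the $\psi_j$ also needs the uniqueness of boundary-fixing normal-coordinate identifications to be made precise, but these issues are repairable; the unbridgeable step as written is the reduction of the inductive step to boundary-distance data at $H_\eta$.
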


The assumptions of the theorem are for instance satisfied if
$\foliation$ is the distance function for $g$ from a point outside
$M$, near $M$, in $\tilde M$, minus the supremum of this distance function on $M$, on a
simply connected manifold $\tilde M$
and if $(\tilde M,g)$ has no focal points (near $M$), see Corollary~\ref{cor_B}.

Theorem~\ref{thm:global} can be viewed as a complete solution of the problem initiated by Herglotz \cite{Herglotz} since, as we mentioned above, his condition \r{Her} is a foliation condition.

We formulate a semiglobal result as well, whose proof is actually
included in the proof of the global Theorem~\ref{thm:global} below in Section~\ref{sec:rigidity}. We refer to Figure~\ref{fig:loc-bdy-semiglobal_pic} for an illustration of the theorem.

\begin{thm} \label{thm:semiglobal}
Suppose that $M$ is a compact $n$-dimensional Riemannian mani\-fold with a strictly convex boundary, $n\geq 3$. Let $\foliation$ be a smooth function on $M$ with $[-T,0]$
in its range with $T>0$,  $\{\foliation=0\}\subset\bo$ and $d\foliation\not=0$ on
$\{-T\le\foliation\le0\}$. Assume that each  hypersurface  $\{\foliation=t\}$,
$-T\le t\le 0$, is strictly convex and let
$M_0$ be their union.   Let $D\subset \partial_-SM$ be a neighborhood of the compact set of all $\beta\in  \partial_-S M$ 
which are initial points of geodesics $\gamma_\beta$ tangent to the level surfaces of the foliation. 

Suppose also that $\hat g$ is a Riemannian metric on $M$ with respect to which $\bo$ is also strictly convex 
and suppose that the lens relations of $g$ and $\hat g$ are the same on $D$.  
Then there exists a diffeomorphism $\psi: M_0\to \psi(M_0)$ fixing $\pa M$ pointwise such
that $g=\psi^*\hat g$.
\end{thm}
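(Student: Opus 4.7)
The strategy is a layer stripping argument along the foliation $\{\foliation=t\}$, $t\in[-T,0]$, using Theorem~\ref{thm:local-pr} as the local building block, together with Michel's equivalence between boundary distance data and lens data near a strictly convex boundary point.

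First, I would translate the lens hypothesis into a boundary-distance statement near $\{\foliation=0\}\cap\pa M$. Since $\pa M$ is strictly convex at each $p\in\{\foliation=0\}\cap\pa M$, Michel's observation (recalled just above the theorem) says that the tangential gradients of $d_g$ on a neighborhood of $(p,p)$ in $\pa M\times\pa M$ are recovered by, and conversely recover, the portion of the lens relation $L$ over a neighborhood of $S_p\pa M$ in $\partial_-SM$. Because $\pa M$ itself is the leaf $\{\foliation=0\}$, vectors tangent to $\pa M$ at $p$ are tangent to a level set, so $D$ contains such a neighborhood of $S_p\pa M$ for every $p\in\{\foliation=0\}\cap\pa M$. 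Equality of lens relations on $D$ therefore yields $d_g|_{U\times U}=d_{\hat g}|_{U\times U}$ for some open $U\subset\pa M$ containing $\{\foliation=0\}\cap\pa M$.

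Second, applying Theorem~\ref{thm:local-pr} with $H=\{\foliation=0\}$ (the required strict concavity from superlevel sets of the level sets of the geodesic normal distance $\tilde x$ is inherited from that of $\foliation$ on a small collar, after extending $g$ and $\hat g$ to an open manifold $\tilde M\supset M$ as in the setup of that theorem), I obtain $c_0>0$ and a diffeomorphism $\psi_0$ fixing $\pa M$ with $\psi_0^*\hat g=g$ on $\{\foliation>-c_0\}\cap M$. Replacing $\hat g$ by $\psi_0^*\hat g$, I may assume $g=\hat g$ on this collar. Now I iterate: the level set $H_1:=\{\foliation=-c_0/2\}$ is strictly convex for both metrics, and the agreement of lens data on $D$, together with the coincidence $g=\hat g$ near $H_1$, implies agreement of the lens relations of the inner domain $\{\foliation\le -c_0/2\}\cap M$ (with boundary component $H_1$) on the corresponding neighborhood of tangential vectors to $H_1$ — because every geodesic of the inner domain exiting through $H_1$ continues uniquely as a geodesic of $M$ that eventually exits through $\pa M$, where the original data are known, and vectors in this neighborhood belong to $D$. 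Applying Theorem~\ref{thm:local-pr} at $H_1$ produces a further diffeomorphism extending agreement to $\{\foliation>-c_0/2-c_1\}\cap M$. Composing the successive diffeomorphisms, each fixing the hypersurface where agreement was previously established, yields the desired $\psi:M_0\to\psi(M_0)$ fixing $\pa M$ pointwise with $g=\psi^*\hat g$.

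The central obstacle is to ensure the iteration terminates after finitely many steps, which requires a uniform positive lower bound on the increments $c_i$. The constant $c$ furnished by Theorem~\ref{thm:local-pr} depends on uniform $C^k$ bounds on the metrics and on the second fundamental form constants of the hypersurface used. Compactness of $M$ supplies uniform convexity estimates along the entire family $\{\foliation=t\}$, $t\in[-T,0]$; and the pulled-back metrics remain uniformly controlled because each intermediate $\psi_i$ is built from geodesic normal coordinates of a uniformly bounded metric on a collar of controlled width. This gives a uniform $c_*>0$, so the procedure halts after at most $\lceil 2T/c_*\rceil$ steps, covering all of $M_0$.
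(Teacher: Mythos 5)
Your skeleton (layer stripping along the level sets of $\foliation$ with a uniform step, seeded by the agreement of the data near $\{\foliation=0\}$) is the same as the paper's, whose proof of this theorem is embedded in the proof of Theorem~\ref{thm:global}; but two of the genuinely hard points are skipped, and one reduction is not licensed as stated. First, you invoke Theorem~\ref{thm:local-pr} verbatim at the interior hypersurface $H_1$. That theorem takes as input the equality of \emph{boundary distance functions} on an open subset of the smooth strictly convex boundary of a manifold with boundary; the inner region $\{\foliation\le -c_0/2\}\cap M$ is a manifold with corners, the required set $U$ would contain a piece of the original $\pa M$ below the corner where equality of the inner distance functions is not known, and --- more fundamentally --- after the first step $\psi_0^*\hat g$ is defined only on a collar, so there is as yet no identified ``inner domain for $\hat g$'' whose lens relation you can compare. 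The paper avoids this by not applying the statement of Theorem~\ref{thm:local-pr} but its machinery: it passes to geodesic normal coordinates relative to $H_\level$ for both metrics, notes the transported lens data agree there (your continuation-through-the-collar argument is essentially this), and re-runs the pseudolinearization together with the uniform normal-gauge estimates with $H_\level$ as the artificial boundary, where $\pa M$ enters only through support conditions.

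Second, and this is the central missing idea: extending $\psi$ past $H_i$ means using the $\hat g$-normal exponential map $\hat\phi$ around $\psi(H_i)$. Lemma~\ref{lemma:normal-coord-tube}(a) gives a uniform collar on which $\hat\phi$ is a \emph{local} diffeomorphism, but its \emph{injectivity} on a uniform collar is not known a priori, because nothing yet controls how $\psi(H_i)$ sits inside $(M,\hat g)$ (two far-apart points of $\psi(H_i)$ could be close in $\hat g$); Lemma~\ref{lemma:normal-coord-tube}(b) needs control of the metric on both sides, which is not yet available. Your uniform-step claim (``collar of controlled width'' for the pulled-back metrics) silently assumes exactly this. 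The paper resolves it by working on the intermediate manifold $\hat V\times(-\ep,\ep)$ and proving global injectivity of $\hat\phi$ via the tangency argument for the strictly convex hypersurfaces $S_t$ (a maximal $t_0$ would force two convex pieces to touch from their concave sides, which is impossible). In addition, one must check that the extended map does not cross $\pa M$ on the $\hat g$ side before it does on the $g$ side --- the paper deduces this from the equality of the lens data --- otherwise the composed $\psi$ need not map $M_0$ into $M$ nor fix $\pa M$ pointwise. Without these two ingredients the inductive step, and hence the termination argument, is not justified.
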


The strict convexity of $\pa M$ is used only to show that the jets of $g$ and $\hat g$ in boundary normal coordinates coincide. This is true, without convexity,  under the mild assumption of no conjugate pairs of points on $\bo$ \cite{SU-lens} which holds automatically for points close enough on a fixed geodesic, which, with a more general definition of the lens relation for non-strictly convex boundaries  as in \cite{SU-lens} would allow us to remove the strict convexity assumption of $\bo$ in the theorem but we will not pursue this. 

\begin{figure}[h!] 
  \centering
  \includegraphics[scale=1.0,page=1]{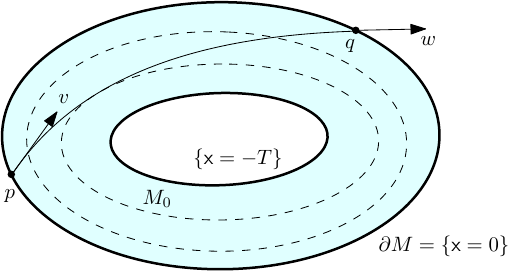}
\caption{The scattering relation $(\x,v)\mapsto
  (q,w)$ restricted to geodesics in the foliation for the semi-global result.}
  \label{fig:loc-bdy-semiglobal_pic}
\end{figure}

A special important case arises when there exists a strictly convex
function, which may have a critical point $x_0$ in $M$ (if so, it is
unique). Then we can apply  Theorem~\ref{thm:semiglobal} in the
exterior of $x_0$; which would create a priori a possible singularity
of the diffeomorphism at $x_0$. In Section~\ref{AppendixB}, we show
that this singularity is removable and obtain a global theorem under
that assumption, see Theorem~\ref{thm:convex-critical-pt}.
 This condition was extensively studied in \cite{PSUZ} (see also the
 references there). In particular Lemma 2.1 of \cite{PSUZ} shows that
 such a function exists  if the sectional curvature of the manifolds
 is non-negative or if  the manifold is simply connected and the
 curvature is non-positive. Manifolds satisfying one of these
 conditions are lens rigid:

\begin{cor}\label{cor_B} 
Let $(M,g)$ be a compact Riemannian manifold with a strictly convex boundary of dimension $n\ge3$  satisfying any of the conditions

(a) $(M,g)$ is simply connected with a non-positive sectional curvature;

(b) $(M,g)$ is simply connected and has no focal points;

(c) $(M,g)$ has non-negative sectional curvature.

\noindent Then if $g_1$ is another metric on $M$ with respect to which
$\bo$ is also strictly convex  and with the same lens data, $(M,g)$ is isometric to $(M,g_1)$ with an isometry fixing the boundary pointwise. 
\end{cor}

Note that (c) can be replaced by the weaker condition of a lower
negative bound of the sectional curvature; depending on some geometric
invariants of $(M,g)$, see \cite{PSUZ}. 

As  mentioned earlier, the lens rigidity problem and the boundary
rigidity problem are equivalent for simple manifolds (which are simply connected). Therefore we
have proved Michel's conjecture in dimension $n\geq 3$ under
conditions corresponding to those of Corollary~\ref{cor_B}. More precisely:

\begin{cor}\label{cor_C} 
Let $(M,g)$ be a compact simple Riemannian manifold with a strictly convex boundary of dimension $n\ge3$  satisfying any of the conditions

(a) $(M,g)$ has non-positive sectional curvature;

(b) $(M,g)$ has no focal points;

(c) $(M,g)$ has non-negative sectional curvature. 

\noindent If $g_1$ is another metric on $M$ with respect to which
$\bo$ is also strictly convex  and with the same
boundary distance function, $(M,g)$ is isometric to $(M,g_1)$ with an
isometry fixing the boundary pointwise. Thus, these classes of
Riemannian manifolds are boundary rigid.
\end{cor}

\textbf{Acknowledgments.} The authors thank Gabriel Paternain, Mikko Salo and the referees for their valuable suggestions.

\section{The approach}\label{sec:approach}
This paper relies crucially on the papers
\cite{UV:local,SUV_localrigidity,SUV:Tensor} both in terms of the
approach and in terms of the results; indeed, these three papers can
be thought of as being part of a process that culminates with the
present result. Thus, we start by discussing these briefly.

The rough picture is that via a linearization procedure, the boundary
rigidity problem connects to the geodesic X-ray transform. In the
general problem we study here this is the X-ray transform on symmetric
2-tensors explored in \cite{SUV:Tensor}.  However, in the simpler case
of boundary rigidity in a fixed conformal class of metrics, which was
proved in \cite{SUV_localrigidity}, it connects to the X-ray transform
on functions. The key analytic ideas in the latter setting were
introduced in \cite{UV:local}. Relative to \cite{UV:local}, the fixed
conformal class boundary rigidity problem, \cite{SUV_localrigidity}, required moving to a
nonlinear setting. On the other hand, the symmetric 2-tensor X-ray
problem is still linear but has a gauge invariance; dealing with this
was the key point in \cite{SUV:Tensor}. Finally the present paper must
combine the ability to deal with the gauge invariance with the
ability to work on a non-linear problem. We go through these
ingredients one by one.

\subsection{The X-ray transform on functions, \`a la \cite{UV:local}}
On a Riemannian manifold $(M,g)$, the geodesic X-ray transform of 2-tensors
is a map $\CI(M)\to\CI(SM)$
\[
If(\beta)=\int_{\gamma_\beta} f(\gamma_\beta(s))\,ds,
\]
where for $\beta\in SM$, $\gamma_\beta$ is the lifted geodesic through
$\beta$. A key question is if from $If$ we can recover $f$, which can take various
forms: injectivity, stability estimates, or perhaps even a
construction of a left inverse. Since $I$ is a Fourier integral
operator, one general approach is to consider the normal operator,
$I^*I$. 
The operator 
$$
L_0v(z)=\int_{S_zM} v(\gamma_{z,\zeta})\,d\zeta 
$$
is actually $I^*$ with a suitable natural parameterization of the space of the geodesics \cite{S-Serdica}

Under the assumption that $M$ has no conjugate points, and working on
the extension
$\tilde M$, $L_0I$ is a
pseudodifferential operator of order $-1$, and moreover it is
elliptic for $n\ge2$, see \cite{SU-Duke, SU-JAMS}. (These requirements can be somewhat relaxed by
microlocalization, see \cite{SU-lens}.) Then there is a parametrix $G$
such that $GL_0I$ differs from the identity operator (when restricted
to distributions supported in $M$) by a smoothing operator. While this
is sufficient for a semi-Fredholm theory, it does not rule out a
potentially large finite dimensional nullspace.

The key advance of \cite{UV:local} was to consider a localized
problem, which introduced a small parameter, as we now explain. This
small parameter is what enables us to rule out the potential large
nullspace and thus to construct a left inverse of $L_0I$, where $L_0$ is a
localized version of the $L_0$ above. Concretely then, suppose we have a convex
foliation, concave from the super-level sets, given by the level sets of a function
$\foliation$ of non-vanishing differential. For a fixed value $\level$
(we use the typeface $\level$ here to distinguish it from the
conformal class factor we discuss next), we
consider the level set $\foliation=-\level$ as an {\em artificial
  boundary}, and consider the region
$\Omega_\level=\{\foliation>-\level\}\cap M$ for the purpose of finding $f|_{\Omega_\level}$
from the information given by the $If(\beta)$ for those $\beta$ for which the geodesic
through $\beta$ stays in $\Omega_\level$ until it hits $\pa M$, i.e.\ for
$\Omega_\level$-localized geodesics. Let $x_\level=\foliation+\level$ be a boundary
defining function for $\{\foliation>-\level\}$ in $\tilde M$. In order to implement this analytically, we
need to add a cutoff to the definition of $L_0$:
$$
L_0v(z)=\int_{S_z M} \chi(z,\zeta) v(\gamma_{z,\zeta})\,d\zeta.
$$
Here $\chi$ localizes to a subset of geodesics that are `almost tangent' to level
sets of $\foliation$. The precise type of operator one obtains depends
on the precise way one implements the almost tangency. We take this so
that on the support of $\chi$, the tangent vector to $\gamma_{z,\zeta}$ at $z$ encloses an angle
$\lesssim x_\level$ with the level sets of $\foliation$, i.e.\ the geodesics
become tangent to the level sets as one approaches the artificial
boundary at a rate that is roughly proportional to the distance to the
artificial boundary. The concavity assumption on the super-level sets
implies that these geodesics are indeed $\Omega_\level$ local. One could in
fact take a somewhat larger angle from tangency just for the concavity
considerations, but our choice ensures that $L_0I$, or more precisely
$e^{-\digamma/x_\level}L_0Ie^{\digamma/x_\level}$, where $\digamma>0$, is a particularly
well-behaved elliptic  pseudodifferential operator: it is in Melrose's
scattering pseudodifferential algebra which has a powerful symbolic
structure and which we discuss in some detail in
Section~\ref{sec:normal-gauge}. Effectively this means that
analytically the artificial boundary acts like a region near infinity
in Euclidean space. On the other hand, the parameter $\digamma$ means
that we are working on exponentially weight spaces, so the estimates
on $f$ (from $If$) will be exponentially weak as one approaches the
artificial boundary since $e^{-\digamma/x_\level}L_0Ie^{\digamma/x_\level}$ should
be thought of as being applied to $e^{-\digamma/x_\level}f$. The key
point is that the level set parameter
$\level$ becomes a new tool: by taking $\level$ sufficiently small, one can
assure that not only is the error of a parametrix `smoothing' (really,
`Schwartzifying' in the asymptotically Euclidean interpretation) but is
actually small as an operator, so the identity plus this error can be
inverted.

Note that the ellipticity now requires $n\ge3$ because we deal with ``almost tangent'' (to the actual or to the artificial boundary) geodesics only. If $n=2$, we get ellipticity on codirections close to normal ones only. 

In order to invert the X-ray transform globally then one has a layer
stripping procedure, in which first one recovers $f$ in
$\foliation\geq-\level_1$, $\level_1>0$ small, then in $-\level_1\geq\foliation\geq
-\level_2$, $\level_2-\level_1>0$ small, etc. Since we can control the step size,
compactness considerations result in global injectivity, stability, etc.

\subsection{Boundary rigidity in a fixed conformal class, \`a la
  \cite{SUV_localrigidity}}
If we have a fixed conformal class, i.e.\ we study multiples
$c^{-2}g_0$ of a
background metric $g_0$, then the linearization (in $c$) of the boundary
distance function around a certain $c_0$ is an X-ray transform of
$\delta c$. 

As mentioned already in the introduction, we actually use the lens
information. This gives rise to a formula, called the
pseudolinearization formula in \cite{SU-MRL}, for the difference of the
cotangent bundle
coordinates of the point $\tilde Z(t,z)$, resp.\ $Z(t,z)$, of the time
$t$ Hamilton flows emanating from a boundary point in the same
direction, i.e.\ from $z=(x,\xi)$, $\xi=g_x(\zeta)$:
\begin{equation}\label{eq:gen-pseudolin}
\tilde Z(t,z) - Z(t,z)= 
\int_0^t \frac{\partial\tilde Z}{\partial z} (t-s,Z 
(s,z))\big(\tilde V - V\big)(Z (s,z))\,ds;
\end{equation}
here $\tilde V$ and $V$ are the Hamilton vector fields given by
$\tilde c^{-2} g_0$ and $c^{-2}g_0$.
If the lens relations are the same, then taking $t$ as the time $\tau(x,\xi)$ at which
the respective flows both reach the boundary at the same point, the
left hand side vanishes. Expressing the Hamilton vector field in terms
of the factors $\tilde c,c$ and their first derivatives, and taking
the momentum (i.e.\ $\xi$) component of $Z$, we obtain a formula
for the integral of the first
derivatives of $\tilde c-c$ and $\tilde c-c$ itself. Since
\eqref{eq:gen-pseudolin} integrates the difference of the Hamilton 
vector fields along the trajectory $Z(.,z)=Z(.,x,\xi)$, i.e.\ along a
bicharacteristic, i.e.\ a
lifted geodesic, this turns to
be an X-ray transform with a weight (essentially given by the
prefactor in \eqref{eq:gen-pseudolin}). 
Namely if we write $f=c^2-\tilde
c^2$, we obtain
\begin{equation}\label{eq:sound-speed-J}
J_if(\gamma):= \int \left( A_i^j(X(t),\Xi(t))(\partial_{x^j}f)(X(t))+
  B_i (X(t),\Xi(t))f(X(t)) \right)\,dt=0,\ \qquad i=1,\ldots,n,
\end{equation}
for any bicharacteristic  $\gamma = (X(t),\Xi(t))$ (related to the
speed $c$) in our set $\Omega_\level$, where
\begin{equation*}\begin{aligned}
A_i^j\left(x,\xi\right) = &-\frac12 \frac{\partial\tilde  \Xi_i}{\partial \xi_j}(\tau(x,\xi),(x,\xi)) c^{-2}(x),
\\
B_i\left(x,\xi\right) = &\frac{\partial \tilde\Xi_i}{\partial x^j}(\tau(x,\xi),(x,\xi))g_0^{ik}(x) \xi_k  -\frac12 \frac{\partial\tilde  \Xi_i}{\partial \xi_j}(\tau(x,\xi),(x,\xi))  (\partial_{x^j}   g_0^{-1}(x))\xi\cdot\xi.
\end{aligned}\end{equation*}
This way, we deal with the geometry of a single metric directly, and the geometry of the other one affects the weight. 
At the boundary of $M$ we have $A_i^j(x,\xi)=-\frac{1}{2}c^{-2}\delta_i^j$. Then
the transform given by just the $A_i^j$ term gives rise to an elliptic
pseudodifferential operator by taking $L_0$ essentially as above (since
we have $n$ components corresponding to the $n$ derivatives, really the $n$ by $n$
matrix version, $L_0\Id_n$), 
while the $B_i$ terms
can be absorbed using a Poincar\'e-type inequality at least for
sufficiently small domains (the foliation parameter is near $0$). This shows that if
$Jf$ vanishes then so does $f$, i.e.\ $c=\tilde c$, proving the local
version of the boundary rigidity in a fixed conformal class.

\subsection{The X-ray transform on tensors, \`a la \cite{SUV:Tensor}}
The geodesic X-ray transform of 2-tensors
along the geodesics of a metric $g$ is a map $\CI(M;\Sym^2T^*M)\to\CI(SM)$
\[
If(\beta)=\int_{\gamma_\beta} f(\gamma_\beta(s))(\dot\gamma_\beta(s),\dot\gamma_\beta(s))\,ds,
\]
and in this transform the symmetric 2-tensor $f$ is evaluated on the tangent vector of
$\gamma_\beta$ in both slots.

The key difference between the X-ray transform on tensors and on
scalar functions is {\em not} that tensors are sections of a bundle:
after all, locally this is just a transform of a matrix function, and
these were analyzed above for the fixed conformal class boundary
rigidity. Rather, the issue is the gauge invariance, which is to say
that if $f$ is a {\em potential tensor}, i.e.\ is the symmetric differential of
a one-form vanishing on the boundary, $f=\dsymm v$, then $If=0$. (In the
analogous one-form setting, this is simply the fundamental theorem of
calculus.) The standard way of fixing this gauge invariance is
adding a gauge condition, and the most standard (due to the
ellipticity we are about to discuss) gauge condition is the
{\em solenoidal gauge condition}, $\delta^s f=0$, where $\delta^s$ is
(negative) divergence. Working globally, taking a background metric
$g_0$ (possibly equal to $g$, but this is not needed), one uses this by replacing the operator $L_0$
above by
$$
L_2v(z)=\int_{S_zM} v(\gamma_{z,\zeta}) g_0(\zeta)\otimes g_0(\zeta)\,d\zeta,
$$
and rather than just taking $L_2I$, one considers $L_2I+\dsymm Q\delta^s$,
where $Q$ is an order $-3$ pseudodifferential operator. This is
elliptic for a suitable choice of $Q$, and applied to tensors in the
solenoidal gauge the second term vanishes, so if $If=0$, then one
concludes that $f$ is smooth, and indeed that there is a finite
dimensional nullspace. There are some additional difficulties near the boundary since solenoidal  tensors extended as zero outside $M$ may not be solenoidal anymore. 

The localized version is quite similar, with the main difference that
the {\em weighted solenoidal gauge} also has an exponential weight:
$\delta^s(e^{-2\digamma/x_\level}f)=0$. Concretely, let
$\delta^s_\digamma=e^{\digamma/x_\level}\delta^s e^{-\digamma/x_\level}$,
$\dsymmw=e^{-\digamma/x_\level}\dsymm e^{\digamma/x_\level}$. Then the
analogue of $L_2I+\dsymm Q\delta^s$ is
\[
A_\digamma=N_\digamma+\dsymmw
Q\delta^s_\digamma,\qquad N_\digamma=e^{-\digamma/x_\level}L_2Ie^{\digamma/x_\level},
\]
where $L_2$ again has a cutoff $\chi$. Again, this
can be arranged to be elliptic for suitable $\chi$ and $Q$ and
suitably large $\digamma>0$, and thus is invertible up to a smoothing
(`Schwartzifying') error by applying a parametrix $G_\digamma$. Now, for again sufficiently small indexed
level set, i.e.\ sufficiently small $\level$, chosen as the artificial
boundary, the error $G_\digamma A_\digamma-\Id$ is not just
`smoothing'/Schwartzifying, but is actually small, so it can be
removed as in the scalar case, i.e.\ we may assume $G_\digamma A_\digamma=\Id$.
If $f$
is in this exponential solenoidal gauge, then applying $A_\digamma$
to $e^{-\digamma/x_\level}f$ gives
$$
A_\digamma e^{-\digamma/x_\level}f  =N_\digamma e^{-\digamma/x_\level}f =e^{-\digamma/x_\level}L_2If,
$$
which thus is determined by $If$, hence the same for
$$
e^{-\digamma/x_\level}f=G_\digamma A_\digamma e^{-\digamma/x_\level}f=G_\digamma N_\digamma e^{-\digamma/x_\level}f =G_\digamma e^{-\digamma/x_\level}L_2If.
$$

We actually suppressed an issue here: putting a tensor $f$ into
solenoidal gauge by adding a potential term, $\dsymm v$, requires solving a
weighted Laplace-type equation on one forms (with a weight,
essentially $e^{-\digamma/x_\level}$, singular at the artificial boundary), which is almost as involved as the
argument we outlined. Part of the issue is that the solution $v$ of
this equation necessarily depends on the whole domain on which we are
solving this Laplace-type equation, and in the actual inversion
procedure a few different domains (in $x_\level\geq 0$) are considered due to the extended
(to $\tilde M$)
nature of the parametrix construction, so these must be related and
the behavior of the Laplace-type operator at artificial boundary
(which is also in Melrose's scattering algebra) also taken into
account in the solution procedure. In particular, as we mentioned above, the extension of a solenoidal tensor, extended as zero
outside $M$, may not be solenoidal anymore, which is ultimately the
reason that the Laplace-type equation must be solved in a number of domains.

\subsection{Boundary rigidity}
One immediate issue with general boundary rigidity (as opposed to the
fixed conformal class one) and localization is that
even if
we have two metrics $g$ and $\tilde g$ with the same lens relation,
it may well happen that $g$ and $\tilde g$ are different due to the
diffeomorphism invariance (the analogue of the above gauge invariance
for the tensor X-ray transform). Therefore, we cannot really expect to be able
to make a statement that in some fixed region they are the same `up to
diffeomorphism': the diffeomorphism deforms the region itself. 
The localization however is an essential part of assuring
the lack of null space of the modified normal operators, at least by our methods. This
already complicates the general boundary rigidity problem.

One can try to circumvent this difficulty by putting the metrics in a
certain gauge in order to eliminate the diffeomorphism
invariance; then we want to prove that they are equal. 
Given the symmetric 2-tensor discussion above, one may
want to put them in a (weighted) solenoidal gauge with respect to a background
metric. An immediate issue of arranging the solenoidal gauge for our
local problems is that it
requires solving an
elliptic PDE, essentially a weighted Laplace-Beltrami equation on one-forms, with
the weight singular at the boundary of $\Omega_\level$ (essentially
$e^{-\digamma/x_\level}$), which again comes back to the point that
one should know the corresponding regions for the two metrics from the
start! Thus
the extension of the solenoidal gauge to non-linear problems appears problematic.

Instead we use the {\em normal gauge} in a product-decomposition of
the underlying manifold, which for
the linear problem means working with tensors (differences of two metrics) whose normal components
vanish (for 2-tensors, this means normal-normal and tangential-normal
components; in the 1-form problem discussed below this means the normal
component). We can pull back each metric by a (metric dependent) local
diffeomorphism so that each new metric is in normal coordinates
relative to a hypersurface, see section~\ref{sec_7.1}. If this is
done, then their difference is in the normal gauge.
An addition of symmetric derivatives of one-forms vanishing at $\pa M$, i.e.\ of potential tensors, does not change the X-ray transform. 
In the normal gauge, this linear invariance disappears and we want to prove injectivity. The
operator $e^{-\digamma/x_\level}L'_2Ie^{\digamma/x_\level}$ however is not
elliptic even restricted to tangential-tangential tensors, i.e.\
tensors in this normal gauge, as noticed already in \cite{SU-MRL}. 
Here $L_2'$ is the analogue of $L_2$
replacing $g_0(\zeta)\otimes g_0(\zeta)$ by its tangential-tangential component,
so that the output is a tangential-tangential tensor. However, there is a major gain: putting an arbitrary
one-form or tensor into the normal gauge by adding a potential tensor
requires solving what amounts to an evolution equation, so this itself
is not an elliptic process (though it is {\em much} simpler than dealing with the
non-ellipticity of the X-ray transform in this gauge). The evolutionary nature allows one
to work locally, since the property of being in the normal gauge is
{\em independent of the choice of the artificial boundary}. 
Thus, we
have a well-behaved gauge condition for the non-linear problem, but at
the cost of losing the ellipticity of our modified normal operator.

Going back to the linear setting, namely that of the X-ray
transform on tensors, if one would like to recover a tensor $f$ which is in the
normal gauge from $If$, it is thus
easier to put $f$ in the solenoidal gauge first, by adding a term
$\dsymm v$.  Then we recover
$f+\dsymm v$ from $N_\digamma e^{-\digamma/x_\level}(f+\dsymm v)=N_\digamma e^{-\digamma/x_\level} f$, hence from $If$, using the solenoidal gauge estimate, i.e.\ the original tensor $f$ up to a
potential term. Then argue that in fact this determines $f$
due to the vanishing of its normal components.
We in fact present this in Section~\ref{sec:Fredholm-geodesic},
together with actual estimates for $f$ in terms $N_\digamma
e^{-\digamma/x_\level}(f+\dsymm v)=N_\digamma e^{-\digamma/x_\level}
f$. These estimates are non-elliptic, with a natural loss of
derivatives in the tangential to the foliation direction; see
Theorem~\ref{thm:normal-gauge-2-tensor-Fredholm-est} and its
Corollary~\ref{cor:normal-gauge-2-tensor-Fredholm}, which gives a
direct left invertibility statement for $N_\digamma$ on tensors in the
normal gauge as a map between appropriate generalized Sobolev spaces.

This approach of using
the solenoidal result for a problem in the normal gauge does {\em not} work for
the pseudolinearization directly, however, because with $J$ being the
generalized X-ray transform of the Stefanov-Uhlmann formula in
Lemma~\ref{SU-identity}, namely the tensorial analogue of $J$ in
\eqref{eq:sound-speed-J} in our fixed conformal class setting, $J$
is {\em not} expected to annihilate potential tensors since
$J$ is {\em not} the actual tensorial X-ray transform. 
Indeed, once the normal coordinates are
fixed, and we are working in a fixed region (so we expect $g=\tilde
g$, without diffeomorphism issues), we
can make the tangential-tangential tensor $g-\tilde g$ solenoidal
relative to a reference metric in the
fixed region, changing $g-\tilde g$ by a potential
term $\dsymm v$ by enforcing
$\delta^s_\digamma (e^{-\digamma/x_\level}(g'-\tilde g'))=0$, but this
{\em eliminates} the identity $J(g'-\tilde g')=0$.

So for our boundary rigidity problem, relying on the
pseudolinearization formula, one needs to argue more directly for the
left invertibility of the weighted transform $J$ in the normal
gauge. The most direct way to proceed would be to deal with the lack
of ellipticity of $e^{-\digamma/x_\level}L'_2Je^{\digamma/x_\level}$
in some way. While in principle the latter is relatively benign, it
gets worse with the order of the tensor: for one-forms it should be
roughly real principal type, except that it is really real principal
type times its adjoint (so quadratic vanishing at the characteristic
set, but with extra structure); in the case of symmetric 2-tensors we have
quadratic vanishing in the first place so quartic once one looks at
the operator times its adjoint.

This large degeneracy, however, can be improved as follows.
We complement the operator $L'_2$ by a larger
collection of operators $L'_j$, $j=0,1$. All $L'_j$ will be
similar integrals, but mapping to different spaces, not just to tangential-tangential
2-tensors; in fact, they can be considered as the parts of the
original $L_2$ mapping into other
components, such as normal-tangential, so altogether one considers $L_2 I=(L_0' I,L_1' I, L_2'I)$. 
After the exponential conjugation this becomes a
pseudodifferential operator between different bundles
(tangential-tangential symmetric tensors to all symmetric
tensors). This is {\em still} not `elliptic' (here meaning having an
injective principal symbol), but the failure of
ellipticity is less pronounced than for the conjugate of
$L_2'I$. Indeed,  for the related
one-form problem (in the normal gauge) this approach easily
gives self-contained results, such as semi-Fredholm theory; we sketch this in
Section~\ref{sec:1-form} using the microlocal real principal type and
radial point tools as in \cite{Vasy-Dyatlov:Microlocal-Kerr} and
\cite{Vasy:Minicourse}.
However, for symmetric 2-tensors in the normal gauge the degeneracy is
still quadratic, and thus harder to deal with for a direct
semi-Fredholm theory, though the improved
structure gives rise to {\em precise mapping properties} of the
{\em operator itself} on suitable Sobolev
spaces with extra regularity properties.

So, instead of proceeding this way, in the 2-tensor setting we combine the very direct approach to the
pseudolinearization transform $J$ and the
relationship between the solenoidal and normal gauge results for the
{\em actual} $X$-ray transform $I$. This can be done because for
$I$ we have an actual left inverse, and as we show in
Section~\ref{sec:Fredholm-weights}, for small $\level>0$, the operator
$N_\digamma$ induced by $I$ is close to the operator $\tilde
N_\digamma$ induced by $J$ {\em as a map between the function spaces of the
left invertibility result}. Due to the invertibility of $N_\digamma$,
we conclude the same for $\tilde N_\digamma$.

Ultimately, this means that the general analysis of tensorial X-ray
transforms in a manner that is suitable for the weighted version,
which is done in Sections~\ref{sec:2-tensor}, is used as the {\em
  regularity theory} for the actual X-ray transform in the normal
gauge, to obtain the sharp results in Section~\ref{sec:Fredholm-geodesic}, as well as to have desired mapping (including perturbation
stability) properties of the weighted transform.
These results are then used in
Section~\ref{sec:rigidity} to prove the actual boundary
rigidity results.

{\em A notational warning: from Section~\ref{sec:1-form}, the maps $L'_j$
  of this last section are denoted by $L_j$, and $L$ takes the place
  of $L_2$ (or $L_1$ in the one-form setting).}

\section{The transform in the normal gauge}\label{sec:normal-gauge}
\subsection{The scalar operator $L$}
We first recall the
definition of $L$ from \cite{SUV:Tensor} and \cite{UV:local}. For
this, it is convenient to consider $M$ as a domain in a larger
manifold without boundary $\tilde M$ by extending $M$ and the metric
across $\pa M$. The basic input is a function $\tilde x$ whose
level sets near the zero level set are strictly concave, from the side of
superlevel sets (at least near the 0-level set) (it suffices
if this only holds on the intersection of these level sets with $M$) whose 0 level set only
intersects $M$ at $\pa M$; an example would be
the negative of a
boundary defining function of our strictly convex domain. We also need
that $\{\tilde x\geq -c\}\cap M$ is compact for $c\geq 0$ sufficiently
small, and we let
$$
\Omega=\Omega_c=\{\tilde x>-c\}\cap M
$$
be the region in which, for small $c>0$, we want to recover a tensor
in normal gauge from its X-ray transform.
In the
context of the elliptic results, both for functions, as in
\cite{UV:local}, and in the tensor case, as in \cite{SUV:Tensor}, this
function $\tilde x$
need not have any further connections with the metric $g$ for which we
study the X-ray transform. {\em However, for obtaining optimal
  estimates in our normal gauge, which is crucial for a perturbation
  stable result, it will
  be important that the metric itself is in the normal gauge near
  $\{\tilde x=0\}\cap M$, i.e.\ writing the region as a subset of
  $(-\delta_0,\delta_0)_{\tilde x}\times Y$ with respect to a product
  decomposition, the metric is of the form $g=d\tilde x^2+h(\tilde x,y,dy)$.}

Concretely $L$ is defined as follows in \cite{SUV:Tensor}. Near $\pa\Omega$, one can use
coordinates $(x,y)$, with $x=x_c=\tilde x+c$ as before, $y$
coordinates on $\pa\Omega$, or better yet $H=\{\tilde x=0\}$. Correspondingly, elements of $T_pM$ can be
written as $\lambda\,\pa_x+\omega\,\pa_y$. The unit speed geodesics which
are close to being tangential to level sets of $\tilde x$ (with the
tangential ones being given by $\lambda=0$) through a
point $p=(x,y)$ can be parameterized by say $(\lambda,\omega)$ (with the
actual unit speed being a positive multiple of this) where
$\omega$ is unit length with respect to a metric on $H$ (say a
Euclidean metric if one is working in local coordinates). These have
the form (cf.\ \cite[Equation~(3.17)]{UV:local})
\begin{equation}\label{eq:geod-form}
(x+\lambda t+\alpha(x,y,\lambda,\omega)t^2+O(t^3),y+\omega t+O(t^2);
\end{equation}
the strict concavity of the level sets of $\tilde x$, as viewed from the
super-level sets means that $\alpha(x,y,0,\omega)$ is positive.
Thus, by this concavity, (for $\lambda$ sufficiently small) $\frac{d^2}{dt^2}\tilde x\circ\gamma$ is
bounded below by a positive constant along geodesics in $\Omega_c$, as
long as $c$ is small, which in turn means that, for sufficiently small
$C_1>0$, geodesics with
$|\lambda|<C_1\sqrt{x}$ indeed remain in $x\geq 0$ (as long as they
are in $M$). Thus, if $If$ is known along $\Omega$-local geodesics,
meaning geodesic segments with endpoints on $\pa M$, contained within $\Omega$, it
is known for geodesics $(x,y,\lambda,\omega)$ in this range. As in
\cite{UV:local} we use a smaller range $|\lambda|<C_2 x$ because of
analytic advantages, namely the ability work in the well-behaved
scattering algebra even though in principle one might obtain stronger
estimates if the larger range is used (polynomial rather than
exponential weights).
Thus,
for $\chi$ smooth, even, non-negative, of compact support, to be specified, in the function case
\cite{UV:local} considered the operator
$$
Lv(z)=x^{-2}\int \chi(\lambda/x)v(\gamma_{x,y,\lambda,\omega})\,d\lambda\,d\omega,
$$
where $v$ is a (locally, i.e.\ on $\supp\chi$, defined) function on the space of geodesics, here parameterized
by $(x,y,\lambda,\omega)$. (In fact, $L$ had a factor $x^{-1}$
only in \cite{UV:local}, with another $x^{-1}$ placed elsewhere; here we simply combine
these, as was also done in \cite[Section~3]{SUV_localrigidity}. Also,
the particular measure $d\lambda\,d\omega$ is irrelevant; any smooth positive
multiple would work equally well.)
The key result was that $L I$ is a pseudodifferential operator
of a certain class
on
\begin{equation}\label{eq:artificial-bdy-mfld}
X=\{x\geq 0\},
\end{equation}
considered as a manifold with boundary; note that only a
neighborhood of $\Omega$ in $\tilde M$ actually matters here due to the
support of the functions to which we apply $I$. An important point is that
the {\em artificial boundary} that we introduced, $\{x=0\}$, is what
is actually important, the original boundary of $M$ simply plays a
role via constraining the support of the functions $f$ we
consider.

\subsection{Scattering pseudodifferential operators}
More precisely then, the pseudodifferential operator class
is that of {\em scattering pseudodifferential operators}, introduced
by Melrose in \cite{RBMSpec} in this generality, but having precedents
in $\RR^n$ in the works of Parenti and Shubin \cite{Pa,Sh}, and in this case it is
also a special case of H\"ormander's Weyl calculus with product type
symbols \cite{Hor}. Thus, on $\RR^n$ the class of symbols $a\in S^{m,l}$ one
considers are ones with the behavior
$$
|D_z^\alpha D_\zeta^\beta a(z,\zeta)|\leq C_{\alpha,\beta}\langle
z\rangle^{l-|\alpha|}\langle\zeta\rangle^{m-|\beta|},\ \alpha,\beta\in\NN^n,
$$
quantized in the usual way, for instance as
$$
Au(z)=(2\pi)^{-n}\int e^{i(z-z')\cdot\zeta}a(z,\zeta)u(z')\,dz'\,d\zeta,
$$
understood as an oscillatory integral; one calls $A$ a scattering
pseudodifferential operator of order $(m,l)$. A typical example of such an
$A$ is a scattering differential operator of order $m$, thus of order $(m,0)$ as
a scattering pseudodifferential operator: $A=\sum_{|\alpha|\leq m}
a_\alpha(z)D^\alpha_z$, where for each $\alpha$, $a_\alpha$ is a $0$-th
order symbol on $\RR^n$: $|D^\gamma a_\alpha(z)|\leq C_{\alpha\gamma}\langle
z\rangle^{-|\gamma|}$, $\gamma\in\NN^n$. A special case is when each
$a_\alpha$ is a classical symbol of order $0$, i.e.\ it has an
expansion of the form $\sum_{j=0}^\infty a_{\alpha,j}(z/|z|) |z|^{-j}$
in the asymptotic regime $|z|\to\infty$. These operators form an
algebra, i.e.\ if $a\in S^{m,l}$, $b\in
S^{m',l'}$, with corresponding operators $A=\Op(a)$, $B=\Op(b)$, then
$AB=\Op(c)$ with $c\in S^{m+m',l+l'}$; moreover $c-ab\in
S^{m+m'-1,l+l'-1}$. Correspondingly it is useful to introduce the
{\em principal symbol}, which is just the class $[a]$ of $a$ in
$S^{m,l}/S^{m-1,l-1}$, suppressing the orders $m,l$ in the notation of
the class; then $[c]=[a][b]$.
Notice that this algebra is
commutative to leading order both in the differential and decay sense, i.e.\ if $a\in S^{m,l}$, $b\in
S^{m',l'}$, with corresponding operators $A=\Op(a)$, $B=\Op(b)$, then
$[A,B]=\Op(c)$, $c\in S^{m+m'-1,l+l'-1}$,
$$
c- \frac{1}{i}\sum_{j=1}^n \Big(\frac{\pa a}{\pa \zeta_j}\frac{\pa
  b}{\pa z_j}-\frac{\pa a}{\pa z_j}\frac{\pa b}{\pa
  \zeta_j}\Big) \in S^{m+m'-2,l+l'-2}.
$$
We introduce 
$$
H_a b=\sum_{j=1}^n \Big(\frac{\pa a}{\pa \zeta_j}\frac{\pa
  b}{\pa z_j}-\frac{\pa a}{\pa z_j}\frac{\pa b}{\pa
  \zeta_j}\Big),
$$
where $H_a=\sum_{j=1}^n \Big(\frac{\pa a}{\pa \zeta_j}\frac{\pa}{\pa z_j}-\frac{\pa a}{\pa z_j}\frac{\pa}{\pa
  \zeta_j}\Big),$ is the Hamilton vector field of $a$.
These operators
also act on weighted Sobolev spaces, $H^{s,r}=\langle
z\rangle^{-r}H^s(\RR^n)$ in the sense that for $a\in S^{m,l}$,
$\Op(a):H^{s,r}\to H^{s-m,r-l}$ in a continuous linear manner.

In order to extend this to
manifolds with boundary, it is useful to compactify $\RR^n$ radially
(or geodesically) as a ball $\overline{\RR^n}$; different points on
$\pa\overline{\RR^n}$ correspond to going to infinity in different
directions in $\RR^n$. Concretely this is achieved by identifying,
say, the exterior of the closed unit ball with
$(1,\infty)_r\times\sphere^{n-1}_\omega$ via `spherical coordinates', which
in turn is identified with $(0,1)_x\times\sphere^{n-1}_\omega$ via the map
$r\mapsto r^{-1}$, to which we glue the boundary $x=0$, i.e.\ we
consider it as a subset of $[0,1)_x\times\sphere^{n-1}_\omega$. (More
formally, one takes the disjoint union of $[0,1)_x\times\sphere^{n-1}$
and $\RR^n$, and identifies $(0,1)\times\sphere^{n-1}$ with the
exterior of the closed unit ball, as above.) Note that for this
compactification of $\RR^n$ a classical symbol of order $0$ on $\RR^n$
is simply a $\CI$ function on $\overline{\RR^n}$; the asymptotic
expansion $\sum_{j=0}^\infty a_{\alpha,j}(z/|z|) |z|^{-j}$ above is
actually Taylor series at $x=0$: $\sum_{j=0}^\infty x^j
a_{\alpha,j}(\omega)$.

It is also instructive to see what happens to scattering vector fields
in this compactification: $V=\sum_{|\alpha|=1}a_\alpha D^\alpha$. A
straightforward computation shows that $D_j$ becomes a vector field on
$\overline{\RR^n}$ which is of the form $xV'$, where $V'$ a smooth
vector field tangent to $\pa \overline{\RR^n}$. In fact, when $a_\alpha$
is classical of order $0$, such $V$ correspond {\em exactly} to the vector
fields on $\overline{\RR^n}$ of the form $xV'$, $V'$ a smooth
vector field tangent to $\pa \overline{\RR^n}$. We use the notation
$\Vsc(\overline{\RR^n})$ for the collection of these vector fields on
$\overline{\RR^n}$. The corresponding scattering differential operators are
denoted by $\Diffsc(\overline{\RR^n})$, and the scattering
pseudodifferential operators by
$\Psisc^{m,l}(\overline{\RR^n})$. Finally, the weighted Sobolev spaces
become weighted scattering Sobolev spaces,
$\Hsc^{s,r}(\overline{\RR^n})=H^{s,r}$; for $s\geq 0$ integer thus
elements are tempered distributions $u$ with $x^{-r}V_1\ldots V_k u\in
L^2(\RR^n)$ for all $V_j\in\Vsc(\overline{\RR^n})$, $1\leq j\leq k$
and $k\leq s$ (including $k=0$).

If $a\in S^{0,0}$ is classical (both in the $z$ and $\zeta$ sense),
i.e.\ it is (under the identification above) an element of
$\CI(\overline{\RR^n_z}\times \overline{\RR^n_\zeta})$, the principal
symbol $[a]$ can be considered as the restriction of $a$ to
$$
\pa(\overline{\RR^n_z}\times \overline{\RR^n_\zeta})=(\overline{\RR^n_z}\times \pa\overline{\RR^n_\zeta})\cup
(\pa\overline{\RR^n_z}\times \overline{\RR^n_\zeta}),
$$
since if its
restriction to the boundary vanishes then $a\in S^{-1,-1}$. Here
$\overline{\RR^n_z}\times \pa\overline{\RR^n_\zeta}$ is {\em fiber
  infinity} and $\pa\overline{\RR^n_z}\times \overline{\RR^n_\zeta}$
is {\em base infinity}. Then the principal
symbol of $\Op(a)\Op(b)$ is $ab$. The case of general orders $m,l$ can
be reduced to this by removing fixed elliptic factors, such as
$\langle\zeta\rangle^m\langle z\rangle^l$. The commutator version is
that is $a\in S^{1,1}$, classical, then $H_a$ is a smooth vector field
on $\overline{\RR^n_z}\times \overline{\RR^n_\zeta}$ tangent to all
boundary faces. In general, we define the rescaled Hamilton vector
field $\scH_a$ by removing the
elliptic factor $\langle\zeta\rangle^{m-1}\langle z\rangle^{l-1}$:
$$
\scH_a=\langle\zeta\rangle^{-m+1}\langle z\rangle^{-l+1}H_a.
$$

In addition to the leading order behavior captured by the principal 
symbol, one can also talk about the behavior of $a$ modulo 
$S^{-\infty,-\infty}$ microlocally; this is most natural from our
compactified perspective. Thus, the {\em operator wave front set},
$\WFsc'(\Op(a))$, is a subset of $\pa(\overline{\RR^n_z}\times
\overline{\RR^n_\zeta})$, with a point $\alpha\in \pa(\overline{\RR^n_z}\times
\overline{\RR^n_\zeta})$ {\em not being} in $\WFsc'(\Op(a))$ if there exists
a neighborhood of $\alpha$ in $\overline{\RR^n_z}\times
\overline{\RR^n_\zeta}$ restricted to which $a$ is in
$S^{-\infty,-\infty}$. This notion then possesses the usual properties
of wave front sets, for instance
$$
\WFsc'(\Op(a)\Op(b))\subset\WFsc'(\Op(a))\cap\WFsc'(\Op(b)).
$$
In the same vein, one can talk about {\em ellipticity} at a point $\alpha\in \pa(\overline{\RR^n_z}\times
\overline{\RR^n_\zeta})$, meaning that $a$ is invertible, in
$S^{-m,-l}$, when restricted to a neighborhood of $\alpha$.

One similarly has a {\em wave front set} $\WFsc(u)$ for tempered
distributions $u$: $\alpha\in \pa(\overline{\RR^n_z}\times
\overline{\RR^n_\zeta})$ is {\em not} in $\WFsc(u)$ if there is a
symbol $a\in S^{0,0}$ such that $a$ is elliptic at $\alpha$ and
$\Op(a)u$ is Schwartz.

The extension of $\Psisc(\overline{\RR^n})$ to manifolds with
boundary $X$, with the result denoted by $\Psisc(X)$, is then via
local coordinate charts, identifying open sets of $X$ and
$\overline{\RR^n}$ (as in the standard theory of pseudodifferential
operators on manifolds for $X^\circ$ and $\RR^n$), with the following additional
requirement. When we restrict the Schwartz kernel of any element of
$\Psisc(X)$ to the product of disjoint open sets in the
left and right factors $X$ of $X\times X$, it vanishes
to infinite order at the boundary of either factor, i.e.\ is, when
localized to such a product, in $\dCI(X\times X)$. 
Note that open subsets of $\overline{\RR^n}$ near
$\pa \overline{\RR^n}$ behave like asymptotic cones in view of the
compactification. Notice that in the context of our problem this means
that even though for $g$, $\{x=0\}$ is at a `finite' location (finite
distance from $\pa M$, say), analytically we push it to infinity by
using the scattering algebra. Returning to the general discussion, one also needs to allow vector bundles; this is done
as for standard pseudodifferential operators, using local
trivializations, in which one simply has a matrix of scalar
pseudodifferential operators. For more details in the present context
we refer to \cite{UV:local,SUV:Tensor}. For a complete discussion we
refer to \cite{RBMSpec} and to \cite{Vasy:Minicourse}.

This is also a good point to introduce the notation $\Vb(X)$ on a
manifold with boundary: this is the collection, indeed Lie algebra, of
smooth vector fields on $X$ tangent to $\pa X$. Thus,
$\Vsc(X)=x\Vb(X)$ if $x$ is a boundary defining function of $X$. This
class will play a role in the appendix. Note that if $y_j$ are local
coordinates on $\pa X$, $j=1,\ldots,n-1$, then
$x\pa_x,\pa_{y_1},\ldots,\pa_{y_{n-1}}$ are a local basis of elements of $\Vb(X)$, with $\CI(X)$ coefficients; the analogue for
$\Vsc(X)$ is $x^2\pa_x,x\pa_{y_1},\ldots,x\pa_{y_{n-1}}$. These vector
fields are
then exactly the local sections of vector bundles $\Tb X$, resp.\ $\Tsc X$, with the
same bases. The dual bundles $\Tb^* X$, resp.\ $\Tsc^* X$, then have
bases $\frac{dx}{x},dy_1,\ldots,dy_{n-1}$, resp.\
$\frac{dx}{x^2},\frac{dy_1}{x},\ldots,\frac{dy_{n-1}}{x}$. Thus,
scattering covectors have the form $\xi
\frac{dx}{x^2}+\sum_{j=1}^{n-1}\eta_j\frac{dy_j}{x}$. Tensorial
constructions apply as usual, so for instance one can construct
$\Sym^2\Tsc^* X$; for $p\in X$, $\alpha\in \Sym^2\Tsc^* X$ gives a
bilinear map from $\Tsc_p X$ to $\Cx$. Notice also that with this
notation $\scH_a$
is an element of $\Vb(\overline{\RR^n_z}\times
\overline{\RR^n_\zeta})$, or in general
$\scH_a\in\Vb(\overline{\Tsc^*}X)$, where $\overline{\Tsc^*}X$ is the
{\em fiber-compactification} of $\Tsc^*X$, i.e.\ the fibers of
$\Tsc^*X$ (which can be identified with $\RR^n$) are compactified as
$\overline{\RR^n}$. Again, see \cite{Vasy:Minicourse} for a more
detailed discussion in this context.

\subsection{The tensorial operator $L$}\label{subsec:L0}
In \cite{SUV:Tensor}, with $v$ still a locally defined function on the space of geodesics, 
for one-forms we considered the map $L$
\begin{equation}\label{eq:L-forms}
L v(z)=\int \chi(\lambda/x)v(\gamma_{x,y,\lambda,\omega})g_{\scl}(\lambda\,\pa_x+\omega\,\pa_y)\,d\lambda\,d\omega,
\end{equation}
while for 2-tensors
\begin{equation}\label{eq:L-tensors}
L v(z)=x^{2}\int \chi(\lambda/x)v(\gamma_{x,y,\lambda,\omega})g_{\scl}(\lambda\,\pa_x+\omega\,\pa_y)\otimes g_\scl(\lambda\,\pa_x+\omega\,\pa_y)\,d\lambda\,d\omega,
\end{equation}
so in the two cases $L$ maps into one-forms, resp.\ symmetric
2-cotensors. Here
$g_{\scl}$, {\em of no relation to $g$}, is a {\em scattering metric} (smooth section of
$\Sym^2\Tsc^*X$) used to
convert vectors into covectors, of the form
$$
g_\scl=x^{-4}\,dx^2+x^{-2} h,
$$
with $h$ being a boundary metric in a warped product decomposition of
a neighborhood of the boundary. Recall that the Euclidean metric becomes
such  a scattering metric when $\RR^n$ is radially compactified;
indeed, this was the reason for Melrose's introduction of this
pseudodifferential algebra: generalizing asymptotically Euclidean metrics. While the product decomposition near
$\pa X$ relative to which $g_{\scl}$ is a warped product
did not need to have any relation to the underlying metric $g$ we are
interested in, in our normal gauge discussion we use $g_{\scl}$ which
is warped product in the product decomposition in which $g$ is in a
normal gauge.

We note here that geodesics of a scattering metric $g_\scl$ are the projections
to $X$ of the integral curves
of the Hamilton vector field $H_{g_\scl}$; it is actually better to
consider $\scH_{g_\scl}$ (which reparameterizes these), for one has a
non-degenerate flow on $\Tsc^*X$ (and indeed
$\overline{\Tsc^*}X$). 
Note that if one is interested in {\em finite
  points} at base infinity, i.e.\ points in $\Tsc^*_{\pa X}X$, it
suffices to renormalize $H_{g_\scl}$ by the weight, i.e.\ consider
$x^{-1}H_{g_\scl}$ which we also denote by  $\scH_{g_\scl}$.

With $L$ defined as in \eqref{eq:L-forms}-\eqref{eq:L-tensors}, it is shown in \cite{SUV:Tensor} that the
exponentially conjugated operator
$$
N_\digamma=e^{-\digamma/x}L I e^{\digamma/x}
$$
is an element of
$\Psisc^{-1,0}(X)$ (with values in $\Tsc^*X$ or $\Sym^2\Tsc^*X$), and for (sufficiently large, in the case of two
tensors) $\digamma>0$, it is elliptic both at finite points at spatial
infinity $\pa X$, i.e.\ points in $\Tsc^*_p X$, $p\in\pa X$, and at
fiber infinity on the kernel of the principal symbol of the adjoint,  relative to $g_{\scl}$, of
the conjugated
symmetric gradient
$$
\dsymmw=e^{-\digamma/x}\dsymm e^{\digamma/x}
$$
of $g$ (so $\dsymm$ is the symmetric gradient of $g$), namely on the kernel of the
principal symbol of
$$
\delta^s_\digamma=e^{\digamma/x}\delta^s e^{-\digamma/x},\qquad
\delta^s=(\dsymm)^*.
$$
This allows one to conclude
that
$$
N_\digamma+\dsymmw Q\delta_\digamma^s \in\Psisc^{-1,0}(X;\Sym^2\Tsc^*X,\Sym^2\Tsc^*X)
$$
is elliptic, over a
neighborhood of $\Omega$ (which is what is relevant), for suitable
$Q\in\Psisc^{-3,0}(X;\Tsc^*,\Tsc^*X)$. The rest of \cite{SUV:Tensor}
deals with arranging the solenoidal gauge and using the parametrix for
this elliptic operator; this actually involves two extensions from
$\Omega$. It also uses that when $c>0$ used in defining
$\Omega$ is small, the error of the parametrix when sandwiched between
relevant cutoffs arising from the extensions is small, and thus the
appropriate error term can actually be removed by a convergent Neumann
series. The reason this smallness holds is that, similarly to the discussion
in the scalar setting in \cite{UV:local}, the map
$$
c\mapsto
N_\digamma+d_\digamma^s Q\delta_\digamma^s\in\Psisc^{-1,0}(X_c)
$$
is continuous, meaning that if one takes a fixed space, say $X_0$,
and identifies $X_c$ (for $c$ small) with it via a translation, then
the resulting map into $\Psisc^{-1,0}(X_0)$ is
continuous. Furthermore, the ellipticity (over a fixed neighborhood of the
image of $\Omega_c$) also holds uniformly in $c$,
and thus one has a parametrix with an error which is uniformly bounded
in $\Psisc^{-\infty,-\infty}(X_0)$, thus when localized to $x<c$ (the
image of $\Omega_c$ under the translation) it is bounded by a
constant multiple of $c$ in any weighted Sobolev operator norm, and
thus is small when $c$ is small.

As in the proof of boundary rigidity in the fixed conformal class setting of
\cite{SUV_localrigidity}, it is also important to see how
$N_\digamma$ (and $\dsymmw Q\delta_\digamma^s$) depend on the
metric $g$. 
Completely analogously to the scalar case, see \cite[Proposition~3.2]{SUV_localrigidity} and the remarks preceding it connecting $g$ to $\Gamma_\pm$ in the notation of that paper, we have the following. 
That dependence is  continuous in the same sense as above, as long as
$g$ is close in a $C^k$-sense (for suitable $k$) to a fixed metric $g_0$ (in the region
we are interested in), i.e.\ any seminorm in $\Psisc^{-1,0}(X_0)$ is
controlled by some seminorm of $g$ in $\CI$ in the relevant region.

\subsection{Ellipticity of $N_{\digamma}$ at finite points, i.e.\
  at points in $\Tsc^*_{\pa X}X$}
An inspection of the proof of \cite[Lemma~3.5]{SUV:Tensor} shows that $N_{\digamma}$ is
elliptic at finite points even on tangential tensors (the kernel of
the restriction to the normal component, rather than the kernel of the
principal symbol of $\delta^s_\digamma$); in the case of symmetric
2-cotensors this holds for sufficiently large $\digamma>0$ as in Lemma~3.5 of \cite{SUV:Tensor}. Indeed, in the case of
one-forms, in Lemma~3.5 of \cite{SUV:Tensor} the principal symbol of $N_{\digamma}$ (at $x=0$) is
calculated to be (see also the next paragraph below regarding how this computation proceeds)
\begin{equation}\begin{aligned}\label{eq:one-form-exp-3}
&(\xi^2+\digamma^2)^{-1/2}\\
&\int_{\sphere^{n-2}}\nu^{-1/2}
\begin{pmatrix}-\frac{\nu(\xi+i\digamma)}{\xi^2+\digamma^2}(\hat Y\cdot\eta)\\\hat
  Y\end{pmatrix}\otimes\begin{pmatrix}-\frac{\nu(\xi-i\digamma)}{\xi^2+\digamma^2}(\hat
  Y\cdot\eta)&\langle\hat
  Y,\cdot\rangle\end{pmatrix}
e^{-(\hat
  Y\cdot\eta)^2/(2\nu(\xi^2+\digamma^2))}\,d\hat Y
\end{aligned}\end{equation}
 for an appropriate choice of $\chi$ (exponentially
decaying, not compactly supported, which is later fixed, as discussed
below), up to an
overall elliptic factor, {\em and in coordinates in which at the point   $y$,
where the symbol is computed, the metric $h$ is the Euclidean metric}. 
Here the block-vector notation corresponds to the decomposition into
normal and tangential components, and where $\nu=\digamma^{-1}\alpha$,
$\alpha=\alpha(0,y,0,\hat Y)$, $\alpha$ as in \eqref{eq:geod-form}. Thus, this
is a superposition of positive (in the sense of non-negative)
operators, which is thus itself positive. Moreover, when restricting
to tangential forms, i.e.\ those with vanishing first components, and
projecting to the tangential components, we get
\begin{equation}\begin{aligned}\label{eq:one-form-exp-4}
&(\xi^2+\digamma^2)^{-1/2}\int_{\sphere^{n-2}}\nu^{-1/2}
\hat
  Y\otimes\langle\hat
  Y,\cdot\rangle
e^{-(\hat
  Y\cdot\eta)^2/(2\nu(\xi^2+\digamma^2))}\,d\hat Y,
\end{aligned}\end{equation}
which is positive definite: indeed, it is certainly non-negative, and when applied to $v$, if $v\neq 0$ is tangential, taking $\hat
Y=v/|v|$ shows the non-vanishing of the integral. The case of
symmetric 2-cotensors is similar; when restricted to
tangential-tangential tensors one simply needs to replace $\hat
  Y\otimes\langle\hat
  Y,\cdot\rangle$ by its analogue $(\hat
  Y\otimes\hat Y)\otimes\langle\hat
  Y\otimes\hat Y,\cdot\rangle$; since tensors of the form $\hat
  Y\otimes\hat Y$ span all tangential-tangential tensors, the
  conclusion follows. Note that one actually has to approximate a
  $\chi$ of compact support by these exponentially decaying $\chi=\chi_0$,
  e.g.\ via taking $\chi_k=\phi(./k)\chi_0$, $\phi\geq 0$ even
  identically $1$ near $0$, of compact support, and letting
  $k\to\infty$; we then have that the principal symbols of the
  corresponding operators converge; thus given any {\em compact}
  subset of $\Tsc^*_{\pa X} X$, for sufficiently large $k$ the
  operator given by $\chi_k$ is elliptic. (This issue does not arise
  in the setting of \cite{SUV:Tensor}, for there one also has
  ellipticity at fiber infinity, thus one can work with the fiber
  compactified cotangent bundle, $\overline{\Tsc^*}_{\pa X} X$.) Of
  course, once we arrange appropriate estimates at fiber infinity to
  deal with the lack of ellipticity of the principal symbol there in
  the current setting (tangential forms/tensors), the estimates also
  apply in a neighborhood of fiber infinity, thus this compact subset
  statement is sufficient for our purposes.

\subsection{The Schwartz kernel of scattering pseudodifferential operators}
Given the results just recalled, it remains to consider the principal symbol, and ellipticity, at fiber infinity. In
\cite{UV:local,SUV:Tensor} this was analyzed using the explicit
Schwartz kernel; indeed this was already the case for the analysis at
finite points considered in the previous paragraph.
In order to connect the present paper with these earlier works we first recall some notation. Instead of the
oscillatory integral definition (via localization, in case of a
manifold with boundary) discussed above, $\Psisc(X)$ can be equally
well characterized by the statement that the Schwartz kernel of
$A\in\Psisc(X)$, which is a priori a tempered distribution on $X^2$,
is a conormal distribution on a certain resolution of $X^2$, called
the scattering double space $X^2_\scl$; again this was introduced by Melrose in \cite{RBMSpec}. Here conormality is both to
the (lifted) diagonal and to the boundary hypersurfaces, of which only
one sees non-trivial, i.e.\ non-infinite order vanishing, behavior, namely the
scattering front face. In order to make this more concrete, we
consider coordinates $(x,y)$ on $X$, $x$ a (local) boundary defining
function and $y=(y_1,\ldots,y_{n-1})$ as
before, and write the corresponding coordinates on $X^2=X\times X$ as
$(x,y,x',y')$, i.e.\ the primed coordinates are the pullback of
$(x,y)$ from the second factor, the unprimed from the first factor. Coordinates on $X^2_\scl$ near the scattering
front face then are
$$
x,\ y,\ X=\frac{x'-x}{x^2},\ Y=\frac{y'-y}{x},\ x\geq 0;
$$
the lifted diagonal is $\{X=0,\ Y=0\}$, while the scattering front
face is $x=0$. In \cite{UV:local,SUV:Tensor} the lifted diagonal was
also blown up, which essentially means that `invariant spherical coordinates' were
introduced around it. Thus, the conormal singularity to the diagonal, which
corresponds to the exponential conjugate of $L_0I$ being a pseudodifferential operator of order
$-1$, becomes a conormal singularity at the new front
face. Concretely, in the region where $|Y|>c|X|$, $c>0$ fixed (but arbitrary), which is the
case on the support of $L_0 I$ for sufficiently small $c$ when the cutoff $\chi$ is compactly
supported, valid `coordinates' ($\hat Y$ below is in $\sphere^{n-2}$) are
\begin{equation}\label{eq:X2sc-blowup-coords}
x,\ y,\ \frac{X}{|Y|},\ \hat Y=\frac{Y}{|Y|},\ |Y|.
\end{equation}
In these coordinates $|Y|=0$ is the new front face, namely the lifted
diagonal, and $x=0$ is still the scattering front face, and
$\big|\frac{X}{|Y|}\big|=\frac{|X|}{|Y|}<c$ in the region of interest. The principal
symbol at base infinity, $x=0$, of an operator $A\in\Psisc^{m,0}(X)$,
evaluated at $(0,y,\xi,\eta)$,
is simply the $(X,Y)$-Fourier transform of the restriction of its Schwartz
kernel to the scattering front face, $x=0$, evaluated at
$(-\xi,-\eta)$; the computation giving \eqref{eq:one-form-exp-3} and
its 2-tensor analogue is exactly the computation of this Fourier transform.

We also introduce the notation
$$
S=\frac{X-\alpha(\hat Y)|Y|^2}{|Y|},\ \hat Y=\frac{Y}{|Y|},
$$
and remark that $S$ is a smooth function of the coordinates in
\eqref{eq:X2sc-blowup-coords}. Then
the Schwartz kernel of $N_{\digamma}$ at the  
scattering front face $x=0$ is, as in \cite[Lemma~3.4]{SUV:Tensor},
given by
\begin{equation*}\begin{aligned}
&e^{-\digamma X}|Y|^{-n+1}\chi(S)\Big(\Big(S\frac{dx}{x^2}+\hat
Y\cdot\, \frac{dy}{x}\Big) \Big((S+2\alpha|Y|)(x^2\pa_x)+\hat Y\cdot(x\pa_{y})\Big) \Big)
\end{aligned}\end{equation*}
on one forms, respectively
\begin{equation*}\begin{aligned}
&e^{-\digamma X}|Y|^{-n+1}\chi(S)\\
&\qquad \Big(\Big(\Big(S\frac{dx}{x^2}+\hat
Y\cdot\, \frac{dy}{x}\Big) \otimes \Big(\Big(S\frac{dx}{x^2}+\hat
Y\cdot\, \frac{dy}{x}\Big)\Big)\Big)
\Big)\\
&\qquad\qquad\Big(\Big((S+2\alpha|Y|) (x^2\pa_x)+\hat Y\cdot(x\pa_{y})\Big)\otimes\Big((S+2\alpha|Y|) (x^2\pa_x)+\hat Y\cdot(x\pa_{y})\Big) \Big)
\end{aligned}\end{equation*}
on 2-tensors,
where $\hat Y$ is regarded as a tangent vector which acts on
covectors. Here
$$
(S+2\alpha|Y|) (x^2\pa_x)+\hat Y\cdot(x\pa_{y})
$$
maps
one forms to scalars, thus
$$
\Big((S+2\alpha|Y|) (x^2\pa_x)+\hat
Y\cdot(x\pa_{y})\Big)\otimes \Big((S+2\alpha|Y|) (x^2\pa_x)+\hat Y\cdot(x\pa_{y})\Big)
$$
maps symmetric 2-tensors to scalars, while
$S\frac{dx}{x^2}+\hat Y\cdot\, \frac{dy}{x}$ maps scalars to one
forms, so
$$
\Big(S\frac{dx}{x^2}+\hat Y\cdot\, \frac{dy}{x}\Big)\otimes
\Big(S\frac{dx}{x^2}+\hat Y\cdot\, \frac{dy}{x}\Big)
$$
maps scalars to symmetric 2-tensors.
In order to make the notation less confusing, we employ a matrix notation,
\begin{equation*}\begin{aligned}
&\Big(S\frac{dx}{x^2}+\hat Y\cdot\, \frac{dy}{x}\Big)
\Big((S+2\alpha|Y|) (x^2\pa_x)+\hat
Y\cdot(x\pa_{y})\Big)\\
&\qquad=\begin{pmatrix}S (S+2\alpha|Y|)&S\langle\hat
  Y,\cdot\rangle\\\hat Y (S+2\alpha|Y|)&\hat Y \langle\hat
  Y,\cdot\rangle\end{pmatrix},
\end{aligned}\end{equation*}
with the first column and row corresponding to $\frac{dx}{x^2}$, resp.\
$x^2\pa_x$, and the second column and row to the (co)normal vectors.
For 2-tensors, as before, we use a decomposition
$$
\frac{dx}{x^2}\otimes\frac{dx}{x^2},\ \frac{dx}{x^2}\otimes\,\frac{dy}{x},\
\frac{dy}{x}\otimes\frac{dx}{x^2},\ \frac{dy}{x}\otimes \frac{dy}{x},
$$
where the symmetry of the 2-tensor is the statement that the 2nd
and 3rd (block) entries are the same.  For the actual
endomorphism
we write
\[
\begin{aligned} 
&\begin{pmatrix}S^2\\S\langle\hat
  Y,\cdot\rangle_1\\S\langle\hat
  Y,\cdot\rangle_2\\\langle \hat Y,\cdot\rangle_1 \langle \hat
  Y,\cdot\rangle_2\end{pmatrix}
\begin{pmatrix}(S+2\alpha|Y|)^2\hat Y_1\hat Y_2&(S+2\alpha|Y|)\hat
  Y_1\hat Y_2 \langle \hat Y,\cdot\rangle_1&(S+2\alpha|Y|)\hat Y_1\hat
  Y_2 \langle \hat Y,\cdot\rangle_2&\hat Y_1\hat Y_2 \langle \hat
  Y,\cdot\rangle_1 \langle \hat Y,\cdot\rangle_2\end{pmatrix}.
\end{aligned}
\]
Here we write subscripts $1$ and $2$ for clarity on $\hat Y$ to denote
whether it is acting on the first or the second factor, though this
also immediately follows from its position within the matrix.

In the next two sections we further analyze these operators first in
the 1-form, and then in the 2-tensor setting, although the oscillatory
integral approach will give us the precise results we need.

\section{One-forms and Fredholm theory in the normal gauge}\label{sec:1-form}
We first consider the X-ray transform on 1-forms in the normal
gauge. The overall form of the transform is similar in the 2-tensor
case, but it is more delicate since it is not purely dependent on a
principal symbol computation, so the 1-form transform will be a
useful guide.

Since we intend to work with tangential forms and tensors, we start
by defining $L_0$ analogously to $L$, but without the normal component
in the output. Thus,
\begin{equation}\label{eq:L0-forms}
L_0 v(z)=\int \chi(\lambda/x)v(\gamma_{x,y,\lambda,\omega})g_{\scl}(\omega\,\pa_y)\,d\lambda\,d\omega,
\end{equation}
while for 2-tensors
\[
L_0 v(z)=x^{2}\int \chi(\lambda/x)v(\gamma_{x,y,\lambda,\omega})g_{\scl}(\omega\,\pa_y)\otimes g_\scl(\omega\,\pa_y)\,d\lambda\,d\omega.
\]
Hence in the two cases $L_0$ maps into {\em tangential} one-forms, resp.\
{\em tangential-tangential} symmetric
2-cotensors,
where $g_{\scl}$ is a scattering metric (smooth section of
$\Sym^2\Tsc^*X$) used to
convert vectors into covectors, of the form
$$
g_\scl=x^{-4}\,dx^2+x^{-2} h,
$$
with $h$ being a boundary metric in a warped product decomposition of
a neighborhood of the boundary, and with $g_\scl$ of no relation to $g$. Then we have
$$
g_\scl(\omega\,\pa_y)=x^{-2} h(\omega\,\pa_y),
$$
explaining the appearance of the diverse powers of $x$ in the above
formulae. 
In other words, $L_0$ is the composition of $L$, see \eqref{eq:L-forms} and \eqref{eq:L-tensors},  with
projection to the tangential forms, resp.\ tangential-tangential
tensors, using the product structure.

Then we define
$$
N_{0,\digamma}=e^{-\digamma/x}L_0 I e^{\digamma/x}
$$
acting on {\em tangential} one forms, resp.\ symmetric
2-tensors. Thus, $N_{0,\digamma}$ is the restriction of $N_{\digamma}$
to tangential one forms or two tensors, composed with projection to the tangential forms, resp.\ tangential-tangential
tensors.

\subsection{The lack of ellipticity of the principal symbol in the
  one-form case}
The standard principal symbol of $N_{\digamma}$ is that of the conormal singularity
at the diagonal, i.e.\ $X=0$, $Y=0$. Writing $(X,Y)=Z$,
$(\xi,\eta)=\zeta$, we would need
to evaluate the $Z$-Fourier transform of the Schwartz kernel of $N_{\digamma}$ as $|\zeta|\to\infty$. This
was discussed in \cite{UV:local} around Equation~(3.8), including
connecting it to the earlier computation of Stefanov and Uhlmann
\cite{SU-Duke}. Concretely, the leading
order behavior, as $|\zeta|\to\infty$, of this Fourier
transform can be obtained by working on the
blown-up space of the diagonal, with coordinates $|Z|,\hat
Z=\frac{Z}{|Z|}$ (as well as $z=(x,y)$), and integrating the
restriction of the Schwartz kernel to the front face, $|Z|^{-1}=0$,
after removing the singular factor $|Z|^{-n+1}$, along the {\em
  equatorial sphere} corresponding to $\zeta$, and given by $\hat
Z\cdot\zeta=0$. Now, in our setting, in view of the
infinite order vanishing, indeed compact support, of the Schwartz
kernel as $X/|Y|\to\infty$ (and $Y$ bounded),
we may work in semi-projective coordinates, i.e.\ in spherical
coordinates in $Y$, but $X/|Y|$ as the additional tangential variable,
$|Y|$ the defining function of the front face. The equatorial
sphere then becomes $(X/|Y|)\xi+\hat Y\cdot\eta=0$, with the integral
relative to an appropriate positive density. With $\tilde
S=X/|Y|$, keeping in mind that terms with extra vanishing factors at
the front face, $|Y|=0$ can be dropped, we thus need to integrate
\[
\begin{pmatrix}\tilde S^2&\tilde S\langle\hat  
  Y,\cdot\rangle\\ \tilde S\hat Y&\hat Y \langle\hat  
  Y,\cdot\rangle\end{pmatrix}\chi(\tilde S)=\begin{pmatrix} \tilde S\\
  \hat Y\end{pmatrix}\otimes\begin{pmatrix}\tilde S&\hat Y\end{pmatrix}\chi(\tilde S),
\]
on this equatorial sphere in the case of one-forms. Now, for $\chi\geq 0$ this matrix is a
positive multiple of the projection to the span of $(\tilde S,\hat
Y)$. As $(\tilde S,\hat Y)$ runs through the $(\xi,\eta)$-equatorial
sphere, we are taking a positive (in the sense of non-negative) linear
combination of the projections to the span of the vectors in this
orthocomplement, with the weight being strictly positive as long as
$\chi(\tilde S)>0$ at the point in question.

Now, for tangential one forms, if we project the result to tangential
one forms, {\em i.e.\ if we replace $N_{\digamma}$ by
  $N_{0,\digamma}$}, this matrix simplifies to
\[
\hat Y \langle\hat  
  Y,\cdot\rangle\chi(\tilde S).
\]
Hence, working at a point $(0,y,\xi,\eta)$ (considered as a
homogeneous object, i.e.\ we are working at fiber infinity) if we show that for
each non-zero tangential vector $w$ there is at least one $(\tilde S,\hat Y)$
with $\chi(\tilde S)>0$ and $\xi\tilde S+\eta\cdot \hat Y=0$ and
$\hat Y \cdot w\neq 0$, we
conclude that the integral of the projections is positive, thus the principal
symbol of our operator is elliptic, on tangential forms. But this is straightforward
if $\chi(0)>0$ {\em and $\xi\neq 0$}:
\begin{enumerate}
\item
if $w\neq 0$ and $w$ is not a multiple of $\eta$, then take $\hat Y$
orthogonal to $\eta$ but not to $w$, $\tilde S=0$,
\item
if $w=c\eta$ with $w\neq 0$ (so $c$ and $\eta$ do not vanish) then
$\hat Y\cdot w=c\hat Y\cdot\eta=-c\xi\tilde S$ under the constraint so
we need non-zero $\tilde S$; but fixing any non-zero $\tilde S$
choosing $\hat Y$ such that $\hat Y\cdot\eta=-\xi\tilde S$
(such
$\hat Y$ exists again as $\eta\in\RR^{n-1}$, $n\geq 3$), $\hat
Y\cdot w\neq 0$ follows. We thus choose $\tilde S$
small enough in order to ensure $\chi(\tilde S)>0$, and apply this
argument to find $\hat Y$.
\end{enumerate}
This shows that the principal symbol is positive definite on
tangential one-forms for $\xi\neq 0$; indeed it shows that on
$\Span\{\eta\}^\perp$, the subspace of $\RR^{n-1}$ orthogonal to $\eta$, we also have positivity even if $\xi=0$. Notice
that if we restrict to $\Span\{\eta\}^\perp$, but do not project the
result to $\Span\{\eta\}^\perp$, the $\Span\{\eta\}$ component
actually vanishes at
$\xi=0$ as the integral is over $\hat Y$ with $\hat Y\cdot\eta=0$,
i.e.\ with $\Pi^\perp$ the projection to $\Span\{\eta\}^\perp$,
$\sigma_{-1,0}(N_{0,\digamma}) \Pi^\perp=\Pi^\perp\sigma_{-1,0}(N_{0,\digamma})
\Pi^\perp$. On the other hand, still for $\xi=0$, with $\Pi^\parallel$ to projection to
$\Span\{\eta\}$,
as the integral is over $\hat Y$ with $\hat Y\cdot\eta=0$, $\sigma_{-1,0}(N_{0,\digamma}) \Pi^\parallel=0$.
Thus, in the decomposition of tangential
covectors into $\Span\{\eta\}^\perp\oplus\Span\{\eta\}$, $\sigma_{-1,0}(N_{0,\digamma})$ (mapping into $\Span\{\eta\}^\perp$) has matrix of
the form, with $O$ denoting behavior as $\xi\to 0$,
$$
\begin{pmatrix} O(1)&O(\xi)\\O(\xi)&O(\xi)\end{pmatrix},
$$
where all terms are order $(-1,0)$ (so they have appropriate elliptic
prefactors) and the $O(1)$ term is elliptic.  In fact, the (1,1) term $\Pi^\parallel\sigma_{-1,0}(N_{0,\digamma})\Pi^\parallel$
is non-negative, so it necessarily is $O(\xi^2)$!
Thus, the difficulty in obtaining a non-degenerate
problem is $\Span\{\eta\}$ when $\xi=0$.

\subsection{The operator $\tilde L_1$: first version}
To deal with $\Span\{\eta\}$ when $\xi=0$, we also consider another
operator. For this purpose it is convenient to replace $\chi$ by a
function $\chi_1$ which
is {\em not} even. It is straightforward to check how this affects the
computation of the principal symbol at fiber infinity: one has to
replace the result by a sum over $\pm$ signs, where both $\hat Y$ and
$S$ are evaluated with both the $+$ sign and the $-$ sign.
Thus, for
instance the Schwartz kernel of $N_{\digamma}$ on one-forms is at
the scattering front face
\begin{equation*}\begin{aligned}
&\sum_\pm e^{-\digamma X}|Y|^{-n+1}\chi_1(\pm S)\Big(\Big(\pm S\frac{dx}{x^2}\pm\hat
Y\cdot\, \frac{dy}{x}\Big) \Big(\pm ( S+2\alpha|Y|)(x^2\pa_x)\pm\hat Y\cdot(x\pa_{y})\Big) \Big).
\end{aligned}\end{equation*}
Here the $\pm$ are all the same, thus the cancel out in the product,
and one is left with $\sum_\pm \chi_1(\pm S)$ times an expression
independent of the choice of $\pm$, i.e.\ only the even part
of $\chi_1$ enters into $N_{\digamma}$ and thus non-even $\chi_1$ are
not interesting for our choice of $L$. Thus, we need to modify
the form of $L$ as well; concretely consider $\tilde L_1$ defined by
\[
\tilde L_1 v(z)=x^{-1}\int \chi_1(\lambda/x)v(\gamma_{x,y,\lambda,\omega})\,d\lambda\,d\omega,
\]
which maps into the scalars! Here the power of $x$ in
front is one lower than that of $L$ on one forms (which is $x^0=1$),
because, as discussed in \cite{SUV:Tensor}, both factors of $\dot\gamma$ in
$I$, which are still present, and $g_{\scl}(\dot\gamma)$, which are no
longer present, give rise to factors of $x^{-1}$ in the integral
expression, and we normalize them by putting the corresponding power
of $x$ into the definition of $L$, with the function case having an
$x^{-2}$ due to the localization itself. Then the Schwartz kernel of
$$
\tilde N_{1,\digamma}=e^{-\digamma/x}\tilde L_1
I e^{\digamma/x}
$$
on the scattering front face is, for not necessarily even $\chi_1$,
\begin{equation*}\begin{aligned}
&\sum_\pm e^{-\digamma X}|Y|^{-n+1}\chi_1(\pm S) \Big(\pm (
S+2\alpha|Y|)(x^2\pa_x)\pm\hat Y\cdot(x\pa_{y})\Big)\\
&=e^{-\digamma X}|Y|^{-n+1} (\chi_1(S)-\chi_1(-S)) \Big((
S+2\alpha|Y|)(x^2\pa_x)+\hat Y\cdot(x\pa_{y})\Big),
\end{aligned}\end{equation*}
so now {\em odd} $\chi_1$ give non-trivial results. In particular, on
tangential one-forms this is 
$$
e^{-\digamma X}|Y|^{-n+1} (\chi_1(S)-\chi_1(-S)) \hat Y\cdot(x\pa_{y}).
$$
The corresponding principal symbol at fiber infinity is still the
integral over the equatorial sphere $\xi\tilde S+\eta\cdot\hat Y=0$ of
$$
(\chi_1(\tilde S)-\chi_1(-\tilde S)) \hat Y
$$
up to an overall elliptic factor. Applied to elements of
$\Span\{\eta\}$, restricted to the equatorial sphere, this is
$$
(\chi_1(\tilde S)-\chi_1(-\tilde S)) \xi\tilde S,
$$
which is twice the even part of $\tilde S\chi_1(\tilde S)$ times $\xi$. Thus,
for odd $\chi_1$, as long as $\chi_1(\tilde S)>0$ for some $\tilde S>0$
and $\chi_1\geq 0$ on $(0,\infty)$, the principal symbol at fiber
infinity, restricted to $\Span\{\eta\}$, is a positive multiple of
$\xi$ (up to an overall elliptic factor). On the other hand, at
$\xi=0$, the integral is simply over $\hat Y$ orthogonal to $\eta$,
and the integral vanishes as the integrand is odd in $\hat
Y$. Correspondingly, in the decomposition
$\Span\{\eta\}^\perp\oplus\Span\{\eta\}$,
$\sigma_{-1,0}(\tilde N_{1,\digamma})$ at fiber infinity is an elliptic multiple of
\[
\begin{pmatrix} b\xi&a\xi\end{pmatrix}
\]
with $a>0$.

\subsection{The operator $L_1$: second version}
There is a different way of arriving at the operator $\tilde L_1$, or rather
a very similar operator $L_1$ which works equally well. Namely, if
one considers $L$ as a map restricted to tangential one forms, but,
unlike $L_0$, mapping not into tangential forms but {\em all}
one-forms, without projecting out the normal,
$\frac{dx}{x^2}$, component, the normal projection $L_1$ of $L$ is exactly $\tilde L_1$ {\em with
  appropriate $\chi_1$}. Indeed,
this component arises from $g_\scl(\lambda \pa_x)$ (as opposed to
$g_\scl(\omega\pa_y)$, cf.\ \eqref{eq:L0-forms}) for a warped product scattering metric
$g_\scl$, which is $\lambda x^{-4}\,dx=x^{-2}(\lambda x^{-2}\,dx)$ (as
opposed to $x^{-2}h(\omega\pa_y)=x^{-1}(x^{-1}h(\omega\pa_y)$), with
the parenthesized factor being a smooth scattering one-form; the
trivialization factors out $x^{-2}\,dx$. Thus, recalling
\eqref{eq:L-forms}, the normal component of $Lv$ is
\[
\int
\chi(\lambda/x)v(\gamma_{x,y,\lambda,\omega})x^{-2}\lambda\,d\lambda\,d\omega=x^{-1}\int
\chi_1(\lambda/x) v(\gamma_{x,y,\lambda,\omega})\,d\lambda\,d\omega,
\]
this is exactly $\tilde L_1$
with $\chi_1(s)=s\chi(s)$.
{\em In this paper, from now on, we shall work with $L_1$ only, and
  not with $\tilde L_1$.} We
also write
$$
N_{1,\digamma}=e^{-\digamma/x}L_1
I e^{\digamma/x},
$$
acting as a map from tangential one forms to scalars.

\subsection{Microlocal projections}
Before we proceed with our computations, it is useful to have a decomposition when
one has an orthogonal projection at the principal symbol level, such
as $\Pi^\perp$ and $\Pi^\parallel$.

\begin{prop}\label{prop:microlocal-bundles}
Suppose that over an open subset $U$ of $\pa\overline{\Tsc^*}X$, a symbol $\Pi$ of order $(0,0)$ is orthogonal projection to a
subbundle of the pullback of a vector bundle $E$, with a Hermitian
inner product, over $X$ to $\Tsc^*X$ by the bundle
projection map, so $\Pi^2=\Pi$ and $\Pi^*=\Pi$. Then for any
$U_1\subset\overline{U_1}\subset U$, there exists
$P\in\Psisc^{0,0}(X)$ such that microlocally on $U_1$, the principal
symbol of $P$ is $\Pi$, and furthermore $P^2=P$, $P^*=P$ microlocally,
i.e.\ $\WFsc'(P^2-P)\cap U_1=\emptyset$, $\WFsc'(P-P^*)\cap U_1=\emptyset$.
\end{prop}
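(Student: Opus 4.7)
The plan is the standard construction of a near-projection, adapted to the scattering calculus: first produce a quantization with principal symbol $\Pi$ on $U_1$, symmetrize it so as to be self-adjoint, and then iterate a quadratically convergent symbolic Newton scheme that preserves self-adjointness and drives the defect $P^2-P$ to be smoothing microlocally on $U_1$.

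First I would fix a nested pair $U_1\subset\overline{U_1}\subset U_2\subset\overline{U_2}\subset U$ of open subsets of $\pa\overline{\Tsc^*}X$ and a cutoff $\psi\in S^{0,0}$ microlocally elliptic on $\overline{U_1}$ and supported in $U_2$. Using a local trivialization of the bundle $E$ and a partition of unity, I can produce a symbol $p_0\in S^{0,0}(X;\Hom(E,E))$ with $p_0=\Pi$ microlocally on a neighborhood of $\overline{U_1}$ in $U$ and with $p_0$ supported where the projection $\Pi$ is defined, and quantize it to $Q_0=\Op(p_0)\in\Psisc^{0,0}(X)$. Replacing $Q_0$ by $P_0=\tfrac12(Q_0+Q_0^*)$ yields a self-adjoint operator whose principal symbol is still $\Pi$ microlocally on $U_1$, since $\Pi^*=\Pi$ there.

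Next I would run the Newton-type iteration
\[
P_{n+1}=3P_n^2-2P_n^3,
\]
which has two key features: it preserves self-adjointness (since each $P_n$ is a polynomial in a self-adjoint operator with real coefficients), and the scalar computation
\[
f(t)=3t^2-2t^3,\qquad f(t)^2-f(t)=(t^2-t)^2(4(t^2-t)-3),
\]
shows that the defect is quadratic in the old defect. Translating this into the Moyal product of $\Psisc$ and using that the principal symbol of the sharp product agrees with the pointwise product, I get that $E_n=P_n^2-P_n\in\Psisc^{m_n,m_n}$ microlocally on $U_1$ with $m_n\to-\infty$ at geometric rate (in fact $m_n\le -2^n$), because $E_0$ is already of order $(-1,-1)$ microlocally on $U_1$ (or even $(-2,-2)$ since $\{p_0,p_0\}=0$). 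In particular $P_{n+1}-P_n\in\Psisc^{m_n,m_n}$ microlocally on $U_1$, so I can asymptotically sum the corrections via Borel summation in $\Psisc^{0,0}(X)$ to get a self-adjoint $P\in\Psisc^{0,0}(X)$ with $P-P_n\in\Psisc^{m_n,m_n}$ microlocally on $U_1$, hence $\WFsc'(P^2-P)\cap U_1=\emptyset$ and $\WFsc'(P-P^*)\cap U_1=\emptyset$ (the latter is automatic from self-adjointness of each $P_n$). The principal symbol of $P$ on $U_1$ is $\Pi$ by construction.

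The main things to watch are (i) that the iteration is carried out within the scattering calculus, so that order doubling in $E_n$ really does hold at both fiber and base infinity, which requires knowing that $\Pi$ extends as a genuine $(0,0)$ symbol and that $\{p_0,p_0\}=0$ kills the first subprincipal term; and (ii) the restriction to $U_1$, since outside $U_1$ the symbol $p_0$ need not be a projection, so the quadratic improvement is only microlocal. The nested neighborhoods and the cutoff $\psi$ ensure the relevant estimates localize in $U_1$. The other point to check is that the symbolic identity $f(t)^2-f(t)=O((t^2-t)^2)$ survives when the pointwise product is replaced by the Moyal product; this follows from the fact that $p_n\#p_n - p_n^2\in S^{-2,-2}\cdot(\text{lower order in }E_n)$ and hence only contributes terms that are even smaller than $E_n^2$ at each step.
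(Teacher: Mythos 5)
Your proposal is correct, but it proceeds by a genuinely different mechanism than the paper. The paper runs a linear correction scheme: starting from a self-adjoint $P_0$ with symbol $\Pi$, at each step it solves the linearized equation $e_{j+1}+\Pi p_{j+1}+p_{j+1}\Pi-p_{j+1}=0$ with the explicit choice $p_{j+1}=-\Pi e_{j+1}\Pi+(1-\Pi)e_{j+1}(1-\Pi)$ (using that the defect's symbol commutes with $\Pi$), gaining one order in $(m,l)$ per step, and then asymptotically sums. You instead use the functional-calculus Newton map $P_{n+1}=3P_n^2-2P_n^3$; since each $P_{n+1}$ is a real polynomial in the single self-adjoint operator $P_n$, the one-variable identity $f(t)^2-f(t)=(t^2-t)^2\bigl(4(t^2-t)-3\bigr)$ holds \emph{exactly} at the operator level, so $E_{n+1}=E_n^2(4E_n-3)$ with no symbol expansion needed, self-adjointness is automatic, and the defect order doubles each step. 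What your route buys is that no linearized equation has to be solved and no commutation argument ($\Pi e=e\Pi$) is needed; what the paper's route buys is explicit control of each correction term order by order. Two small points: your worry at the end about whether the identity "survives the Moyal product" is moot — the iteration is defined by operator composition, so the polynomial identity in the single operator $P_n$ is exact and only the microlocal order bookkeeping remains; and the parenthetical claim that $E_0\in\Psisc^{-2,-2}$ because $\{p_0,p_0\}=0$ is not right (the first subleading term of $p_0\# p_0$ is $\frac1i\sum\pa_\zeta p_0\,\pa_z p_0$, not a Poisson bracket, and it need not vanish for matrix-valued symbols), but this is harmless since $E_0\in\Psisc^{-1,-1}$ already gives $m_n\to-\infty$. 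The final summation step should be phrased, as in the paper, by splitting each increment $P_{n+1}-P_n=-(2P_n-1)E_n$ into a globally low-order piece plus a piece with $\WFsc'$ disjoint from $\overline{U_1}$ before asymptotic summation; this is routine and consistent with the localization you describe.
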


\begin{proof}
This is a standard iterative construction, which is completely
microlocal. We first write down the argument with
$U_1=U=\pa\overline{\Tsc^*}X$, i.e.\ globally, and then simply remark
on its microlocal nature.

One starts by taking any
operator $P_0\in\Psisc^{0,0}$ with principal symbol $\Pi$; one can replace $P_0$ by
$\frac{1}{2}(P_0+P_0^*)$ and thus assume that it is self-adjoint. Now
let $E_1=P_0^2-P_0\in\Psisc^{-1,-1}$ be the error of $P_0$ in being a
projection (note that the principal symbol of $P_0^2-P_0$ in
$\Psisc^{0,0}$ is $\Pi^2-\Pi=0$, hence its membership in $\Psisc^{-1,-1}$). Note
that $P_0E_1=P_0^3-P_0^2=E_1P_0$, so if $e_1$ is the principal symbol
of $E_1$, then $\Pi e_1=e_1\Pi$. Now we want to correct $P_0$ by
adding $P_1\in\Psisc^{-1,-1}$ so that $P_1^*=P_1$ and
$(P_0+P_1)^2-(P_0+P_1)\in\Psisc^{-2,-2}$ has lower order than
$E_1=P_0^2-P_0$; note that $E_1^*=E_1$. We compute this:
$$
(P_0+P_1)^2-(P_0+P_1)=P_0^2-P_0+P_0P_1+P_1P_0-P_1+P_1^2=E_1+P_0P_1+P_1P_0-P_1+F_2,
$$
where $F_2\in\Psisc^{-2,-2}$, so irrelevant for our conclusion on the
improved projection property. Hence,
the membership of $(P_0+P_1)^2-(P_0+P_1)$ in $\Psisc^{-2,-2}$ is
equivalent to
the principal symbol $p_1$ of $P_1$ satisfying $e_1+\Pi
p_1+p_1\Pi-p_1=0$. So let
$$
p_1=-\Pi e_1\Pi+(1-\Pi)e_1(1-\Pi);
$$
notice that $p_1^*=p_1$ since $e_1^*=e_1$ (being the principal symbol
of a symmetric operator).
Then, as $\Pi^2=\Pi$, $\Pi(1-\Pi)=0$,
\begin{equation*}\begin{aligned}
e_1+\Pi
p_1+p_1\Pi-p_1&=e_1-\Pi e_1\Pi-\Pi e_1\Pi+\Pi
e_1\Pi-(1-\Pi)e_1(1-\Pi)\\
&=e_1-\Pi e_1\Pi-(1-\Pi)e_1(1-\Pi)=0
\end{aligned}\end{equation*}
since $e_1=\Pi e_1\Pi+\Pi
e_1(1-\Pi)+(1-\Pi)e_1\Pi+(1-\Pi)e_1(1-\Pi)=\Pi
e_1\Pi+(1-\Pi)e_1(1-\Pi)$ as $e_1$ commutes with $\Pi$, so $\Pi
e_1(1-\Pi)=0$, etc. Thus, $e_1+\Pi
p_1+p_1\Pi-p_1=0$ holds. Taking any $P_1$ with principal symbol $p_1$,
replace $P_1$ by $\frac{1}{2}(P_1+P_1^*)$ so one has self-adjointness
as well (and still the same principal symbol), we have the desired
property $(P_0+P_1)^2-(P_0+P_1)\in\Psisc^{-2,-2}$.

The general inductive procedure is completely similar; in step $j+1$,
$j\geq 0$ (so $j=0$ above), if
$(P^{(j)})^2-P^{(j)}=E_{j+1}\in\Psisc^{-j-1,-j-1}$ and
$(P^{(j)})^*=P^{(j)}$, one finds $P_{j+1}\in\Psisc^{-j-1,-j-1}$
such that $P_{j+1}^*=P_{j+1}$, which one can easily arrange at the
end, and such that
$(P^{(j)}+P_{j+1})^2-(P^{(j)}+P_{j+1}\in\Psisc^{-j-2,-j-2}$; for this one
needs (with analogous notation to above) $e_{j+1}+\Pi p_{j+1}+\Pi
p_{j+1}-p_{j+1}=0$, which is satisfied with $p_{j+1}=-\Pi
e_{j+1}\Pi+(1-\Pi)e_{j+1}(1-\Pi)$ by completely analogous arguments as
above.

An asymptotic summation of $\sum_{j=0}^\infty P_j$ gives the desired
operator $P$ in the global case.

In the local case, when $U$ is a proper subset of
$\pa\overline{\Tsc^*X}$, one simply notes that all the algebraic steps
are microlocal (i.e.\ local in $\pa\overline{\Tsc^*X}$ modulo
$\Psisc^{-\infty,-\infty}$) including the composition of microlocally
defined operators. One thus obtains a sequence of microlocal operators
$P_j$ defined on $U$; taking any $Q\in\Psisc^{0,0}$ with
$\WFsc'(Q)\subset U$, $\WFsc'(\Id-Q)\cap \overline{U_1}=\emptyset$,
one then asymptotically sums $\sum_{j=0}^\infty QP_j$ (with each term
making sense modulo $\Psisc^{-\infty,-\infty}$) to obtain the
globally defined $P$
with the desired properties.
\end{proof}

\begin{rem}\label{rem:microlocal-bundles}
Proposition~\ref{prop:microlocal-bundles} means that if one has
orthogonal projections $\Pi^\perp$ and $\Id-\Pi^\perp$ to orthogonal
subspaces of, say, $\Tsc^*X$, microlocally on $U$, then one can take
$P^\perp$ as guaranteed by the proposition, so $P^\perp$,
$\Id-P^\perp$ are microlocal orthogonal projections, write
$u=u_\perp+u_\parallel$ with $u_\perp=P^\perp v$,
$u_\parallel=(\Id-P^\perp) w$ microlocally on $U_1$ (i.e.\
$\WFsc(u_\perp-P^\perp v)\cap U_1=\emptyset$, etc.), and $u_\perp$,
$u_\parallel$ are microlocally uniquely determined, i.e.\ any other
$u'_\perp$, $u'_\parallel$ satisfy $\WFsc(u'_\perp-u_\perp)\cap
U_1=\emptyset$, etc. Indeed, for such $u_\parallel$, $P^\perp
u_\parallel$ has $\WFsc$ disjoint from $U_1$, so $P^\perp u=P^\perp
u^\perp=(P^\perp)^2 v=P^\perp v=u_\perp$ microlocally on $U_1$, and
similarly for $u_\parallel$. Since operators with wave front sets
disjoint from the region we are working on are irrelevant for our
considerations, we may legitimately write one forms as
$$
\begin{pmatrix}u_0\\u_1\end{pmatrix},
$$
where $u_0$ is microlocally in $\Ran P^\perp$, $u_1$ in $\Ran
(\Id-P^\perp)$: $u_0=P^\perp u$, $u_1=(\Id-P^\perp)u$.
\end{rem}

\subsection{The principal symbol in the one form setting}
In order to do the computation of the principal symbol of $L_j I$ in $x>0$ in a
smooth (thus uniform) manner down to $x=0$, in a way that also
describes the boundary principal symbol {\em near} fiber infinity (the
previous computations were at fiber infinity only!), it is convenient to
utilize a direct oscillatory integral representation of $L_j I$,
$j=0,1$. With a slight abuse of notation we write
$$
N_\digamma=\begin{pmatrix}N_{0,\digamma}\\N_{1,\digamma}\end{pmatrix};
$$
this is indeed the previous $N_\digamma$ with domain restricted to
tangential one-forms is and with target space decomposed according to
the normal-tangential decomposition of one-forms.

Our initial goal in this section is to prove:

\begin{prop}\label{prop:N-digamma-1-form-structure-basic}
Let $\xi_\digamma=\xi+i\digamma$.
The full symbol of the operator
$$
N_\digamma=\begin{pmatrix}N_{0,\digamma}\\N_{1,\digamma}\end{pmatrix},
$$
with domain restricted to
tangential one-forms is, relative to the $\Span\{\eta\}$-based decomposition of the domain,
$$
\begin{pmatrix}
a_{00}^{(0)}&a_{01}^{(1)}\xi_\digamma+a_{01}^{(0)}\\
a_{10}^{(0)}&a_{11}^{(1)}\xi_\digamma+a_{11}^{(0)}\\
\end{pmatrix},
$$
where $a_{ij}^{(k)}\in S^{-1-j,0}$ for all $i,j,k$.

Furthermore, $a_{ij}^{(k)}\in S^{-1-j,0}$ depend continuously on the
metric $g$ (with the $\CI$ topology on $g$) as long as $g$ is
$C^k$-close (for suitable $k$) to a background metric $g_0$ satisfying
the strictly convex assumptions on
the metric, the boundary and the function $x$.
\end{prop}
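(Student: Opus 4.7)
The plan is to read off the claimed structure directly from the oscillatory-integral representation \eqref{eq:aj-1-form-pre-xi}. The preceding analysis has already established that the full symbol of each $N_{i,\digamma}$, $i=0,1$, is a scattering symbol of order $(-1,0)$ via a two-dimensional stationary-phase reduction in $(\hat t,\omega^\parallel)$, where $\omega^\parallel=\omega\cdot\hat\eta$. What remains is to analyze how this symbol behaves under the decomposition of the tangential domain into $\Span\{\eta\}^\perp$ (column $j=0$, corresponding to $k=0$ in \eqref{eq:aj-1-form-pre-xi}) and $\Span\{\eta\}$ (column $j=1$, corresponding to $k=1$), which introduces the factor $|\eta|^{-k}(\omega\cdot\eta)^k$ into the integrand.

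For the column $j=0$ there is no additional $\zeta$-dependence in the amplitude, so the stationary-phase reduction gives $a_{i0}^{(0)}\in S^{-1,0}$ directly. For the column $j=1$, the extra factor $|\eta|^{-1}(x\dot\gamma^{(2)}\cdot\eta)$ is handled through the identity
$$
(x\dot\gamma^{(2)}\cdot\eta)\,e^{i\eta\cdot x^{-1}(\gamma^{(2)}-y)} \;=\; -i x\,\pa_{\hat t}\,e^{i\eta\cdot x^{-1}(\gamma^{(2)}-y)},
$$
an immediate consequence of $\pa_{\hat t}(x^{-1}(\gamma^{(2)}(x\hat t)-y))=\dot\gamma^{(2)}(x\hat t)$. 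Integrating by parts in $\hat t$ --- with boundary terms negligible thanks to the Schwartz decay of the damping $e^{-\digamma(\hat\lambda\hat t+\alpha\hat t^2+O(x\hat t^3))}$ for $\digamma>0$ and the bounded $t$-range --- transfers $\pa_{\hat t}$ onto the remaining factors. When the derivative hits the $\xi$-oscillatory factor $e^{i\xi x^{-2}(\gamma^{(1)}-x)}$ it contributes $i\xi(\hat\lambda+2\alpha\hat t+O(x\hat t^2))$; when it hits the damping it contributes $-\digamma(\hat\lambda+2\alpha\hat t+O(x\hat t^2))$; and the combination $i\xi-\digamma=i\xi_\digamma$ is precisely where the shift from $\xi$ to $\xi_\digamma$ enters. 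Derivatives falling on the remaining $\hat t$-smooth amplitude (including the $O(x\hat t)$ corrections from $\Gamma^{(i)},\tilde\Gamma^{(i)},\hat\Gamma^{(i)}$) give contributions without a $\xi_\digamma$ prefactor. Applying stationary phase to each resulting piece --- the $|\eta|^{-1}$ weight from the projection onto $\Span\{\eta\}$ contributing an extra power of $|\eta|^{-1}$ to the order count --- produces the decomposition $a_{i1}=a_{i1}^{(1)}\xi_\digamma+a_{i1}^{(0)}$ with $a_{i1}^{(k)}\in S^{-2,0}$.

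The main technical obstacle is justifying the use of standard stationary phase over the noncompact range $|\hat t|<T/x$, which blows up as $x\to 0$. This is handled by the rapid-decay argument carried out just above the proposition statement: for $|\hat t|\geq 1$ the phase has no critical points once $T$ is small, so non-stationary integration by parts against the phase produces arbitrarily rapid decay in $|(\xi,\eta)|$; and the damping factor supplies Schwartz decay in $\hat t$ uniformly in the remaining variables. Hence the contribution from $|\hat t|\geq 1$ is lower order than any symbol class $S^{-N,-N}$, and the usual stationary-phase expansion applies on the compact set $|\hat t|<1$.

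Finally, continuous dependence on $g$ follows because every ingredient entering \eqref{eq:1-form-kernel-sc-form} --- the geodesic components $\gamma^{(1)},\gamma^{(2)}$, the leading coefficient $\alpha$, the smooth remainders $\Gamma^{(i)},\tilde\Gamma^{(i)},\hat\Gamma^{(i)}$, and the induced boundary metric $h$ --- depends $\CI$-continuously on $g$ provided $g$ is $C^k$-close to a reference metric $g_0$ for $k$ large enough that the geodesic flow is defined on the relevant region and the phase Hessian at the critical set remains non-degenerate. Since stationary phase and integration by parts are $\CI$-continuous operations on the amplitude, and the critical set itself depends smoothly on $g$, each $a_{ij}^{(k)}$ depends continuously on $g$ in the symbol seminorms of $S^{-1-j,0}$, completing the proof.
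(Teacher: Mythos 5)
Your proposal is correct and follows essentially the same route as the paper: the oscillatory-integral representation \eqref{eq:1-form-kernel-sc-form}, the change of variables to $(\hat t,\hat\lambda)$, the elimination of the noncompact region $|\hat t|\geq 1$ by the absence of critical points for small $T$, stationary phase in $(\hat t,\omega^\parallel)$, and the integration by parts in $\hat t$ against the identity $(x\dot\gamma^{(2)}\cdot\eta)e^{i\eta x^{-1}(\gamma^{(2)}-y)}=cx\pa_{\hat t}e^{i\eta x^{-1}(\gamma^{(2)}-y)}$ to extract the order gain in the $\Span\{\eta\}$ column, together with the same continuity-in-$g$ observation. The only (harmless) cosmetic difference is that you obtain $\xi_\digamma$ directly by combining the $\xi$-phase and the $\digamma$-damping contributions, whereas the paper extracts powers of $\xi$ and notes in the remark following the proposition that $\xi$ and $\xi_\digamma$ are interchangeable at this stage.
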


\begin{rem}
The statement of this proposition would be equally valid with
$\xi_\digamma$ replaced by $\xi$, since one can absorb the difference
into the lower order, in terms of $\xi$-power, terms. The reason we
phrase it this way is that in Proposition~\ref{prop:N-digamma-1-form-structure} this will no longer be
the case due to the order of e.g.\ $a^{(0)}_{01}$ there, with the
decay order being the issue.
\end{rem}

\begin{proof}
We in fact do the complete form computation from scratch, initially using a
general localizer $\tilde\chi$ (potentially explicitly dependent on $x,y,\omega$
as well, with compact support in $\lambda/x$), not just the kind considered above.  Note that we already know
that we have a pseudodifferential operator
$\Ajd=e^{-\digamma/x}L_j Ie^{\digamma/x}\in\Psisc^{-1,0}$, where we
{\em do not restrict} $I$ to tangential forms, and
with $\Ajd$ the component mapping to tangential ($j=0$) or normal
($j=1$) one forms
given by
\begin{equation*}\begin{aligned}
\Ajd f(z)=\int e^{-\digamma/x(z)}
&e^{\digamma/x(\gamma_{z,\lambda,\omega}(t))} x^{-j}\lambda^j
(h(y)\omega)^{\otimes(1-j)}\\
&\tilde\chi(z,\lambda/x,\omega)
f(\gamma_{z,\lambda,\omega}(t))(\dot\gamma_{z,\lambda,\omega}(t))\,dt\,|d\nu|. 
\end{aligned}\end{equation*}
{\em Here $\Ajd$ is understood to apply only to $f$ with
  support in $M$, thus for which the $t$-integral is in a fixed finite
  interval}, where $h(y)\omega$ is the image of $\omega$ under the
metric $h=h(y)$ induced on the level sets of $x$ by $g_{\scl}$ and
where $|d\nu|$ is a smooth positive density in $(\lambda,\omega)$, such
as $|d\lambda\,d\omega|$.
Then $\Ajd$ will be the left quantization of the symbol $a_{j,\digamma}$ where $a_{j,\digamma}$ is
the inverse Fourier transform in $z'$ of the integral. If $K_{\Ajd}$ is the
Schwartz kernel, then in the sense of oscillatory integrals (or
directly if the order of $a_{j,\digamma}$ is sufficiently low)
$$
K_{\Ajd}(z,z')=(2\pi)^{-n}\int e^{i(z-z')\cdot\zeta}a_{j,\digamma}(z,\zeta)\,d\zeta,
$$
i.e.\ $(2\pi)^{-n}$ times the Fourier transform in $\zeta$ of
$(z,\zeta)\mapsto e^{iz\cdot\zeta} a_{j,\digamma}(z,\zeta)$, so taking the inverse
Fourier transform in $z'$ yields
$(2\pi)^{-n}a_{j,\digamma}(z,\zeta)e^{iz\cdot\zeta}$, i.e.
\begin{equation} \label{eq:aj-in-terms-of-kernel}
a_{j,\digamma}(z,\zeta)=(2\pi)^{n}e^{-iz\cdot\zeta}\cF^{-1}_{z'\to\zeta}K_{\Ajd}(z,z').
\end{equation}
Now,
\begin{equation*}\begin{aligned}
K_{\Ajd}(z,z')&=\int e^{-\digamma/x(z)} e^{\digamma/x(\gamma_{z,\lambda,\omega}(t))}x^{-j}\lambda^j
(h(y)\omega)^{\otimes(1-j)}\tilde\chi(z,\lambda/x,\omega)\\
&\qquad\qquad\dot\gamma_{z,\lambda,\omega}(t)\delta(z'-\gamma_{z,\lambda,\omega}(t))\,dt\,|d\nu|\\
&=
(2\pi)^{-n}
\int e^{-\digamma/x(z)} e^{\digamma/x(\gamma_{z,\lambda,\omega}(t))}x^{-j}\lambda^j
(h(y)\omega)^{\otimes(1-j)}\tilde\chi(z,\lambda/x,\omega)\\
&\qquad\qquad\dot\gamma_{z,\lambda,\omega}(t)e^{-i\zeta'\cdot(z'-\gamma_{z,\lambda,\omega}(t))}\,dt\,|d\nu|\,|d\zeta'|;
\end{aligned}\end{equation*}
as remarked above, the $t$ integral is actually over a fixed finite
interval, say $|t|<T$, or one may explicitly insert a compactly supported cutoff
in $t$ instead. (So the only non-compact domain of integration is in
$\zeta'$, corresponding to the Fourier transform.)
Thus, taking the inverse Fourier transform in $z'$ and evaluating at $\zeta$
gives
\begin{equation*}\begin{aligned}
a_{j,\digamma}(z,\zeta)=
\int e^{-\digamma/x(z)} &e^{\digamma/x(\gamma_{z,\lambda,\omega}(t))}x^{-j}\lambda^j
(h(y)\omega)^{\otimes(1-j)}\tilde\chi(z,\lambda/x,\omega)\\
&\qquad \dot\gamma_{z,\lambda,\omega}(t) e^{-iz\cdot\zeta} e^{i\zeta\cdot\gamma_{z,\lambda,\omega}(t)}\,dt\,|d\nu|.
\end{aligned}\end{equation*}
Translating into sc-coordinates, writing $(x,y)$ as local coordinates,
scattering covectors as $\xi\frac{dx}{x^2}+\eta\cdot\frac{dy}{x}$, and
$\gamma=(\gamma^{(1)},\gamma^{(2)})$, with $\gamma^{(1)}$ the $x$
component, $\gamma^{(2)}$ the $y$ component, we obtain
\begin{equation}\begin{aligned}\label{eq:1-form-kernel-sc-form}
&a_{j,\digamma}(x,y,\xi,\eta)\\
&=
\int e^{-\digamma/x} e^{\digamma/\gamma^{(1)}_{x,y,\lambda,\omega}(t)}x^{-j}\lambda^j
(h(y)\omega)^{\otimes(1-j)}\tilde\chi(x,y,\lambda/x,\omega)
\dot\gamma_{x,y,\lambda,\omega}(t)\\
&\qquad\qquad\qquad\qquad\qquad e^{i(\xi/x^2,\eta/x)\cdot(\gamma^{(1)}_{x,y,\lambda,\omega}(t)-x, \gamma^{(2)}_{x,y,\lambda,\omega}(t)-y)}\,dt\,|d\nu|
\end{aligned}
\end{equation}
and
$$
\gamma_{x,y,\lambda,\omega}(t)=(x+\lambda t+\alpha
t^2+t^3\Gamma^{(1)}(x,y,\lambda,\omega,t),y+\omega t+t^2\Gamma^{(2)}(x,y,\lambda,\omega,t)). 
$$
As a scattering tangent vector, i.e.\ expressed in terms of
$x^2\pa_x$ and $x\pa_y$, so as to act on sections of
$\Tsc^*X$, recalling that the $x$ coordinate of the point
we are working at is $\gamma^{(1)}_{x,y,\lambda,\omega}(t)$,
$$
\dot\gamma_{x,y,\lambda,\omega}(t)=\gamma^{(1)}_{x,y,\lambda,\omega}(t)^{-1}(\gamma^{(1)}_{x,y,\lambda,\omega}(t)^{-1} (\lambda+2\alpha t+t^2\tilde\Gamma^{(1)}(x,y,\lambda,\omega,t)),\omega+t\tilde\Gamma^{(2)}(x,y,\lambda,\omega,t)),
$$
with $\Gamma^{(1)},\Gamma^{(2)},\tilde\Gamma^{(1)},\tilde\Gamma^{(2)}$
smooth functions of $x,y,\lambda,\omega,t$.
We recall from \cite{UV:local} that we need to work in a sufficiently
small region so that there are no geometric complications. Thus the interval of integration in $t$, i.e., $T$, is such that (with
the dot denoting $t$-derivatives)
$\ddot\gamma^{(1)}(t)$ is uniformly bounded below by a positive constant in the
region over which we integrate, see
the discussion in \cite{UV:local} above Equation~(3.1). Then $T$ is 
further reduced in Equations~(3.3)-(3.4) so that the map sending $(x,y,\lambda,\omega,t)$ to
the lift of $(x,y,\gamma_{x,y,\lambda,\omega}(t))$ in the resolved
space $X^2$ with the diagonal being blown up, is a diffeomorphism in $t\geq
0$, as well as $t\leq 0$. In the present paper the restriction to small $T$ will occur
in a closely related manner, when dealing with the stationary phase expansion.

We change the variables of integration to $\hat t=t/x$, and
$\hat\lambda=\lambda/x$, so the $\hat\lambda$ integral is in fact over
a fixed compact interval, but the $\hat t$ one is over $|\hat t|<T/x$
which grows as $x\to 0$.
We get that the
phase is
$$
\xi(\hat\lambda\hat t+\alpha\hat t^2+x\hat
t^3\Gamma^{(1)}(x,y,x\hat\lambda,\omega,x\hat t))+\eta\cdot(\omega\hat t+x\hat t^2\Gamma^{(2)}(x,y,x\hat\lambda,\omega,x\hat t)),
$$
while the exponential damping factor (which we regard as a Schwartz
function, part of the amplitude, when one regards $\hat t$ as a
variable on $\RR$) is
\begin{equation*}\begin{aligned}
&-\digamma/x+\digamma/\gamma^{(1)}_{x,y,\lambda,\omega}(t)\\
&=
-\digamma(\lambda t+\alpha
t^2+t^3\Gamma^{(1)}(x,y,\lambda,\omega,t))x^{-1}(x+\lambda t+\alpha
t^2+t^3\Gamma^{(1)}(x,y,\lambda,\omega,t))^{-1}\\
&=-\digamma(\hat\lambda\hat t+\alpha\hat
t^2+\hat t^3 x\hat\Gamma^{(1)}(x,y,x\hat\lambda,\omega,x\hat t)),
\end{aligned}\end{equation*}
with $\hat\Gamma^{(1)}$ a smooth function.
The only subtlety in applying the stationary phase lemma is that the
domain of integration in $\hat t$ is not compact, so we need to
explicitly deal with the region $|\hat t|\geq 1$, say, assuming that the
amplitude is Schwartz in $\hat t$, uniformly in the other variables. Notice that as long
as the first derivatives of the phase in the integration variables
have a lower bound $c |(\xi,\eta)|\,|\hat t|^{-k}$ for some $k$, and for some $c>0$, the standard
integration by parts argument gives the rapid decay of the integral in
the large parameter $|(\xi,\eta)|$. At $x=0$ the phase
is
$\xi(\hat\lambda\hat t+\alpha\hat t^2)+\hat t\eta\cdot\omega$;
if $|\hat
t|\geq 1$, say, the $\hat\lambda$ derivative is $\xi\hat t$, which is
thus bounded below by $|\xi|$ in magnitude. The only place where
one may not have rapid decay is at $\xi=0$ (meaning, in the spherical
variables, $\frac{\xi}{|(\xi,\eta)|}=0$). In this region one may use
$|\eta|$ as the large variable to simplify the notation slightly. The
phase is then with $\hat\xi=\frac{\xi}{|\eta|}$, $\hat\eta=\frac{\eta}{|\eta|}$,
$$
|\eta|(\hat\xi(\hat\lambda\hat t+\alpha\hat t^2)+\hat t\hat\eta\cdot\omega),
$$
with parameter differentials (ignoring the overall $|\eta|$ factor)
$$
\hat\xi\hat t\,d\hat\lambda,(\hat t\hat\eta+\hat
t^2\hat\xi\pa_\omega\alpha)\cdot
\,d\omega,\; (\hat\xi(\hat\lambda+2\alpha\hat
t)+\hat\eta\cdot\omega)\,d\hat t.
$$
With $\hat\Xi=\hat\xi\hat t$ and $\rho=\hat t^{-1}$ these are
$$
\hat\Xi\,d\hat\lambda,\hat t(\hat\eta+\hat\Xi\pa_\omega\alpha)\cdot
\,d\omega,\; (\hat\Xi(\rho\hat\lambda+2\alpha)+\hat\eta\cdot\omega)\,d\hat t,
$$
and now for critical points $\hat\Xi$ must vanish (as we already knew
from above), then the last of these gives that $\hat\eta\cdot\omega$
vanishes, but then the second gives that there cannot be a critical
point (in $|\hat t|\geq 1$). While this argument was at $x=0$, the full
phase derivatives
are
\begin{equation*}\begin{aligned}
&(\hat\xi\hat t(1+x\hat t\pa_\lambda\alpha+x^2\hat
t^2\pa_\lambda\Gamma^{(1)})+\hat\eta\cdot x^2\hat
t^2\pa_\lambda\Gamma^{(2)})\,d\hat\lambda,\\
&(\hat t\hat\eta+x\hat t^2\hat\eta\cdot\pa_\omega\Gamma^{(2)}+\hat
t^2\hat\xi\pa_\omega\alpha+x\hat t^3\hat\xi\pa_\omega\Gamma^{(1)})\cdot
\,d\omega,\\
&(\hat\xi(\hat\lambda+2\alpha\hat
t+3x\hat t^2\Gamma^{(1)}+x^2\hat
t^3\pa_t\Gamma^{(1)}) +\hat\eta\cdot\omega+2x\hat t\Gamma^{(2)}+x^2\hat
t^2\pa_t\Gamma^{(2)})\,d\hat t, 
\end{aligned}\end{equation*}
i.e.
\begin{equation*}\begin{aligned}
&(\hat\Xi(1+t\pa_\lambda\alpha+t^2\pa_\lambda\Gamma^{(1)})+\hat\eta\cdot t^2\pa_\lambda\Gamma^{(2)})\,d\hat\lambda, \\
&\hat t(\hat\eta+\hat\eta\cdot t\pa_\omega\Gamma^{(2)}+\hat\Xi\pa_\omega\alpha+t\hat\Xi\pa_\omega\Gamma^{(1)})\cdot
\,d\omega,\\
&(\hat\Xi(\hat\lambda\rho+2\alpha+3t\Gamma^{(1)}+
t^2\pa_t\Gamma^{(1)})+\hat\eta\cdot\omega+2t\Gamma^{(2)}+t^2\pa_t\Gamma^{(2)})\,d\hat t,
\end{aligned}\end{equation*}
and now all the additional terms are small if $T$ is small (where
$|t|<T$), so the lack of critical points in the $x=0$ computation
implies the analogous statement (in $|\hat t|>1$) for the general computation.

This implies that one can use the standard {\em parameter-dependent} stationary phase lemma, see
e.g.\ \cite[Theorem~7.7.6]{Hor}. At $x=0$, the
stationary points of the phase are $\hat t=0$,
$\xi\hat\lambda+\eta\cdot\omega=0$, which remain critical points for
$x$ non-zero due to the $x\hat t^2$ vanishing of the other terms, and
when $T$ is small, so $x\hat t$ is small, there are no other critical
points. (One can see this in a different way: above we worked with
$|\hat t|\geq 1$, but for any $\ep>0$, $|\hat t|\geq\ep$ would have
worked equally.) These critical points lie on a smooth codimension 2
submanifold of the parameter space. At $x=0$,
$\xi=0$, in whose neighborhood we are focusing on, since this is where
$N_{0,\digamma}$ is not elliptic, this submanifold is given by the
vanishing of $(\hat t,\omega^\parallel)$, with
$\omega^\parallel=\omega\cdot\hat\eta$ the $\hat\eta$ component of
$\omega$. Moreover, the $(\hat t,\omega^\parallel)$-Hessian matrix
there is
$\begin{pmatrix}
  0&|\eta|\\|\eta|&0\end{pmatrix}$,
which is elliptic. We thus use the stationary phase lemma in the
$(\hat t,\omega^\parallel)$ variables.
This gives that all terms of the
form $\hat
t x$ times smooth functions will have contributions which are 1 differentiable and 1 decay
order lower than the main terms, while $\hat
t^3 x$-type terms will have contributions which are 2 differentiable and 1 decay
order lower than the main terms. For us in this section only the
principal terms matter, unlike in the 2-tensor case considered in the
next section, so any $O(x\hat t)$ terms are actually ignorable for our
purposes.
Moreover,
when evaluated on tangential forms (which is our
interest here, as we are analyzing $N_{j,\digamma}$),
$\dot\gamma_{x,y,\lambda,\omega}(t)$ can be replaced by
\begin{equation*}\begin{aligned}
\dot\gamma^{(2)}_{x,y,\lambda,\omega}&=\gamma^{(1)}_{x,y,\lambda,\omega}(t)^{-1} (\omega+\hat t
x\tilde\Gamma^{(2)}(x,y,x\hat\lambda,\omega,x\hat t))\\
&=x^{-1}(\omega+\hat t
x\hat\Gamma^{(2)}(x,y,x\hat\lambda,\omega,\hat t))
\end{aligned}\end{equation*}
with $\hat\Gamma^{(2)}$ smooth.

Notice that $\Njd P^\perp$, $\Njd P^\parallel$, with
$P^\perp$, resp.\ $P^\parallel$, the microlocal orthogonal
projection with principal symbol $\Pi^\perp$, resp.\ $\Pi^\parallel$,
cf.\ Proposition~\ref{prop:microlocal-bundles} and Remark~\ref{rem:microlocal-bundles},
will have principal symbol given by the composition of principal
symbols.
Thus, with $\tilde\chi=\chi(\lambda/x)=\chi(\hat\lambda)$,
we have that on
\begin{equation*}\begin{aligned}
&\Span\{\eta\}^{\perp}\ (k=0), \text{resp.}\ \Span\{\eta\}\ (k=1),
\end{aligned}\end{equation*}
writing the sections in $\Span\{\eta\}$ factors explicitly as a
multiple
of $\frac{\eta}{|\eta|}$,
\begin{equation}\begin{aligned}\label {eq:aj-1-form-pre-xi}
a_{j,\digamma}(x,y,\xi,\eta) &=\int e^{i(\xi x^{-2}(\gamma^{(1)}_{x,y,x\hat\lambda,\omega}(x\hat t)-x)+\eta x^{-1}(
  \gamma^{(2)}_{x,y,x\hat\lambda,\omega}(x\hat t)-y))} e^{-\digamma(\hat\lambda\hat t+\alpha\hat
t^2)} \\
&\qquad   \hat\lambda^j(h(y)\omega)^{\otimes(1-j)}\chi(\hat\lambda)|\eta|^{-k}
(x\dot\gamma^{(2)}_{x,y,x\hat\lambda,\omega}(x\hat t)\cdot\eta)^k(x\dot\gamma^{(2)}_{x,y,x\hat\lambda,\omega}(x\hat t)\cdot)^{\otimes(1-k)}
\,d\hat t\,d\hat \lambda\,d\omega\\
&=\int e^{i(\xi(\hat\lambda\hat t+\alpha\hat t^2+x\hat
t^3\Gamma^{(1)}(x,y,x\hat\lambda,\omega,x\hat t))+\eta\cdot(\omega\hat
t+x\hat t^2\Gamma^{(2)}(x,y,x\hat\lambda,\omega,x\hat t)))} e^{-\digamma(\hat\lambda\hat t+\alpha\hat
t^2)}\\ &\qquad
\hat\lambda^j(h(y)\omega)^{\otimes(1-j)}\chi(\hat\lambda)|\eta|^{-k}(
\omega\cdot\eta)^k
\big(\omega\cdot\big)^{\otimes(1-k)}\,d\hat t\,d\hat \lambda\,d\omega,
\end{aligned}\end{equation}
up to errors that are
$O(x\langle\xi,\eta\rangle^{-1})$ relative to
the a priori order, $(-1,0)$, arising from the $0$-th order symbol in
the oscillatory integral and the 2-dimensional space in which the
stationary phase lemma is applied.

Now we want to see, for $k=1$ (since the $k=0$ statement is trivial), that
$(x\dot\gamma^{(2)}_{x,y,x\hat\lambda,\omega}(x\hat t)\cdot\eta)^k$ ,
while an order $k$ symbol, in this oscillatory integral is actually equivalent to the sum of terms
over $\ell$, $0\leq\ell\leq k$, each of which is the product of
$\xi^\ell$ and an order $0$ symbol, essentially due to the structure
of the set of critical points of the phase. In order to avoid having
to specify the latter in $x>0$, we proceed with a direct integration
by parts argument.
Notice that
$$
(x\dot\gamma^{(2)}_{x,y,x\hat\lambda,\omega}(x\hat t)\cdot\eta)e^{i \eta x^{-1}(
  \gamma^{(2)}_{x,y,x\hat\lambda,\omega}(x\hat t)-y)}=x\pa_{\hat t}e^{i \eta x^{-1}(
  \gamma^{(2)}_{x,y,x\hat\lambda,\omega}(x\hat t)-y)},
$$
integration by
parts gives that \eqref{eq:aj-1-form-pre-xi} is, with $k=1$,
\begin{equation*}\begin{aligned}
a_{j,\digamma}(x,y,\xi,\eta)&=\int e^{i \eta x^{-1}(
  \gamma^{(2)}_{x,y,x\hat\lambda,\omega}(x\hat t)-y)}\\
&\qquad\qquad x^k\pa_{\hat t}^k\Big(e^{i(\xi x^{-2}(\gamma^{(1)}_{x,y,x\hat\lambda,\omega}(x\hat t)-x))}e^{-\digamma(\hat\lambda\hat t+\alpha\hat
t^2)}(x\dot\gamma^{(2)}_{x,y,x\hat\lambda,\omega}(x\hat t)\cdot)\Big)\\
&\qquad\qquad \hat\lambda^j(h(y)\omega)^{\otimes(1-j)}\chi(\hat\lambda)|\eta|^{-k}\,d\hat t\,d\hat \lambda\,d\omega.
\end{aligned}\end{equation*}
Expanding the derivative, if $\ell$ derivatives hit the first
exponential (the phase factor) and thus $k-\ell$ the second (the
amplitude) one obtains $\xi^\ell$ times the oscillatory factor
$e^{i(\xi x^{-2}(\gamma^{(1)}_{x,y,x\hat\lambda,\omega}(x\hat t)-x))}$
times a symbol of order $0$. Notice that
$$
x\pa_{\hat t}(x^{-2}(\gamma^{(1)}_{x,y,x\hat\lambda,\omega}(x\hat t)-x))=\hat\lambda+2\alpha \hat t+\hat
t^2 x\tilde\Gamma^{(1)}(x,y,x\hat\lambda,\omega,x\hat t), 
$$
so in view of the overall weight
$|\eta|^{-k}$, we deduce that, modulo terms one order down (so subprincipal), in terms
of the differential order, $a_{j,\digamma}$ is a sum of terms of the
form of symbols of order $(-k-1,0)$ times $\xi^\ell$, $0\leq\ell\leq
k$. Here the first order is $-k-1$ since stationary phase itself, in the
two variables, gives an extra factor of $|\eta|^{-1}$, corresponding
to the square root of the absolute value of the determinant of the Hessian.

We remark here that $\gamma$, and thus $N_{j,\digamma}$, depend
continuously on the metric $g$, and furthermore the same is true for
$a_j$ and the decomposition into components as in the statement of the proposition.
\end{proof}

Analyzing the proof of Proposition~\ref{prop:N-digamma-1-form-structure-basic} at $x=0$ more precisely, we have
\begin{equation*}\begin{aligned}
&a_{j,\digamma}(0,y,\xi,\eta)\\
&=
\int e^{i(\xi(\hat\lambda\hat t+\alpha\hat t^2)+\eta\cdot(\omega\hat t))}e^{-\digamma(\hat\lambda\hat t+\alpha\hat
t^2)}\hat\lambda^j(h(y)\omega)^{\otimes(1-j)}\chi(\hat\lambda)|\eta|^{-k} (\omega\cdot\eta)^k(\omega\cdot)^{\otimes(1-k)}\,d\hat t\,d\hat
\lambda\,d\omega\\
&=
\int e^{i((\xi+i\digamma)(\hat\lambda\hat t+\alpha\hat t^2)+\eta\cdot(\omega\hat t))}\hat\lambda^j(h(y)\omega)^{\otimes(1-j)}\chi(\hat\lambda) |\eta|^{-k}  (\omega\cdot\eta)^k(\omega\cdot)^{\otimes(1-k)}\,d\hat t\,d\hat
\lambda\,d\omega\\
&=
\int_{\sphere^{n-2}}|\eta|^{-k}  (\omega\cdot\eta)^k(h(y)\omega)^{\otimes(1-j)}(\omega\cdot)^{\otimes(1-k)}\Big(\int e^{i((\xi+i\digamma)(\hat\lambda\hat t+\alpha\hat t^2)+(\eta\cdot\omega)\hat t)}\hat\lambda^j\chi(\hat\lambda) \,d\hat t\,d\hat
\lambda\Big)\,d\omega.
\end{aligned}\end{equation*}
We recall that $\alpha=\alpha(x,y,\lambda,\omega)$ so at $x=0$,
$\alpha(0,y,0\cdot\hat\lambda,\omega)=\alpha(0,y,0,\omega)$, and it is a
quadratic form in $\omega$.

Some of the computations below become notationally simpler if we assume that
the coordinates are such that at $y$ at which the principal symbol is
computed $h$ is the Euclidean metric. {\em We thus assume this from
  now on; note that even the integration by parts arguments are
  unaffected, as $h$ would not be differentiated, since it is a
  prefactor of the integral used in the integration by parts.}

We now apply the projection $P^\perp$ (quantization of the projection to
$\Span\{\eta\}^\perp$ as in Proposition~\ref{prop:microlocal-bundles}) from the left: for the
tangential, resp.\ normal
components we apply $P^\perp$, resp.\
$\Id$, which means for the symbol computation that we compose with
$\Pi^\perp$, resp.\
$I$ from the left. This replaces $(h(y)\omega)^{\otimes{1-j}}=\omega^{\otimes(1-j)}$ by
$((h(y)\omega)^\perp)^{\otimes(1-j)}=(\omega^\perp)^{\otimes(1-j)}$ with the result
\begin{equation*}\begin{aligned}
\tilde a_{j,\digamma}(0,y,\xi,\eta)&=
\int_{\sphere^{n-2}}|\eta|^{-k}  (\omega\cdot\eta)^k (\omega^\perp)^{\otimes(1-j)} (\omega^\perp\cdot)^{\otimes(1-k)}\\
&\quad \times\Big(\int e^{i((\xi+i\digamma)(\hat\lambda\hat t+\alpha\hat t^2)+(\eta\cdot\omega)\hat t)}\hat\lambda^j\chi(\hat\lambda) \,d\hat t\,d\hat
\lambda\Big)\,d\omega,
\end{aligned}\end{equation*}
where we used that $(\omega\cdot)^{\otimes(1-k)}$ is being applied to
the $\eta$-orthogonal factors, so it may be written as
$(\omega^\perp\cdot)^{\otimes(1-k)}$. This means that at $\xi=0$ the overall
parity of the integrand in $\omega^\perp$ is $(-1)^{j+k}$ apart from
the appearance of $\omega^\perp$ in the exponent (via $\alpha$) of
$e^{-\digamma(\hat\lambda\hat t+\alpha\hat t^2)}$, which due to the
$\hat t^2$ prefactor of $\alpha$, giving quadratic vanishing at the
critical set, only contributes one order lower
terms, so modulo these the integral vanishes when $j$ and $k$ have the
opposite parity. This proves that $N_\digamma$,
when composed with the projections as described, has the following
form:

\begin{prop}\label{prop:N-digamma-1-form-structure-proj}
Let $\xi_\digamma=\xi+i\digamma$.
The symbol of the operator 
$$
\begin{pmatrix}P^\perp N_{0,\digamma}\\N_{1,\digamma}\end{pmatrix},
$$
with domain restricted to
tangential 1-forms, relative to the $\Span\{\eta\}$-based decomposition of the
domain, at $x=0$
has the form
$$
\begin{pmatrix}
a_{00}^{(0)}&a_{01}^{(1)}\xi_\digamma+a_{01}^{(0)}\\
a_{10}^{(1)}\xi_\digamma+a_{10}^{(0)}&a_{11}^{(1)}\xi_\digamma+a_{11}^{(0)}\\
\end{pmatrix},
$$
where $a_{ij}^{(k)}\in S^{-1-\max(i,j),0}$ for all $i,j,k$. Moreover,
this restriction
depends continuously on $\chi$ in these spaces when $\chi$ is
considered as an element of the Schwartz space.
\end{prop}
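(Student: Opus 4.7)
The plan is to combine Proposition~\ref{prop:N-digamma-1-form-structure-basic} with the $\omega^\perp$-parity observation made in the paragraph preceding the statement. That earlier result already supplies
\[
\begin{pmatrix} a_{00}^{(0)} & a_{01}^{(1)}\xi_\digamma + a_{01}^{(0)}\\ a_{10}^{(0)} & a_{11}^{(1)}\xi_\digamma + a_{11}^{(0)}\end{pmatrix}
\]
with each $a_{ij}^{(k)}\in S^{-1-j,0}$. For $(i,j)\in\{(0,0),(0,1),(1,1)\}$ one has $\max(i,j)=j$, so the decay orders required here already hold, and composition with $P^\perp$ on the top row does not alter them. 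The only entry for which an improvement is needed is $(1,0)$: I must upgrade $a_{10}^{(0)}\in S^{-1,0}$ to a decomposition $a^{(1)}_{10}\xi_\digamma + a^{(0)}_{10}$ with both summands in $S^{-2,0}$.

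I would set $x=0$ and work with the integral expression $\tilde a_{j,\digamma}(0,y,\xi,\eta)$ displayed before the statement, after the normalization making $h(y)$ Euclidean at $y$. With $\omega=\omega^\parallel\hat\eta+\omega^\perp$ where $\hat\eta=\eta/|\eta|$, for the $(1,0)$ entry ($j=1$, $k=0$) the prefactor of the $d\omega$ integral is $\hat\lambda\,(\omega^\perp\cdot)$, odd in $\omega^\perp$. The only other $\omega^\perp$-dependence enters through $\alpha(0,y,0,\omega)$ inside $e^{i\xi_\digamma\alpha\hat t^2}$; because $\alpha$ is multiplied by $\hat t^2$ while the stationary set of the phase in $(\hat t,\omega)$ lies on $\hat t=0$, the stationary phase expansion forces any contribution from it to be one decay order lower than the naive $S^{-1,0}$ bound. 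Consequently $a_{10}(0,y,0,\eta)\in S^{-2,0}$ in $\eta$.

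To package this as the claimed decomposition I would then apply Taylor's formula in the real variable $\xi$:
\[
a_{10}(0,y,\xi,\eta) = a_{10}(0,y,0,\eta) + \xi\int_0^1 (\pa_\xi a_{10})(0,y,s\xi,\eta)\,ds.
\]
Since $a_{10}\in S^{-1,0}$ by Proposition~\ref{prop:N-digamma-1-form-structure-basic}, $\pa_\xi a_{10}\in S^{-2,0}$, and the $s$-integral preserves this class. Writing $\tilde b$ for that integral and using $\xi=\xi_\digamma-i\digamma$, one obtains
\[
a_{10}(0,y,\xi,\eta) = \xi_\digamma\tilde b + \bigl(a_{10}(0,y,0,\eta) - i\digamma\tilde b\bigr),
\]
so taking $a^{(1)}_{10} = \tilde b$ and $a^{(0)}_{10} = a_{10}(0,y,0,\eta)-i\digamma\tilde b$ gives the required form, both summands lying in $S^{-2,0}$.

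The main technical subtlety I anticipate is verifying that the $\alpha$-induced correction is genuinely subprincipal in the scattering symbol class uniformly in $(\xi,\eta)$, rather than only at a single value of $\xi$; this is handled exactly as in the derivation of Proposition~\ref{prop:N-digamma-1-form-structure-basic}, since the integration-by-parts and stationary phase arguments used there carry over verbatim once $\omega^\perp$-parity is exploited to kill the leading term. Continuous dependence on $\chi$ follows at once: $\chi(\hat\lambda)$ enters linearly as an amplitude in every oscillatory integral, and the Taylor step together with the underlying symbol estimates are continuous linear operations, so each $a^{(k)}_{ij}$ depends continuously on $\chi$ in the $S^{-1-\max(i,j),0}$ topology.
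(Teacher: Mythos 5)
Your argument is correct and essentially the paper's: the proposition is proved there by exactly this $\omega^\perp$-parity observation applied to the projected symbol at $x=0$, with the $\alpha$-dependence in the exponential dismissed as one order lower because of the $\hat t^2$ prefactor and the critical set $\hat t=0$, and with continuity in $\chi$ following from linearity. Your explicit Taylor-in-$\xi$ repackaging of the $(1,0)$ entry (valid in the relevant region near $\xi=0$ at fiber infinity, where the $\Span\{\eta\}$-based decomposition and the estimate $\langle(s\xi,\eta)\rangle\asymp\langle(\xi,\eta)\rangle$ hold) merely spells out a step the paper leaves implicit.
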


We can compute the leading terms quite easily: for $j=k=0$ this is
\begin{equation*}\begin{aligned}
&\tilde a_{0,\digamma}(0,y,\xi,\eta)\\
&=
\int_{\sphere^{n-2}}\omega^\perp(\omega^\perp\cdot)\Big(\int e^{i((\xi+i\digamma)(\hat\lambda\hat t+\alpha\hat t^2)+(\eta\cdot\omega)\hat t)}\chi(\hat\lambda) \,d\hat t\,d\hat
\lambda\Big)\,d\omega\\
&=
\int_{\sphere^{n-2}}\omega^\perp(\omega^\perp\cdot)\Big(\int e^{i((\xi\hat\lambda\hat t+\alpha\hat t^2)+(\eta\cdot\omega)\hat t)}e^{-\digamma(\hat\lambda\hat t+\alpha\hat t^2)}\chi(\hat\lambda) \,d\hat t\,d\hat
\lambda\Big)\,d\omega
\end{aligned}\end{equation*}
which at the critical points of the phase, $\hat t=0$,
$\xi\hat\lambda+\eta\cdot\omega=0$, where $\omega^\perp$ and
$\hat\lambda$ give variables along the critical set, gives, up to an
overall elliptic factor,
$$
\int_{\sphere^{n-3}}\omega^\perp(\omega^\perp\cdot)\Big(\int\chi(\hat\lambda) \,d\hat
\lambda\Big)\,d\omega^\perp,
$$
which is elliptic for $\chi\geq 0$ with $\chi(0)>0$. (Note here that
when $n=3$, the integral over $\sphere^{n-3}$ is a sum over two points.)

On the other hand, for $j=k=1$, 
\begin{equation*}\begin{aligned}
&\tilde a_{1,\digamma}(0,y,\xi,\eta)\\
&=
\int_{\sphere^{n-2}}|\eta|^{-1}  (\omega\cdot\eta)\Big(\int e^{i((\xi+i\digamma)(\hat\lambda\hat t+\alpha\hat t^2)+(\eta\cdot\omega)\hat t)}\hat\lambda\chi(\hat\lambda) \,d\hat t\,d\hat
\lambda\Big)\,d\omega,
\end{aligned}\end{equation*}
Writing $i(\omega\cdot\eta)e^{i (\eta\cdot\omega)\hat t}=\pa_{\hat
  t}e^{i (\eta\cdot\omega)\hat t}$ and integrating by parts yields
\begin{equation}\begin{aligned}\label{eq:1-form-11-entry-comp-1}
&\tilde a_{1,\digamma}(0,y,\xi,\eta)\\
&=
i\int_{\sphere^{n-2}}|\eta|^{-1}  \Big(\int e^{i((\xi+i\digamma)(\hat\lambda\hat t+\alpha\hat t^2)+(\eta\cdot\omega)\hat t)}(\xi+i\digamma)(\hat\lambda+2\alpha\hat t)\hat\lambda\chi(\hat\lambda) \,d\hat t\,d\hat
\lambda\Big)\,d\omega\\
&=
i|\eta|^{-1} (\xi+i\digamma)
\int_{\sphere^{n-2}} \Big(\int e^{i((\xi+i\digamma)(\hat\lambda\hat t+\alpha\hat t^2)+(\eta\cdot\omega)\hat t)}(\hat\lambda+2\alpha\hat t)\hat\lambda\chi(\hat\lambda) \,d\hat t\,d\hat
\lambda\Big)\,d\omega. 
\end{aligned}\end{equation}
Now the integral (the factor after $|\eta|^{-1}(\xi+i\digamma) $) at the critical points of the phase $\hat t=0$,
$\xi\hat\lambda+\eta\cdot\omega=0$, gives, up to an
overall elliptic factor,
$$
\int_{\sphere^{n-3}}\omega^\perp (\omega^\perp\cdot)\Big(\int\hat\lambda^2\chi(\hat\lambda) \,d\hat
\lambda\Big)\,d\omega^\perp,
$$
modulo $S^{-2,0}$,
i.e.\ for the same reasons as in the $j=k=0$ case above, when
$\chi\geq 0$, $\chi(0)>0$, \eqref{eq:1-form-11-entry-comp-1} is an elliptic
multiple of $|\eta|^{-1}(\xi+i\digamma)$!

Finally, when $j=0$, $k=1$, we have
\begin{equation*}\begin{aligned}
&\tilde a_{0,\digamma}(0,y,\xi,\eta)\\
&=
\int_{\sphere^{n-2}}|\eta|^{-1}  \omega^\perp (\omega\cdot\eta)\Big(\int e^{i((\xi+i\digamma)(\hat\lambda\hat t+\alpha\hat t^2)+(\eta\cdot\omega)\hat t)}\chi(\hat\lambda) \,d\hat t\,d\hat
\lambda\Big)\,d\omega,
\end{aligned}\end{equation*}
which, using  $i(\omega\cdot\eta)e^{i (\eta\cdot\omega)\hat t}=\pa_{\hat
  t}e^{i (\eta\cdot\omega)\hat t}$ as above, gives
\[\begin{aligned} 
&\tilde a_{0,\digamma}(0,y,\xi,\eta)\\
&=
i|\eta|^{-1} (\xi+i\digamma)
\int_{\sphere^{n-2}} \Big(\int e^{i((\xi+i\digamma)(\hat\lambda\hat t+\alpha\hat t^2)+(\eta\cdot\omega)\hat t)}(\hat\lambda+2\alpha\hat t)\chi(\hat\lambda) \,d\hat t\,d\hat
\lambda\Big)\,d\omega. 
\end{aligned}
\]
Now the leading term of the integral, due to the contributions from the critical points,
is (up to an overall elliptic factor)
$$
\int_{\sphere^{n-3}}\omega^\perp \Big(\int\hat\lambda\chi(\hat\lambda) \,d\hat
\lambda\Big)\,d\omega^\perp,
$$
modulo $S^{-2,0}$,
which vanishes for $\chi$ even, so for such $\chi$, the $(0,1)$ entry
has principal symbol which at $x=0$ is a multiple of $\xi_\digamma$,
and the multiplier is in $S^{-3,0}$ (one order lower than the
previous results).

In summary, we have the following result:

\begin{prop}\label{prop:N-digamma-1-form-structure}
Suppose $\chi\geq 0$, $\chi(0)>0$, $\chi$ even.
Let $\xi_\digamma=\xi+i\digamma$.
The full symbol of the operator 
$$
\begin{pmatrix}P^\perp N_{0,\digamma}\\N_{1,\digamma}\end{pmatrix},
$$
with domain restricted to
tangential 1-forms, relative to the $\Span\{\eta\}$-based decomposition of the
domain, at $x=0$
has the form
$$
\begin{pmatrix}
a_{00}^{(0)}&a_{01}^{(1)}\xi_\digamma+a_{01}^{(0)}\\
a_{10}^{(1)}\xi_\digamma+a_{10}^{(0)}&a_{11}^{(1)}\xi_\digamma+a_{11}^{(0)}\\
\end{pmatrix},
$$
where $a_{ij}^{(k)}\in S^{-1-\max(i,j),0}$ for all $i,j,k$, and $a_{00}^{(0)}$ and
$a_{11}^{(1)}$ (these are the multipliers of the
leading terms along the diagonal) are elliptic in $S^{-1,0}$ and
$S^{-2,0}$, respectively and $a_{01}^{(0)},a_{11}^{(0)}\in S^{-2,-1}$, i.e.\ in
addition to the statements in the previous propositions vanish at
$x=0$ and $a_{01}^{(1)}$ also has one lower differential order at $x=0$: $a_{01}^{(1)}\in S^{-3,0}+S^{-2,-1}$.
\end{prop}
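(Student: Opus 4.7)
The proposition refines the previous structural statement in Proposition~\ref{prop:N-digamma-1-form-structure-proj} by asserting ellipticity of the leading diagonal coefficients $a_{00}^{(0)}$ and $a_{11}^{(1)}$, vanishing at $x=0$ of the off-leading $(0,1)$ and $(1,1)$ coefficients, and a differential-order drop for $a_{01}^{(1)}$ at $x=0$. Since the matrix form and the baseline orders $a_{ij}^{(k)}\in S^{-1-\max(i,j),0}$ are already in hand, the plan is to read off these four refinements from the explicit oscillatory-integral representations at $x=0$ derived just above the statement, and then to organize the decomposition $a^{(1)}\xi_\digamma+a^{(0)}$ compatibly with all the refinements.

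The two ellipticity statements follow directly from the leading-order stationary-phase computations already carried out for $(j,k)=(0,0)$ and $(j,k)=(1,1)$. In each case, after projecting onto tangential components via $P^\perp$, the principal symbol at $x=0$ reduces to a spherical integral over $\omega^\perp$ with scalar prefactor $\int \chi(\hat\lambda)\,d\hat\lambda$ or $\int \hat\lambda^2\chi(\hat\lambda)\,d\hat\lambda$ respectively. Under the hypotheses $\chi\geq 0$, $\chi(0)>0$, both integrals are strictly positive, and the spherical integral of $\omega^\perp\otimes\omega^\perp$ is positive definite, which gives the claimed ellipticity of $a_{00}^{(0)}$ in $S^{-1,0}$ and $a_{11}^{(1)}$ in $S^{-2,0}$ (the extra order in the second case coming from the $|\eta|^{-1}$ prefactor extracted during the integration by parts that produced the $\xi_\digamma$ factor).

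For the vanishing claims $a_{01}^{(0)},a_{11}^{(0)}\in S^{-2,-1}$, I would exploit that in both the $(0,1)$ and $(1,1)$ entries the identity $i(\omega\cdot\eta)e^{i(\eta\cdot\omega)\hat t}=\pa_{\hat t}e^{i(\eta\cdot\omega)\hat t}$, used in \eqref{eq:1-form-11-entry-comp-1} and \eqref{eq:1-form-01-entry-comp-1}, cleanly pulls a single factor of $\xi_\digamma=\xi+i\digamma$ out of the oscillatory integral at $x=0$. Consequently the full symbol of each of these entries at $x=0$ is a multiple of $\xi_\digamma$, with no residual degree-zero-in-$\xi_\digamma$ piece. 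Choosing the representatives $a_{01}^{(1)}$ and $a_{11}^{(1)}$ to match this $x=0$ leading behavior, and extending them to $x>0$ within the appropriate symbol class by a standard Borel-type construction, forces $a_{01}^{(0)}$ and $a_{11}^{(0)}$ to vanish at $x=0$, hence to lie in $xS^{-2,0}\subset S^{-2,-1}$.

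Finally, the improvement $a_{01}^{(1)}\in S^{-3,0}+S^{-2,-1}$ comes from the parity argument: in the $(0,1)$ case $j+k=1$ is odd, so at $x=0$ the leading integrand in $\omega^\perp$ is odd, up to the $\alpha\hat t^2$ corrections which contribute at one lower differential order. Concretely, the stationary-phase leading coefficient of the integral appearing after the $\xi_\digamma$-extraction in \eqref{eq:1-form-01-entry-comp-1} is proportional to $\int \hat\lambda\chi(\hat\lambda)\,d\hat\lambda$, which vanishes for even $\chi$. Hence the integral is one differential order below its baseline, giving the $S^{-3,0}$ component at $x=0$ and leaving an $x$-vanishing remainder in $S^{-2,-1}$. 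The main obstacle I foresee is purely bookkeeping: one must choose the splitting $a^{(0)}_{ij},a^{(1)}_{ij}$ once and for all so that all three refinements hold simultaneously, which requires tracking how the $x$-Taylor expansion of the full symbol interacts with the $\xi_\digamma$-degree decomposition; this is routine for scattering symbols but must be performed consistently across all four matrix entries.
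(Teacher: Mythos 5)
Your proposal is correct and follows essentially the same route as the paper: the ellipticity of $a_{00}^{(0)}$ and $a_{11}^{(1)}$ is read off from the stationary-phase leading terms (positivity of $\int\chi\,d\hat\lambda$ and $\int\hat\lambda^2\chi\,d\hat\lambda$), the vanishing of $a_{01}^{(0)},a_{11}^{(0)}$ at $x=0$ comes from the $\hat t$-integration by parts that pulls out an exact factor of $\xi_\digamma$ as in \eqref{eq:1-form-11-entry-comp-1}--\eqref{eq:1-form-01-entry-comp-1}, and the extra drop $a_{01}^{(1)}\in S^{-3,0}+S^{-2,-1}$ follows from the evenness of $\chi$ killing $\int\hat\lambda\chi\,d\hat\lambda$ in the leading term. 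The remaining bookkeeping you flag about fixing the $\xi_\digamma$-splitting consistently is exactly how the paper implicitly handles it, so no gap.
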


\begin{cor}\label{cor:1-form-pre-post-mult}
By pre- and postmultiplying
$$
\begin{pmatrix} P^\perp N_{0,\digamma}\\N_{1,\digamma}\end{pmatrix}
$$
by elliptic operators in $\Psisc^{0,0}$,
one can arrange that the full principal symbol of the resulting
operator is of the form
$$
\begin{pmatrix} T&0\\0&\tilde a(\xi+i\digamma)+\tilde b\end{pmatrix},
$$
with $T=a_{00}^{(0)}$, resp.\ $\tilde a$ elliptic in $S^{-1,0}$, resp.\ $S^{-2,0}$,
near $\xi=0$ at fiber infinity, and $\tilde b\in S^{-2,-1}$.

Furthermore, $\tilde a,\tilde b,T$ in the indicated spaces depend continuously on the
metric $g$ (with the $\CI$ topology on $g$) as long as $g$ is
$C^k$-close (for suitable $k$) to a background metric $g_0$ satisfying
the strictly convex assumptions on
the metric, the boundary and the function $x$.
\end{cor}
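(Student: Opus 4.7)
The plan is to carry out a block Gaussian elimination at the operator level, using the ellipticity of the $(1,1)$ entry $a_{00}^{(0)}$ to clear the off-diagonal terms, and then to track orders carefully using the refined information from Proposition~\ref{prop:N-digamma-1-form-structure}.

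First I would let $N = \begin{pmatrix} N_{00} & N_{01}\\ N_{10} & N_{11}\end{pmatrix}$ denote the operator in question (with $N_{00} = P^\perp N_{0,\digamma}$ restricted to $\Ran P^\perp$, etc.), so the full symbols of $N_{ij}$ are, at $x=0$, the entries in Proposition~\ref{prop:N-digamma-1-form-structure}. Since $a_{00}^{(0)}$ is elliptic in $S^{-1,0}$ near $\xi=0$ at fiber infinity, there is an elliptic parametrix $Q\in\Psisc^{1,0}$ for $N_{00}$, microlocalized to this region; here one uses that parametrix construction is continuous in the coefficients, hence in the metric $g$. Now define the pre- and post-multipliers
\[
L = \begin{pmatrix} I & 0\\ -N_{10}Q & I\end{pmatrix},\qquad R = \begin{pmatrix} I & -QN_{01}\\ 0 & I\end{pmatrix}.
\]
Since $N_{01},N_{10}\in\Psisc^{-1,0}$ and $Q\in\Psisc^{1,0}$, the off-diagonal entries $-N_{10}Q,-QN_{01}$ lie in $\Psisc^{0,0}$, and $L,R\in\Psisc^{0,0}$ are elliptic (being block lower/upper triangular with identity on the diagonal). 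Thus $LNR$ has $(2,1)$ and $(1,2)$ entries lying in $\Psisc^{-\infty,-\infty}$ microlocally on the region where $N_{00}$ is elliptic, its $(1,1)$ entry is $T := N_{00}$ (elliptic in $S^{-1,0}$), and its $(2,2)$ entry is $N_{11} - N_{10}QN_{01}$ modulo smoothing.

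The main computational step is to verify that this $(2,2)$ entry has the required form $\tilde a\,\xi_\digamma + \tilde b$ with $\tilde a$ elliptic in $S^{-2,0}$ near $\xi=0$ at fiber infinity and $\tilde b\in S^{-2,-1}$. The principal symbol of $N_{10}QN_{01}$ is the product
\[
(a_{10}^{(1)}\xi_\digamma + a_{10}^{(0)})\,(a_{00}^{(0)})^{-1}\,(a_{01}^{(1)}\xi_\digamma + a_{01}^{(0)}),
\]
modulo terms lower in both orders (via the scattering composition expansion). Expanding, this yields a $\xi_\digamma^2$ term, a $\xi_\digamma$ term, and a constant-in-$\xi_\digamma$ term. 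Inserting the orders from Proposition~\ref{prop:N-digamma-1-form-structure} (in particular the crucial improvement $a_{01}^{(1)}\in S^{-3,0}+S^{-2,-1}$), one finds that the coefficient of $\xi_\digamma^2$ lies in $S^{-4,0}+S^{-3,-1}$, so factoring one $\xi_\digamma$ out gives a contribution to $\tilde a$ in $S^{-3,0}+S^{-2,-1}$; the coefficient of $\xi_\digamma$ lies in $S^{-3,-1}+S^{-4,0}\subset S^{-2,0}$; and the constant term lies in $S^{-3,-1}\subset S^{-2,-1}$. Hence, setting
\[
\tilde a = a_{11}^{(1)} + (\text{correction in } S^{-3,0}+S^{-2,-1}),\qquad \tilde b = a_{11}^{(0)} + (\text{correction in } S^{-2,-1}),
\]
one has $\tilde b\in S^{-2,-1}$, and $\tilde a$ differs from the $S^{-2,0}$-elliptic $a_{11}^{(1)}$ by a term that is lower order either in the differential sense ($S^{-3,0}$) or in decay at $x=0$ ($S^{-2,-1}$), hence remains elliptic in $S^{-2,0}$ near $\xi=0$ at fiber infinity.

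The anticipated main obstacle is the bookkeeping in the previous paragraph: the $\xi_\digamma^2$ term in the expansion of $N_{10}QN_{01}$ is a priori of differential order $-1$ (one higher than that of $a_{11}^{(1)}\xi_\digamma$), so one really needs the extra order gained from the fact that $a_{01}^{(1)}\in S^{-3,0}+S^{-2,-1}$ rather than just $S^{-2,0}$ to land inside the desired class. The continuity statement then follows because each constituent---the symbols $a_{ij}^{(k)}$ (by Proposition~\ref{prop:N-digamma-1-form-structure}), the parametrix $Q$, and the compositions and subtractions---depends continuously on $g$ in the $C^k$ sense (for suitable $k$) to the background metric, and is preserved under the scattering pseudodifferential symbol topology.
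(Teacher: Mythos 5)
Your proposal is correct and is essentially the paper's own argument: a triangular (block Gaussian) pre- and post-multiplication using the ellipticity of $a_{00}^{(0)}$, with the same order bookkeeping exploiting $a_{01}^{(1)}\in S^{-3,0}+S^{-2,-1}$ and $a_{01}^{(0)}\in S^{-2,-1}$ to conclude that $\tilde a$ stays elliptic in $S^{-2,0}$ near $\xi=0$ at fiber infinity and $\tilde b\in S^{-2,-1}$. The only difference is cosmetic: you phrase the elimination with an operator-level parametrix $Q$ for $N_{00}$ (clearing the off-diagonal blocks modulo smoothing), while the paper manipulates the symbol matrices directly with $T^{-1}$.
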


\begin{proof}
Let $T=a_{00}^{(0)}$. By multiplying from the left by the elliptic symbol
$$
\begin{pmatrix} 1&0\\-(a_{10}^{(1)}\xi_\digamma+a_{10}^{(0)}) T^{-1}&1\end{pmatrix}
$$
we obtain
$$
\begin{pmatrix} T&a_{01}^{(1)}\xi_\digamma+a_{01}^{(0)} \\0&\tilde
  a_{11}^{(1)}\xi_\digamma+\tilde a_{11}^{(0)}\end{pmatrix},\qquad \tilde
a_{11}^{(k)}=a_{11}^{(k)}- (a_{10}^{(1)}\xi_\digamma+a_{10}^{(0)})T^{-1}a_{01}^{(k)},
$$
so  $\tilde a_{11}$ has the same properties as $a_{11}$ for $\xi$ near $0$ (the
case of interest), in particular the ellipticity of $\tilde
a_{11}^{(1)}$ follows from the one differential
order lower behavior (at $x=0$) than a priori expected for $a_{01}^{(1)}$, stated
in Proposition~\ref{prop:N-digamma-1-form-structure}, while the
vanishing of $\tilde a_{11}^{(0)}$ from that of $a_{01}^{(0)}$ (at $x=0$). Multiplying from the right by
$$
\begin{pmatrix} 1&-T^{-1}(a_{01}^{(1)}\xi_\digamma+a_{01}^{(0)}) \\0&1\end{pmatrix}
$$
we obtain
$$
\begin{pmatrix} T&0\\0&\tilde a_{11}^{(1)}(\xi+i\digamma)+\tilde a_{11}^{(0)}\end{pmatrix},
$$
as desired.
\end{proof}

\subsection{Analysis at radial points}
We now have a principally diagonal real principal type system, and thus in $x>0$
the standard propagation of singularities results applies. The
boundary behavior is also not hard to see due to the leading order
decoupling: one has {\em radial points} in the second (index $1$)
component. We recall here that radial points for an operator with real
scalar principal symbol are points at which the
Hamilton vector field of the (homogeneous with respect to dilations) principal symbol is tangent to the dilation orbits of the
cotangent bundle. This means that H\"ormander's propagation of
singularities theorem is vacuous there, since the bicharacteristic
through such a point is exactly the dilation orbit. In the
compactified perspective, in which the fibers of the (here:
scattering) cotangent bundle are compactified, so the `standard'
(differential regularity) microlocal analysis takes place on the
boundary of the fibers (which in turn can be identified with the
cosphere bundle), the (rescaled) Hamilton
vector field vanishes at such radial points.

In general, when the principal symbol is real, for such radial points there is a threshold regularity below
which one can propagate estimates towards the radial points and above
which one can propagate estimates away from the radial points. In our
case the standard principal symbol (at fiber infinity) is real, but
the principal symbol at $\pa X$, while real {\em at} fiber infinity, is not
so at finite points, thus {\em near} fiber infinity. In such a
situation even the weight does not help, and the imaginary part of the
principal symbol (which of course is only non-zero at $\pa X$) must
have the correct sign. Fortunately, this is the case for us since, as 
we have seen, the
principal symbol of the second component, in both senses, is an elliptic multiple of
$\xi+i\digamma$, $\digamma>0$. To illustrate why this is the correct
sign, note that $\xi+i\digamma$ is the principal symbol of $x^2D_x+i\digamma$,
whose nullspace contains functions like $e^{-\digamma/x}a(y)$, which
are exponentially decaying as $\digamma>0$, and indeed these give the
asymptotic behavior of solutions of the {\em inhomogeneous equation} as $x\to 0$, i.e.\ one can expect
Fredholm properties in the polynomially weighted Sobolev
spaces. A different connection one can make is with the standard propagation of
singularities: when the imaginary part of the principal symbol is
non-negative, one can still propagate estimates in the backward
direction along the bicharacteristics; in this case the usual
principal symbol in $x>0$, where this applies, is real, but this fact
illustrates the consistency of our present result (propagating to
$x=0$ is backward propagation and $\digamma>0$) with other phenomena. That
the Fredholm statement holds for operators of this type follows from
the following proposition, which we state for bundle valued
pseudodifferential operators for use in the 2-tensor setting:

\begin{prop}\label{prop:not-quite-real-princ-scalar}
Suppose for two vector bundles $\tilde E,\tilde F$,
$P\in\Psisc^{1,0}(X;\tilde E,\tilde F)$ has principal symbol
$(\xi+i\digamma)\tilde p$ in $x<\ep_0$, $\ep_0>0$,
where $\tilde p$ is elliptic in $S^{0,0}(\Tsc^*X;\Hom(\tilde E,\tilde F))$ and $\digamma>0$.

Suppose first that $B_j\in\Psisc^{0,0}$, $\WFsc'(B_1)$ contained near the radial
set $L$ of $\xi+i\digamma$ at fiber infinity at $x=0$, $B_2$ is
elliptic at $x=\ep_0/2$, $B_3$ is elliptic at fiber infinity for $x\in
[0,\ep_0/2]$ and on $\WFsc'(B_1)$. Then
for all $s,r,M,N$ we have estimates
$$
\|B_1 u\|_{s,r}\leq C(\|B_2 u\|_{s,r}+\|B_3 Pu\|_{s,r}+\|u\|_{-N,-M}),
$$
with $\|\cdot\|$ the norm in $\Hsc^{s,r}$, etc.

Suppose now instead that $B_j\in\Psisc^{0,0}$, $\WFsc'(B_1)$ contained
near $L$, $B_3$ is elliptic at fiber infinity for $x\in
[0,\ep_0/2]$ and on $\WFsc'(B_1)$. Then
for all $s,r,M,N$ we have estimates
$$
\|B_1 u\|_{s,r}\leq C(\|B_3 P^*u\|_{s,r}+\|u\|_{-N,-M}).
$$

In both cases, $u$ can be any distribution for which the right hand
side is finite, understood as $u\in\Hsc^{-N,-M}$, etc.
\end{prop}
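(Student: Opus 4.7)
The strategy will be a positive commutator argument in the scattering calculus, combined with standard scattering-elliptic regularity and real-principal-type propagation of singularities, as developed by Melrose \cite{RBMSpec} and Vasy \cite{Vasy:Minicourse}, adapted to the radial set $L$. I would first reduce to a scalar model: since $\tilde p$ is elliptic in $S^{0,0}$, composing from the left with a parametrix $\tilde q\in\Psisc^{0,0}(X;\tilde F,\tilde E)$ of $\tilde p$ produces an operator $\tilde qP$ with principal symbol $(\xi+i\digamma)\,\Id_{\tilde E}$ in $x<\ep_0$, modulo lower order terms absorbable in $\|u\|_{-N,-M}$. The characteristic set at fiber infinity is then $\{\xi=0\}$, and $L$ is the component $\{x=0,\ \xi=0\}$ at the corner where fiber and base infinity meet. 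At finite scattering points in $x<\ep_0$ the operator is scattering-elliptic because $\digamma>0$; on $x>0$ at fiber infinity the principal symbol is real of real-principal type, and bicharacteristics of $\scH_\xi$ connect $L$ with the region $x=\ep_0/2$ where $B_2$ is elliptic. Hence standard scattering-elliptic regularity and scattering propagation of singularities dispose of everything except the radial point $L$ itself.

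The core of both estimates is a positive commutator argument at $L$. I would take a self-adjoint commutant $B=A^*A\in\Psisc^{2s-1,2r}(X;\tilde E,\tilde E)$ with non-negative symbol $b=a^2$ microlocally supported near $L$ and elliptic at the microlocal set of $B_1$, and use the identity
\begin{equation*}
2\Im\langle Pu,Bu\rangle \;=\; \tfrac{1}{i}\langle [P^*,B]u,u\rangle \,+\, \tfrac{1}{i}\langle (P^*-P)Bu,u\rangle.
\end{equation*}
The second term contributes $-2\digamma\langle Bu,u\rangle$ at the principal level at $x=0$, while the first contributes $-\langle\Op(\scH_\xi b) u,u\rangle$. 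Choosing $b$ with $\scH_\xi b\le 0$ and with support extending from $L$ along the bicharacteristic direction that reaches into the region where $B_2$ is elliptic makes the commutator contribution sign-definite apart from an error localized in the elliptic sets of $B_2$ and $B_3$. Cauchy--Schwarz on $\Im\langle Pu,Bu\rangle$ produces a term bounded by $\|B_3 Pu\|_{s,r}$; the uniformly positive damping $2\digamma b$ yields the coercive lower bound $\digamma\|B_1 u\|_{s,r}^2$ (no threshold condition on $s,r$ arises, because $\digamma$ does not vanish at $L$); and the error pieces are controlled by $\|B_2 u\|_{s,r}$ and the background $\|u\|_{-N,-M}$, giving the first estimate.

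For the second estimate I would apply the same identity with $P$ replaced by $P^*$. The crucial asymmetry is that $\tfrac{1}{i}\sigma(P^*-(P^*)^*) = -2\digamma$ rather than $+2\digamma$ at $x=0$, so the damping contribution changes sign: $\digamma\langle Bu,u\rangle$ now appears on the same side of the identity as $|\Im\langle P^*u,Bu\rangle|$ and is bounded directly by $\|B_3P^*u\|_{s,r}$. Taking $b$ with $\scH_\xi b$ of the opposite sign and support confined to a neighborhood of $L$ that does not reach $x=\ep_0/2$, the commutator is absorbable on the favorable side and no auxiliary term $\|B_2 u\|_{s,r}$ is needed. The main obstacle is the construction of a commutant $b$ with the simultaneous properties of correct ellipticity at $B_1$, correct sign of $\scH_\xi b$, correct support relative to $\{B_2,B_3\}$, and correct homogeneous orders $(2s-1,2r)$, adapted separately to the source/sink role of $L$ in the two cases; this is a standard construction in scattering-calculus radial-point theory (Melrose \cite{RBMSpec}, Vasy \cite{Vasy:Minicourse}), but requires careful bookkeeping of the weighted Sobolev orders and of the interaction between the flow of $\scH_\xi$ and the uniformly positive damping $\digamma$.
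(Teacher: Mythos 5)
Your proposal follows essentially the same route as the paper's proof: reduce to scalar principal symbol $(\xi+i\digamma)\Id$ using the ellipticity of $\tilde p$, run a positive commutator argument at $L$ in which the uniformly positive zeroth-order damping $\digamma$ (rather than any threshold condition) supplies the coercive term, control the region where the $x$-cutoff is differentiated near $x=\ep_0/2$ by $B_2$ in the first case, and use the sign flip of the damping for $P^*$ together with a commutant monotone in $x$ (the paper's $\chi\chi'=-\psi^2$) so that no a priori term is needed in the second case. Two bookkeeping points to fix when writing it out: since the positivity comes from the zeroth-order damping and not from the commutator, the commutant should have orders $(2s,2r)$ rather than $(2s-1,2r)$ to yield $\|B_1u\|_{s,r}$; and no sign condition on $\scH_\xi b$ is actually needed near $x=0$, because $\scH_\xi b$ carries an extra factor of $x$ and is absorbed by $2\digamma b$ there (the paper first reduces to small $\ep_0$ by standard propagation in $x>0$ and also carries out the regularization argument, which your sketch leaves implicit).
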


\begin{rem}
Note that the statements are trivial unless $N,M$ are sufficiently
large relative to $-s,-r$; the point is that they can be taken
arbitrary.

Also, we emphasize that $s,r$ can take {\em any} value, unlike in the
usual real principal symbol radial point estimates; this is due to the
imaginary part $i\digamma$ of the principal symbol, which {\em is}
principal in the full scattering sense (no additional decay relative
to $\xi$).
\end{rem}

\begin{proof}
By multiplying from the left by an operator whose principal symbol is
$\tilde p^{-1}$ (recall the ellipticity assumption), one may assume
that $\tilde p$ is the identity homomorphism at each point, i.e.\ that
the principal symbol of $P$ is $\xi+i\digamma$ times the identity
operator on the fibers of the vector bundle $\tilde E$. Equip $\tilde
E$ with a Hermitian fiber metric; since $P$ has scalar principal
symbol, so does the adjoint, namely $\xi-i\digamma$ times the
identity. Now
write $P=P_R+iP_I$, with $P_R =\frac{P+P^*}{2}\in\Psisc^{1,0}$ formally self-adjoint,
with principal symbol $\xi$ times the identity,
$P_I\in\Psisc^{0,0}$ formally  skew-adjoint, with principal symbol
{\em at $\pa X$} given by $\digamma$ times the identity, thus is of the form
$\digamma+x\alpha$, $\alpha\in S^{0,0}$.

Due to a standard iterative argument, improving the regularity and decay by $1/2$ in 
each step while shrinking the support of $B_1$ slightly, it suffices to show the estimates under the a priori 
assumption that $u$ is in $\Hsc^{s-1/2,r-1/2}$ on
$\WFsc'(B_1)$. Furthermore, as $\xi\pm i\digamma$ have real principal
symbol in the standard sense, the usual propagation of singularities
theorem applies in $x>0$, which reduces the estimate to the case when
$\ep_0>0$ is fixed but small; we choose it so that
$|x\alpha|<\digamma/2$ for $x\in[0,\ep_0)$.

To prove the first statement of the proposition, with $\rho$ a defining
function of fiber infinity, such as $\rho=|\eta|^{-1}$ near $L$,
consider the scalar symbol
$$
a=\chi(x)x^{-r}\rho^{-s}\chi_1(\xi/\eta)\chi_2(\rho),
$$
where $\chi_1,\chi_2$ are identically $1$ near $0$ and have compact support,
$\chi\equiv 1$ near $0$, $d\chi$ supported near $\ep_0/2$. 
Note that on $\supp d(\chi_1\chi_2)$ we have elliptic estimates (as
$P$ is elliptic there since either $\xi/\eta$ is non-zero, or one is
at finite points where $\digamma$ gives the ellipticity), while on
$\supp d\chi$ we have a priori regularity of $u$ (in terms of control
on $B_2 u$).
Then with
$\tilde A=A^*A$, $A\in\Psisc^{s,r}$ having principal symbol $a$ times
the identity homomorphism, consider
\begin{equation}\label{eq:cx-rad-mod-comm} 
i(P^*\tilde A-\tilde AP)=i[P_R,\tilde A]+(P_I\tilde A+\tilde A P_I).  
\end{equation} 
Now the first term is in $\Psisc^{2s,2r+1}$, the second is in
$\Psisc^{2s,2r}$, so while they have the same differential order, the second
actually dominates in the decay sense, thus at finite points of
$\Tsc^*_{\pa X}X$; at fiber infinity of course they need to be
considered comparable. The principal symbol of the second term, in
$\Psisc^{2s,2r}$,
is $2(\digamma+x\alpha) a^2$, which is positive, bounded
below by $\digamma a^2$, say, for $x$ small, in
particular on $\supp a$ by our arrangements.
The principal symbol
of the first term, on the other hand, is $2xa H_\xi a=2ax(x\pa_x+\eta\pa_\eta)a$. Thus,
in view of $x\pa_x+\eta\pa_\eta$ being a smooth vector field tangent
to all boundaries of $\overline{\Tsc^*}X$, i.e.\ an element of
$\Vb(\overline{\Tsc^*}X)$, the principal symbol of the first term
can be absorbed into $2\digamma a^2$ away from the boundary of the
support of $\chi_1\chi_2$ and $\chi$; at both of those locations, however,
we have a priori/elliptic control. Thus, we have that the principal
symbol of \eqref{eq:cx-rad-mod-comm} is
$$
b^2+e+e_0,
$$
where
\begin{equation*}\begin{aligned}
b^2&=2(\digamma+x\alpha) a^2 +2ax \chi(x)\chi_1(\xi/\eta)\chi_2(\rho)  (x\pa_x+\eta\pa_\eta)(x^{-r}\rho^{-s})\\
e&=2ax x^{-r}\rho^{-s}\chi_1(\xi/\eta)\chi_2(\rho) (x\pa_x)\chi,\\
e_0&=2ax x^{-r}\rho^{-s}\chi(x)(x\pa_x+\eta\pa_\eta) (\chi_1(\xi/\eta)\chi_2(\rho)).
\end{aligned}\end{equation*}
Note that taking the non-negative square root, this indeed gives a
smooth $a$ as one can simply factor our all cutoffs, etc., so one
eventually needs to take the square root of $2\digamma$ plus $x$ times
a smooth function, which is thus strictly positive if $\chi,\chi_2$
have small supports.
Hence, taking
$B\in\Psisc^{s,r}$ with principal symbol $b$,
$E\in\Psisc^{2s,-\infty}$ with principal symbol $e$,
$E_0\in\Psisc^{2s,2r-1}$ with principal symbol $e_0$,
\begin{equation}\label{eq:cx-rad-mod-comm-pos}
i(P^*\tilde A-\tilde AP)=B^*B+E+E_0+F,
\end{equation}
where $E_0$ has $\WFsc'(E_0)$ is in the
elliptic set of $P$, while $\WFsc'(E)$
is near $x=\ep$ and $F\in\Psisc^{2s-1,2r-1}$. This gives
$$
\|Bu\|^2\leq 2|\langle \tilde Au,Pu\rangle|+|\langle Eu,u\rangle|+|\langle E_0u,u\rangle|+|\langle Fu,u\rangle|.
$$
Now, the first term is handled by the Cauchy-Schwartz inequality in a
standard way (cf.\ below),
while the latter terms (iteratively improving regularity for the $F$
term) are controlled by a priori assumptions, proving the estimate, a
priori for $u\in\dCI(X;\tilde E)$.

A standard regularization
argument, see e.g.\ \cite[Proof of
Propositions~2.3-2.4]{Vasy-Dyatlov:Microlocal-Kerr} or \cite{Vasy:Minicourse}, which is normally delicate at radial points, but not in this case,
due to the skew-adjoint part, $P_I$, proves the result. To see this, one replaces
$a$ by $a_\ep=a s_\ep r_\ep$ throughout this computation, where
$$
s_\ep=(1+\ep\rho^{-1})^{-\delta},\ r_\ep=(1+\ep x^{-1})^{-\delta},\
\ep\in(0,1],
$$
where any
$\delta\geq 1$ suffices. This makes the corresponding
$A_\ep\in\Psisc^{s+\delta,r+\delta}$ for $\ep>0$, but uniformly bounded in
$\Psisc^{s,r}$, $\ep\in(0,1]$, with $A_\ep\to A$ as $\ep\to 0$ in
$\Psisc^{s-\delta',r-\delta'}$ for any $\delta'>0$. Then in the analogue of
\eqref{eq:cx-rad-mod-comm},
\begin{equation}\label{eq:cx-rad-mod-comm-reg}
i(P^*\tilde A_\ep-\tilde A_\ep P)=i[P_R,\tilde A_\ep]+(P_I\tilde
A_\ep+\tilde A_\ep P_I),\qquad \tilde A_\ep=A_\ep^* A_\ep,
\end{equation}
the principal symbol of the second term, considered uniformly in
$\Psisc^{2s,2r}$,
is $2(\digamma+x\alpha) a_\ep^2$ which is positive for $x$ small, that
of the first term is $2xa_\ep H_\xi a_\ep=2a_\ep
x(x\pa_x+\eta\pa_\eta)a_\ep$. Now,
$$
ds_\ep=(\delta+1)\ep\rho^{-2}(1+\ep\rho^{-1})^{-\delta-1}\,d\rho=(\delta+1)
s_\ep \ep\rho^{-1}(1+\ep\rho^{-1})^{-1}\frac{d\rho}{\rho},
$$
with a similar computation also holding for $r_\ep$.
Since
$x\pa_x+\eta\pa_\eta\in\Vb(\overline{\Tsc^*}X)$, $\frac{d\rho}{\rho}
(x\pa_x+\eta\pa_\eta)$ is smooth on $\overline{\Tsc^*}X$, while
$\ep\rho^{-1}(1+\ep\rho^{-1})^{-1}$ is uniformly bounded in $S^{0,0}$,
with analogous statements also holding for the $r_\ep$ contributions,
the principal symbol of the first term of \eqref{eq:cx-rad-mod-comm-reg}
can be absorbed into $2\digamma a_\ep^2$ away from the boundary of the
support of $\chi_1\chi_2$ and $\chi$, where $a_\ep$ has a lower bound
$cs_\ep r_\ep$ for some $c>0$. As before, at both of these remaining locations
we have a priori/elliptic control. One then still has the analogue of
\eqref{eq:cx-rad-mod-comm-pos}, which gives for $\ep>0$,
$$
\|B_\ep u\|^2\leq 2|\langle \tilde A_\ep u,Pu\rangle|+|\langle E_\ep
u,u\rangle|+|\langle E_{0,\ep}u,u\rangle|+|\langle F_\ep u,u\rangle|,
$$
and now all terms but the first on the right hand side remain bounded
as $\ep\to 0$ due to the a priori assumptions and elliptic
estimates. On the other hand, one can apply the Cauchy-Schwartz
inequality to the first term of the right hand side, bounding it from above
by $\tilde\ep\|A_\ep u\|^2+\tilde\ep^{-1}\|A_\ep Pu\|^2$, absorbing a small
multiple ($\tilde\ep>0$ small) of $\|A_\ep u\|^2$ into $\|B_\ep u\|^2$ modulo lower order
terms (which are bounded by the a priori assumptions), which is possible as
the principal symbol of
$B_\ep$ is an elliptic multiple of $a_\ep$, and thus one obtains
the uniform boundedness of $\|B_\ep u\|^2$, $\ep\in(0,1]$. This proves $Bu\in
L^2$, completing the proof of the first half of the proposition.

For the second case, we take the same commutant, but now making sure
that $\chi\chi'=-\psi^2$ for a smooth function $\psi$. Then $P^*$ has
skew-adjoint part with the opposite sign of that of $P$, $-P_I$,
and
$$
i(P\tilde A-\tilde AP^*)=i[P_R,\tilde A]-(P_I\tilde A+\tilde A P_I).
$$
has principal symbol
$$
2ax(x\pa_x+\eta\pa_\eta)a-2a^2\digamma=-2b^2-2a^2(\digamma+x\beta)+e,
$$
where $e$ is supported where $d(\chi_1\chi_2)$ is (thus in the
elliptic set), while
$$
b=\psi(x)x^{-r+1}\rho^{-s}\chi_1(\xi/\eta)\chi_2(\rho)
$$
is elliptic for $x\in[0,\ep_0/2]$. Since the two terms not controlled by
elliptic estimates have matching signs, there is no need for a priori
control of $u$ in the sense of propagation, and we obtain the claimed
estimate by going through the regularization argument as above.
\end{proof}

\subsection{Full estimates in the one-form setting}
This gives real principal
type estimates up to $x=0$, which together with the rest of the preceding
discussion gives the coercivity of the system given by $L_0$ and
$L_1$, taking into account that $N_{0,\digamma},N_{1,\digamma}$ are in $\Psisc^{-1,0}$:

\begin{prop}\label{prop:final-one-form-est}
Let $\ep>0$.
For $u$ supported in $x<\ep$, writing $u=(u_0,u_1)$ for the
decomposition relative to $\Span\{\eta\}$ as in Remark~\ref{rem:microlocal-bundles},
we have estimates
$$
\|u_0\|_{s,r}+\|u_1\|_{s-1,r}+\|x^2D_x u_1\|_{s-1,r}\leq C(\|N_{0,\digamma} u\|_{s+1,r}+\|N_{1,\digamma} u\|_{s+1,r}+\|u\|_{-N,-M}).
$$
\end{prop}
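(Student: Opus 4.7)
The plan is to diagonalize the pair $(P^\perp N_{0,\digamma}, N_{1,\digamma})$ via Corollary~\ref{cor:1-form-pre-post-mult}, and then combine an elliptic estimate for the upper-left block with the radial-point estimate of Proposition~\ref{prop:not-quite-real-princ-scalar} for the lower-right block; the $x^2 D_x v_1$ term is then read off directly from the equation.

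Concretely, Corollary~\ref{cor:1-form-pre-post-mult} supplies elliptic $E_L, E_R \in \Psisc^{0,0}$ with
\[
E_L \begin{pmatrix} P^\perp N_{0,\digamma} \\ N_{1,\digamma} \end{pmatrix} E_R \;\equiv\; \begin{pmatrix} T & 0 \\ 0 & Q \end{pmatrix} \pmod{\Psisc^{-\infty,-\infty}},
\]
with $T$ elliptic in $\Psisc^{-1,0}$ near the radial set $L = \{\xi = 0\}$ at fiber infinity over $x = 0$, and $Q \in \Psisc^{-1,0}$ of principal symbol $\tilde a(\xi + i\digamma) + \tilde b$ where $\tilde a$ is elliptic in $S^{-2,0}$ and $\tilde b \in S^{-2,-1}$. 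Setting $v = E_R^{-1} u$ and using the boundedness of $E_L^{\pm 1}, E_R^{\pm 1}$ on scattering Sobolev spaces, it suffices to bound $\|v_0\|_{s,r} + \|v_1\|_{s-1,r} + \|x^2 D_x v_1\|_{s-1,r}$ by $\|T v_0\|_{s+1,r} + \|Q v_1\|_{s+1,r} + \|v\|_{-N,-M}$.

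Off $L$, the diagonalized system is fully scattering elliptic: at fiber infinity with $\xi \neq 0$, $\tilde a(\xi + i\digamma)$ is elliptic of order $(-1,0)$; at finite points of $\pa\overline{\Tsc^*}X$ the condition $\digamma > 0$ keeps $\xi + i\digamma$ nonzero; and $T$ is elliptic on its block throughout. Standard scattering elliptic estimates therefore control $v$ microlocally off $L$. Near $L$, ellipticity of $T$ yields $\|v_0\|_{s,r} \lesssim \|T v_0\|_{s+1,r} + \|v\|_{-N,-M}$. For $v_1$, I factor $Q \equiv A P_1$ modulo smoothing, with $A \in \Psisc^{-2,0}$ elliptic (principal symbol $\tilde a$) and $P_1 = A^{-1}Q \in \Psisc^{1,0}$ of principal symbol $\xi + i\digamma$; choosing $\ep < \ep_0/2$ so that the $B_2$-cutoff of Proposition~\ref{prop:not-quite-real-princ-scalar} is supported where $v$ is smoothing, the first half of that proposition applied to $P_1$ gives $\|v_1\|_{s-1,r} \lesssim \|P_1 v_1\|_{s-1,r} + \|v\|_{-N,-M}$, and since $A^{-1} \in \Psisc^{2,0}$, $\|P_1 v_1\|_{s-1,r} \lesssim \|Q v_1\|_{s+1,r} + \|v\|_{-N,-M}$. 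Finally, $P_1 v_1 = (x^2 D_x + i\digamma) v_1 + R v_1$ for some $R \in \Psisc^{0,-1}$ coming from the subprincipal part of $P_1$, so $x^2 D_x v_1 = A^{-1} Q v_1 - i\digamma v_1 - R v_1$, and the bounds already proved control $\|x^2 D_x v_1\|_{s-1,r}$.

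The main obstacle is verifying the applicability of Proposition~\ref{prop:not-quite-real-princ-scalar} at $L$: one needs the imaginary part of the principal symbol of $P_1$ at $\pa X$ to have the correct sign, which is exactly the point of taking $\digamma > 0$ in the exponential conjugation, and one needs the subprincipal perturbation $\tilde b$ to be genuinely subleading in the decay sense so that conjugation by $A^{-1}$ leaves it strictly subprincipal relative to $\xi + i\digamma$---this is precisely the content of $\tilde b \in S^{-2,-1}$ in Corollary~\ref{cor:1-form-pre-post-mult}. A secondary technicality is that the microlocal projection $P^\perp$, and hence the decomposition $v = (v_0, v_1)$, is only defined where $\eta \neq 0$, but in the complementary region one has ordinary fiber-infinity ellipticity of the undiagonalized system and elliptic estimates apply directly to $u$.
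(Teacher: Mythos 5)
Your overall strategy is the paper's: reduce via the pre/post-multiplication of Corollary~\ref{cor:1-form-pre-post-mult}, use ellipticity for the $T$-block and Proposition~\ref{prop:not-quite-real-princ-scalar} for the $\xi+i\digamma$-block, and read off $x^2D_xu_1$ from the equation. But there is a genuine gap at the first step: Corollary~\ref{cor:1-form-pre-post-mult} does \emph{not} give block-diagonalization modulo $\Psisc^{-\infty,-\infty}$. It only arranges the (full) \emph{principal} symbol to be diagonal, microlocally near $\xi=0$ at fiber infinity, with the extra structure at $x=0$; at the operator level the off-diagonal entries after pre/post-multiplication are genuine operators $A_{01},A_{10}\in\Psisc^{-2,-1}$, one order lower in both senses but not smoothing. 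Hence the two scalar problems remain coupled: what you control is $Tv_0+A_{01}v_1$ and $A_{10}v_0+Qv_1$, so the elliptic estimate for $v_0$ carries an error $\|v_1\|_{s-1,r-1}$ and the radial-point estimate for $v_1$ carries an error $\|v_0\|_{s-1,r-1}$, and these cross-terms must be reabsorbed. That absorption is the technical heart of the paper's proof: one multiplies the $N_{0,\digamma}$-elliptic estimate by a large parameter $\alpha$, chooses cutoffs $B_0,B_1$ so that at each point at least one is elliptic, absorbs $C\|\tilde u_1\|_{s-1,r-1}$ into the left-hand side, and then iterates in $(s,r)$ to push the remaining error down to $\|\tilde u\|_{s-k,r-k}$, i.e.\ to the $\|u\|_{-N,-M}$ term. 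None of this appears in your write-up, and without it your chain of estimates does not close; the congruence mod $\Psisc^{-\infty,-\infty}$ you display is simply not what the cited corollary provides (and, since $P^\perp$ and the decomposition are only microlocally defined, it could at best hold microlocally near the radial set in any case).

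A second, smaller gap is the return from $v=E_R^{-1}u$ to $u$. Generic boundedness of $E_R^{\pm1}\in\Psisc^{0,0}$ is not enough, because the two components carry different orders: $u_0=v_0+Cv_1$ with $v_1$ only controlled in $\Hsc^{s-1,r}$, so an arbitrary order-$(0,0)$ entry $C$ would only give $u_0\in\Hsc^{s-1,r}$, one order short of the claim. What saves the day---and what the paper explicitly flags as the key point---is that the postmultiplier is upper triangular with top-right entry of the form $C^{(1)}(x^2D_x+i\digamma)+C^{(0)}$, $C^{(j)}\in\Psisc^{-1,0}$, so the extra bound on $x^2D_xv_1$ in $\Hsc^{s-1,r}$ upgrades $Cv_1$ to $\Hsc^{s,r}$. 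You do include $\|x^2D_xv_1\|_{s-1,r}$ in your list of quantities to bound, but you never make this structural argument, so the asserted sufficiency of the $v$-estimates is unproven as written. Both gaps are repairable---indeed repairing them essentially reproduces the paper's proof---but as it stands the argument skips the steps that make the estimate actually close.
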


\begin{rem}
Here the decomposition $(u_0,u_1)$ is defined only at fiber
infinity and even there only near $\xi=0$. However, away from fiber infinity the estimates for $u_0$
and $u_1$ are in the {\em same} space, and the same is true at fiber
infinity away from $\xi=0$ (in view of the ellipticity of $x^2D_x$ there), so this is irrelevant.
\end{rem}

\begin{proof}
Microlocally away from fiber infinity the estimate holds without
$\|N_{1,\digamma} u\|_{s+1,r}$ even (i.e.\ $\|B_0u\|_{s,r}$ can be so
estimated if $\WFsc'(B_0)$ is disjoint from fiber infinity, with $B_0\in\Psisc^{0,0}$), and it also holds at fiber
infinity away from $\xi=0$ in the same manner; namely for such $B_0$
we have
$$
\|B_0 u\|_{s,r}\leq C(\|N_{0,\digamma} u\|_{s+1,r}+\|u\|_{s-2,r-1}).
$$
Now we write the pre-
and postmultiplied version of
$$
\begin{pmatrix}P^\perp N_{0,\digamma}\\N_{1,\digamma}\end{pmatrix},
$$
defined microlocally near fiber infinity,
as
$$
\begin{pmatrix} A_{00}&A_{01}\\A_{10}&A_{11}\end{pmatrix}
$$
with $A_{ij}\in\Psisc^{-2,-1}$ if $i\neq j$, $A_{00}\in\Psisc^{-1,0}$
elliptic and $A_{11}\in\Psisc^{-1,0}$ satisfying the hypotheses of
Proposition~\ref{prop:not-quite-real-princ-scalar}. We write the
components of $u$ as $(u_0,u_1)$ corresponding to the
decomposition
relative to $\eta$, and we write $\tilde u=(\tilde u_0,\tilde u_1)$ for the modified
decomposition obtained by multiplying $u$ by the postmultiplier of
$\begin{pmatrix}P^\perp
  N_{0,\digamma}\\N_{1,\digamma}\end{pmatrix}$. Since the inverse of
the premultiplier preserves $\Hsc^{s+1,r}\oplus\Hsc^{s+1,r}$, we then
have
$$
\begin{pmatrix}
  A_{00}&A_{01}\\A_{10}&A_{11}\end{pmatrix}\begin{pmatrix}\tilde
  u_0\\\tilde u_1\end{pmatrix}
$$
controlled in $\Hsc^{s+1,r}\oplus\Hsc^{s+1,r}$ by $P^\perp
  N_{0,\digamma} u$ and $N_{1,\digamma}u$ in $\Hsc^{s+1,r}$.
Thus, using the first equation (involving $A_{0j}$),
writing it as $A_0 \tilde u=f_0$,
gives the microlocal elliptic estimate 
\begin{equation*}\begin{aligned}
\|B_1 \tilde u_0\|_{s,r}&\leq C(\|\tilde u_0\|_{s-1,r-1}+\|A_{00}
\tilde u_0\|_{s+1,r})\\
&\leq C(\|\tilde u_0\|_{s-1,r-1}+\|\tilde u_1\|_{s-1,r-1}+\|A_0 \tilde
u\|_{s+1,r}),\qquad A_0=\begin{pmatrix} A_{00}&A_{01}\end{pmatrix},
\end{aligned}\end{equation*}
where $B_1\in\Psisc^{0,0}$ has wave front set near $\xi=0$ at fiber
infinity, elliptic on a smaller neighborhood of $\xi=0$ at fiber infinity.
On the other hand, by
Proposition~\ref{prop:not-quite-real-princ-scalar}, taking into
account the order of $A_{11}\in\Psisc^{-1,0}$ and the support of $\tilde u_1$
(so that the $B_2$ term of the proposition is irrelevant), the second equation gives the estimate
\begin{equation*}\begin{aligned}
\|B_1\tilde u_1\|_{s-1,r}&\leq C(\|\tilde u_1\|_{s-2,r-1}+\|A_{11} \tilde u_1\|_{s+1,r})\\
&\leq C(\|\tilde u_1\|_{s-2,r-1}+\|\tilde u_0\|_{s-1,r-1}+\|A_1 \tilde
u\|_{s+1,r}), \qquad A_1=\begin{pmatrix} A_{10}&A_{11}\end{pmatrix},
\end{aligned}\end{equation*}
with $B_1$ as above. Moreover, since $A_{11}$ is an elliptic multiple
or order $(-2,0)$
of $x^2D_x+i\digamma$ microlocally, we have the microlocal elliptic estimate
\begin{equation*}\begin{aligned}
\|B_1(x^2D_x+i\digamma)\tilde u_1\|_{s-1,r}&\leq C(\|\tilde u_1\|_{s-2,r-1}+\|A_{11}
\tilde u_1\|_{s+1,r})\\
&\leq C(\|\tilde u_1\|_{s-2,r-1}+\|\tilde u_0\|_{s-1,r-1}+\|A_{1}
\tilde u\|_{s+1,r}).
\end{aligned}\end{equation*}
Thus, for $\alpha\geq 1$,
\begin{equation*}\begin{aligned}
&\alpha\|B_0 u\|_{s,r}+\|B_1\tilde u_0\|_{s,r}+\alpha\|B_1\tilde u_1\|_{s-1,r}+\|B_1(x^2D_x+i\digamma)\tilde u_1\|_{s-1,r}\\
&\qquad\leq
C(\alpha\|N_{0,\digamma}u\|_{s+1,r}+\alpha\|u\|_{s-2,r-1}\\
&\qquad\qquad+\|\tilde u_0\|_{s-1,r-1}+\alpha \|\tilde u_0\|_{s-1,r-1}+\|\tilde u_1\|_{s-1,r-1}+\alpha
\|\tilde u_1\|_{s-2,r-1}\\
&\qquad\qquad+\|A_{0} \tilde u\|_{s+1,r}+\|A_{1} \tilde u\|_{s+1,r}).
\end{aligned}\end{equation*}
Taking $\alpha>1$ sufficiently large, $C\|\tilde u_1\|_{s-1,r-1}$ on the
right hand side can be absorbed into the left hand side modulo $\|\tilde u_1\|_{s-2,r-2}$:
$$
\|\tilde u_1\|_{s-1,r-1}\leq C'(\|B_0 u\|_{s-1,r-1}+\|B_1\tilde u_1\|_{s-1,r-1}+\|\tilde u_1\|_{s-2,r-2})
$$
if $B_0$ and $B_1$ are so chosen that at each point at least one of
them is elliptic, as can be done.
This gives the estimate (with a new constant $C$, corresponding to any
fixed sufficiently large value of $\alpha$)
\begin{equation*}\begin{aligned}
&\|B_0 u\|_{s,r}+\|B_1 \tilde u_0\|_{s,r}+\|B_1 \tilde u_1\|_{s-1,r}+\|B_1 x^2D_x
\tilde u_1\|_{s-1,r}\\
&\leq C(\|\tilde u_0\|_{s-1,r-1}+\|\tilde
u_1\|_{s-2,r-1}+\|N_{0,\digamma}u\|_{s+1,r}+\|A_0 \tilde
u\|_{s+1,r}+\|A_1 \tilde u\|_{s+1,r}),
\end{aligned}\end{equation*}
and then the usual iteration in $s,r$ improves the error term to
\begin{equation}\begin{aligned}\label{eq:final-tilde-1-form-est}
&\|B_0 u\|_{s,r}+\|B_1 \tilde u_0\|_{s,r}+\|B_1 \tilde u_1\|_{s-1,r}+\|B_1 x^2D_x
\tilde u_1\|_{s-1,r}\\
&\leq C(\|\tilde u_0\|_{s-k,r-k}+\|\tilde
u_1\|_{s-1-k,r-k}+\|N_{0,\digamma}u\|_{s+1,r}+\|A_0 \tilde
u\|_{s+1,r}+\|A_1 \tilde u\|_{s+1,r})
\end{aligned}\end{equation}
for all $k$.

Now,
$$
\|A_0 \tilde u\|_{s+1,r}+\|A_1 \tilde u\|_{s+1,r}\leq C (\|N_{0,\digamma} u\|_{s+1,r}+\|N_{1,\digamma} u\|_{s+1,r}),
$$
as explained above. Similarly one has microlocal control of
$(u_0,u_1)$ in terms of $\tilde u_0,\tilde u_1$, with the key point
being that the premultiplier of $\begin{pmatrix}P^\perp
  N_{0,\digamma}\\N_{1,\digamma}\end{pmatrix}$, and its inverse, are
upper triangular with top right entry having principal symbol of the
form $(\xi+i\digamma)\tilde c+\tilde d$, with $\tilde c,\tilde d\in S^{-1,0}$, so the
regularity we proved on $\tilde u_1$ only gives rise to contributions
to $u_0$ in $\Hsc^{s,r}$, {\em not} in the space $\Hsc^{s-1,r}$ as one
would a priori expect. Therefore \eqref{eq:final-tilde-1-form-est}
gives the claimed estimate of the proposition.
\end{proof}

For $c>0$ small, the error term on the right hand side of the estimate
of Proposition~\ref{prop:final-one-form-est} can be absorbed into the
left hand side, as in \cite{UV:local}, \cite{SUV:Tensor}. Thus one obtains an invertibility result for
1-forms in the
normal gauge that is analogous to
Corollary~\ref{cor:normal-gauge-2-tensor-Fredholm} below in the
2-tensor setting, {\em but here without using the solenoidal gauge
  results of \cite{SUV:Tensor}}.

\section{The transform on 2-tensors in the normal
  gauge}\label{sec:2-tensor}

\subsection{The operators $L_1$ and $L_2$}
Now we turn to the 2-tensor setting. Recall the only issue
with the transform in this case is the lack of ellipticity of $L_0 I$
at fiber infinity. In this case, as for 1-forms, the
problem is still $\xi=0$, but we have ellipticity of $N_{0,\digamma}$
only on $\Span\{\eta\}^\perp\otimes \Span\{\eta\}^\perp$. A
computation similar to the one above shows the vanishing of the
principal symbol on $\Span\{\eta\}^\perp\otimes_s \Span\{\eta\}$ and
$\Span\{\eta\}\otimes \Span\{\eta\}$. The vanishing is simple
in the first case and quadratic in the second, essentially because as
above in Section~\ref{sec:1-form}, on $\Span\{\eta\}$ one may replace $\hat Y\cdot$ by $\xi \tilde
S$, so the order of vanishing is given by the number of factors of
$\Span\{\eta\}$. Then a similar argument as above directly deals with
$\Span\{\eta\}^\perp\otimes_s \Span\{\eta\}$, namely we just need
to
consider the map
\[
\tilde L_1 v(z)=x\int \chi_1(\lambda/x)v(\gamma_{x,y,\lambda,\omega})g_{\scl}(\omega\,\pa_y)\,d\lambda\,d\omega,
\]
where now we are mapping to (tangential) one-forms rather than scalars in the
one-forms setting.
Again, this is better considered as the normal-tangential component of
the map $L$ restricted to tangential-tangential tensors, for that is, by \eqref{eq:L-tensors}, trivializing the normal
1-forms with $x^{-2}\,dx$ as in the 1-form setting,
$$
L_1v(z)=x^2\int \chi(\lambda/x)v(\gamma_{x,y,\lambda,\omega})\lambda x^{-2} g_{\scl}(\omega\,\pa_y)\,d\lambda\,d\omega,
$$
which gives exactly this result when $\chi_1(s)=s\chi(s)$.
The resulting $N_{1,\digamma}$ still has the same even/odd properties as the
$L_1$ considered in the one form setting due to the odd number $1+2=3$
of vector/one-form factors appearing. Correspondingly, calculations as
above would give a real principal type system if there were no
$\Span\{\eta\}^\perp\otimes \Span\{\eta\}^\perp$ components.

Now, one is then tempted to consider the operator
\[
\tilde L_2 v(z)=\int \chi_2(\lambda/x)v(\gamma_{x,y,\lambda,\omega})\,d\lambda\,d\omega,
\]
mapping scalars to scalars to deal with
$\Span\{\eta\}^\perp\otimes \Span\{\eta\}^\perp$. 
This arises as the normal-normal component of the $L$ restricted to tangential-tangential tensors:
$$
L_2 v(z)=x^2\int \chi(\lambda/x)v(\gamma_{x,y,\lambda,\omega})(\lambda x^{-2})^2 \,d\lambda\,d\omega,
$$
provided we take $\chi_2(s)=s^2\chi(s)$ in this case. 
Unfortunately this produces similar behavior to $L_0$, and while at
the principal symbol level it is not hard to see that the appropriate
rows of the resulting matrix are linearly independent in an
relevant (non-elliptic) sense, see the discussions around \eqref{eq:2-tensor-11-entry-comp-1}, this is not so easy to see at the
subprincipal level, which is needed here.

\subsection{The symbol computation}
In spite of this, for our perturbation result involving
weights, we need to compute the full symbol of $L_2 I$ (more
precisely, the computation involves the symbol modulo terms {\em two}
orders below the leading term in the differential sense, {\em one}
order in the sense of decay, terms with more vanishing are
irrelevant below). Here, $L_2I$ 
is just the normal-normal component of $N_\digamma$ restricted to
tangential-tangential tensors, and we want to find its form,
in particular its precise vanishing properties at fiber infinity at $\xi=0$.
To do so, as in the one-form case, we perform the full symbol computation of \cite{UV:local} without restricting to
tangential-tangential tensors, with $\tilde\chi$ the localizer which is an
arbitrary smooth function on the cosphere bundle (not just the kind considered above for
$\chi$), using the oscillatory integral representation as in
Section~\ref{sec:1-form}, proceeding from scratch.

We already know
that we have a pseudodifferential operator
$$
\Ajd=e^{-\digamma/x}L_j Ie^{\digamma/x}\in\Psisc^{-1,0},
$$
with $I$ {\em not restricted} to tangential-tangential tensors, and with $\Ajd$ the component mapping to tangential-tangential ($j=0$),
tangential-normal ($j=1$) or normal-normal ($j=2$) tensors
given by
\begin{equation*}\begin{aligned}
\Ajd f(z)=\int e^{-\digamma/x(z)}
&e^{\digamma/x(\gamma_{z,\lambda,\omega}(t))} x^{-j}\lambda^j
(h(y)\omega)^{\otimes(2-j)}\\
&\tilde\chi(z,\lambda/x,\omega)
f(\gamma_{z,\lambda,\omega}(t))(\dot\gamma_{z,\lambda,\omega}(t),\dot\gamma_{z,\lambda,\omega}(t))\,dt\,|d\nu|,
\end{aligned}\end{equation*}
{\em where $\Ajd$ is understood to apply only to $f$ with
  support in $M$, thus for which the $t$-integral is in a fixed finite
  interval}.

Now,
\begin{equation*}\begin{aligned}
K_{\Ajd}(z,z')&=\int e^{-\digamma/x(z)} e^{\digamma/x(\gamma_{z,\lambda,\omega}(t))}x^{-j}\lambda^j
(h(y)\omega)^{\otimes(2-j)}\tilde\chi(z,\lambda/x,\omega)\\
&\qquad\qquad(\dot\gamma_{z,\lambda,\omega}(t)\otimes\dot\gamma_{z,\lambda,\omega}(t))\delta(z'-\gamma_{z,\lambda,\omega}(t))\,dt\,|d\nu|\\
&=
(2\pi)^{-n}
\int e^{-\digamma/x(z)} e^{\digamma/x(\gamma_{z,\lambda,\omega}(t))}x^{-j}\lambda^j
(h(y)\omega)^{\otimes(2-j)}\tilde\chi(z,\lambda/x,\omega)\\
&\qquad\qquad(\dot\gamma_{z,\lambda,\omega}(t)\otimes\dot\gamma_{z,\lambda,\omega}(t))e^{-i\zeta'\cdot(z'-\gamma_{z,\lambda,\omega}(t))}\,dt\,|d\nu|\,|d\zeta'|. 
\end{aligned}\end{equation*}
As remarked above, the $t$ integral is actually over a fixed finite
interval, say $|t|<T$, or one may explicitly insert a compactly supported cutoff
in $t$ instead. (So the only non-compact domain of integration is in
$\zeta'$, corresponding to the Fourier transform.)
Thus, using \eqref{eq:aj-in-terms-of-kernel}, so taking the inverse Fourier transform in $z'$ and evaluating at $\zeta$,
gives
\begin{equation}\begin{aligned}\label{eq:general-2-tensor-form-1}
a_{j,\digamma}(z,\zeta)=
\int e^{-\digamma/x(z)} &e^{\digamma/x(\gamma_{z,\lambda,\omega}(t))}x^{-j}\lambda^j
(h(y)\omega)^{\otimes(2-j)}\tilde\chi(z,\lambda/x,\omega)\\
&\qquad (\dot\gamma_{z,\lambda,\omega}(t)\otimes\dot\gamma_{z,\lambda,\omega}(t)) e^{-iz\cdot\zeta} e^{i\zeta\cdot\gamma_{z,\lambda,\omega}(t)}\,dt\,|d\nu|.
\end{aligned}\end{equation}
Translating into sc-coordinates, writing $(x,y)$ as local coordinates,
scattering covectors as $\xi\frac{dx}{x^2}+\eta\cdot\frac{dy}{x}$, and
$\gamma=(\gamma^{(1)},\gamma^{(2)})$, with $\gamma^{(1)}$ the $x$
component, $\gamma^{(2)}$ the $y$ component, we obtain
\begin{equation*}\begin{aligned}
&a_{j,\digamma}(x,y,\xi,\eta)\\
&=
\int e^{-\digamma/x} e^{\digamma/\gamma^{(1)}_{x,y,\lambda,\omega}(t)}x^{-j}\lambda^j
(h(y)\omega)^{\otimes(2-j)}\tilde\chi(x,y,\lambda/x,\omega)
(\dot\gamma_{x,y,\lambda,\omega}(t)\otimes\dot\gamma_{x,y,\lambda,\omega}(t))\\
&\qquad\qquad\qquad\qquad\qquad e^{i(\xi/x^2,\eta/x)\cdot(\gamma^{(1)}_{x,y,\lambda,\omega}(t)-x, \gamma^{(2)}_{x,y,\lambda,\omega}(t)-y)}\,dt\,|d\nu|,
\end{aligned}\end{equation*}
as in \eqref{eq:1-form-kernel-sc-form}.
We recall that
$$
\gamma_{x,y,\lambda,\omega}(t)=(x+\lambda t+\alpha
t^2+t^3\Gamma^{(1)}(x,y,\lambda,\omega,t),y+\omega t+t^2\Gamma^{(2)}(x,y,\lambda,\omega,t))
$$
while as a scattering tangent vector, i.e.\ expressed in terms of
$x^2\pa_x$ and $x\pa_y$,
\begin{equation*}\begin{aligned}
\dot\gamma_{x,y,\lambda,\omega}(t)=\gamma^{(1)}_{x,y,\lambda,\omega}(t)^{-1}(\gamma^{(1)}_{x,y,\lambda,\omega}(t)^{-1}(\lambda+2\alpha
t+t^2\tilde\Gamma^{(1)}(x,y,\lambda,\omega,t)),\omega+t\tilde\Gamma^{(2)}(x,y,\lambda,\omega,t)),
\end{aligned}\end{equation*}
with $\Gamma^{(1)},\Gamma^{(2)},\tilde\Gamma^{(1)},\tilde\Gamma^{(2)}$
smooth functions of $x,y,\lambda,\omega,t$.
Here the interval of integration in $t$, i.e.\ $T$,
will be small due to having to deal with the stationary phase
expansion as in the 1-form case.

Still following the argument in the 1-form case, we change the variables of integration to $\hat t=t/x$, and
$\hat\lambda=\lambda/x$, so the $\hat\lambda$ integral is in fact over
a fixed compact interval, but the $\hat t$ one is over $|\hat t|<T/x$
which grows as $x\to 0$.
We recall that the
phase is
$$
\xi(\hat\lambda\hat t+\alpha\hat t^2+x\hat
t^3\Gamma^{(1)}(x,y,x\hat\lambda,\omega,x\hat t))+\eta\cdot(\omega\hat t+x\hat t^2\Gamma^{(2)}(x,y,x\hat\lambda,\omega,x\hat t)),
$$
while the exponential damping factor (which we regard as a Schwartz
function, part of the amplitude, when one regards $\hat t$ as a
variable on $\RR$) is
\begin{equation*}\begin{aligned}
&-\digamma/x+\digamma/\gamma^{(1)}_{x,y,\lambda,\omega}(t)\\
&=
-\digamma(\lambda t+\alpha
t^2+t^3\Gamma^{(1)}(x,y,\lambda,\omega,t))x^{-1}(x+\lambda t+\alpha
t^2+t^3\Gamma^{(1)}(x,y,\lambda,\omega,t))^{-1}\\
&=-\digamma(\hat\lambda\hat t+\alpha\hat
t^2+\hat t^3 x\hat\Gamma^{(1)}(x,y,x\hat\lambda,\omega,x\hat t)),
\end{aligned}\end{equation*}
with $\hat\Gamma^{(1)}$ a smooth function.
The only subtlety in applying the stationary phase lemma is still that the
domain of integration in $\hat t$ is not compact, but this is handled
exactly as in the 1-form setting, for the 1-form vs.\ 2-tensor values
play no role in the argument.

Therefore one can use the standard stationary phase lemma, with the
stationary points (including the Hessian) having exactly the same structure as in the 1-form
setting. Then at $x=0$, the
stationary points of the phase are $\hat t=0$,
$\xi\hat\lambda+\eta\cdot\omega=0$, which remain critical points for
$x$ non-zero due to the $x\hat t^2$ vanishing of the other terms.  When $T$ is small, so $x\hat t$ is small, there are no other critical
points, so these critical points lie on a smooth codimension 2
submanifold of the parameter space.
This means that all terms of the
form $\hat
t x$ will have contributions which are 1 differentiable and 1 decay
order lower than the main terms, while $\hat
t^3 x$ will have contributions which are 2 differentiable and 1 decay
order lower than the main terms, and thus ignorable for our
purposes. Moreover,
when evaluated on tangential-tangential tensors (which is our
interest here),
$\dot\gamma_{x,y,\lambda,\omega}(t)$ can be replaced by
\begin{equation*}\begin{aligned}
\dot\gamma^{(2)}_{x,y,\lambda,\omega}&=\gamma^{(1)}_{x,y,\lambda,\omega}(t)^{-1} (\omega+\hat t
x\tilde\Gamma^{(2)}(x,y,x\hat\lambda,\omega,x\hat t))\\
&=x^{-1}(\omega+\hat t
x\hat\Gamma^{(2)}(x,y,x\hat\lambda,\omega,\hat t))
\end{aligned}\end{equation*}
with $\hat\Gamma^{(2)}$ smooth.

We recall from the one form discussion that $\Njd P^\perp$, $\Njd P^\parallel$, with
$P^\perp$, resp.\ $P^\parallel$, the microlocal orthogonal
projection with principal symbol $\Pi^\perp$, resp.\ $\Pi^\parallel$,
will have principal symbol given by the composition of principal
symbols, but here we need to compute to the subprincipal level. Moreover, as $\Njd$ is written as a left
quantization, if $P^\parallel,P^\perp$ are written as right
quantizations, the full amplitude is the composition of the full symbols,
evaluated at $(x,y)$ (the left, or `outgoing' variable of
$\Njd$), resp.\ $(x',y')$ (the right, or `incoming', variable
of $P^\perp,P^\parallel$). In addition, to get the full left symbol one simply
`left reduces', i.e.\ eliminates $(x',y')$ by the standard Taylor
series argument at the diagonal $(x,y)=(x',y')$. In the Euclidean
notation, to which the scattering algebra reduces to locally, this involves taking
derivatives of $a_{j,\digamma}$ in the momentum variables and
derivatives of the full symbol of $P^\parallel,P^\perp$ in the
position variables, evaluating the latter at $(x',y')=(x,y)$, with
each derivative reducing the symbolic order both in the differential
and in the decay sense by $1$. 

Thus, with $\tilde\chi=\chi(\lambda/x)=\chi(\hat\lambda)$,
we have that on
\begin{equation*}\begin{aligned}
&\Span\{\eta\}^{\perp}\otimes\Span\{\eta\}^{\perp}\ (k=0),
\ \Span\{\eta\}\otimes_s\Span\{\eta\}^{\perp}\ (k=1),\\
&\text{resp.}\ \{\eta\}\otimes\Span\{\eta\}\ (k=2),
\end{aligned}\end{equation*}
writing the sections in $\Span\{\eta\}$ factors explicitly as multiples
of $\frac{\eta}{|\eta|}$,
\begin{equation}\begin{aligned}\label{eq:general-2-tensor-form-2}
a_{j,\digamma}(x,y,\xi,\eta)\\
=
\int 
&e^{i(\xi x^{-2}(\gamma^{(1)}_{x,y,x\hat\lambda,\omega}(x\hat t)-x)+\eta x^{-1}(
  \gamma^{(2)}_{x,y,x\hat\lambda,\omega}(x\hat t)-y))}\\
&\qquad e^{-\digamma(\hat\lambda\hat t+\alpha\hat
t^2)}\hat\lambda^j(h(y)\omega)^{\otimes(2-j)}\chi(\hat\lambda)|\eta|^{-k}
(x\dot\gamma^{(2)}_{x,y,x\hat\lambda,\omega}(x\hat t)\cdot\eta)^k(x\dot\gamma^{(2)}_{x,y,x\hat\lambda,\omega}(x\hat t)\cdot)^{\otimes(2-k)}
\,d\hat t\,d\hat \lambda\,d\omega\\
=\int &e^{i(\xi(\hat\lambda\hat t+\alpha\hat t^2+x\hat
t^3\Gamma^{(1)}(x,y,x\hat\lambda,\omega,x\hat t))+\eta\cdot(\omega\hat
t+x\hat t^2\Gamma^{(2)}(x,y,x\hat\lambda,\omega,x\hat t)))}\\
&\qquad e^{-\digamma(\hat\lambda\hat t+\alpha\hat
t^2)}\hat\lambda^j(h(y)\omega)^{\otimes(2-j)}\chi(\hat\lambda)|\eta|^{-k}((\omega+\hat
t x\hat\Gamma^{(2)}(x,y,x\hat\lambda,\omega,\hat t))\cdot\eta)^k\\
&\qquad\qquad \big((\omega+\hat t x\hat\Gamma^{(2)}(x,y,x\hat\lambda,\omega,\hat t))\cdot\big)^{\otimes(2-k)}\,d\hat t\,d\hat \lambda\,d\omega,
\end{aligned}\end{equation}
up to errors that are $O(x\langle\xi,\eta\rangle^{-1})$ relative to
the a priori order, $(-1,0)$, arising from the $0$-th order symbol in
the oscillatory integral and the 2-dimensional space in which the
stationary phase lemma is applied. Indeed the error can be
improved to $O(x\langle\xi,\eta\rangle^{-2})$ if the composition with
the projections $P^\parallel\otimes P^\parallel$, etc., is written out
as discussed in the paragraph above. However, we will deal with $k=2$,
when this improvement would be important, in a different manner below.

Notice that
$$
(x\dot\gamma^{(2)}_{x,y,x\hat\lambda,\omega}(x\hat t)\cdot\eta)e^{i \eta x^{-1}(
  \gamma^{(2)}_{x,y,x\hat\lambda,\omega}(x\hat t)-y)}=x\pa_{\hat t}e^{i \eta x^{-1}(
  \gamma^{(2)}_{x,y,x\hat\lambda,\omega}(x\hat t)-y)},
$$
when $k\geq
1$, integration by
parts once gives that this is
\begin{equation*}\begin{aligned}
a_{j,\digamma}(x,y,\xi,\eta)\\
=-\int&e^{i \eta x^{-1}(
  \gamma^{(2)}_{x,y,x\hat\lambda,\omega}(x\hat t)-y)}\\
&\qquad x\pa_{\hat t}\Big(e^{i(\xi x^{-2}(\gamma^{(1)}_{x,y,x\hat\lambda,\omega}(x\hat t)-x))}e^{-\digamma(\hat\lambda\hat t+\alpha\hat
t^2)}(\dot\gamma^{(2)}_{x,y,x\hat\lambda,\omega}(x\hat t)\cdot)^{\otimes(2-k)}(x\dot\gamma^{(2)}_{x,y,x\hat\lambda,\omega}(x\hat t)\cdot\eta)^{k-1}\Big)\\
&\qquad\qquad \hat\lambda^j(h(y)\omega)^{\otimes(2-j)}\chi(\hat\lambda)|\eta|^{-k}\,d\hat t\,d\hat \lambda\,d\omega.
\end{aligned}\end{equation*}
If $k=1$, expanding the derivative, if $\ell$ derivatives (so $\ell=0,1$) hit the first
exponential (the phase term) and thus $k-\ell$ the second (the
amplitude) one obtains $\xi^\ell$ times the oscillatory factor
$e^{i(\xi x^{-2}(\gamma^{(1)}_{x,y,x\hat\lambda,\omega}(x\hat t)-x))}$
times a symbol of order $0$ (notice that $x\pa_{\hat t}(x^{-2}(\gamma^{(1)}_{x,y,x\hat\lambda,\omega}(x\hat t)-x))=\hat\lambda+2\alpha \hat t+\hat
t^2 x\tilde\Gamma^{(1)}(x,y,x\hat\lambda,\omega,x\hat t)$). In view of the overall weight
$|\eta|^{-k}$, we deduce that, modulo terms two orders down, in terms
of the differential order, $a_{j,\digamma}$ is a sum of terms of the
form of symbols of order $(-k-1,0)$ times $\xi^\ell$, $0\leq\ell\leq
k$. Notice that here $\eta$ can be replaced by any other element of
$S^{1,0}$ which has the same principal symbol, i.e.\ differs from
$\eta$ by an element $r$ of $S^{0,-1}$, for one then expands
$(x\dot\gamma^{(2)}_{x,y,x\hat\lambda,\omega}(x\hat
t)\cdot(\eta+r))^k$ into terms involving
$(x\dot\gamma^{(2)}_{x,y,x\hat\lambda,\omega}(x\hat t)\cdot\eta)^{k'}$
and $(x\dot\gamma^{(2)}_{x,y,x\hat\lambda,\omega}(x\hat t)\cdot
r)^{k-k'}$; for the latter factors one does not need an integration by
parts argument to get the desired conclusion, while for the former it
proceeds exactly as beforehand.

If $k=2$, there are subtleties because subprincipal terms are
involved. So to complete the analysis, we use that $I\circ \dsymm=0$, so
$Ie^{\digamma/x}\dsymmw=0$; recall that
$$
\dsymmw=e^{-\digamma/x}\dsymm e^{\digamma/x}.
$$
Concretely, we use:

\begin{lemma}
The
microlocal projection to $\Span\{\eta\}\otimes\Span\{\eta\}$,
$P^\parallel\otimes P^\parallel$, given by  Proposition~\ref{prop:microlocal-bundles}, is (modulo microlocally smoothing terms)
$(\dsymmY d_Y)G(\dsymmY d_Y)^*$, where $G\in\Psisc^{-4,0}(X)$ is a parametrix for the microlocally
elliptic operator $(\dsymmY d_Y)^*\dsymmY d_Y$, and where
$d_Y,\dsymmY$ are considered as elements of $\Psisc^{1,0}$ between various {\em scattering}
bundles, e.g.\ $d^Y v=\sum 
(x\pa_{y_j} v)\,\frac{dy_j}{x}$.
\end{lemma}

\begin{proof}
We just need to note that $(\dsymmY d_Y)G(\dsymmY d_Y)^*$ satisfies
all the requirements of Proposition~\ref{prop:microlocal-bundles}.

Indeed,
it has the correct principal symbol,
$\Pi^\parallel\otimes\Pi^\parallel$, as $d_Y$, $\dsymmY$ have
principal symbol $i^{-1} \eta\otimes\cdot$, so $\dsymmY d_Y$ (acting on scalar functions) has principal symbol
$-\eta\otimes\eta$. Thus its adjoint with respect to $g_{\scl}$ has principal symbol given by
evaluation on $-\eta\otimes\eta$, which is regarded as a 2-tensor via
$g_{\scl}$, hence $(\dsymmY d_Y)^*\dsymmY d_Y$ has principal symbol
$|\eta|^4$ (which is microlocally elliptic away from $\eta=0$).
In combination this gives that $(\dsymmY d_Y)G(\dsymmY d_Y)^*$ has
principal symbol $P^\parallel\otimes P^\parallel$.

Note that $G$ is
microlocally formally self-adjoint since $(\dsymmY d_Y)^*\dsymmY d_Y$
is such, so $(\dsymmY d_Y)G(\dsymmY d_Y)^*$ is also microlocally
formally self-adjoint. Finally, using the microlocal parametrix property of $G$,
$$
\big((\dsymmY d_Y)G(\dsymmY d_Y)^*\big)^2=(\dsymmY d_Y)\big(G(\dsymmY
d_Y)^*(\dsymmY d_Y\big))G(\dsymmY d_Y)^*
$$
microlocally differs from $(\dsymmY d_Y)G(\dsymmY d_Y)^*$ by a
smoothing operator.

This shows that all the properties in
Proposition~\ref{prop:microlocal-bundles} are satisfied, completing
the proof of the lemma.
\end{proof}

As a consequence of this lemma, the
computation on the range of $P^\parallel\otimes P^\parallel$,
amounts to that on the range of $\dsymmY d_Y$. Now, a computation gives that on
tangential (scattering) forms, such as those in the range of $d_Y$, when $g$ is in
the normal gauge,
$$
\dsymmw u=\big(e^{-\digamma/x}(x^2\pa_x +x^2a
)e^{\digamma/x}u\big)\otimes_s \frac{dx}{x^2}+\dsymmY u
$$
for suitable smooth $a$, which means that
$$
I e^{\digamma/x}\dsymmY u=-I e^{\digamma/x}\Big(\big(e^{-\digamma/x}(x^2\pa_x +x^2a
)e^{\digamma/x}u\big)\otimes_s \frac{dx}{x^2}\Big).
$$
Composing with $d_Y$ from the right, i.e.\ taking $u=d_Y v$, and commuting $e^{-\digamma/x}(x^2\pa_x +x^2a
)e^{\digamma/x}$ through $d_Y$, we have that
\begin{equation}\label{eq:convert-dY-normal-dsymm}
I e^{\digamma/x}\dsymmY d_Y v=-I e^{\digamma/x}\Big(\big(d_Y\big(e^{-\digamma/x}(x^2\pa_x +x^2a
)e^{\digamma/x}\big)+x^2\tilde a\big)v\otimes_s \frac{dx}{x^2}\Big),
\end{equation}
with $\tilde a$ smooth. The $x^2\tilde a$ term is two orders lower
than the a priori order, and thus completely negligible for our
purposes. (Even if a one order lower term had been created, it would
not cause any issues: one would either have a $x^2D_x$ factor or a
$d^Y$ factor left, modulo two orders lower terms, and each of these
can be handled as above.)  The advantage
of this rewriting is that we can work with $I e^{\digamma/x}(\cdot\otimes_s \frac{dx}{x^2} )d_Y$, and
we only need to be concerned about it at the principal symbol level;
we obtain an extra factor of $\xi+i\digamma-ix^2\tilde a$ after the composition.
Correspondingly,
$A_{j,\digamma}(\cdot\otimes_s \frac{dx}{x^2} )d_Y$ has principal symbol given by, up to a non-zero
constant factor,
\begin{equation*}\begin{aligned}
&b_{j,\digamma}(x,y,\xi,\eta)\\
&=
\int e^{-\digamma/x} e^{\digamma/\gamma^{(1)}_{x,y,\lambda,\omega}(t)}x^{-j}\lambda^j
(h(y)\omega)^{\otimes(2-j)}\tilde\chi(x,y,\lambda/x,\omega)
(x^2\dot\gamma^{(1)}_{x,y,\lambda,\omega}(t))(x\dot\gamma^{(2)}_{x,y,\lambda,\omega}(t)\cdot
\eta)\\
&\qquad\qquad\qquad\qquad\qquad e^{i(\xi/x^2,\eta/x)\cdot(\gamma^{(1)}_{x,y,\lambda,\omega}(t)-x, \gamma^{(2)}_{x,y,\lambda,\omega}(t)-y)}\,dt\,|d\nu|;
\end{aligned}\end{equation*}
here $x^2\dot\gamma^{(1)}_{x,y,\lambda,\omega}(t)$ appears due to
$\otimes_s \frac{dx}{x^2}$ above in \eqref{eq:convert-dY-normal-dsymm}.
This gives
\begin{equation*}\begin{aligned}
b_{j,\digamma}(x,y,\xi,\eta)\\
=
\int 
&e^{i(\xi x^{-2}(\gamma^{(1)}_{x,y,x\hat\lambda,\omega}(x\hat t)-x)+\eta x^{-1}(
  \gamma^{(2)}_{x,y,x\hat\lambda,\omega}(x\hat t)-y))}\\
&\quad e^{-\digamma(\hat\lambda\hat t+\alpha\hat
t^2)}\hat\lambda^j(h(y)\omega)^{\otimes(2-j)}\chi(\hat\lambda)|\eta|^{-1}
(x\dot\gamma^{(2)}_{x,y,x\hat\lambda,\omega}(x\hat t)\cdot\eta)(x^2\dot\gamma^{(1)}_{x,y,x\hat\lambda,\omega}(x\hat t)\cdot)
\,d\hat t\,d\hat \lambda\,d\omega.
\end{aligned}\end{equation*}
This can be handled exactly as above, so an integration by parts as
above in $\hat t$ gives
\begin{equation*}\begin{aligned}
b_{j,\digamma}(x,y,\xi,\eta)\\
=-\int&e^{i \eta x^{-1}(
  \gamma^{(2)}_{x,y,x\hat\lambda,\omega}(x\hat t)-y)}\\
&\qquad x\pa_{\hat t}\Big(e^{i(\xi x^{-2}(\gamma^{(1)}_{x,y,x\hat\lambda,\omega}(x\hat t)-x))}e^{-\digamma(\hat\lambda\hat t+\alpha\hat
t^2)}(x^2\dot\gamma^{(1)}_{x,y,x\hat\lambda,\omega}(x\hat t))\Big)\\
&\qquad\qquad \hat\lambda^j(h(y)\omega)^{\otimes(2-j)}\chi(\hat\lambda)|\eta|^{-1}\,d\hat t\,d\hat \lambda\,d\omega. 
\end{aligned}\end{equation*}
Again the derivative either produces a $\xi$ factor, or a term
which is one order lower than the a priori order.
Taking into account to the extra factor of
$x^2D_x+i\digamma-ix^2\tilde a$ we had, as well as $G(\dsymmY
d_Y)^*\in\Psisc^{-2,0}$, and also the same continuity properties as in
the 1-form setting, this proves:

\begin{prop}\label{prop:N-digamma-structure-basic}
Let $\xi_\digamma=\xi+i\digamma$.
The full symbol of the operator
$$
N_\digamma=\begin{pmatrix}N_{0,\digamma}\\N_{1,\digamma}\\N_{2,\digamma}\end{pmatrix},
$$
with domain restricted to
tangential-tangential
tensors, relative to the $\Span\{\eta\}$-based decomposition of the domain,
has the form
$$
\begin{pmatrix}
a_{00}^{(0)}&a_{01}^{(1)}\xi_\digamma+a_{01}^{(0)}&a_{02}^{(2)}\xi_\digamma^2+a_{02}^{(1)}\xi_\digamma+a_{02}^{(0)}\\
a_{10}^{(0)}&a_{11}^{(1)}\xi_\digamma+a_{11}^{(0)}&a_{12}^{(2)}\xi_\digamma^2+a_{12}^{(1)}\xi_\digamma+a_{12}^{(0)}\\
a_{20}^{(0)}&a_{21}^{(1)}\xi_\digamma+a_{21}^{(0)}&a_{22}^{(2)}\xi_\digamma^2+a_{22}^{(1)}\xi_\digamma+a_{22}^{(0)}
\end{pmatrix},
$$
where $a_{ij}^{(k)}\in S^{-1-j,0}$ for all $i,j,k$.

Furthermore, $a_{ij}^{(k)}\in S^{-1-j,0}$ depend continuously on the
metric $g$ (with the $\CI$ topology on $g$) as long as $g$ is
$C^k$-close (for suitable $k$) to a background metric $g_0$ satisfying
the strict convexity assumptions on
the metric, the boundary and $x$.
\end{prop}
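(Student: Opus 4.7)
The proposition is a compilation of three cases indexed by the column $j\in\{0,1,2\}$, which records the number of $\Span\{\eta\}$-factors in the input tangential-tangential tensor. My plan is to analyze each column separately, starting from the oscillatory integral representation \eqref{eq:general-2-tensor-form-2} of $a_{j,\digamma}$ as a left quantization, and track how many factors of $\xi_\digamma$ each integration by parts in $\hat t$ can produce. The underlying mechanism is that every factor $(x\dot\gamma^{(2)}\cdot\eta)$ appearing in the amplitude (one for each $\Span\{\eta\}$-component of the domain, normalized by a compensating $|\eta|^{-1}$) can be rewritten as $x\pa_{\hat t}$ acting on the $\eta$-phase $e^{i\eta x^{-1}(\gamma^{(2)}-y)}$, and then integrated by parts; each such integration either produces a factor of $\xi_\digamma$ (from $\pa_{\hat t}$ hitting the $\xi$-phase, since $x\pa_{\hat t}(x^{-2}(\gamma^{(1)}-x))=\hat\lambda+2\alpha\hat t+O(x)$ is a $0$th-order symbol) or a contribution one differential order lower with the same decay order.

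For $j=0$ there is nothing to do: the a priori membership $N_{i,\digamma}\in\Psisc^{-1,0}$ gives $a_{i0}^{(0)}\in S^{-1,0}$ directly. For $j=1$, one integration by parts as above splits the amplitude into a $\xi_\digamma$-coefficient plus a lower-order remainder, giving the form $a_{i1}^{(1)}\xi_\digamma + a_{i1}^{(0)}$ with both coefficients in $S^{-2,0}$. The main obstacle is $j=2$: a naive double integration by parts forces one to track cross-terms at the subprincipal $S^{-3,-1}$ level, where contributions from the Hessian of the stationary phase expansion and from differentiating $\alpha,\Gamma^{(k)}$ mix in ways that are genuinely delicate, not merely routine. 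The idea for circumventing this, already indicated in the computation preceding the statement, is to exploit $I\circ d^s = 0$: modulo microlocally smoothing errors, write the projector onto $\Span\{\eta\}\otimes\Span\{\eta\}$ as $(\dsymmY d_Y)\,G\,(\dsymmY d_Y)^*$ with $G\in\Psisc^{-4,0}$, and use \eqref{eq:convert-dY-normal-dsymm} to trade each factor of $\dsymmY d_Y$ for a normal $\frac{dx}{x^2}$-valued term involving $e^{-\digamma/x}(x^2\pa_x+x^2a)e^{\digamma/x}$ acting before $I e^{\digamma/x}$. This cleanly factors out one $\xi_\digamma$, reducing the remaining computation to a $j=1$-type calculation that produces the second $\xi_\digamma$ factor; the result is the claimed polynomial structure $a_{i2}^{(2)}\xi_\digamma^2+a_{i2}^{(1)}\xi_\digamma+a_{i2}^{(0)}$ with coefficients in $S^{-3,0}$.

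The continuous dependence of each $a_{ij}^{(k)}$ on $g$, in the indicated symbol topologies, follows because $\gamma^{(1,2)}$, $\alpha$, and $\Gamma^{(1,2)}$ depend continuously in $C^\infty$ on the metric so long as $g$ stays in a $C^k$-neighborhood of the background metric $g_0$, and all the symbolic operations used — the oscillatory-integral representation, the integration-by-parts expansions, the parametrix $G$, and the algebraic manipulations with $\dsymmY d_Y$ — are continuous in $g$ in the sense required. Assembling the three column-wise statements into a single matrix then yields the proposition; the critical step, and the only one that is not a direct consequence of the symbol computations already carried out in the preceding subsections, is the $j=2$ reduction via $I\circ d^s=0$.
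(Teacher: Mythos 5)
Your proposal is correct and follows essentially the same route as the paper: the trivial $j=0$ case from $N_{i,\digamma}\in\Psisc^{-1,0}$, a single integration by parts in $\hat t$ converting each $(x\dot\gamma^{(2)}\cdot\eta)$ factor into $\xi_\digamma$ or a lower-order term for $j=1$, and for $j=2$ the same key device used in the paper, namely $I\circ d^s=0$ together with realizing the $\Span\{\eta\}\otimes\Span\{\eta\}$ projector as $(\dsymmY d_Y)G(\dsymmY d_Y)^*$ and invoking \eqref{eq:convert-dY-normal-dsymm} to extract one explicit $\xi_\digamma$ factor before a final $j=1$-type integration by parts. The continuity in $g$ is also handled as in the paper, via the continuous dependence of $\gamma$, $\alpha$, $\Gamma^{(1,2)}$ and all symbolic operations on the metric.
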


In
addition, at $x=0$ we have
\begin{equation*}\begin{aligned}
&a_{j,\digamma}(0,y,\xi,\eta)\\
&=
\int e^{i(\xi(\hat\lambda\hat t+\alpha\hat t^2)+\eta\cdot(\omega\hat t))}e^{-\digamma(\hat\lambda\hat t+\alpha\hat
t^2)}\hat\lambda^j(h(y)\omega)^{\otimes(2-j)}\chi(\hat\lambda)|\eta|^{-k} (\omega\cdot\eta)^k(\omega\cdot)^{\otimes(2-k)}\,d\hat t\,d\hat
\lambda\,d\omega\\
&=
\int e^{i((\xi+i\digamma)(\hat\lambda\hat t+\alpha\hat t^2)+\eta\cdot(\omega\hat t))}\hat\lambda^j(h(y)\omega)^{\otimes(2-j)}\chi(\hat\lambda) |\eta|^{-k}  (\omega\cdot\eta)^k(\omega\cdot)^{\otimes(2-k)}\,d\hat t\,d\hat
\lambda\,d\omega\\
&=
\int_{\sphere^{n-2}}|\eta|^{-k}  (\omega\cdot\eta)^k(h(y)\omega)^{\otimes(2-j)}(\omega\cdot)^{\otimes(2-k)}\Big(\int e^{i((\xi+i\digamma)(\hat\lambda\hat t+\alpha\hat t^2)+(\eta\cdot\omega)\hat t)}\hat\lambda^j\chi(\hat\lambda) \,d\hat t\,d\hat
\lambda\Big)\,d\omega.
\end{aligned}\end{equation*}
We recall that $\alpha=\alpha(x,y,\lambda,\omega)$ so at $x=0$,
$\alpha(0,y,0\cdot\hat\lambda,\omega)=\alpha(0,y,0,\omega)$, and it is a
quadratic form in $\omega$.

{\em Again, it is notationally convenient to assume, as we do from now on, that at $y$ at which we
perform the computations below, $h$ is the Euclidean metric.} As in the
one form setting, this does not affect even the integration by parts
arguments below since $h(y)$ would be a prefactor of the integrals.

We now apply the projection $P^\perp$ (quantization of the projection to
$\Span\{\eta\}^\perp$ as in Proposition~\ref{prop:microlocal-bundles}) and its tensor powers from the left: for the
tangential-tangential, tangential-normal, resp.\ normal-normal
components we apply $P^\perp\otimes P^\perp$, resp.\ $P^\perp$, resp.\
$\Id$, which means for the symbol computation (we are working at $x=0$!) that we compose with
$\Pi^\perp\otimes \Pi^\perp$, resp.\ $\Pi^\perp\otimes_s I$, resp.\
$I$ from the left. This replaces $\omega^{2-j}$ by
$(\omega^\perp)^{2-j}$ with the result
\begin{equation*}\begin{aligned}
&\tilde a_{j,\digamma}(0,y,\xi,\eta)\\
&=
\int_{\sphere^{n-2}}|\eta|^{-k}  (\omega\cdot\eta)^k(\omega^\perp)^{\otimes(2-j)}(\omega^\perp\cdot)^{\otimes(2-k)}\Big(\int e^{i((\xi+i\digamma)(\hat\lambda\hat t+\alpha\hat t^2)+(\eta\cdot\omega)\hat t)}\hat\lambda^j\chi(\hat\lambda) \,d\hat t\,d\hat
\lambda\Big)\,d\omega,
\end{aligned}\end{equation*}
where we used that $(\omega\cdot)^{\otimes(2-k)}$ is being applied to
the $\eta$-orthogonal factors, so it may be written as
$(\omega^\perp\cdot)^{\otimes(2-k)}$. This means that at $\xi=0$ the overall
parity of the integrand in $\omega^\perp$ is $(-1)^{j+k}$ apart from
the appearance of $\omega^\perp$ in the exponent (via $\alpha$) of
$e^{-\digamma(\hat\lambda\hat t+\alpha\hat t^2)}$. The latter is due to the
$\hat t^2$ prefactor of $\alpha$, giving quadratic vanishing at the
critical set, only contributes one order lower
terms, so modulo these the integral vanishes when $j$ and $k$ have the
opposite parity. This proves that the first two rows of $N_\digamma$,
when composed with the projections as described, have the following
form:

\begin{prop}\label{prop:N-digamma-structure-proj}
Let $\xi_\digamma=\xi+i\digamma$.
The symbol of the operator 
$$
\begin{pmatrix}(P^\perp\otimes
  P^\perp)N_{0,\digamma}\\(P^\perp\otimes_s I)N_{1,\digamma}\end{pmatrix},
$$
with domain restricted to
tangential-tangential
tensors, relative to the $\Span\{\eta\}$-based decomposition of the
domain, at $x=0$
has the form
$$
\begin{pmatrix}
a_{00}^{(0)}&a_{01}^{(1)}\xi_\digamma+a_{01}^{(0)}&a_{02}^{(2)}\xi_\digamma^2+a_{02}^{(1)}\xi_\digamma+a_{02}^{(0)}\\
a_{10}^{(1)}\xi_\digamma+a_{10}^{(0)}&a_{11}^{(1)}\xi_\digamma+a_{11}^{(0)}&a_{12}^{(2)}\xi_\digamma^2+a_{12}^{(1)}\xi_\digamma+a_{12}^{(0)}\\
\end{pmatrix},
$$
where $a_{ij}^{(k)}\in S^{-1-\max(i,j),0}$ for all $i,j,k$.
\end{prop}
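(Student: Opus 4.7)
My plan is to adapt the parity argument used for the one-form analogue (see the discussion preceding Proposition~\ref{prop:N-digamma-1-form-structure-proj}), using the explicit symbolic formula recorded in Proposition~\ref{prop:N-digamma-structure-basic} and the closed-form $x=0$ expression displayed just before the proposition statement. The only content in Proposition~\ref{prop:N-digamma-structure-proj} not already contained in Proposition~\ref{prop:N-digamma-structure-basic} is the sharpening, for entries in which the codomain index $i$ exceeds the domain index $j$, of the a priori bound $a_{ij}^{(k)}\in S^{-1-j,0}$ to $a_{ij}^{(k)}\in S^{-1-\max(i,j),0}$. Since the statement concerns only rows $i=0,1$, the sole entry that genuinely changes is $(i,j)=(1,0)$, where I must both lower $a_{10}^{(0)}$ to $S^{-2,0}$ and exhibit a new $a_{10}^{(1)}\xi_\digamma$ term with $a_{10}^{(1)}\in S^{-2,0}$.

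The mechanism is the $\omega^\perp$-parity cancellation. After left-composition with $P^\perp\otimes_s I$, the tangential-normal component of the $\omega$-integrand at $x=0$ carries the prefactor $(\omega^\perp)^{\otimes(2-j)}(\omega^\perp\cdot)^{\otimes(2-k)}$, which has parity $(-1)^{j+k}$ under $\omega^\perp\mapsto-\omega^\perp$, while $(\omega\cdot\eta)^k$ and the inner $\hat t,\hat\lambda$-integrand depend only on $\omega^\parallel$. The one obstruction to clean parity cancellation is the $\omega^\perp$-dependence entering through $\alpha=\alpha(0,y,0,\omega)$ in the factor $e^{-\digamma\alpha\hat t^2}$, where the main care is needed: since $\alpha$ is multiplied by $\hat t^2$, which vanishes to second order at the critical point $\hat t=0$ of the stationary-phase problem in $(\hat t,\omega^\parallel)$ with its nondegenerate Hessian $\begin{pmatrix}2\xi_\digamma\alpha & |\eta| \\ |\eta| & 0\end{pmatrix}$, this $\omega^\perp$-dependence enters only at first subleading order in the $|\eta|^{-1}$-expansion. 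Hence for $(j,k)=(1,0)$ the leading-order contribution to $\tilde a_{1,\digamma}(0,y,0,\eta)$ vanishes by odd parity, with a residual controlled in $S^{-2,0}$.

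To assemble the claimed decomposition, I then Taylor expand in $\xi$ about $\xi=0$,
$$\tilde a_{1,\digamma}(0,y,\xi,\eta)=\tilde a_{1,\digamma}(0,y,0,\eta)+\xi\int_0^1(\pa_\xi\tilde a_{1,\digamma})(0,y,s\xi,\eta)\,ds,$$
noting that one $\xi$-derivative drops the differential order of a symbol in $S^{-1,0}$ by one, so the integrated remainder coefficient lies in $S^{-2,0}$. Writing $\xi=\xi_\digamma-i\digamma$ and absorbing $-i\digamma$ times that remainder into the $\xi$-independent piece (which remains in $S^{-2,0}$) produces the form $a_{10}^{(1)}\xi_\digamma+a_{10}^{(0)}$ with both coefficients in $S^{-2,0}$, as claimed. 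All other matrix entries coincide with those of Proposition~\ref{prop:N-digamma-structure-basic} since $\max(i,j)=j$ whenever $i\leq j$, so no additional work is required there; the principal remaining hazard is the standard stationary-phase bookkeeping needed to verify that the $\alpha$-correction is indeed cleanly one differential order below the leading term.
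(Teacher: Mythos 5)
Your proposal is correct and takes essentially the same route as the paper: the paper also reduces the statement to the $\omega^\perp$-parity cancellation at $\xi=0$ after composing with the projections, noting that the only obstruction is the $\omega^\perp$-dependence of $\alpha$ in the exponential factor, which is harmless because its $\hat t^2$ prefactor vanishes quadratically at the critical set and hence contributes only one order lower. Your explicit Taylor expansion in $\xi$ (used in the conic region near $\xi=0$ at fiber infinity, where the $\Span\{\eta\}$-based decomposition is meaningful) simply makes precise the step the paper leaves implicit in converting that vanishing into the form $a_{10}^{(1)}\xi_\digamma+a_{10}^{(0)}$ with both coefficients in $S^{-2,0}$.
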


We can compute the leading terms quite easily: for $j=k=0$ this is
\begin{equation*}\begin{aligned}
&\tilde a_{0,\digamma}(0,y,\xi,\eta)\\
&=
\int_{\sphere^{n-2}}(\omega^\perp)^{\otimes 2}(\omega^\perp\cdot)^{\otimes
  2}\Big(\int e^{i((\xi+i\digamma)(\hat\lambda\hat t+\alpha\hat t^2)+(\eta\cdot\omega)\hat t)}\chi(\hat\lambda) \,d\hat t\,d\hat
\lambda\Big)\,d\omega\\
&=
\int_{\sphere^{n-2}}(\omega^\perp)^{\otimes 2}(\omega^\perp\cdot)^{\otimes
  2}\Big(\int e^{i((\xi\hat\lambda\hat t+\alpha\hat t^2)+(\eta\cdot\omega)\hat t)}e^{-\digamma(\hat\lambda\hat t+\alpha\hat t^2)}\chi(\hat\lambda) \,d\hat t\,d\hat
\lambda\Big)\,d\omega.
\end{aligned}\end{equation*}
At the critical points of the phase, $\hat t=0$,
$\xi\hat\lambda+\eta\cdot\omega=0$, where $\omega^\perp$ and
$\hat\lambda$ are variables along the critical set, this gives, up to an
overall elliptic factor,
$$
\int_{\sphere^{n-3}}(\omega^\perp)^{\otimes 2}(\omega^\perp\cdot)^{\otimes
  2}\Big(\int\chi(\hat\lambda) \,d\hat
\lambda\Big)\,d\omega^\perp,
$$
which is elliptic for $\chi\geq 0$ with $\chi(0)>0$.
On the other hand, for $j=k=1$, 
\begin{equation*}\begin{aligned}
&\tilde a_{1,\digamma}(0,y,\xi,\eta)\\
&=
\int_{\sphere^{n-2}}|\eta|^{-1}  (\omega\cdot\eta)(\omega^\perp) (\omega^\perp\cdot)\Big(\int e^{i((\xi+i\digamma)(\hat\lambda\hat t+\alpha\hat t^2)+(\eta\cdot\omega)\hat t)}\hat\lambda\chi(\hat\lambda) \,d\hat t\,d\hat
\lambda\Big)\,d\omega. 
\end{aligned}\end{equation*}
Writing $i(\omega\cdot\eta)e^{i (\eta\cdot\omega)\hat t}=\pa_{\hat
  t}e^{i (\eta\cdot\omega)\hat t}$ and integrating by parts yields
\begin{equation}\begin{aligned}\label{eq:2-tensor-11-entry-comp-1}
&\tilde a_{1,\digamma}(0,y,\xi,\eta)\\
&=
i\int_{\sphere^{n-2}}|\eta|^{-1}  (\omega^\perp) (\omega^\perp\cdot)\Big(\int e^{i((\xi+i\digamma)(\hat\lambda\hat t+\alpha\hat t^2)+(\eta\cdot\omega)\hat t)}(\xi+i\digamma)(\hat\lambda+2\alpha\hat t)\hat\lambda\chi(\hat\lambda) \,d\hat t\,d\hat
\lambda\Big)\,d\omega\\
&=
i|\eta|^{-1} (\xi+i\digamma)
\int_{\sphere^{n-2}} (\omega^\perp) (\omega^\perp\cdot)\Big(\int e^{i((\xi+i\digamma)(\hat\lambda\hat t+\alpha\hat t^2)+(\eta\cdot\omega)\hat t)}(\hat\lambda+2\alpha\hat t)\hat\lambda\chi(\hat\lambda) \,d\hat t\,d\hat
\lambda\Big)\,d\omega,
\end{aligned}\end{equation}
and now the integral (the factor after $|\eta|^{-1}(\xi+i\digamma) $) at the critical points of the phase $\hat t=0$,
$\xi\hat\lambda+\eta\cdot\omega=0$, gives, up to an
overall elliptic factor,
$$
\int_{\sphere^{n-3}}(\omega^\perp) (\omega^\perp\cdot)\Big(\int\hat\lambda^2\chi(\hat\lambda) \,d\hat
\lambda\Big)\,d\omega^\perp,
$$
i.e.\ for the same reasons as in the $j=k=0$ case above, when
$\chi\geq 0$, $\chi(0)>0$, \eqref{eq:2-tensor-11-entry-comp-1} is an elliptic
multiple of $|\eta|^{-1}(\xi+i\digamma)$!

Finally, when $j=0$, $k=1$, we have
\begin{equation*}\begin{aligned}
&\tilde a_{0,\digamma}(0,y,\xi,\eta)\\
&=
\int_{\sphere^{n-2}}|\eta|^{-1}  (\omega^\perp)^{\otimes 2}(\omega^\perp\cdot) (\omega\cdot\eta)\Big(\int e^{i((\xi+i\digamma)(\hat\lambda\hat t+\alpha\hat t^2)+(\eta\cdot\omega)\hat t)}\chi(\hat\lambda) \,d\hat t\,d\hat
\lambda\Big)\,d\omega. 
\end{aligned}\end{equation*}
This, using  $i(\omega\cdot\eta)e^{i (\eta\cdot\omega)\hat t}=\pa_{\hat
  t}e^{i (\eta\cdot\omega)\hat t}$ as above, gives
\begin{equation}\begin{aligned}\label{eq:2-tensor-01-entry-comp-1}
&\tilde a_{0,\digamma}(0,y,\xi,\eta)\\
&=
i|\eta|^{-1} (\xi+i\digamma)
\int_{\sphere^{n-2}} (\omega^\perp)^{\otimes 2}(\omega^\perp\cdot) \Big(\int e^{i((\xi+i\digamma)(\hat\lambda\hat t+\alpha\hat t^2)+(\eta\cdot\omega)\hat t)}(\hat\lambda+2\alpha\hat t)\chi(\hat\lambda) \,d\hat t\,d\hat
\lambda\Big)\,d\omega. 
\end{aligned}\end{equation}
Now the leading term of the integral, due to the contributions from the critical points,
is 
$$
\int_{\sphere^{n-3}}(\omega^\perp)^{\otimes 2}(\omega^\perp\cdot) \Big(\int\hat\lambda\chi(\hat\lambda) \,d\hat
\lambda\Big)\,d\omega^\perp,
$$
which vanishes for $\chi$ even, so for such $\chi$, the $(0,1)$ entry
has principal symbol which at $x=0$ is a multiple of $\xi_\digamma$,
and the multiplier is in $S^{-3,0}$ (one order lower than the
previous results).

In summary, we have the following result:

\begin{prop}\label{prop:N-digamma-structure}
Suppose $\chi\geq 0$, $\chi(0)>0$, $\chi$ even.
Let $\xi_\digamma=\xi+i\digamma$.
The full symbol of the operator 
$$
\begin{pmatrix}(P^\perp\otimes
  P^\perp)N_{0,\digamma}\\(P^\perp\otimes_s I)N_{1,\digamma}\end{pmatrix},
$$
with domain restricted to
tangential-tangential
tensors, relative to the $\Span\{\eta\}$-based decomposition of the
domain, at $x=0$
has the form
$$
\begin{pmatrix}
a_{00}^{(0)}&a_{01}^{(1)}\xi_\digamma+a_{01}^{(0)}&a_{02}^{(2)}\xi_\digamma^2+a_{02}^{(1)}\xi_\digamma+a_{02}^{(0)}\\
a_{10}^{(1)}\xi_\digamma+a_{10}^{(0)}&a_{11}^{(1)}\xi_\digamma+a_{11}^{(0)}&a_{12}^{(2)}\xi_\digamma^2+a_{12}^{(1)}\xi_\digamma+a_{12}^{(0)}\\
\end{pmatrix},
$$
where $a_{ij}^{(k)}\in S^{-1-\max(i,j),0}$ for all $i,j,k$, and $a_{00}^{(0)}$ and
$a_{11}^{(1)}$ (these are the multipliers of the
leading terms along the `diagonal') are elliptic in $S^{-1,0}$ and
$S^{-2,0}$, respectively and $a_{01}^{(0)},a_{11}^{(0)}\in S^{-2,-1}$, i.e.\ in
addition to the above statements vanish at $x=0$, and $a_{01}^{(1)}\in
S^{-3,0}$.
\end{prop}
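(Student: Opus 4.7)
The plan is to assemble the three explicit symbol computations that were carried out in the paragraphs immediately preceding the proposition, and to verify that the claimed orders and ellipticity properties follow from them. The overall matrix form and the generic bounds $a_{ij}^{(k)}\in S^{-1-\max(i,j),0}$ have already been established by Proposition~\ref{prop:N-digamma-structure-proj}; what remains is to sharpen the specific entries mentioned, all of which concerns the boundary principal symbol, i.e., the restriction to $x=0$, where the oscillatory integral can be evaluated by stationary phase on the codimension-two critical set $\{\hat t=0,\ \xi\hat\lambda+\eta\cdot\omega=0\}$, whose Hessian (in $\hat t$ and the $\hat\eta$-direction of $\omega$) is non-degenerate.

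First I would handle the $(0,0)$ entry by plugging $j=k=0$ into $\tilde a_{j,\digamma}(0,y,\xi,\eta)$. The stationary phase contribution along the critical set is (up to an overall elliptic factor) the tensor $\int_{\sphere^{n-3}}(\omega^\perp)^{\otimes 2}\,(\omega^\perp\cdot)^{\otimes 2}\,d\omega^\perp$ multiplied by $\int \chi(\hat\lambda)\,d\hat\lambda$. Since $\chi\geq 0$ and $\chi(0)>0$, and since rank-one tangential-tangential tensors $\omega^\perp\otimes\omega^\perp$ span all of $\Span\{\eta\}^\perp\otimes\Span\{\eta\}^\perp$ as $\omega^\perp$ varies, this positive superposition is an elliptic element of $\mathrm{End}(\Span\{\eta\}^\perp\otimes\Span\{\eta\}^\perp)$, giving ellipticity of $a_{00}^{(0)}$ in $S^{-1,0}$.

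Next I would analyze the $(1,1)$ entry ($j=k=1$). Writing $i(\omega\cdot\eta)e^{i(\eta\cdot\omega)\hat t}=\pa_{\hat t}e^{i(\eta\cdot\omega)\hat t}$ and integrating by parts in $\hat t$ produces a prefactor $(\xi+i\digamma)=\xi_\digamma$ (from differentiating the $\xi$-piece of the phase) together with an amplitude $(\hat\lambda+2\alpha\hat t)\hat\lambda\chi(\hat\lambda)$; the resulting integral, evaluated on the critical set, is an elliptic multiple of $\int\hat\lambda^2\chi(\hat\lambda)\,d\hat\lambda\,\int(\omega^\perp)(\omega^\perp\cdot)\,d\omega^\perp$, which is nonzero since $\chi\geq 0$, $\chi(0)>0$. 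Hence $\tilde a_{1,\digamma}|_{x=0}$ is an elliptic multiple of $|\eta|^{-1}\xi_\digamma$, yielding both the ellipticity of $a_{11}^{(1)}$ in $S^{-2,0}$ and the vanishing of $a_{11}^{(0)}$ at $x=0$, i.e.\ $a_{11}^{(0)}\in S^{-2,-1}$. The same integration by parts applied to the $(0,1)$ entry ($j=0,k=1$) extracts a factor of $\xi_\digamma$, but the leading $\omega^\perp$-integrand becomes $\int\hat\lambda\chi(\hat\lambda)\,d\hat\lambda$, which vanishes under the hypothesis that $\chi$ is even. This gains one additional differential order, giving $a_{01}^{(1)}\in S^{-3,0}$ and $a_{01}^{(0)}\in S^{-2,-1}$.

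The only step that requires genuine care is bookkeeping the error terms from stationary phase: the deviation of the phase and amplitude from their $x=0$ values introduces contributions of the form $x\hat t\cdot(\text{smooth})$ and $x\hat t^3\cdot(\text{smooth})$, and I would need to confirm that on application of stationary phase these produce symbols that are one (respectively two) differential orders lower and one decay order lower, hence can be absorbed into the lower order pieces $a_{ij}^{(0)}$ or $a_{ij}^{(k)}$ with the $x$-vanishing factor. This bookkeeping is the main potential obstacle, but it is already carried out implicitly in the preceding symbol-class argument based on the non-degeneracy of the Hessian, so assembling the three computations above and invoking those order estimates completes the proof.
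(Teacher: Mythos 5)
Your proposal is correct and follows essentially the same route as the paper: the general matrix structure is taken from Proposition~\ref{prop:N-digamma-structure-proj}, and the sharpened claims are obtained from exactly the three boundary-symbol computations at the critical set $\{\hat t=0,\ \xi\hat\lambda+\eta\cdot\omega=0\}$, with the integration by parts in $\hat t$ extracting the $\xi_\digamma$ factor for the $(1,1)$ and $(0,1)$ entries and the evenness of $\chi$ killing the leading $(0,1)$ term. The error bookkeeping you flag is indeed the $x\hat t$ and $x\hat t^3$ stationary-phase order-counting already carried out earlier in the section, so nothing further is needed.
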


The problem with this result is that we have too few equations: we
would have needed to prove some non-degeneracy properties of an
operator like $L_2$ to have a self-contained
result.
We deal with this by using our results in the twisted
solenoidal gauge as a background estimate. When doing so, the last
column (corresponding $u_2$) can be regarded as forcing based on the
background estimate. This is not the case for the first two columns,
however, so it is useful to note that they can be diagonalized:

\begin{lemma}\label{lemma:N-digamma-improved}
The first two columns of $\begin{pmatrix}(P^\perp\otimes
  P^\perp)N_{0,\digamma}\\(P^\perp\otimes_s
  I)N_{1,\digamma}\end{pmatrix}$ expanded relative to the $\Span\{\eta\}$-based decomposition of the
domain, can be multiplied from the left by an operator with symbol
$\begin{pmatrix}1&0\\b^{(1)}\xi_\digamma+b^{(0)}&1\end{pmatrix}$ and
from the right by an operator with symbol of the form
$\begin{pmatrix}1&c^{(1)}\xi_\digamma+c^{(0)}\\0&1\end{pmatrix}$
with $b^{(j)}$ and $c^{(j)}$ in $S^{-1,0}$, such that the result has
principal symbol of the form
$$
\begin{pmatrix}
\tilde a_{00}^{(0)}&0\\
0&\tilde a_{11}^{(1)}\xi_\digamma+\tilde a_{11}^{(0)}\\
\end{pmatrix},
$$
with $\tilde a_{00}^{(0)}=a_{00}^{0}$ elliptic in $S^{-1,0}$, $\tilde
a_{11}^{(1)}\in S^{-2,0}$ elliptic, $\tilde a_{11}^{(0)}\in
S^{-2,-1}$.

Furthermore, $\tilde a_{00}^{(0)},\tilde a_{11}^{(k)}$ depend
continuously (in the indicated spaces) on the
metric $g$ (with the $\CI$ topology on $g$) as long as $g$ is
$C^k$-close (for suitable $k$) to a background metric $g_0$ satisfying
the strict convexity assumptions on
the metric, the boundary and $x$.
\end{lemma}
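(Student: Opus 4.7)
The plan is Gaussian elimination at the symbol level, made feasible precisely by the improved off-diagonal orders established in Proposition~\ref{prop:N-digamma-structure}. Working with the $2\times 2$ block given by the first two columns there, I first kill the $(2,1)$ entry from the left. Since $a_{00}^{(0)}$ is elliptic in $S^{-1,0}$, it has a microlocal symbol parametrix $q\in S^{1,0}$, and I set $b^{(1)}=-a_{10}^{(1)}q$, $b^{(0)}=-a_{10}^{(0)}q$. From $a_{10}^{(1)},a_{10}^{(0)}\in S^{-2,0}$ we get $b^{(j)}\in S^{-1,0}$, as required, and the left multiplication by $\begin{pmatrix}1&0\\b^{(1)}\xi_\digamma+b^{(0)}&1\end{pmatrix}$ annihilates the $(2,1)$ entry at the principal symbol level.

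The key check is that this left multiplication does not degrade the advertised structure of the $(2,2)$ entry. After the operation, the new $(2,2)$ entry is
\begin{equation*}
(a_{11}^{(1)}\xi_\digamma+a_{11}^{(0)})+(b^{(1)}\xi_\digamma+b^{(0)})(a_{01}^{(1)}\xi_\digamma+a_{01}^{(0)}).
\end{equation*}
Expanding and re-collecting the $\xi_\digamma^2$ term as $(b^{(1)}a_{01}^{(1)}\xi_\digamma)\cdot\xi_\digamma$, one obtains contributions to the coefficient of $\xi_\digamma$ in $S^{-3,0}$ (from $b^{(1)}a_{01}^{(1)}\xi_\digamma\in S^{-4,0}\cdot S^{1,0}$, using the improved bound $a_{01}^{(1)}\in S^{-3,0}$), in $S^{-3,-1}$ (from $b^{(1)}a_{01}^{(0)}$, using $a_{01}^{(0)}\in S^{-2,-1}$), and in $S^{-4,0}$ (from $b^{(0)}a_{01}^{(1)}$); all of these are strictly lower order than the leading $a_{11}^{(1)}\in S^{-2,0}$, so $\tilde a_{11}^{(1)}$ stays elliptic in $S^{-2,0}$. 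The constant-in-$\xi_\digamma$ correction $b^{(0)}a_{01}^{(0)}\in S^{-3,-1}\subset S^{-2,-1}$ matches the order of $a_{11}^{(0)}\in S^{-2,-1}$, so $\tilde a_{11}^{(0)}\in S^{-2,-1}$ as well.

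Next I kill the $(1,2)$ entry from the right by setting $c^{(1)}=-q\,a_{01}^{(1)}$ and $c^{(0)}=-q\,a_{01}^{(0)}$, which gives $c^{(1)}\in S^{-2,0}$ and $c^{(0)}\in S^{-1,-1}$, both contained in $S^{-1,0}$. Since the $(2,1)$ entry is already zero after the first step, this column operation leaves the $(2,2)$ entry unchanged, producing the desired diagonal form at the principal symbol level. Continuous dependence on $g$ follows from the corresponding continuity of the $a_{ij}^{(k)}$ in Proposition~\ref{prop:N-digamma-structure-basic}, together with the fact that elliptic symbol inversion and symbol composition are continuous in the relevant Fr\'echet topologies. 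The only obstacle worth flagging is the order bookkeeping in the cross terms: had any of the improvements $a_{01}^{(1)}\in S^{-3,0}$ or $a_{01}^{(0)},a_{11}^{(0)}\in S^{-2,-1}$ from Proposition~\ref{prop:N-digamma-structure} failed, the cross terms in the modified $(2,2)$ entry would reach the order of the leading terms and could either destroy the ellipticity of $\tilde a_{11}^{(1)}$ or spoil the decay of $\tilde a_{11}^{(0)}$; so the argument is essentially a transcription of these structural gains into explicit elimination coefficients.
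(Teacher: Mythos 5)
Your proof is correct and is essentially the paper's argument: the paper proves this lemma by declaring it ``completely parallel'' to Corollary~\ref{cor:1-form-pre-post-mult}, which is exactly your symbol-level Gaussian elimination with $b=-a_{10}\,(a_{00}^{(0)})^{-1}$ on the left and $c=-(a_{00}^{(0)})^{-1}a_{01}$ on the right, relying on the improved orders $a_{01}^{(1)}\in S^{-3,0}$ and $a_{01}^{(0)},a_{11}^{(0)}\in S^{-2,-1}$ from Proposition~\ref{prop:N-digamma-structure} to keep $\tilde a_{11}^{(1)}$ elliptic and $\tilde a_{11}^{(0)}$ decaying. Your explicit order bookkeeping for the cross terms is exactly the point the paper leaves implicit, so nothing is missing.
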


\begin{proof}
The proof is completely parallel to that of Corollary~\ref{cor:1-form-pre-post-mult}.
\end{proof}

This lemma will be used below as the input for the regularity theory
in the normal gauge.

\section{Fredholm theory for 2-tensors in the normal
  gauge}\label{sec:Fredholm}

\subsection{Fredholm theory for the geodesic X-ray transform in the
  normal gauge}\label{sec:Fredholm-geodesic}
We are now ready to discuss Fredholm properties for the 2-tensor
transform in the normal gauge; for this recall that $X$ is defined by
the artificial boundary, see \eqref{eq:artificial-bdy-mfld}.
The solenoidal gauge approach tells us that one can recover the solenoidal part
of $u$ from $N_\digamma u$ in a lossless, in terms of the order of the weighted
Sobolev spaces involved, manner, at least for $c$
small (where $c$ defines the domain $\Omega$): Recall that
$$
\dsymmw=e^{-\digamma/x}\dsymm e^{\digamma/x}
$$
is the conjugate symmetric gradient, and
$$
\delta^s_\digamma=e^{\digamma/x}\delta^s e^{-\digamma/x}
$$
is its adjoint relative to scattering metric $g_{\scl}$.

\begin{thm}\label{thm:lossless-solenoidal}
Let $s=0$.
There exists $c_0>0$ such that for $0<c<c_0$, on
$\Omega_c=\{x_c>0\}\cap M$, $x_c=\tilde x+c$,
one has $u=u^s+\dsymmw v$, where
\begin{equation}\label{eq:lossless-solenoidal}
\|u^s\|_{s,r}\leq C\|N_\digamma u\|_{s+1,r}.
\end{equation}

Furthermore, the constants $c_0$ and $C$ can be taken to be independent of the
metric $g$ as long as $g$ is
$C^k$-close (for suitable $k$) to a background metric $g_0$ satisfying the assumptions on
the metric.
\end{thm}

\begin{rem}
We remark that even the loss, in terms of the order of the weighted
Sobolev spaces involved, in recovering the solenoidal part
of $u$ from $N_\digamma u$ would not be an issue if $\pa_\inter\Omega\cap\pa X=\emptyset$, for
then in \eqref{eq:almost-final-inv} below, in the second appearance of
$P_{\Omega_1\setminus\Omega}$, which is the problematic one,
$\gamma_{\pa_\inter\Omega}P_{\Omega_1\setminus\Omega}$ is lossless as
the loss of weight is irrelevant in this case. Here
$\pa_\inter\Omega=\pa M\cap X$. Thus, even the lossy
estimate would suffice to if we assumed that $\pa_\inter\Omega\cap\pa
X=\emptyset$, i.e.\ we worked globally within the boundary.
\end{rem}

\begin{rem}
It would be straightforward to allow general $s\geq 0$, but this would
require an improvement of the results of \cite{SUV:Tensor} by
developing elliptic boundary regularity theory in the
boundary-scattering setting for the Dirichlet problem for
$\delta^s_\digamma\dsymmw$ for various domains such as $\Omega$. This would proceed by proving b-sc
regularity, `b' (i.e.\ conormal regularity) at $\pa_\inter\Omega$,
`sc' at $\pa X$ at first, and then using the operator to improve the
regularity to full standard Sobolev regularity at $\pa_\inter\Omega$, in the
appropriate uniform sense to $\pa X$, analogously to how one
proves first tangential regularity for standard boundary value
problems (on compact domains with smooth boundary), and then obtains
normal regularity using the operator. Note that even though $\Omega$
is a domain with corners, there are no additional issues at the
corners unlike for standard boundary value problems in domains with
corners, since the scattering operators are very differently
behaved from standard operators at $\pa X$. Since this theory and
improvement are not needed for our
main results, we refrain from developing this theory in the present paper.
\end{rem}

\begin{proof}
Recall first that $\pa_\inter\Omega=\pa M\cap X$ is the internal (in
$X$) part of $\pa\Omega$, and similarly for neighborhoods of $\Omega$,
such as $\Omega_1$, considered in \cite{SUV:Tensor}.

We have the formula
\begin{equation}\begin{aligned}\label{eq:almost-final-inv}
&(\Id+(r_{10}-\dsymmw B_\Omega
\gamma_{\pa_\inter\Omega}
P_{\Omega_1\setminus\Omega}) K_2)^{-1}\\
&\qquad\qquad\circ(r_{10}-\dsymmw B_\Omega
\gamma_{\pa_\inter\Omega}
P_{\Omega_1\setminus\Omega})
\cS_{\digamma,\Omega_1}r_{21}\cS_{\digamma,\Omega_2}
GN_\digamma=\cS_{\digamma,\Omega}
\end{aligned}\end{equation}
from \cite[Equation~(4.20)]{SUV:Tensor}, with the various operators
defined {\em and estimated} in that paper, and for $s=0$ the discussions of that paper
{\em almost} give this estimate: Lemma~4.13 of that paper, which
controls $P_{\Omega_1\setminus\Omega}$, a local left inverse of
$\dsymmw$ on $\Omega_1\setminus\Omega$ with Dirichlet boundary
conditions on $\pa_\inter\Omega_1$, loses decay (relevant for the
second appearance of this operator only in this formula, as $K_2$
gains infinite order decay),
and the result one gets directly is
$$
\|u^s\|_{s,r-\alpha}\leq C\|N_\digamma u\|_{s+1,r}
$$
for $\alpha=2$, which is too weak for the theorem.
However, we improve Lemma~4.13 of \cite{SUV:Tensor} below in the appendix in 
Lemma~\ref{lemma:local-ds-inverse-forms-improved} to a lossless
version, which directly proves \eqref{eq:lossless-solenoidal} for
$s=0$.

Finally the uniformity of the estimate in $g$ follows from the
continuous dependence of $N_{\digamma}$ on $g$, as noted at the end of Section~\ref{subsec:L0}.
\end{proof}

Now, we solve for $v$ in the decomposition $u=u^s+\dsymmw v$ when $u$ is in the normal gauge, i.e.\ its
normal components vanish. As shown in \cite{SUV:Tensor},
in the decomposition of 1-forms, resp.\ symmetric 2-tensors, into
normal and tangential, resp.\ normal-normal, normal-tangential and
tangential-tangential components, the principal symbol of $\dsymmw$ is
\begin{equation*}
\begin{pmatrix}
\xi+i\digamma&0\\
\frac{1}{2}\eta\otimes&\frac{1}{2}(\xi+i\digamma)\\
a&\eta\otimes_s
\end{pmatrix},
\end{equation*}
where $a$ is a smooth bundle map. In fact, if
we use normal coordinates for $g$, then the full operator in the top
right entry (and not just its principal symbol) is
identically $0$, as follows from a Christoffel symbol computation. 
Indeed, 
denoting the index corresponding to the normal variable by $0$, the
Christoffel symbol needed is $\Gamma^i_{00}$ (where $i\neq 0$), which
is given by $\frac{1}{2}g^{ij}$ times $\pa_0 g_{j0}+\pa_0g_{0j}-\pa_j
g_{00}$, and in normal coordinates (relative to a level set of $x$) all the components being
differentiated are constant.
Thus, if $u$ is in the normal gauge, so $u_{NN}=0$ and $u_{NT}=0$, we
get equations for $v_N$ and $v_T$:
\begin{equation}\begin{aligned}\label{eq:normal-gauge-reduction-1}
&u^s_{NN}+A_{NN}v_N=0,\\
&u^s_{NT}+A_{NT}v_T+B_{NT}v_N=0,
\end{aligned}\end{equation}
where $A_{NN}\in\Diffsc^1$ has principal symbol $\xi+i\digamma$,
$B_{NT}\in\Diffsc^1$ has principal symbol $\frac{1}{2}\eta\otimes$,
and $A_{NT}$ has principal symbol $\frac{1}{2}(\xi+i\digamma)$. But
from the first equation of \eqref{eq:normal-gauge-reduction-1}, using
Proposition~\ref{prop:not-quite-real-princ-scalar}, we deduce that
\begin{equation}\label{eq:normal-gauge-reduction-rad-1}
\|v_N\|_{s,r}+\|x^2 D_x v_N\|_{s,r}\leq C\|u^s_{NN}\|_{s,r}\leq C\|N_\digamma u\|_{s+1,r}.
\end{equation}
Then from the second equation of \eqref{eq:normal-gauge-reduction-1} we deduce that
\begin{equation}\begin{aligned}\label{eq:normal-gauge-reduction-rad-2}
&\|v_T\|_{s-1,r}+\|x^2 D_x v_T\|_{s-1,r}\\
&\leq
C(\|u^s_{NT}\|_{s-1,r}+\|B_{NT}v_N\|_{s-1,r})\\
&\leq C(\|N_\digamma u\|_{s,r}+\|v_N\|_{s,r})\leq C \|N_\digamma u\|_{s+1,r}.
\end{aligned}\end{equation}
In fact, applying $x^2D_x$ to the second equation of
\eqref{eq:normal-gauge-reduction-1} and using that
$x^2D_x v_N\in \Hsc^{s,r}$ (with an estimate as above),
we conclude that
$$
A_{NT}(x^2 D_x)v_T=-x^2D_x u^s_{NT}-[x^2 D_x,B_{NT}]v_N-B_{NT}x^2D_x v_N-[x^2D_x,A_{NT}]v_T,
$$
so, using Proposition~\ref{prop:not-quite-real-princ-scalar},  as well as that $x^2D_x$ commutes with $A_{NT}$ at the principal symbol
level, so the commutator is of order $(0,-2)$,
\begin{equation*}\begin{aligned}
&\|x^2D_x v_T\|_{s-1,r}+\|(x^2 D_x)^2 v_T\|_{s-1,r}\\
&\leq
C(\|u^s_{NT}\|_{s,r}+\|v_N\|_{s,r-1}+\|B_{NT}x^2 D_x v_N\|_{s-1,r}+\|v_T\|_{s-1,r-2})\\
&\leq C(\|N_\digamma u\|_{s+1,r}+\|v_N\|_{s,r-1}+\|x^2D_x
v_N\|_{s,r}+\|N_\digamma u\|_{s+1,r})\leq C \|N_\digamma u\|_{s+1,r},
\end{aligned}\end{equation*}
proving \eqref{eq:normal-gauge-reduction-rad-2}, and
where the last inequality also used \eqref{eq:normal-gauge-reduction-rad-1}.
This gives that $u$, which is $u^s+\dsymmw v$, satisfies
\begin{equation}\label{eq:u-coiso-0}
\|u\|_{s-2,r}\leq C(\|N_\digamma u\|_{s+1,r}+\|v\|_{s-1,r})\leq C \|N_\digamma u\|_{s+1,r},
\end{equation}
which is a loss of 2 derivatives relative to the solenoidal
gauge. Notice also that $v$ satisfies
$x^2 D_x v\in \Hsc^{s-1,r}$, thus $\dsymmw v$ satisfies a similar
estimate (here the action of $x^2D_x$ on tangential tensors makes
sense directly):
$$
(x^2D_x)\dsymmw v=\dsymmw(x^2D_x v)+[\dsymmw,x^2D_x]v
$$
implies, as the commutator is in $x\Diffsc^1$,
$$
\|(x^2D_x)\dsymmw v\|_{s-2,r}\leq C(\|x^2D_x
v\|_{s-1,r}+\|v\|_{s-1,r+1})\leq C\|N_\digamma u\|_{s+1,r}.
$$
Hence, also taking advantage of Theorem~\ref{thm:lossless-solenoidal}, 
\[
\|x^2D_x u\|_{s-2,r}\leq C \|N_\digamma u\|_{s+1,r}
\]
as well. Finally $(x^2 D_x)^2v\in\Hsc^{s-1,r}$ as well:
\[
(x^2D_x)^2\dsymmw
v=\dsymmw(x^2D_x)^2v+2[x^2D_x,\dsymmw](x^2D_x v)-[x^2D_x,[\dsymmw,x^2D_x]]v,
\]
so
\[
\|(x^2D_x)^2\dsymmw v\|_{s-2,r}\leq C(\|(x^2 D_x)^2
v\|_{s-1,r}+\|x^2 D_x v\|_{s-1,r-1}+\|v\|_{s-1,r-2}).
\]
This gives
\begin{equation}\label{eq:u-coiso-2}
\|(x^2D_x)^2 u\|_{s-2,r}\leq C \|N_\digamma u\|_{s+1,r},
\end{equation}
i.e.\ $u$ satisfies coisotropic estimates.

Now, $v$ in fact only enters into particular components of $u$ in the
decomposition of $u$ as $(u_0,u_1,u_2)$ corresponding to the decomposition
relative to $\Span\{\eta\}$, and it is then straightforward to obtain
a more precise estimate directly from the argument above. We, however,
proceed differently and instead recover it from
Proposition~\ref{prop:N-digamma-structure} above:
Proposition~\ref{prop:N-digamma-structure} is crucial in any case for
the microlocally weighted transform considered below.

\begin{thm}\label{thm:normal-gauge-2-tensor-Fredholm-est}
There exists $c_0>0$ such that for $0<c<c_0$, on
$\Omega_c=\{x_c>0\}\cap M$, $x_c=\tilde x+c$,
with $s=0$,
we have for $u$ in the normal gauge, written as $u=(u_0,u_1,u_2)$
relative to the $\Span\{\eta\}$-based tensorial decomposition, that
\begin{equation}\begin{aligned}\label{eq:Fredholm-est}
&\|u_0\|_{s,r}+\|u_1\|_{s-1,r}+\|x^2 D_x u_1\|_{s-1,r}\\
&\qquad\qquad+\|u_2\|_{s-2,r}+\|(x^2D_x)
u_2\|_{s-2,r}+\|(x^2D_x)^2
u_2\|_{s-2,r}\\
&\leq C\|N_\digamma u\|_{s+1,r}.
\end{aligned}\end{equation}

Furthermore, the constants $c_0$ and $C$ can be taken to be independent of the
metric $g$ as long as $g$ is
$C^k$-close (for suitable $k$) to a background metric $g_0$ satisfying the assumptions on
the metric.
\end{thm}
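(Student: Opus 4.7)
The plan is to assemble the required estimate from three microlocal pieces that have already been essentially prepared. First, the $u_2$-bounds (the last three summands on the left-hand side of \eqref{eq:Fredholm-est}) are already delivered by \eqref{eq:u-coiso-0}, \eqref{eq:u-coiso-1} and \eqref{eq:u-coiso-2}, which express exactly the coisotropic regularity of $u$ purchased from the solenoidal-gauge decomposition $u=u^s+\dsymmw v$ together with Proposition~\ref{prop:not-quite-real-princ-scalar} applied to the first equation of \eqref{eq:normal-gauge-reduction-1}. Thus the substantive content of the theorem is the estimate for $u_0$ and $u_1$, and for these we must gain back the two orders lost when going through the solenoidal gauge.

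Second, outside a conic neighborhood of $\{\xi=0\}$ at fiber infinity, $N_\digamma$ is elliptic on tangential-tangential tensors (by the discussion following \eqref{eq:one-form-exp-4} and its 2-tensor analogue), so standard microlocal elliptic regularity produces $\|B_0 u\|_{s,r}\leq C(\|N_\digamma u\|_{s+1,r}+\|u\|_{-N,-M})$ whenever $\WFsc'(B_0)$ avoids a fixed neighborhood of $\xi=0$ at fiber infinity. In that region $u_0$ and $u_1$ are in the same space, so this covers the main estimate there. It remains to work microlocally in a small conic neighborhood $U$ of $\{\xi=0\}\cap\pa\overline{\Tsc^*}X$ at fiber infinity, which is where the one-form argument of Proposition~\ref{prop:final-one-form-est} must be imitated.

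Third, in $U$ we apply $P^\perp\otimes P^\perp$ and $P^\perp\otimes_s I$ from the left to obtain the $2\times 3$ matrix of Proposition~\ref{prop:N-digamma-structure}; then, following Lemma~\ref{lemma:N-digamma-improved}, we pre- and postmultiply the $(u_0,u_1)$ block by elliptic $(0,0)$-order symbols to reduce it to the diagonal form
\[
\begin{pmatrix}\tilde a_{00}^{(0)}&0\\0&\tilde a_{11}^{(1)}\xi_\digamma+\tilde a_{11}^{(0)}\end{pmatrix},
\]
with $\tilde a_{00}^{(0)}\in S^{-1,0}$ elliptic and $\tilde a_{11}^{(1)}\in S^{-2,0}$ elliptic. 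The third column of the resulting matrix couples $(u_0,u_1)$ to $u_2$; denoting the modified unknowns by $(\tilde u_0,\tilde u_1)$, the system becomes
\[
\tilde A_{00}\tilde u_0+\tilde A_{02}u_2=f_0,\qquad \tilde A_{11}\tilde u_1+\tilde A_{12}u_2=f_1,
\]
where $f_0,f_1$ are controlled in $\Hsc^{s+1,r}$ by $\|N_\digamma u\|_{s+1,r}$, and $\tilde A_{0j},\tilde A_{1j}$ inherit the structure of the entries in the third column of Proposition~\ref{prop:N-digamma-structure}. From the first (elliptic) equation we get $\|\tilde u_0\|_{s,r}\leq C(\|f_0\|_{s+1,r}+\|\tilde A_{02}u_2\|_{s+1,r}+\text{l.o.t.})$; from the second, Proposition~\ref{prop:not-quite-real-princ-scalar} (in the form used in the 1-form case, backward propagation into $x=0$ with $\digamma>0$) gives $\|\tilde u_1\|_{s-1,r}+\|x^2D_x\tilde u_1\|_{s-1,r}\leq C(\|f_1\|_{s+1,r}+\|\tilde A_{12}u_2\|_{s+1,r}+\text{l.o.t.})$. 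The postmultiplier is upper triangular of order $(0,0)$ with off-diagonal term of order $(-1,0)$, so $(u_0,u_1)$ and $(\tilde u_0,\tilde u_1)$ satisfy the estimate in the stated spaces interchangeably (just as in the proof of Proposition~\ref{prop:final-one-form-est}). Combining the microlocal pieces and iterating in $(s,r)$ to upgrade error terms handles the lower-order coupling.

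The main technical point, and the only place where one could lose, is controlling the coupling terms $\tilde A_{02}u_2$ and $\tilde A_{12}u_2$ in $\Hsc^{s+1,r}$. Here one uses crucially that the entries of the third column of Proposition~\ref{prop:N-digamma-structure} are polynomials in $\xi_\digamma$ of degree at most $2$ with coefficients $a_{j2}^{(k)}\in S^{-3,0}$, so schematically $\tilde A_{j2}u_2$ is a sum of terms of the form $b_k(x^2D_x)^k u_2$ with $b_k\in\Psisc^{-3,0}$ and $k=0,1,2$. By the coisotropic bounds \eqref{eq:u-coiso-0}--\eqref{eq:u-coiso-2}, each $(x^2D_x)^k u_2$ lies in $\Hsc^{s-2,r}$, and the three-order gain from $b_k$ places $b_k(x^2D_x)^k u_2$ precisely in $\Hsc^{s+1,r}$ with norm bounded by $C\|N_\digamma u\|_{s+1,r}$; thus the coupling term is exactly controlled at the order needed, with no further loss. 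The continuous dependence on $g$ at the end is inherited from the corresponding statements in Proposition~\ref{prop:N-digamma-structure-basic}, Lemma~\ref{lemma:N-digamma-improved}, and \eqref{eq:lossless-solenoidal}.
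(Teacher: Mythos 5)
Your proposal is correct and follows essentially the same route as the paper: it uses the coisotropic bounds \eqref{eq:u-coiso-0}--\eqref{eq:u-coiso-2} coming from the solenoidal-gauge background estimate for the $u_2$ part, regards the third column as forcing, diagonalizes the $(u_0,u_1)$ block via Lemma~\ref{lemma:N-digamma-improved}, applies the elliptic estimate to $\tilde u_0$ and Proposition~\ref{prop:not-quite-real-princ-scalar} to $\tilde u_1$, and then converts back through the upper-triangular postmultiplier exactly as in the proof of Proposition~\ref{prop:final-one-form-est}. Your accounting of the coupling terms (coefficients $a_{j2}^{(k)}\in S^{-3,0}$ against $(x^2D_x)^k u_2\in\Hsc^{s-2,r}$) is the same mechanism the paper uses when it lists the $u_2$-norms on the right-hand sides of \eqref{eq:u_0-elliptic-est}--\eqref{eq:u_1-real-princ-est}.
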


\begin{proof}
We use the operator matrix in
Proposition~\ref{prop:N-digamma-structure}, pre- and postmultiplied as
in Lemma~\ref{lemma:N-digamma-improved}, after regarding the $u_2$
terms as forcing. Note that the
postmultiplication preserves the space $\Hsc^{s+1,r}$. Write the new
combination of $u_0$ and $u_1$ given by 
$\begin{pmatrix}1&C^{(1)}(x^2D_x+i\digamma)+C^{(0)}\\0&1\end{pmatrix}^{-1}\begin{pmatrix}u_0\\u_1\end{pmatrix}$
with $C^{(j)}$ in $\Psisc^{-1,0}$ as in
Lemma~\ref{lemma:N-digamma-improved}, as $\begin{pmatrix}\tilde u_0\\
  \tilde u_1\end{pmatrix}$. With
$B_{0,\digamma},B_{1,\digamma}\in\Psisc^{-1,0}$ as the two rows of the
result of Proposition~\ref{prop:N-digamma-structure}, and the tilded
versions $\tilde B_{0,\digamma},\tilde B_{1,\digamma}\in\Psisc^{-1,0}$
arising from the two rows of Lemma~\ref{lemma:N-digamma-improved}, we obtain
pseudodifferential equations, in which we regard the off-diagonal
terms as forcing, i.e.\ put them on the right hand side of the
equation. Thus, the $0$-th row, i.e.\ that of $\tilde B_{0,\digamma}$, yields an elliptic estimate
(keeping in mind the order of $\tilde b_{00}^{(0)}$)
\begin{equation}\begin{aligned}\label{eq:u_0-elliptic-est}
\|\tilde u_0\|_{s,r}\leq C(&\|\tilde u_0\|_{s-1,r-1}+\|\tilde u_1\|_{s-2,r-1}+\|x^2 D_x
\tilde u_1\|_{s-2,r-1}\\
&+\|u_2\|_{s-2,r-1}+\|(x^2D_x)u_2\|_{s-2,r-1}+\|(x^2
D_x)^2u_2\|_{s-2,r-1}+\|\tilde B_{0,\digamma} \tilde u\|_{s+1,r})\\
&\leq C\|N_\digamma u\|_{s+1,r},
\end{aligned}\end{equation}
where we used \eqref{eq:u-coiso-0}-\eqref{eq:u-coiso-2}.

Turning to the 1st row, i.e.\ that of $\tilde B_{1,\digamma}$,
due to the imaginary part of the principal symbol, independently of
the weight $r$, the combination of
Proposition~\ref{prop:not-quite-real-princ-scalar} and standard
real principal type estimates yields
\begin{equation}\begin{aligned}\label{eq:u_1-real-princ-est}
\|\tilde u_1\|_{s-1,r}+\|x^2 D_x \tilde u_1\|_{s-1,r}\leq
C(&\|\tilde u_1\|_{s-2,r-1}+\|\tilde u_0\|_{s-2,r-1}+\|(x^2D_x)\tilde u_0\|_{s-2,r-1}\\
&+\|u_2\|_{s-2,r-1}+\|(x^2
D_x)u_2\|_{s-2,r-1}\\
&\qquad+\|(x^2D_x)^2u_2\|_{s-2,r-1}+\|\tilde B_{1,\digamma} \tilde u\|_{s+1,r})\\
&\leq C\|N_\digamma u\|_{s+1,r}.
\end{aligned}\end{equation}
Together with \eqref{eq:u-coiso-0}-\eqref{eq:u-coiso-2}, \eqref{eq:u_0-elliptic-est}-\eqref{eq:u_1-real-princ-est} imply
\eqref{eq:Fredholm-est} with $(u_0,u_1)$ replaced by $(\tilde
u_0,\tilde u_1)$. Finally,
$$
\begin{pmatrix}u_0\\u_1\end{pmatrix}=\begin{pmatrix}1&C^{(1)}(x^2D_x+i\digamma)+C^{(0)} \\0&1\end{pmatrix}\begin{pmatrix}\tilde
  u_0\\ \tilde u_1\end{pmatrix}
$$
proves the theorem.
\end{proof}

We now consider 
$$
N_\digamma:\cX\to\cY 
$$
where
\begin{equation}\begin{aligned}\label{eq:cX-def}
\cX=\{u=(u_0,u_1,u_2):\ &u_0\in\Hsc^{s,r},\ u_1,x^2D_x
u_1\in\Hsc^{s-1,r}, \\
&u_2,(x^2D_x)u_2,(x^2 D_x)^2u_2\in\Hsc^{s-2,r},\
\supp u\subset\overline\Omega\},
\end{aligned}\end{equation}
with the natural norm (and inner product: this is a Hilbert space), so
elements of $\cX$ are tangential-tangential tensors, and
$$
\cY=\Hsc^{s+1,r}(X;\Sym^2\Tsc^*X).
$$
Notice that this mapping property of $N_\digamma$ follows from
Proposition~\ref{prop:N-digamma-structure-basic}, and that the spaces
are independent of the metric $g$, with the dependence of
$N_\digamma$ on $g$ continuous as a map between these spaces as long
as $g$ is $C^k$-close to a metric $g_0$ satisfying the assumptions on
the metric (with both in the normal gauge).

We then have from Theorem~\ref{thm:normal-gauge-2-tensor-Fredholm-est}:

\begin{cor}\label{cor:normal-gauge-2-tensor-Fredholm}
There exists $c_0>0$ such that for $0<c<c_0$, on
$\Omega_c=\{x_c>0\}\cap M$, $x_c=\tilde x+c$,
and with $\cX,\cY$ as above,
the operator $N_\digamma:\cX\to\cY$ satisfies
\begin{equation}\label{eq:2-tensor-Fredholm-brief}
\|u\|_{\cX}\leq C\|N_\digamma u\|_{\cY},\ u\in\cX,
\end{equation}
so $N_\digamma$ injective and has closed range.

Thus, it has a left
inverse, which we denote by $N_\digamma^{-1}$ with a slight abuse of notation, which is continuous
$\cY\to\cX$.

Furthermore, the constants $c_0$ and $C$ can be taken to be independent of the
metric $g$ as long as $g$ is
$C^k$-close (for suitable $k$) to a background metric $g_0$ satisfying the assumptions on
the metric.
\end{cor}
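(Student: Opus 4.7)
The plan is to deduce Corollary~\ref{cor:normal-gauge-2-tensor-Fredholm} as a straightforward functional-analytic repackaging of the a priori estimate in Theorem~\ref{thm:normal-gauge-2-tensor-Fredholm-est}, with the spaces $\cX$ and $\cY$ chosen precisely so that this estimate becomes \eqref{eq:2-tensor-Fredholm-brief}. First I would observe that the definition of $\cX$ makes the norm there equal to the sum
$$\|u_0\|_{s,r}+\|u_1\|_{s-1,r}+\|x^2 D_x u_1\|_{s-1,r}+\|u_2\|_{s-2,r}+\|(x^2 D_x) u_2\|_{s-2,r}+\|(x^2 D_x)^2 u_2\|_{s-2,r},$$
with the support condition built into the space, while the norm on $\cY$ is just $\|\cdot\|_{s+1,r}$. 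Under this identification, Theorem~\ref{thm:normal-gauge-2-tensor-Fredholm-est} is literally \eqref{eq:2-tensor-Fredholm-brief}. Separately, I need to check that $N_\digamma$ is in fact bounded $\cX\to\cY$; this follows from Proposition~\ref{prop:N-digamma-structure-basic}, since the structure of the full symbol implies that the $u_j$-column maps elements carrying precisely the control required by $\cX$ (namely on $u_j$, $(x^2 D_x) u_j$, and $(x^2 D_x)^2 u_j$ for $j=2$) into $\Hsc^{s+1,r}$.

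With these two facts in hand, the remaining statements are entirely standard Banach space generalities. Injectivity of $N_\digamma$ on $\cX$ is immediate from \eqref{eq:2-tensor-Fredholm-brief}. For closed range, given $\{u_n\}\subset\cX$ with $N_\digamma u_n\to f$ in $\cY$, applying the estimate to $u_n-u_m$ shows that $\{u_n\}$ is Cauchy in the Hilbert space $\cX$; its limit $u$ then satisfies $N_\digamma u=f$ by continuity. Thus $N_\digamma$ is a continuous linear bijection from $\cX$ onto the closed subspace $R=\Ran N_\digamma\subset\cY$, and the open mapping theorem (or directly \eqref{eq:2-tensor-Fredholm-brief}) provides a continuous inverse $R\to\cX$ with norm at most $C$. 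Since $\cY$ is Hilbert and $R$ is closed, composing with the orthogonal projection $\cY\to R$ yields the desired left inverse $N_\digamma^{-1}:\cY\to\cX$.

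The uniformity in $g$ is inherited directly: the spaces $\cX$ and $\cY$ do not depend on $g$ (only on the background product decomposition), the estimate in Theorem~\ref{thm:normal-gauge-2-tensor-Fredholm-est} comes with constants $c_0$ and $C$ that are uniform for $g$ close to $g_0$ in $C^k$, and the orthogonal projection onto $R$ is defined solely from the Hilbert structure on $\cY$. The continuous dependence of $N_\digamma$ on $g$ recorded in Proposition~\ref{prop:N-digamma-structure-basic} and at the end of Section~\ref{subsec:L0} guarantees that the operator norm of $N_\digamma:\cX\to\cY$ stays bounded uniformly in this neighborhood of $g_0$. There is no substantial obstacle at this stage: all of the analytical content, namely the lossless recovery of the solenoidal part, the conversion to the normal gauge via \eqref{eq:normal-gauge-reduction-1}, the radial-point estimate for $u_2$ via Proposition~\ref{prop:not-quite-real-princ-scalar}, and the principal symbol structure from Proposition~\ref{prop:N-digamma-structure} and Lemma~\ref{lemma:N-digamma-improved}, has already been assembled in the proof of Theorem~\ref{thm:normal-gauge-2-tensor-Fredholm-est}. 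The only point requiring minor care is verifying that the mapping property $N_\digamma:\cX\to\cY$ holds with all the correct orders in $\cX$, which is precisely what the symbol matrix in Proposition~\ref{prop:N-digamma-structure-basic} delivers column by column.
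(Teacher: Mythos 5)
Your proposal is correct and follows essentially the same route as the paper: the estimate of Theorem~\ref{thm:normal-gauge-2-tensor-Fredholm-est} gives injectivity and closed range, the open mapping theorem gives a continuous inverse on the range, and composing with the orthogonal projection onto that closed subspace of the Hilbert space $\cY$ yields the left inverse, with uniformity in $g$ inherited from the theorem and the $g$-independence of $\cX,\cY$. Your additional verification of the mapping property $N_\digamma:\cX\to\cY$ via Proposition~\ref{prop:N-digamma-structure-basic} matches the remark the paper makes just before stating the corollary.
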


\begin{proof}
Due to Theorem~\ref{thm:normal-gauge-2-tensor-Fredholm-est}, resulting
in \eqref{eq:2-tensor-Fredholm-brief},
$N_\digamma:\cX\to\cY$ is injective and has closed range. Letting
$\cR$ be this range, being a closed subspace of $\cY$ it is a Hilbert
space, so $N_\digamma:\cX\to\cR$ is invertible, with a continuous
inverse, by the open mapping theorem. Composing this inverse from the
right with the orthogonal projection from $\cY$ to $\cR$ we obtain the
desired left inverse.
\end{proof}

\subsection{Extension to 
weights}  \label{sec:Fredholm-weights}
We are also interested in generalizations of $I$ by adding
weights:
$$
\tilde I f(\beta)=\int_{\gamma_\beta} a(\gamma(s),\dot\gamma(s))f(\gamma(s))(\dot\gamma(s),\dot\gamma(s))\,ds
$$
with the notation of Section~\ref{sec:approach}, so $\beta\in
S^*\tilde M$, $\gamma_\beta$ the geodesic through $\beta$, $a$ a given
weight function.
More generally consider an $N\times N$ system of transforms, $f=(f_1,\ldots,f_N)$,
$$
(\tilde I f)_i(\beta)=\int_{\gamma_\beta} A^j_i(\gamma(s),\dot\gamma(s))f_j(\gamma(s))(\dot\gamma(s),\dot\gamma(s))\,ds
$$
Here we require $A_i^j$ to be smooth, but rather than imposing $C^k$
estimates on $A_i^j$ to measure closeness to the identity weight, we
work with weaker estimates. Namely, with $\ep$ such that $x<\ep$ on
$\overline{\Omega_c}$ (so $\ep>c$), which corresponds to a transform
with data at $x=\ep$, we assume that the derivatives of
$A_i^j=A_i^j(x,y,\lambda,\omega)$ have the property that $A_i^j$ remains
bounded under iterated applications of
\begin{equation}\label{eq:edge-derivs}
x\pa_x,\pa_y,x\pa_\lambda,\pa_\omega,
\end{equation}
where
e.g.\ $\pa_\lambda$ stands for derivative in the third slot. (These
are called ``edge derivatives'' by Mazzeo \cite{Mazzeo:Edge}.) We write
$\|.\|_{C_{\rm sc}^k}$   for the norm on the space of $\CI$ functions $a$ given by the maximum, over products of up to $k$ vector fields on the list
\eqref{eq:edge-derivs}, of the supremum of these products applied to $a$
evaluated on $\overline{\Omega_c}$ in the $(x,y)$ variables,
$|\lambda|\leq\lambda_0$, $\omega\in\sphere^{n-1}$ with $\lambda_0$
chosen so that all the geodesics used in $LI$ have
$|\lambda|\leq\lambda_0$ (so the support of the cutoff $\chi$ lies in
$[-c\lambda_0, c\lambda_0]$). The reason for so weakening the
requirements is that the weights that arise in the pseudolinearization discussed in
the next section are well-behaved in this sense, with the key point being that these weights are a
priori $C^0$ close to (half of) $\delta_i^j$ which would suffice for
elliptic problems, but not $C^k$ close for $k\geq 1$. This is an issue because for our non-elliptic problem closeness in a $C^k$-type norm is needed,
with the crucial gain, however, that the derivatives only need to be
taken relative to the vector fields \eqref{eq:edge-derivs}.

Then, with $L$ defined identically to the case of
$I$ in the first case, and the $N\times N$ diagonal matrix with the
previous $L$ as
the diagonal entry in the second case, we have

\begin{thm}\label{thm:X-ray-weights}
There exists $c_0>0$ such that for $0<c<c_0$, on
$\Omega_c=\{x_c>0\}\cap M$, $x_c=\tilde x+c$,
the operator $\tilde N_\digamma=L\circ\tilde I$ maps
$$
\tilde N_\digamma:\cX^N\to\cY^N.
$$

Moreover, there exist $A_0>0$ and $c_0>0$ such that if $0<c<c_0$ and
$\|A^j_i-\delta^j_i\|_{C_{\rm sc}^k}<A_0$ (or the analogous statement
holds for a constant multiple of $\delta_i^j$,
such as $-\frac{1}{2}\delta^j_i$) then we have
\begin{equation}\label{eq:2-tensor-Fredholm-brief-1}
\|u\|_{\cX^N}\leq C\|\tilde N_\digamma u\|_{\cY^N},\ u\in\cX^N,
\end{equation}
so $\tilde N_\digamma$ injective and has closed range.

Thus, it has a left
inverse, which we denote by $\tilde N_\digamma^{-1}$ with a slight abuse of notation, which is continuous
$\cY^N\to\cX^N$.

Furthermore, the constants $A_0$, $c_0$, and $C$ in \eqref{eq:2-tensor-Fredholm-brief-1}, can be taken to be independent of the
metric $g$ as long as $g$ is
$C^k$-close (for suitable $k$) to a background metric $g_0$ satisfying the assumptions on
the metric.
\end{thm}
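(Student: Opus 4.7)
The plan is to run the program of Sections~\ref{sec:2-tensor}--\ref{sec:Fredholm} verbatim, tracking how the weight $A$ enters the oscillatory integral and noting that closeness of $A$ to a constant multiple of $\delta^j_i$ at $\pa M$ is exactly what is needed to preserve the key ellipticity and principal-symbol structure used there.

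First, the mapping property $\tilde N_\digamma:\cX^N\to\cY^N$ follows by repeating the symbol computation of Section~\ref{sec:2-tensor} with $A^j_i$ inserted into the amplitude: in the analog of \eqref{eq:general-2-tensor-form-1}, the integrand acquires a factor $A^j_i(\gamma_{z,\lambda,\omega}(t),\dot\gamma_{z,\lambda,\omega}(t))$ which is smooth in $(x,y,\hat\lambda,\omega,\hat t)$ and of symbol class $S^{0,0}$ in the fiber variables after the $\hat t$-stationary phase reduction. Thus $\tilde N_\digamma\in\Psisc^{-1,0}$ with the same block structure as in Proposition~\ref{prop:N-digamma-structure-basic}: each entry in the matrix is multiplied, modulo lower-order terms, by the restriction of $A^j_i$ to the critical set $\{\hat t=0,\ \omega\cdot\eta+\xi\hat\lambda=0\}$ at $x=0$, i.e., by $A^j_i(0,y,0,\omega)$ with $\omega$ on the appropriate codimension-one sphere.

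Second, under the hypothesis that $A^j_i\approx c\,\delta^j_i$ at $\pa M$ (with $c=1$ or $c=-\tfrac{1}{2}$), the leading diagonal entries $a_{00}^{(0)}$ and $a_{11}^{(1)}$ of Proposition~\ref{prop:N-digamma-structure} remain elliptic in the respective symbol classes, and the diagonalization of Lemma~\ref{lemma:N-digamma-improved} still produces a principally diagonal system of the same form. The parity argument forcing $a_{01}^{(0)}\in S^{-2,-1}$ and $a_{01}^{(1)}\in S^{-3,0}$ at $x=0$ continues to hold because $A^j_i(0,y,0,\omega)$ is, to leading order, independent of $\omega$. Consequently the elliptic estimate at $\xi\neq 0$ and the radial-point/complex-absorption estimate of Proposition~\ref{prop:not-quite-real-princ-scalar} on the $u_1$ component go through unchanged.

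Third, one needs the background coisotropic estimate \eqref{eq:u-coiso-0}--\eqref{eq:u-coiso-2}. Here is the main obstacle: in the unweighted case this came from the solenoidal decomposition $u=u^s+\dsymmw v$ together with the identity $I\circ \dsymm=0$, and no such identity holds for $\tilde I$ in general. The remedy is a perturbation argument. Write $\tilde N_\digamma=c\,N_\digamma^{\mathrm{diag}}+R_\digamma$, where $N_\digamma^{\mathrm{diag}}$ is the diagonal $N\times N$ block of copies of the unweighted $N_\digamma$ and $R_\digamma$ is the scattering pseudodifferential operator built from $A^j_i-c\,\delta^j_i$. The hypothesis that $A^j_i-c\,\delta^j_i$ is small at $\pa M$, together with the continuous dependence of the whole construction on the amplitude (in the sense used at the end of Section~\ref{subsec:L0} for dependence on $g$), implies that $R_\digamma$ has small operator norm $\cX^N\to\cY^N$ once $c_0$ is chosen small enough so that $\Omega_{c_0}$ lies in a sufficiently small neighborhood of $\pa M$. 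Apply Corollary~\ref{cor:normal-gauge-2-tensor-Fredholm} componentwise to $N_\digamma^{\mathrm{diag}}$ to get
\[
\|u\|_{\cX^N}\le C\|N_\digamma^{\mathrm{diag}} u\|_{\cY^N}\le C\|\tilde N_\digamma u\|_{\cY^N}+C\|R_\digamma u\|_{\cY^N},
\]
and absorb the $R_\digamma$ term into the left-hand side by smallness. This yields \eqref{eq:2-tensor-Fredholm-brief-1}, hence injectivity and closed range; the left inverse is obtained as in Corollary~\ref{cor:normal-gauge-2-tensor-Fredholm} via the open mapping theorem applied to $\tilde N_\digamma:\cX^N\to\Ran(\tilde N_\digamma)$ composed with orthogonal projection. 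The uniformity of $c_0$ and $C$ in $g$ follows from the continuous dependence, analogous to the unweighted statement.
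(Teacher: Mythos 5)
Your perturbation scheme for the estimate --- writing $\tilde N_\digamma$ as (a constant multiple of) the unweighted operator plus a remainder $R_\digamma$, applying Corollary~\ref{cor:normal-gauge-2-tensor-Fredholm} componentwise and absorbing $R_\digamma$ by smallness/Neumann series --- is exactly the paper's route, as is the way you get the left inverse and the uniformity in $g$. The gap is in your first paragraph, and it propagates into the third. You assert that inserting $A^j_i$ into the amplitude and repeating Section~\ref{sec:2-tensor} gives the block structure of Proposition~\ref{prop:N-digamma-structure-basic} ``modulo lower-order terms''. For the $k=2$ column (the $\Span\{\eta\}\otimes\Span\{\eta\}$ part of the domain) that is not good enough, and it is not how the paper obtained the structure even in the unweighted case: those entries must be of the form $a^{(2)}\xi_\digamma^2+a^{(1)}\xi_\digamma+a^{(0)}$ with $a^{(k)}\in S^{-3,0}$, i.e.\ \emph{two} differential orders below the nominal order of the operator modulo explicit $\xi_\digamma$ factors, because $u_2$ only carries $\Hsc^{s-2,r}$ regularity (plus $x^2D_x$ powers) while the target is $\Hsc^{s+1,r}$; an error that is merely one order lower lands in $\Hsc^{s,r}$, not in $\cY$, so the mapping property fails at that level of precision. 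The paper explicitly declines to extract this subprincipal information from the stationary-phase computation (the error in \eqref{eq:general-2-tensor-form-2} is only $O(x\langle\xi,\eta\rangle^{-1})$) and instead uses $I\circ\dsymm=0$ together with the factorization of $P^\parallel\otimes P^\parallel$ through $\dsymmY d_Y$. You correctly note that $\tilde I\circ\dsymm\neq 0$, but you attribute the use of that identity to the background coisotropic estimates \eqref{eq:u-coiso-0}--\eqref{eq:u-coiso-2}; those belong to the already-established unweighted theory (Corollary~\ref{cor:normal-gauge-2-tensor-Fredholm}) and need no modification, so your ``remedy'' repairs the wrong step.

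What is needed, and what the paper supplies, is a substitute for $I\circ\dsymm=0$ for the weighted transform: integrating by parts along the geodesic as in \eqref{eq:weighted-IBP} shows that $\tilde I(\dsymm u)$ is itself a microlocally weighted \emph{one-form} transform with the differentiated weight $\dot\gamma(A^j_i)$, so $\tilde N_\digamma\dsymmw\in\Psisc^{-1,0}$ rather than merely $\Psisc^{0,0}$, and since only one factor of $\dot\gamma$ (hence of $x\dot\gamma^{(2)}\cdot\eta$) appears, the $\hat t$-integration by parts producing the $\xi_\digamma$ factors still applies; this yields the analogue of Proposition~\ref{prop:N-digamma-structure-basic} for $\tilde N_\digamma$, and with it both the mapping property $\tilde N_\digamma:\cX^N\to\cY^N$ and the boundedness of your remainder $R_\digamma$ on $\cX^N$ with norm controlled by the size of $A-c\,\delta$ (which is what makes it small for small $c_0$). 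Without this ingredient neither your first paragraph nor the smallness of $R_\digamma:\cX^N\to\cY^N$ in your third paragraph is justified. Your second paragraph is not needed once the perturbation argument is in place, and its claim that the exact vanishing statements at $x=0$ (parity arguments) persist for a weight that is only approximately constant would in any case require more care.
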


\begin{rem}
The space $C_{\rm sc}^k$ is the natural one appearing in the actual application, see Lemma~\ref{lemma:A}, and cannot be replaced there with the classical $C^k$. 
\end{rem}

\begin{proof}
The first part is {\em almost} immediate by explicitly writing out $\tilde
N_\digamma$ as in Section~\ref{sec:2-tensor}. For instance, the
additional weight does not affect the phase function, so the fact that
$\tilde N_{\digamma}$ is in $\Psisc^{-1,0}$ is unaffected, as is the
structure of the principal symbol
computation. Note that in the oscillatory integral computation leading
to the principal symbols the weights
$A_i^j$ are evaluated at
\[
\begin{aligned} 
(\gamma(t),\dot\gamma(t))&=\big(x+\lambda t+\alpha
t^2+t^3\Gamma^{(1)}(x,y,\lambda,\omega,t),y+\omega t+t^2\Gamma^{(2)}(x,y,\lambda,\omega,t),\\
&\qquad\qquad \lambda+2\alpha
t+t^2\tilde\Gamma^{(1)}(x,y,\lambda,\omega,t),\omega+t\tilde\Gamma^{(2)}(x,y,\lambda,\omega,t)\big),
\end{aligned}
\]
with $\Gamma^{(1)},\Gamma^{(2)},\tilde\Gamma^{(1)},\tilde\Gamma^{(2)}$
smooth functions of $x,y,\lambda,\omega,t$. Then one introduces $\hat
t=t/x$, $\hat\lambda=\lambda/x$, so the evaluation is at
\begin{equation*}\begin{aligned}
(\gamma(x\hat t),\dot\gamma(x\hat t))&=\big(x+x^2(\hat\lambda \hat t+\alpha
\hat t^2+x\hat t^3\Gamma^{(1)}(x,y,x\hat\lambda,\omega,x\hat
t)),y+x(\omega \hat t+x\hat
t^2\Gamma^{(2)}(x,y,x\hat\lambda,\omega,x\hat t)),\\
&\qquad\qquad x(\hat\lambda+2\alpha
\hat t+x\hat t^2\tilde\Gamma^{(1)}(x,y,x\hat
\lambda,\omega,t)),\omega+x\hat
t\tilde\Gamma^{(2)}(x,y,x\hat\lambda,\omega,x\hat t)\big).
\end{aligned}\end{equation*}
The stationary phase lemma shows that as long as iterated derivatives of $A_i^j (\gamma(x\hat
t),\dot\gamma(x\hat t))$ in
$x\pa_x,\pa_y,\pa_{\hat\lambda},\pa_\omega,\pa_{\hat t}$ are bounded,
the operator is in the same class as the unweighted one. By the chain
rule these are bounded by the $x\pa_x,\pa_y,x\pa_\lambda,\pa_\omega$
derivatives of $A_i^j$, which are exactly the derivatives giving rise
to the $\mathcal{A}^k$-norms.

There is only one real
subtlety, namely where $I\dsymm=0$ was used in the $k=2$ case (range
of $P^\parallel\otimes P^\parallel$) to deal with subprincipal terms; this is not satisfied for
$\tilde I$. However, $I\dsymm =0$ relies on $\sX\iota_{\sX} u=\dsymm
u(\sX,\sX)$ for all $u$, where $\sX$ is the tangent vector field of a
geodesic, see the discussion in the appendix; the integral of $\sX v$
along the geodesic vanishes for
any function $v$ (such as $v=\iota_{\sX} u$) of compact support by the fundamental theorem of
calculus. Thus, if $f_j=\dsymm u_j$, $\sX=\dot\gamma$,
\begin{equation}\begin{aligned}\label{eq:weighted-IBP}
(\tilde I f)_i(\beta)&=\int_{\gamma_\beta}
A^j_i(\gamma(s),\dot\gamma(s))\sX(\gamma(s))\iota_{\sX(\gamma(s))}u_j(\gamma(s))\,ds\\
&=\int_{\gamma_\beta}
\sX(\gamma(s))\big(A^j_i(\gamma(s),\dot\gamma(s))(\gamma(s))\iota_{\sX(\gamma(s))}u_j(\gamma(s))\big)\,ds\\
&\qquad\qquad-\int_{\gamma_\beta}
\sX(\gamma(s))(A^j_i(\gamma(s),\dot\gamma(s)))\iota_{\sX(\gamma(s))}u_j(\gamma(s))\,ds\\
&=-\int_{\gamma_\beta}
\sX(\gamma(s))(A^j_i(\gamma(s),\dot\gamma(s)))\iota_{\sX(\gamma(s))}u_j(\gamma(s))\,ds\\
&=-\int_{\gamma_\beta}
\tilde A^j_i(\gamma(s),\dot\gamma(s))
u_j(\gamma(s))(\dot\gamma(s))\,ds,\\
&\qquad \tilde A^j_i(\gamma(s),\dot\gamma(s))=(\sX (\gamma(s)))(A^j_i(\gamma(s),\dot\gamma(s)))
\end{aligned}\end{equation}
and now notice that the right hand side is a microlocally weighted
1-form X-ray transform. Crucially this means that $\tilde
N_{j,\digamma} \dsymmw $, while not $0$, is a transform of the same
form {\em with the same $\dot\gamma$, resp.\ $x\dot\gamma^{(2)}$ appearing in the argument
  as in \eqref{eq:general-2-tensor-form-1} and
  \eqref{eq:general-2-tensor-form-2}, albeit only to the first
  power}. Notice that a priori, $\tilde N_{\digamma} \dsymmw\in\Psisc^{0,0}$,
  but \eqref{eq:weighted-IBP} shows that it is in $\Psisc^{-1,0}$, and
  then the appearance of $\dot\gamma$ as mentioned means that the
  principal symbol has the same vanishing at $\xi=0$, since the same
  integration by parts is possible. This shows that the
analogue of Proposition~\ref{prop:N-digamma-structure-basic} holds (with an $N\times N$ matrix of operators, each with the
same structure as in that proposition), which gives the claimed mapping property just as in the
case of $N_\digamma$.

Moreover, if the weight is close to the
identity in the $\mathcal{A}^k$ norm for $k$ sufficiently large, then $N_\digamma\otimes I_N-\tilde N_\digamma$ is {\em small} as an
operator between these Hilbert spaces, and $N_\digamma\otimes
I_N:\cX^N\to\cY^N$ has a left inverse $N_\digamma^{-1}\otimes
I_N$. Correspondingly,
$$
\tilde N_\digamma^{-1}= (\Id+(N_{\digamma}^{-1}\otimes\Id_N) (\tilde N_\digamma-(N_\digamma\otimes\Id_N)))^{-1}(N_\digamma^{-1}\otimes\Id_N),
$$
is the desired left inverse.
\end{proof}

\section{Boundary rigidity}\label{sec:rigidity}

\subsection{Preliminaries}\label{sec_7.1}
Before proceeding with boundary rigidity, we recall from \cite{LassasSU} that if the
boundary distance functions of two metrics $g$, $\hat g$ are the
same on an open set $U_0$ of $\pa M$ and $\pa M$ is strictly
convex with respect to these metrics (indeed, convexity suffices), then for any compact subset $K$ of $U_0$ there is a diffeomorphism of $M$ fixing $\pa M$ such that the pull back of $\hat g$ by this diffeomorphism agrees with $g$ to infinite order at $K$. For a more
general result not requiring convexity, see \cite{SU-lens}. 
Concretely, the local statement is:

\begin{lemma} [\cite{LassasSU}] 
Let   $\bo$ be convex at $p_0$ with respect to $g$ and $\hat g$. Let $d=\tilde d$ on $\bo\times\bo$  near $(p_0,p_0)$. 
Then there exists a local diffeomorphism $\psi$ of a neighborhood of $p_0$ in $M$ to another such neighborhood with $\psi=\Id$ on $\bo$ near $p_0$ so that  
$\partial^\alpha g=\partial^\alpha(\psi^* \hat g)$ on $\bo$ near $p_0$ for every  multiindex $\alpha$. 
\end{lemma}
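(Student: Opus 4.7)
The plan is to set up boundary normal coordinates adapted to each of $g$ and $\hat g$, exhibit $\psi$ as the natural comparison map between them, and then show that in the shared coordinates the two metric representations have identical Taylor expansions at $\bo$. First I would fix local coordinates $y=(y^1,\dots,y^{n-1})$ on $\bo$ near $p$, let $t$ denote the inward $g$-geodesic distance from $\bo$, and extend $(y,t)$ to a one-sided collar via the $g$-normal exponential map, so that $g = dt^2 + h_{ij}(y,t)\,dy^i dy^j$. I then define $\psi$ to send the point with $g$-boundary normal coordinates $(y,t)$ to the point with $\hat g$-boundary normal coordinates $(y,t)$; since $t=0$ corresponds to $\bo$ in both collars, $\psi = \Id$ on $\bo$ near $p$, and in the $(y,t)$ coordinates $\psi^*\hat g = dt^2 + \tilde h_{ij}(y,t)\,dy^i dy^j$. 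Tangential derivatives of $h$ and $\tilde h$ at $t=0$ are automatically determined once their restrictions to $\bo$ agree, so the lemma reduces to proving $\pa_t^k h_{ij}(y,0) = \pa_t^k \tilde h_{ij}(y,0)$ for every $k\geq 0$ and every $y$ near $p$.

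Next I would induct on $k$ using $d = \tilde d$ on $\bo \times \bo$ near $(p,p)$. Setting $y_2 = y_1 + \epsilon w$ with $|w|=1$ and $\epsilon > 0$ small, strict convexity of $\bo$ forces the $g$-minimizer between $(y_1,0)$ and $(y_2,0)$ to dip into $t>0$ by an amount of order $\epsilon^2$, and expanding its energy in $\epsilon$ yields a Taylor series whose leading term $\epsilon^2 h_{ij}(y_1,0) w^i w^j$ recovers the base case $k=0$. For the inductive step, assuming $\pa_t^\ell h = \pa_t^\ell \tilde h$ at $t=0$ for all $\ell < k$, the first coefficient in this $\epsilon$-expansion that can depend on $\pa_t^k h$ involves it linearly modulo quantities already controlled by the induction hypothesis, and matching with the analogous coefficient for $\tilde d^2$ yields a linear equation for the symmetric $2$-tensor $\tau_{ij} := \pa_t^k h_{ij}(y_1,0) - \pa_t^k \tilde h_{ij}(y_1,0)$.

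The main obstacle is verifying the non-degeneracy of this linear equation, i.e.\ that as $w$ varies through the unit tangent vectors of $\bo$, the resulting polynomial identities on $\tau$ force $\tau=0$. The structural point is that, after subtracting contributions from previously determined jets, the residual contribution of $\tau$ reduces to a positive multiple of $\tau_{ij} w^i w^j$ arising from integration of the universal parabolic profile $t(s) \sim A(y_1)\, s(1-s)\, \epsilon^2$ of the minimizing geodesic against the Euler--Lagrange integrand, with multiplier determined by the already-known boundary jet and the second fundamental form. Since $w \mapsto w\otimes w$ spans the symmetric $2$-tensors as $w$ ranges over unit vectors, vanishing of this quadratic polynomial forces $\tau = 0$, closing the induction. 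Iterating in $k$ proves that all normal derivatives of $h$ and $\tilde h$ agree at $t=0$, which together with the automatic agreement of tangential derivatives gives $\pa^\alpha g = \pa^\alpha(\psi^*\hat g)$ on $\bo$ near $p$ for every multiindex $\alpha$.
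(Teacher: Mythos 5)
The paper itself does not prove this lemma: it is quoted from \cite{LassasSU}, and the text only uses it (plus the remark that $\psi$ comes from putting both metrics in semigeodesic coordinates). So the comparison is with the cited reference rather than with an internal proof; your overall strategy --- boundary normal coordinates for both metrics, $\psi$ the comparison map, reduction to equality of the normal jets $\pa_t^k h(\cdot,0)=\pa_t^k\tilde h(\cdot,0)$, induction on $k$ via distances between nearby boundary points --- is the standard one. However, as written the argument has two genuine gaps.

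First, the inductive step is asserted where it most needs justification. You match Taylor coefficients of $d^2(y_1,y_1+\ep w)$ in $\ep$ and claim that, at the first order where $\pa_t^k h$ can appear, it enters only linearly through the integral of $\frac{t^k}{k!}\tau_{ij}\dot y^i\dot y^j$ along the limiting parabolic profile. But the minimizing geodesic itself depends on the full metric, including the unknown $k$-th jet, so a priori that same coefficient also receives contributions from the perturbation of the minimizer; ``modulo quantities already controlled by the induction hypothesis'' is precisely the statement in question. The clean way to close this is not coefficient matching but a two-sided competitor argument: insert the $g$-minimizer $\gamma$ into the $\hat g$-energy to get $d_{\hat g}^2(x,y)\le d_g^2(x,y)-c_k\,\ep^{2k+2}\bigl(\tau(w,w)+o(1)\bigr)$ with $c_k>0$ (the change of curve is harmless only in this inequality form, because $\gamma$ need not be $\hat g$-critical), then swap the roles of $g$ and $\hat g$ for the reverse inequality; equality of the distance functions gives $\tau(w,w)=0$ for all unit tangent $w$, and polarization gives $\tau=0$. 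You gesture at this (``subtracting contributions from previously determined jets''), but never state why the implicit dependence of the minimizer on $\pa_t^k h$ does not contribute at order $\ep^{2k+2}$, nor the uniformity in the base point needed to conclude on all of $\bo$ near $p$, nor that the swapped inequality requires the same penetration asymptotics for the $\hat g$-minimizer.

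Second, your quantitative input is strict convexity, while the lemma (and the paper's surrounding remark ``indeed, convexity suffices'') assumes only convexity at $p$. The positivity of your multiplier comes from the dip $t(s)\sim A\,s(1-s)\,\ep^2$ with $A>0$, i.e.\ from a strictly positive second fundamental form; at a merely convex point (e.g.\ a totally geodesic piece of $\bo$) the short minimizers may hug the boundary, the dip is of higher order or absent, and the coefficient multiplying $\tau(w,w)$ degenerates, so the induction does not close. In the theorems of this paper the boundary is strictly convex, so the case you treat is the one actually used, but your proposal does not establish the lemma under its stated hypothesis; the weakly convex case requires a different (comparison-of-lengths) argument as in \cite{LassasSU}.
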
 

The diffeomorphism $\psi$ is constructed by identifying the semigeodesic coordinates, also called boundary normal coordinates, for both metrics. More specifically, let $z'=(z^1,\dots,z^{n-1})$ be local coordinates on $\bo$ near $p_0$, and let for a moment denote by $\gamma_{z',\nu}(s)$ the unit speed geodesic in the metric $g$ with initial point $p=p(z')\in\bo$ and direction the unit outward normal $\nu$ at $p$. Then $\phi: z=(z',z^n)\mapsto \gamma_{z',\nu}(z^n)$ is a local diffeomorphism, and then $z$ are local coordinates near $p_0$. Then $\phi^*g$ is $g$ in the normal gauge to $\bo$ and it satisfies $(\phi^*g)_{in}=\delta_{in}$, $i=1, \dots,n$ and $\bo$ is given locally by $z^n=0$. The distance function restricted to $\bo\times\bo$ near $(p_0,p_0)$ recovers the full jet of $\phi^*g$ at $\bo$ near $p_0$ uniquely. 
Let $\hat\phi$ be the diffeomorphism related to $\hat g$. Then $\psi:= \hat\phi\circ \phi ^{-1}$ is the diffeomorphism in the lemma above. In the (common) coordinates $z$, they both satisfy $g_{in}=\hat g_{in}=\delta_{in}$; more precisely, 
$(\phi^*g)_{in}=(\hat \phi^*\hat g)_{in}=\delta_{in}$, see, e.g., \cite[sec.~4.1]{Sh-book}. In other words, they are both in the normal gauge. 

The local statement of the lemma immediately implies the
semiglobal statement we made above it, namely the existence of a {\em
  single} diffeomorphism $\psi$ for compact subsets $K$ of
$U_0\subset\pa M$ such that  
$\partial^\alpha g=\partial^\alpha(\psi^* \hat g)$ on $K$ for every  multiindex $\alpha$.

We simply denote the pullback $\psi^*\hat g$ by $\hat g$, i.e.\ we
assume, as we may, that $g$ and $\hat g$ agree to infinite order on $K$. Applying this with an open smooth subdomain 
$U_1\ni p_0$ of $\pa M$ with $ \bar{U}_1\subset U_0$ compact, we can then extend $g$ and $\tilde
g$ to a neighborhood of $M$ in the ambient
manifold without boundary $\tilde M$ so that the extensions are
identical in a neighborhood $O_1$ of $U_1$; from this point on we work in
such a neighborhood of $U_1$.

Recall also that the above linear
results {\em in the normal gauge} required that the metric itself,
whose geodesics we consider, is in
the normal gauge. So for the non-linear problem we proceed as follows. First, we are given a smooth function $\foliation$ with $ d\foliation\not=0$ and strictly concave level sets from the
side of its superlevel sets at least near the 0-level set $H$, assume that the zero level set only intersects $M$ at $p_0\in \pa M$, then $\{\foliation\geq
-c\}\cap M$ is compact for $c>0$ small.  
A unique point of contact with $\pa M$ can be achieved, as in \cite{UV:local}, if we chose the concavity of $H$ to be strictly greater than that of $\pa M$ at $p_0$. Then  $\{\foliation\geq -c\}\cap M$ becomes small when $0<c\ll1$ and converges to $p_0$ as $c\to 0+$.

In fact, only the zero level set of the function $\foliation$ near $p_0$ will be relevant for local boundary rigidity. Thus, the open set $U_0$ above
is a neighborhood of $p_0$ in  $\pa M$, and the open set on which the metric
is recovered will be a neighborhood of $p_0$ in $M$, see also Figure~\ref{fig:local_lens_rigidity_pic1}. 

Namely, using
$H=\{\foliation=0\}$ as the initial hypersurface (rather than $\bo$ as above), we put the metrics $g$,
$\hat g$ into normal coordinate form relative to $H$ in a
neighborhood of $p_0$. In other words,  we pull each one back by a diffeomorphism
fixing $H$, so, dropping the diffeomorphism from the notation (as it
will not be important from now on), they are of the form $g=d\tilde x^2+h(\tilde x,y,dy)$,
$\hat g=d\tilde x^2+\tilde h(\tilde x,y,dy)$, and correspondingly the dual metrics
are of the form $g^{-1}=\pa_{\tilde x}^2+h^{-1}(\tilde x,y,\pa_y)$,
$\hat g^{-1}=\pa_{\tilde x}^2+\tilde h^{-1}(\tilde x,y,\pa_y)$. Note that those diffeomorphisms, constructed by identifying semigeodesic coordinates normal to $H$ map $\bo$ (near $p_0$) to the same hypersurface (pointwise) which we still call $\bo$ since the two metrics are equal outside $M$. 
It is with the so obtained $\tilde x$ that we apply our linear normal
gauge result; note that as $\{\tilde x=0\}=H$,  and $\{\tilde x\geq -c\}\cap M$ is
small when $c\ll1$, we still have the
concavity (as well as the other) assumptions satisfied for the level
sets $\{\tilde x=-c\}$
when $c$ is small. In addition, $g-\hat g$, as well as
$g^{-1}-\hat g^{-1}$, have support whose intersection with $O_1$ is
a subset of $M$.

\subsection{Pseudolinearization}
Our normal gauge result then plugs into the
pseudolinearization formula based on the following identity which appeared in \cite{SU-MRL}, see also \cite{SUV_localrigidity}.  
Let $V$, $\tilde V$ be two vector fields on a  manifold  $M$ which will be replaced later with $T^*M$. Denote by $\X(s,\X\zero)$ the solution of $\dot \X=V(\X)$, $\X(0)=\X\zero$, and we use the same notation for $\tilde V$ with the  
corresponding solution are denoted by $\tilde \X$.

\begin{lemma}\label{SU-identity}
For any $t>0$ and any  initial condition $\X\zero$, if $\tilde \X\!\left(\cdot,\X\zero\right)$ and $\X\!\left(\cdot,\X\zero\right) $ exist on the interval $[0,t]$, then 
\[
\tilde \X\!\left(t,\X\zero\right)-\X\!\left(t,\X\zero\right) = \int_0^t \frac{\partial \tilde \X}{\partial \X\zero}\!\left(t-s,\X(s,\X\zero)\right)\left(\tilde V-V \right)\!\left( \X(s,\X\zero)\right)\,ds.
\]
\end{lemma}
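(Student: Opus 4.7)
The plan is to use the standard ``homotopy'' or ``interpolation'' trick between the two flows: define the auxiliary curve
\[
F(s) = \tilde X\!\bigl(t-s,\, X(s,X\zero)\bigr),\qquad s\in[0,t],
\]
and observe that at the endpoints $F(0)=\tilde X(t,X\zero)$ and $F(t)=\tilde X(0,X(t,X\zero))=X(t,X\zero)$. Hence the desired difference equals $-\int_0^t F'(s)\,ds$, and the whole task reduces to computing $F'(s)$ and rearranging.

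Next I would differentiate $F$ by the chain rule, using that $\tilde X$ solves $\partial_\tau \tilde X(\tau,Y)=\tilde V(\tilde X(\tau,Y))$ in the first slot and $X$ solves $\partial_s X(s,X\zero)=V(X(s,X\zero))$ in the second. This yields
\[
F'(s) = -\tilde V\bigl(\tilde X(t-s,X(s,X\zero))\bigr) + \frac{\partial \tilde X}{\partial X\zero}\bigl(t-s,X(s,X\zero)\bigr)\, V(X(s,X\zero)).
\]
The key algebraic step is to rewrite the first term. From the flow property $\tilde X(\tau,Y) = \tilde X\!\bigl(\tau-\sigma,\tilde X(\sigma,Y)\bigr)$, differentiating in $\sigma$ at $\sigma=0$ produces the identity
\[
\tilde V\bigl(\tilde X(\tau,Y)\bigr) = \frac{\partial \tilde X}{\partial X\zero}(\tau,Y)\,\tilde V(Y),
\]
which says that the Jacobian of the $\tilde V$-flow conjugates $\tilde V$ at the initial point to $\tilde V$ at the final point. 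Applying this with $\tau=t-s$ and $Y=X(s,X\zero)$ converts $F'(s)$ into $-\frac{\partial \tilde X}{\partial X\zero}(t-s,X(s,X\zero))\,(\tilde V-V)(X(s,X\zero))$, and integrating from $0$ to $t$ gives exactly \eqref{L1}.

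The step I expect to require the most care is the conjugation identity for $\tilde V$ under the Jacobian of its own flow; it is elementary but must be set up with the correct sign and slot conventions so that the final integrand lines up with the formula as stated. Everything else is bookkeeping via the chain rule and the fundamental theorem of calculus, and no regularity beyond smoothness of $V,\tilde V$ (to ensure $\partial_{X\zero}\tilde X$ exists and is continuous on $[0,t]$) is needed.
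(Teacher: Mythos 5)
Your proposal is correct and is essentially the paper's own argument: the paper proves the lemma by applying the Fundamental Theorem of Calculus to exactly the same interpolating function $F(s)=\tilde X(t-s,X(s,X\zero))$, with the chain-rule computation and the conjugation identity $\tilde V(\tilde X(\tau,Y))=\frac{\partial\tilde X}{\partial X\zero}(\tau,Y)\tilde V(Y)$ carried out just as you describe.
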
 

The proof is based on the application of the Fundamental Theorem of Calculus to the function
\[
F(s) = \tilde \X\!\left(t-s,\X(s,\X\zero)\right), \quad 0\le s\le t. 
\] 
Let $g$, $\hat g$ be two metrics.   The corresponding Hamiltonians and Hamiltonian vector fields are  
\be{HV}
H = \frac12 g^{ij}\xi_i\xi_j , \qquad 
V = \left(g^{-1} \xi, -\frac12\partial_\x |\xi|_{g}^2\right),
\ee
and the same ones related to $\hat g$. Here,  $|\xi|_{g}^2:= g^{ij}\xi_i\xi_j $.

In what follows,  we denote points in the phase space $T^*M$, in a fixed coordinate system, by $z = (\x,\xi)$. We denote the bicharacteristic with initial point $z$ by $Z(t,z) = (\X(t,z),\Xi(t,z))$. 

Then we obtain the identity already used  in \cite{SU-MRL, SUV_localrigidity}:

\be{1}
\tilde Z(t,z) - Z(t,z)= 
\int_0^t \frac{\partial\tilde Z}{\partial z} (t-s,Z 
(s,z))\big(\tilde V - V\big)(Z (s,z))\,ds.
\ee
 We can naturally think of the scattering relation $\cL$ and the travel
 time $\ell$ as functions on the cotangent bundle instead of the
 tangent one, which yields the following.

\begin{proposition}\label{pr1} 
Assume  
\be{5a}
\cL(x_0,\xi^0) = \tilde \cL(x_0,\xi^0), \quad \ell(x_0,\xi^0) =\tilde \ell(x_0,\xi^0)
\ee
for some $z_0= (x_0,\xi^0)\in \partial_-S^*M$. Then 
\be{5v}
\int_0^{ \ell(z_0) }  \frac{\partial\tilde Z}{\partial z} ( \ell(z_0)-s,Z 
(s,z_0) )\big(V -\tilde V\big)(Z (s,z_0))\,ds =0
\ee
with $V$ as in \r{HV}. 
\end{proposition}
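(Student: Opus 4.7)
The plan is to deduce Proposition~\ref{pr1} directly from Lemma~\ref{SU-identity} applied to the bicharacteristic flow, using that the equality of partial lens data forces the two flows out of $z_0$ to end at the same point at the same time.

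First, I would instantiate Lemma~\ref{SU-identity} with the manifold being (a neighborhood of $S^*M$ in) $T^*M\setminus 0$, with $X(s,z)=Z(s,z)$ the bicharacteristic of the Hamiltonian $H=\tfrac12 g^{ij}\xi_i\xi_j$ of $g$ issued from $z$, and $\tilde X(s,z)=\tilde Z(s,z)$ the analogous bicharacteristic of $\hat g$; the corresponding vector fields are $V$ and $\tilde V$ as in \eqref{HV}. Taking $X\zero=z_0\in\partial_- S^*M$ and $t=\ell(z_0)$, Lemma~\ref{SU-identity} gives
\be{plan-applied}
\tilde Z(\ell(z_0),z_0)-Z(\ell(z_0),z_0)=\int_0^{\ell(z_0)}\frac{\partial\tilde Z}{\partial z}(\ell(z_0)-s,Z(s,z_0))(\tilde V-V)(Z(s,z_0))\,ds,
\ee
provided both flows exist on $[0,\ell(z_0)]$ starting from $z_0$. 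Existence for $Z$ is by definition of $\ell(z_0)$, and existence for $\tilde Z$ on the same interval is built into the hypothesis $\tilde\ell(z_0)=\ell(z_0)$.

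Next, I would show that the left-hand side of \eqref{plan-applied} vanishes. By definition of the scattering relation, $Z(\ell(z_0),z_0)$ is (the cotangent-bundle image of) $L(x_0,\xi^0)\in\partial_+ S^*M$, and likewise $\tilde Z(\tilde\ell(z_0),z_0)$ equals $\tilde L(x_0,\xi^0)$. Using the hypotheses $\ell(z_0)=\tilde\ell(z_0)$ and $L(x_0,\xi^0)=\tilde L(x_0,\xi^0)$, together with the fact (justified by the infinite-order agreement of $g$ and $\hat g$ at $\bo$ arising from the Lassas--Sharafutdinov--Uhlmann boundary determination recalled earlier in this section) that the metric identifications of $T\bo$ with $T^*\bo$ coincide where they matter, one concludes $\tilde Z(\ell(z_0),z_0)=Z(\ell(z_0),z_0)$. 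Thus the left-hand side of \eqref{plan-applied} is zero.

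Finally, multiplying \eqref{plan-applied} by $-1$ yields exactly \eqref{5v}. The only step that requires any real care is the second one, namely verifying that the identifications under which $L$ and $\tilde L$ are compared as functions on $\partial_- S^* M$ make $\tilde Z(\ell(z_0),z_0)=Z(\ell(z_0),z_0)$; this, however, is automatic in our setting because $g=\hat g$ to infinite order on $\bo$ after the preliminary normalization described at the beginning of Section~\ref{sec:rigidity}, so the two Hamiltonians, and thus the exit data in $T^*M$, are identified in a consistent way.
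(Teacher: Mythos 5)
Your proposal is correct and is essentially the paper's own argument: Proposition~\ref{pr1} is obtained by applying Lemma~\ref{SU-identity} (i.e.\ identity \eqref{1}) to the two Hamiltonian flows with $t=\ell(z_0)$, and the hypothesis \eqref{5a} makes the left-hand side $\tilde Z(\ell(z_0),z_0)-Z(\ell(z_0),z_0)$ vanish, giving \eqref{5v} after a sign flip. Your closing remark about the identification of the lens data (via the boundary determination/identical extension of $g$ and $\hat g$ near the relevant boundary set) is exactly how the paper arranges matters before invoking the identity, so there is no gap.
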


Recall from the introduction that the boundary distance function determines the lens data
locally, thus Proposition~\ref{pr1} is the geometric input of
Theorems~\ref{thm:local-impr}-\ref{thm:local-pr} establishing the
connection between the given geometric data and a transform (which
depends on $g$ and $\hat g$) of
$V-\tilde V$, namely \eqref{5v}.

\subsubsection{Linearization near   $g$ Euclidean}   As a simple exercise,
we first consider the special case of the Euclidean metric to develop a 
feel for this identity. So let  $g_{ij}=\delta_{ij}$ and linearize for $\hat g$ near $g$ first under the assumption  $\hat g_{ij}=\delta_{ij}$ outside an open region $\Omega\subset \R^n$. Then
\begin{equation*}  \label{110'}
Z(s,z) = \mat {I_n}{sI_n}0{I_n} z,\quad \frac{\partial Z(s,z)}{\partial z} = 
\mat {I_n}{sI_n}0 {I_n},
\end{equation*}
with $I_n$ being the identity $n\times n$ matrix, and  we get the following formal linearization  of \r{5v} 
\begin{equation} \label{112}
\int_0^t \left(f\xi-\frac12(t-s) \partial_\x f^{ij}\xi_i\xi_j ,\,
-\frac12\partial_{\x} f^{ij}\xi_i\xi_j \right)(\x+s\xi,\xi)\,\d s=0,
\end{equation}
for $t\gg1$ with  
\[
f^{ij}(\x):= \delta^{ij}(\x)-\hat g^{ij}(\x). 
\]
Equation \r{112} is obtained by replacing  $\partial \tilde Z/\partial z$ in \r{1} by $\partial  Z/\partial z$. 
The last $n$ components of  (\ref{112}) imply 
\begin{equation*} \label{112'}
\int  \partial_\x f^{ij}(\x+s\xi)\xi_i\xi_j \,\d s=0.
\end{equation*} 
We integrate over the whole line $s\in\R$ because the integrand vanishes outside the interval $[0, \ell(\x,\xi)]$. 
We can remove the derivative there and get that the X-ray transform $If$ of the tensor field $f$ vanishes. 
Now, assume that this holds for all $(\x,\xi)$. Then  $ f=\dsymm v$ for
some covector field $v$ vanishing at $\bo$.   This is a linearized
version of the statement that $\hat g$ is isometric to $g$ with a
diffeomorphism fixing $\bo$ pointwise. Even in this simple case we see
that we actually obtained at first that $I(\partial_\x f)=0$ rather than $If=0$ and needed to integrate.

\subsubsection{The general case} \label{sec_2.2}
We take the second $n$-dimensional component on \r{1}.
 We get, with $f=g^{-1}-\hat g^{-1}$,
\[
\begin{split}
&\int  \frac{\partial \tilde \Xi }{\partial \x}( \ell(z)-s,Z
(s,z))(f\xi) (Z(s,z))\,\d s\\
&-\frac12  \int  \frac{\partial \tilde \Xi}{\partial \xi}( \ell(z)-s,Z 
(s,z ))(  \partial_\x  f \xi\cdot\xi)   (Z(s,z))\,\d s=0
\end{split}
\]
for any $z\in \partial_-SM$ for which \r{5a} holds. 
As before, we integrate over  $s\in\R$ because the support of the integrand vanishes for $s\not\in [0, \ell(\x,\xi)]$ (for that, we extend the bicharacteristics formally outside so that they do not come back).

Introduce the exit times $\tau(\x,\xi)$ defined as the minimal (and the only) $t> 0$ so that $\X(t,\x,\xi)\in\bo$. They are well defined near $S_p\bo$, if $\bo$ is strictly convex at $p_0$. 
We have
\[
\frac{\partial \tilde Z}{\partial z}(\ell(z)-s,Z(s,z)) = \frac{\partial \tilde Z}{\partial z}(\tau(Z(s,z))). 
\]
Then we get, with $f^{kl}=g^{kl}-\hat g^{kl}$,
\be{3}
\begin{split}
J_i f(\gamma):= &\int \Big( A_i^j(\X(t),\Xi(t))(\partial_{\x^j}f^{kl})(\X(t)) \Xi_k(t) \Xi_l(t) \\
&\qquad + B_i (\X(t),\Xi(t))f^{kl}(\X(t))  \Xi_k(t) \Xi_l(t)  \Big)\d t=0
\end{split}
\ee
for any bicharacteristic  $\gamma = (\X(t),\Xi(t))$ related to the metric  $g$ in our set, where
\be{4}
A_i^j\left(\x,\xi\right) = -\frac12 \frac{\partial\tilde  \Xi_i}{\partial \xi_j}(\tau(\x,\xi),(\x,\xi)) , 
\quad 
B_i\left(\x,\xi\right) = \frac{\partial \tilde\Xi_i}{\partial
  \x^j}(\tau(\x,\xi),(\x,\xi))g^{jk}(\x) \xi_k. 
\ee

The exit time function $\tau(\x,\xi)$ (recall that we assume  strong
convexity) becomes singular at $(\x,\xi)\in T^*\bo$. More precisely,
the normal derivative with respect to $\x$ when $\xi$ is tangent to $\bo$ has a
square root type of singularity. This is yet another reason to extend
the metrics $g$ and $\hat g$ outside $M$, in an identical
manner.

Based on those arguments, we  push the boundary  away a bit, to
$\tilde x=\delta$ with some $\delta>0$. For $(\x,\xi)$ with $\x$ near
$p_0$, redefine $\tau(\x,\xi)$ to be the travel time from $(\x,\xi)$ to
$H_\delta=\{\tilde x=\delta\}$. Let $U_-\subset\partial_-SH_\delta$ be
the set of all points on  $H_\delta$ and incoming unit directions so
that the corresponding geodesic in the metric $g$ is close enough to
one tangent to $\bo$ at
$p_0$. Similarly, let $U_+$ be the set of such pairs with outgoing  directions. 
Redefine the scattering relation $\cL$ locally to act from $U_-$ to $U_+$, and redefine $\ell$ similarly, see Figure~\ref{fig:local_lens_rigidity_pic1}.  
Then under the assumptions of Theorems~\ref{thm:local-impr}-\ref{thm:local-pr},
$\cL=\tilde \cL$ and $\ell=\tilde \ell$ on $U_-$. We can apply the construction above by replacing $\partial_\pm SM$ locally by $U_\pm$.  Equalities \r{3}, \r{4} are preserved then.  The advantage we have now is that on $U_-$, the travel time $\tau$ is non-singular but its derivatives are still large when $\delta\ll1$. To deal with this, we need the following lemmas. 

\begin{figure}[h!] 
  \centering
  \includegraphics[scale=1,page=3]{loc-bdy-semiglobal_pic.pdf}
\caption{The redefined scattering relation.}
  \label{fig:local_lens_rigidity_pic1}
\end{figure}

\begin{lemma}\label{lemma:tau}
For $|\lambda|\le C\delta$, $|x|\le\delta/2$, $y$ bounded, we have, for $0<\delta\ll1$,
\be{eq:tau}
\tau(x,y,\lambda,\omega  )=\sqrt{\delta-x}\,\tilde \tau\Big(\sqrt{\delta-x},y,\frac{\lambda}{\sqrt{\delta-x}},\omega\Big)
\ee
with some smooth function $\tilde \tau$. Moreover, $\tilde \tau$ depends continuously on $g\in C^k$ for $k\ge1$ under small perturbations of $g$. 
\end{lemma}

\begin{proof}
By \eqref{eq:edge-derivs}, 
ignoring the $\Gamma$ terms,   the bicharacteristic meets
$x=\delta$ when $x+\lambda t+\alpha t^2=\delta$, i.e.\ when
\begin{equation}\label{eq:quadratic-roots}
t=\frac{-\lambda\pm\sqrt{\lambda^2+4\alpha(\delta-x)}}{2\alpha};
\end{equation}
for the forward direction one needs to take the $+$ sign.
Note that for $\lambda=0$, this means
$t=\frac{1}{\sqrt{\alpha}}{\sqrt{\delta-x}}$. 
Now, \eqref{eq:quadratic-roots}, and its
$\lambda=0$ case, suggests that we should factor out $\sqrt{\delta-x}$
from the formula for $t$, which then (as $\alpha>0$ is bounded below
by a positive constant) suggests in turn defining
$$
\tilde\lambda= {\lambda}/{\sqrt{\delta-x}},\quad \tilde t= {t}/{\sqrt{\delta-x}},
$$
to get 
\[
\tilde t=\frac{-\tilde\lambda+\sqrt{\tilde\lambda^2+4\alpha}}{2\alpha},
\]
which is a smooth function of $\tilde\lambda$ (for which $|\tilde\lambda|\le C\sqrt\delta$) and
$\alpha=\alpha(x,y,\lambda,\omega)$ for $\tilde\lambda$ small. 
This then immediately suggests how to proceed in the general case, without
ignoring the $\Gamma$ terms. Namely, $x=\delta$ is reached when
$$
x-\delta+\lambda t+\alpha
t^2+t^3\Gamma^{(1)}(x,y,\lambda,\omega,t)
$$
vanishes. 
With  $\tilde\rho=\sqrt{\delta-x}$, this is
$$
\tilde\rho^2\Big(-1+\tilde\lambda\tilde t+\alpha\tilde
t^2+\tilde\rho\,\tilde
t^3\Gamma^{(1)}(x,y,\tilde\rho\,\tilde\lambda,\omega,\tilde\rho\,\tilde
t)\Big),
$$
and the vanishing is equivalent (in the relevant region) to that of
$$
h=-1+\tilde\lambda\tilde t+\alpha\tilde
t^2+\tilde\rho\,\tilde
t^3\Gamma^{(1)}(x,y,\tilde\rho\,\tilde\lambda,\omega,\tilde\rho\,\tilde
t).
$$
But $h$ vanishes when $\tilde\rho=0$, $\tilde\lambda=0$, $\tilde
t=\frac{1}{\sqrt{\alpha}}$, and it is a $\CI$ function of
$\tilde\rho,y,\tilde\lambda,\omega,\tilde t$, with $\pa_{\tilde t} h$
at these points  given by $2\sqrt{\alpha}\neq 0$. Hence the
implicit function theorem applies and shows that, for sufficiently
small $|\tilde\rho|$ and $|\tilde\lambda|$, say both being $<\tilde\delta$, $x=\delta$ is crossed at
$$
\tilde t=\tilde \tau(\tilde\rho,y,\tilde\lambda,\omega),
$$
where $\tilde \tau$ is $\CI$, and hence at $t=\tau$ as in \eqref{eq:tau}.  
Then the smallness requirements for $|\tilde\rho|$ and $|\tilde\lambda|$ are satisfied for $\lambda$, and $x$ as in the lemma as long as $\delta\ll1$. Finally, $\alpha$ and $\Gamma^{(1)}$ depend continuously of $g$ in the sense of the lemma, then so does $\tilde\tau$. 
\end{proof}

\begin{lemma} \label{lemma:A}
For every $k$, 
\be{7}
A_i^j(\x,\xi) =  -\frac12 \delta_i^j +O(\sqrt{\delta}), \quad B_i(\x,\xi)=O(1) 
\quad \text{in $C_{\rm sc}^k$} \ee
as $\delta\ll1$ for  $(\x,\xi)\in T^*M $ near $S^*_{p_0}\bo$ satisfying the smallness assumptions of Lemma~\ref{lemma:tau}. 
 Moreover, $A_i^j$ and $B_i$ with values in   $C_{\rm sc}^k$, depend continuously on $\hat g\in C^k$ for $k\ge1$ under small perturbations of $\hat g$. 
\end{lemma}
Recall that $C_{\rm sc}^k$ was defined after \eqref{eq:edge-derivs}. Estimate \r{7} is not true in general in the conventional $C^k$ norms. 

\begin{proof}
By Lemma~\ref{lemma:tau}, $\tau=O(\sqrt{\delta})$ in $C_{\rm sc}^k$.  Passing to the coordinates $x,y,\lambda, \omega$, we write
\[
A_i^j  =  -\frac12 \delta_i^j + \tau(x,y,\lambda, \omega)\tilde A_i^j(x,y,\lambda, \omega,  \tau(x,y,\lambda, \omega))
\]
with some smooth function $\tilde A_i^j(x,y,\lambda, \omega,  t)$ with derivatives uniformly bounded (and independent of $\delta$) in the region in Lemma~\ref{lemma:tau} and $|t|\ll1$. Then \r{7}  for $A_i^j$ follows by \r{4}. The proof for $B_i$ is similar. 
\end{proof}

\subsection{Local boundary rigidity. Proof of Theorem~\ref{thm:local-pr}} 
The equality of the distance functions $d_g$ and $d_{\hat g}$ for pairs of points on $\pa M$ close to a fixed one implies equality of the lens relations as redefined in the paragraph preceding Lemma~\ref{lemma:tau}, see also Figure~\ref{fig:local_lens_rigidity_pic1}. A priori, minimizing paths  may not be in a small neighborhood of ones tangent to $p_0$ but by shrinking $U$ in Theorem~\ref{thm:local-pr} if needed, we can arrange that they are. Note that the size of $U$ can be chosen uniform under small perturbations of  $g$ and $\hat g$ in $C^k$ with $k\gg1$.

Since in Section~\ref{sec:Fredholm} we analyzed the X-ray transform on
symmetric cotensors with 
weights, it is convenient to 
replace $f$ in \eqref{3} by its cotensor version. Thus, with $
f^{kl}=g^{kl}-\hat g^{kl}$, we have
\begin{equation*}\begin{aligned}
J_i f(\gamma):= &\int \Big(
A_i^j(\X(t),\Xi(t))g_{kr}(\X(t))g_{ls}(\X(t))(\partial_{x^j}f^{rs})(\X(t))\\
&\qquad\qquad\qquad\qquad\qquad\qquad g^{kr'}(\X(t))\Xi_{r'}(t) g^{ls'}(\X(t))\Xi_{ls'}(t) \\
&\qquad + B_i (\X(t),\Xi(t)) g_{kr}(\X(t))g_{ls}(\X(t)) f^{rs}(\X(t)) \\
&\qquad\qquad\qquad\qquad\qquad\qquad g^{kr'}(\X(t))\Xi_{r'}(t) g^{ls'}(\X(t))\Xi_{ls'}(t)  \Big)\d t=0,
\end{aligned}\end{equation*}
where
now $g^{-1}\Xi(t)$ in the arguments of $g_{kr}g_{ls}\partial_{x^j}f^{rs}$
and $g_{kr}g_{ls}f^{rs}$ is the tangent vector of the
geodesic (projected bicharacteristic) at $\X(t)$.
The equality is true for every $z$ for which $\ell(z)=\tilde\ell(z)$ near $S^*_{p_0}\pa M$.

In order to fit into the framework of Section~\ref{sec:Fredholm},
we further want to consider this as a transform on the $n+1$ functions $(f_j)_{ik}=g_{ir}g_{ks}\pa_j(g^{rs}-\hat g^{rs})$,
$(f_0)_{ik}=g_{ir}g_{ks}(g^{rs}-\hat g^{rs})$; thus ultimately the
transform we consider is
\begin{equation}\begin{aligned}\label{eq:tilde-I-defn}
\tilde I_i(\beta)(f_0,f_1,\ldots,f_n) 
&=\int_{\gamma_\beta}A^j_i(\X(t),\Xi(t))
f_j(\X(t))(X'(t),X'(t))\\
&\qquad\qquad+B_i(\X(t),\Xi(t)) f_0(\X(t))(X'(t),X'(t)))\,dt,
\end{aligned}\end{equation}
where $\gamma_\beta$ is the geodesic through $\beta\in S^*X$. Moreover, for every $k$, 
by Lemma~\ref{lemma:A},  $-2A^j_i$ is $O(\delta^{1/2})$ 
close to $\delta^j_i$  in $C_{\rm sc}^k$, if 
 the initial points and directions are $\delta$ close to $T_{p_0}\pa M$. 
Thus, considering the resulting transform $\tilde
N_\digamma$ on the $n$ components $u'=(u_1,\ldots,u_n)$, with
$u=e^{-\digamma/x} f$, we get, as in \cite{SUV:Tensor}, in this case
using Theorem~\ref{thm:X-ray-weights}, that there is $c_0>0$ such that
for $0<c<c_0$,
\begin{equation}\label{eq:X-ray-weight-mod-0}
\|u'\|_{\cX^n}\leq C(\|\tilde N_\digamma u'\|_{}+\|u_0\|_{\cX});
\end{equation}
here $\cX^n$ is the $n$-fold product space based on $\cX$ (i.e.\ each
$u_j\in\cX$, $j=1,\ldots,n$, and is estimated in that space).
We note that here $c_0$ and $C$ can be taken to be independent of $g$ as long
$g$ is $C^k$-close to a background metric (satisfying the assumptions) for suitable $k$.
We also need that

\begin{lemma}\label{lemma:microlocal-small-error}
Suppose $\tilde\delta>0$.
There exists $c_0>0$ such that for $0<c<c_0$, $\|u_0\|_{\cX}\leq
\tilde\delta\|u'\|_{\cX^n}$. Furthermore, $c_0$ can be taken to be
independent of $g$ as long as, for suitable $k$, $g$ is $C^k$-close to
a background metric $g_0$ satisfying our assumptions.
\end{lemma}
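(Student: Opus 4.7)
The plan is to exploit the fact that $f_0 = g - \hat g$, viewed as a symmetric $2$-cotensor via index lowering, vanishes to infinite order at $\pa M$ (corresponding to $x = c$ in the coordinate $x = x_c = \tilde x + c$), since $g$ and $\hat g$ have been arranged to agree to infinite order there via the Lassas--Sharafutdinov--Uhlmann diffeomorphism recalled above. Because we are in normal coordinates relative to $H$, the first coordinate $x^1 = x$ may be taken to be the foliation-transverse one, so that $\pa_x$ is among the $\pa_j$'s appearing in the $f_j$'s. A direct computation from
\[
(f_0)_{ik} = g_{ir}g_{ks}(g^{rs}-\hat g^{rs}), \qquad (f_1)_{ik} = g_{ir}g_{ks}\pa_x(g^{rs}-\hat g^{rs})
\]
yields the identity
\[
\pa_x f_0 = f_1 + R\,f_0,
\]
where $R$ is a smooth bundle endomorphism depending only on $g$ and $\pa_x g$. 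This is a first-order linear ODE in $x$ for $f_0$, with source $f_1$ and terminal condition $f_0 \to 0$ to infinite order as $x \to c^-$.

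Integrating from $c$ and applying Gr\"onwall, one obtains the pointwise bound
\[
|f_0(x,y)| \leq C_0\,e^{C_1 (c - x)}\int_x^c |f_1(x',y)|\,dx'
\]
for $x \in (0,c)$, with $C_0, C_1$ depending only on the $C^1$-norm of $g$. Passing to the exponentially conjugated quantities $u_j = e^{-\digamma/x}f_j$ and using that $e^{-\digamma/x + \digamma/x'} \leq 1$ whenever $0 < x \leq x' \leq c$, this becomes
\[
|u_0(x,y)| \leq 2 C_0 \int_x^c |u_1(x',y)|\,dx'
\]
once $c < c_0$ is small enough that $e^{C_1 c_0} \leq 2$. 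Applying the same argument after first commuting $x^2 D_x$ (or a tangential scattering derivative) through the ODE yields analogous pointwise bounds for all the derivatives of $u_0$ entering the definition of $\cX$, with forcing built from the corresponding derivatives of $u'$ plus lower-order remainders involving $u_0$ itself, which can then be iteratively absorbed.

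The pointwise bounds translate to weighted $L^2$ bounds via Cauchy--Schwarz: the Volterra-type operator $Tv(x,y) := \int_x^c v(x',y)\,dx'$ has operator norm $O(c^{1/2})$ on the weighted $L^2$ spaces underlying $\Hsc^{s,r}$, because the interval of integration has length at most $c$ and the contraction $e^{-\digamma/x + \digamma/x'} \leq 1$ prevents any blow-up from the exponential weights when passing back to the $u_j$. Since the microlocal projections defining the decomposition $(u^{(0)},u^{(1)},u^{(2)})$ appearing in the norm on $\cX$ are scattering pseudodifferential operators of order $(0,0)$, hence bounded on these spaces, we obtain $\|u_0\|_\cX \leq C c^{1/2}\,\|u'\|_{\cX^n}$; taking $c_0$ so that $C c_0^{1/2} < \tilde\delta$ proves the claim. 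All constants depend only on finitely many $C^k$-seminorms of $g$, giving the asserted uniformity. The main technical care is in keeping track of the $x^2 D_x$ derivatives and the microlocal decomposition simultaneously, which is handled by iteratively differentiating the ODE and commuting scattering derivatives through.
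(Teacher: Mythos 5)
Your core mechanism is sound, and it is essentially a hands-on version of what the paper does: writing $\pa_x f_0 = f_n + Rf_0$ (your $f_1$ is the paper's $f_n$, the normal-derivative component, which is the only one of the $f_j$ you need) and integrating from the $\pa M$ side, where $f_0$ vanishes, amounts to inverting the conjugated operator $e^{-\digamma/x}(x^2\pa_x+\cdots)e^{\digamma/x}$, whose inverse on supported distributions is exactly your Volterra operator with kernel factor $e^{-\digamma/x+\digamma/x'}\leq 1$ for $x\leq x'$. Your smallness comes from the interval length $\leq c$; the paper instead uses the identity $-ix^2u_n=(g\otimes g)e^{-\digamma/x}(x^2D_x)e^{\digamma/x}(g^{-1}\otimes g^{-1})u_0$, whose operator has principal symbol $(\xi+i\digamma)\Id$, applies Proposition~\ref{prop:not-quite-real-princ-scalar} (dropping the error term since the operator has no nullspace on distributions supported in $\overline\Omega$), and gets smallness from the explicit factor $x^2\leq c_0^2$ on the support. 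A minor point: the terminal condition should be stated as vanishing of $f_0$ across $\pa M$ along each line $y=\mathrm{const}$ (it is supported in $M\cap\overline\Omega$ and smooth across $\pa M$ by the infinite-order agreement of the metrics there), not as vanishing at $x=c$; along many lines one exits $M$ before $x$ reaches $c$.

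The genuine gap is the final step, where you pass from pointwise/Gr\"onwall and $L^2$ Volterra bounds to $\|u_0\|_{\cX}\leq\tilde\delta\|u'\|_{\cX^n}$. The $\cX$-norm is not an $L^2$-type norm: with $s=0$ its components lie in the negative-order spaces $\Hsc^{-1,r}$ and $\Hsc^{-2,r}$, it carries the extra $x^2D_x$ and $(x^2D_x)^2$ factors on the middle and last components, and the splitting into components is only defined microlocally near $\xi=0$ at fiber infinity via the approximate projections of Proposition~\ref{prop:microlocal-bundles}. Pointwise bounds do not control negative-order norms, and ``commuting $x^2D_x$ through the ODE'' does not address the commutators of your Volterra operator with these pseudodifferential projections; to complete your route you would have to establish boundedness (with the factor $O(c)$) of the conjugated Volterra operator on each $\Hsc^{s,r}$ needed — e.g.\ by a duality-plus-commutator argument, noting that the adjoint kernel is again a contraction in the opposite direction — and then check that the commutators with $P^\perp$, $P^\parallel$ are lower order and absorbable. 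This bookkeeping is precisely what Proposition~\ref{prop:not-quite-real-princ-scalar} packages: it yields $\|u_0\|_{s,r}\leq C\|x^2u_n\|_{s,r}\leq Cc_0^2\|u_n\|_{s,r}$ for all $(s,r)$ simultaneously, uniformly for $g$ in a $C^k$-neighborhood of $g_0$, from which the $\cX$-estimate follows; I recommend either invoking that proposition or supplying the weighted-Sobolev mapping properties of your Volterra operator explicitly.
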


\begin{proof}
Recall that $(u_j)_{ik}=e^{-\digamma/x}
g_{ir}g_{ks}\pa_j(g^{rs}-\hat g^{rs})$,
$(u_0)_{jk}=e^{-\digamma/x}g_{ir}g_{ks}(g^{rs}-\tilde
g^{rs})$, i.e.\ $u_j=(g\otimes g)e^{-\digamma/x}\pa_j(g^{-1}-\tilde
g^{-1})$, $u_0=(g\otimes g)e^{-\digamma/x}(g^{-1}-\tilde
g^{-1})$. Thus, $u_j=(g\otimes g)e^{-\digamma/x}\pa_j
e^{\digamma/x}(g^{-1}\otimes g^{-1})u_0$.
Writing the first $n-1$ coordinates as the $y$ variables and
the $n$th as the $x$ variable, the result is proved if we can show
that $\|u_0\|_{\cX}\leq
\tilde\delta\|u_n\|_{\cX}$ when $c$ is suitably small.

Now
$$
-ix^2 u_n =(g\otimes g)e^{-\digamma/x}(x^2D_x)
e^{\digamma/x}(g^{-1}\otimes g^{-1})u_0,
$$
and $(g\otimes g)e^{-\digamma/x}(x^2D_x)
e^{\digamma/x}(g^{-1}\otimes g^{-1})$ has principal symbol
$\xi+i\digamma$ times the identity. By
Proposition~\ref{prop:not-quite-real-princ-scalar} we have
\begin{equation}\label{eq:rad-pt-based-for-Poincare}
\|u_0\|_{s,r}\leq C(\|x^2 u_n\|_{s,r}+\|u_0\|_{-N,-M}),
\end{equation}
with $C$ uniform in $g$ in the sense of the statement of the lemma.

We take $s>0$, $N>0$ as
we may, and note that the error term on the right hand side satisfies
$$
\|u_0\|_{-N,-M}\leq\|u_0\|_{0,-M}=\|x^{r+M} u_0\|_{0,r}\leq c_0^{r+M}\|u_0\|_{0,r}\leq c_0^{r+M}\|u_0\|_{s,r}
$$
if $u$ is supported in $x<c_0$. Substituting into
\eqref{eq:rad-pt-based-for-Poincare}, this can be absorbed into the
left hand side of the same equation for sufficiently small
$c_0$, which is uniform in $g$; for instance $Cc_0^{r+M}<1/2$ suffices.

Therefore, if $u$ is supported in
$x<c_0$, we deduce that $\|u_0\|_{s,r}\leq 2c_0^2C\| u_n\|_{s,r}$.

Now, recall from \eqref{eq:cX-def} that the $\cX$ spaces are just
spaces where similar estimates are made also for $x^2 D_x u$ and $(x^2
D_x)^2 u$ (more precisely, of the microlocal projections of $u$),
so this proves
the lemma.
\end{proof}

Then as in \cite{SUV:Tensor}, for $\tilde\delta>0$ sufficiently small,
one can absorb the $u_0$ term from the right hand side of \eqref{eq:X-ray-weight-mod-0}
in the left hand side. This proves the stable recovery of $u$, thus
$f$, from the transform, and thus local boundary rigidity: restricted
to $\tilde
x\geq -c$, the metrics are the same.

This concludes the proof of Theorem~\ref{thm:local-pr}.

\subsection{Semiglobal and global lens rigidity. Proof of Theorem~\ref{thm:global} and Theorem~\ref{thm:semiglobal}} \label{sec_Sg} 
Our approach also allows us to prove a global rigidity result. The key
point for this is to make the local
boundary rigidity argument uniform in how far from an initial
hypersurface $H$ the metrics $g$ and $\hat g$ can be shown to be identical in
geodesic normal coordinates.

We note that the normal gauge relative to a hypersurface provides a local 
diffeomorphism at a
uniform distance to it if one has a uniform estimate for the second
fundamental form of the hypersurface and of the curvature of the
manifold. We do it by proving differential injectivity first at a uniform distance, i.e.\
giving a lower bound for the flow parameter
for non-zero Jacobi fields to vanish. This follows from
comparison geometry (essentially the Rauch comparison theorem), namely comparing the ODE for Jacobi fields to that of the constant curvature case, when it is explicitly
solvable. To prove that this map is a diffeomorphism to its image,  notice that the geodesic flow from the unit
normal bundle of the hypersurface is globally well defined (if $\tilde
M$ is complete, as one may assume).  The question is if it is injective. 
For points a fixed distance apart, geodesics cannot intersect in
short times and there is a uniform lower bound 
and that bound depends  on the second fundamental form and on the curvature.  Concretely:

\begin{lemma}\label{lemma:normal-coord-tube}\ 

(a) 
Suppose $H$ is an embedded hypersurface in a Riemannian manifold without boundary
$(\tilde M,g)$, the sectional curvature of $g$ is $\leq \mu$, $\mu>0$, and suppose that the second fundamental form $\text{\rm II}$ of $H$
satisfies $|\text{\rm II}|\leq K$. Then the normal geodesic exponential map is a local
diffeomorphism on the $\frac{1}{\sqrt{\mu}}\cot^{-1} K$ (two sided) collar neighborhood of 
$H$, and the there is a uniform bound for the differential of the
local inverse on collars of strictly smaller radii.

(b)  Moreover, if $H$ is a compact subset of $H_\level=\{\foliation=\level\}$ with $d\foliation\not=0$ on $H_\level$, there exists $\delta_0>0$ depending on $\foliation$, $g$ and uniform under small perturbations of $\level$ so that the normal geodesic exponential map is a (global) 
diffeomorphism on the $\ep_0$ collar neighborhood of $H$; and hypersurfaces $\text{\rm dist}(\cdot,H_\level)=s$ are strictly convex for $|s|\le\ep_0$   under a small  perturbation of $\level$ and $g$. 
\end{lemma}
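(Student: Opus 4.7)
For part (a) I would reduce the local diffeomorphism property to the absence of focal points of $H$ along unit-normal geodesics. The normal exponential map $E\colon NH\to\tilde M$, $(p,s\nu)\mapsto\exp_p(s\nu)$, fails to be a local diffeomorphism at $(p_0,s_0\nu_0)$ precisely when there is a nontrivial $H$-Jacobi field $J$ along $\gamma(s)=\exp_{p_0}(s\nu_0)$, meaning $J(0)\in T_{p_0}H$ with $J'(0)=-S_{\nu_0}J(0)$ (where $S_{\nu_0}$ is the shape operator of $H$ in direction $\nu_0$), that vanishes at $s_0$. Normalizing $|J(0)|=1$ and setting $\kappa=\langle S_{\nu_0}J(0),J(0)\rangle\in[-K,K]$, the Sturm--Rauch comparison against the model of constant sectional curvature $\mu$ yields $|J|(s)\ge u_\kappa(s)$ for all $s$ up to the first zero of $u_\kappa$, where $u_\kappa$ solves $u''+\mu u=0$ with $u_\kappa(0)=1$, $u_\kappa'(0)=-\kappa$, so
\begin{equation*}
u_\kappa(s)=\cos(\sqrt{\mu}\,s)-\tfrac{\kappa}{\sqrt{\mu}}\sin(\sqrt{\mu}\,s).
\end{equation*}
Its first positive zero is $\frac{1}{\sqrt{\mu}}\cot^{-1}(\kappa/\sqrt{\mu})\ge \frac{1}{\sqrt{\mu}}\cot^{-1}K$ in the stated bound, so no focal point can occur inside the claimed collar. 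The uniform bound on the differential of the local inverse on any strictly smaller collar then follows by continuity: the smallest singular value of $dE$ is bounded below there, because the Sturm comparison gives a positive lower bound on $|J|(s)$ that is uniform in the unit initial data and $\kappa\in[-K,K]$.

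For part (b), local injectivity from (a) plus compactness of $H$ upgrades to global injectivity on a uniformly thin tube. If not, one extracts sequences $(p_n,s_n\nu_n),(q_n,t_n\nu_n)\in NH$ with $(p_n,s_n\nu_n)\ne(q_n,t_n\nu_n)$ but $E(p_n,s_n\nu_n)=E(q_n,t_n\nu_n)$ and $s_n,t_n\to 0$; passing to a subsequence with $p_n\to p_*$, $q_n\to q_*\in H$, continuity forces $p_*=q_*$, contradicting the local diffeomorphism property at $(p_*,0)$ supplied by (a). The uniformity in $\level$ reflects smooth dependence: since $d\foliation\ne 0$ on $H_\level$, the hypersurfaces $H_{\level+s}$ together with their induced metrics and second fundamental forms, as well as the ambient sectional curvature on a neighborhood, depend smoothly on $s$, so the constants $\mu,K$ from (a) may be chosen uniform over a small interval of levels, and hence so may $\ep_0$.

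Finally, strict convexity of the distance hypersurfaces $\{\mathrm{dist}(\cdot,H_{\level+s'})=s\}$ for $|s|\le\ep_0$ and small $|s'|$ follows from the Riccati equation $S'+S^2+R_{\nu\,\cdot\,}\nu=0$ satisfied by their shape operators along normal geodesics. Strict concavity of $H_\level$ from the superlevel side (a hypothesis on the foliation by level sets of $\foliation$) provides definite-signed initial data $S_0$, and the sign is preserved on a short $s$-interval by the Riccati flow; continuity in $s'$ preserves this under small perturbations of the level.

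The comparison-geometry Jacobi bound is the standard analytical ingredient. The more delicate step is the global injectivity argument in (b): it hinges on the sequential compactness argument above and depends crucially on the uniform first-focal-radius from (a) being genuinely uniform on a neighborhood of $H$ in $NH$, which is why part (a) is set up to give a quantitative tube radius and uniform control on $dE^{-1}$ rather than just a pointwise local diffeomorphism assertion.
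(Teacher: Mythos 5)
Your part (a) follows essentially the same route as the paper: the paper also reduces local diffeomorphy to the absence of zeros of the $H$-Jacobi fields with $\dot J(0)=S(J(0),N)$, bounds $\frac{||J|^\cdot(0)|}{|J(0)|}$ by $K$ via $\text{\rm II}$, and invokes the constant-curvature comparison (Jost, Theorem~4.5.1) to push the first zero of $|J|$ beyond an explicit radius, with the same remark that the comparison function also gives a positive lower bound for $|J|$ on strictly smaller collars and hence uniform control of the differential of the local inverse. One small quantitative remark: your model solution has its first zero at $\frac{1}{\sqrt{\mu}}\cot^{-1}(\kappa/\sqrt{\mu})$, and the inequality you assert, that this is $\ge\frac{1}{\sqrt{\mu}}\cot^{-1}K$, holds only when $\mu\ge 1$; this is immaterial for the application (only some radius depending on $K,\mu$ is needed), but the asserted comparison of constants should not be stated as automatic.

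In part (b) you replace the paper's quantitative argument by a soft sequential-compactness argument; for a \emph{fixed} level this is correct and is a legitimate alternative. The genuine gap is the uniformity in $\level$: you claim that since $\mu$ and $K$ can be chosen uniformly over a small interval of levels, ``hence so may $\ep_0$''. But the $\ep_0$ produced by your compactness argument is \emph{not} a function of $\mu$ and $K$ alone; it also depends on how close two intrinsically distant points of $H_\level$ can come to each other in $\tilde M$ (an embedded hypersurface with $|\text{\rm II}|\le K$ in a region of curvature $\le\mu$ can nearly touch itself). This extrinsic self-separation is exactly what the paper tracks: it splits pairs $p,q\in H$ into those with $\mathrm{dist}_H(p,q)<c_0$, handled by the uniform local injectivity scale from (a) (so $c_0=c_0(K,\mu)$), and the compact set of pairs with $\mathrm{dist}_H(p,q)\ge c_0$, on which $\mathrm{dist}(p,q)\ge 1/C_0$ with $C_0$ depending on $H$ and $g$; taking $\ep_0<1/(2C_0)$ gives injectivity, and both $c_0$ and $1/C_0$ are stable under small perturbations of $\level$, which is what makes the uniformity transparent. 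To repair your version you should either include the level as an additional parameter in the sequence extraction (the family of hypersurfaces $H_{\level'}\cap\overline{\cU_0}$ for $\level'$ in a compact interval is compact, so the same contradiction argument closes), or adopt the paper's near/far dichotomy. Your Riccati-equation argument for strict convexity of the distance hypersurfaces is fine; it is an explicit version of the paper's one-line observation that the strict inequality $\text{\rm II}>0$ persists under small perturbations on a compact set.
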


\begin{proof}
We use the result of
\cite[Theorem~4.5.1]{Jost:Riemannian}, which shows that if $J$ is a
Jacobi field, $\mu>0$, and $f_\mu=|J(0)|\cos(\sqrt{\mu}
t)+|J|^\cdot(0)\sin(\sqrt{\mu}t)$ and $f_\mu(t)>0$ for $0<t<\tau$ then
$f_\mu(t)\leq |J(t)|$ for $0\leq t\leq\tau$; here $\cdot$ denotes
derivatives in $t$. In particular, if
$J(0)\neq 0$, the first zero of $J(t)$ cannot happen before the first
zero of $f_\mu$, at which $|\cot(\sqrt{\mu}
t)|=\frac{||J|^\cdot(0)|}{|J(0)|}$, i.e.\
$|t|=\frac{1}{\sqrt{\mu}}\cot^{-1}\frac{||J|^\cdot(0)|}{|J(0)|}$.

Furthermore,
the discussion of \cite[Section~4.6]{Jost:Riemannian}, which is
directly stated for the distance spheres from a point, more generally
applies to geodesic normal coordinates to a submanifold. Thus, using
the computation following Equation~(4.6.12), considering a Jacobi
field arising from varying the initial point in $H$ of the normal
geodesic along a curve in $H$, one has $\dot J(0)=S(J(0),N)$ where $N$
is the unit normal vector to $H$, where $S$ is the second fundamental
form considered as a map $T_p H\times N_p H\to T_p H$, with $N_p H$
denoting the normal bundle.

Now, $(|J|^2)^\cdot=2|J||J|^\cdot$ (where $J\neq 0$), but also
$(|J|^2)^\cdot=2\langle \dot J,J\rangle$, so
$\frac{|J|^\cdot(0)}{|J(0)|}=\frac{1}{|J(0)|^2}\langle \dot
J(0),J(0)\rangle$.
Substituting in the above expression for $\dot J(0)$, we have
$$
\frac{|J|^\cdot(0)}{|J(0)|}=\frac{1}{|J(0)|^2}\langle
S(J(0),N),J(0)\rangle=\frac{1}{|J(0)|^2} \text{\rm II}(J(0),J(0))
$$
since $\text{\rm II}$ is related to $S$ by
$\text{\rm II}(X,Y)=\langle S(X,N),Y\rangle$. Correspondingly, with the assumed
bound on $\text{\rm II}$, we have 
$\frac{||J|^\cdot(0)|}{|J(0)|}\leq K$ and thus, as $\cot$ is
decreasing on $(0,\pi/2]$, so its inverse is such on $[0,\infty)$,
$|t|=\frac{1}{\sqrt{\mu}}\cot^{-1}\frac{||J|^\cdot(0)|}{|J(0)|}\geq
\frac{1}{\sqrt{\mu}}\cot^{-1}K$.

Hence the normal geodesic exponential map is a local diffeomorphism up
to distance $\frac{1}{\sqrt{\mu}}\cot^{-1}K$ from $H$.

One has a uniform bound for the differential of the inverse map if one
obtains a uniform bound for $|J(t)|$; this is provided for by the
explicit bound involving $f_\mu$ above for a strictly smaller collar.

To prove the second statement, notice first that we can find $c_0>0$ so that if $p,q\in H$ with $\text{dist}_H(p,q)<c_0$, then $p$ and $q$ have  distinct images under the normal exponential map $\psi$; and $c_0$ depends on $K$ and $\mu$ only. The complement $\mathcal{K}$ of such pairs  is compact and 
$\text{dist}(p,q)>1/C_0$ there with $C_0>0$ depending on $H$, $g$,  $K$ and $\mu$ but the latter two depend on $\foliation$ and $g$.  Then such $p$ and $q$ would have distinct images under $\psi$ if the latter is limited to $\text{dist}(\cdot,H)\le \ep_0<1/(2C_0)$. 
 Under a small perturbation of $\level$ and $g$, the constant $c_0$ can be chosen uniform, and then by a perturbation argument for $\text{dist}_H(p,q)\ge c_0$, $p$ and $q$  have distinct images if $\ep_0<1/(4C_0)$. The strong convexity statement follows from the fact that we can perturb the strict inequality $\text{\rm II}>0$ on a compact set. 
\end{proof}

\begin{proof}[Proof of Theorem~\ref{thm:global}] 
As before, since we can recover all derivatives of the metric at $\bo$ in boundary normal coordinates \cite{LassasSU, SU-lens}, 
we may assume that $M$ is a domain in $\tilde M$, and $g$ and $\hat g$ are defined on $\tilde M$, identically equal outside $M$.
It is convenient to work with open sets $\cU_0$, $\cU_1$ in $\tilde M$ with $\overline{\cU_0}$ compact and $M\subset\cU_0\subset \overline {\cU_0}\subset \cU_1$ with $\foliation$ smoothly extended to $\cU_1$ so that the
concavity and the condition $\{\foliation\geq 0\}\cap M\subset\pa M$ hold for this extension, and so that all derivatives of $\foliation$
are bounded. Notice that either $g$ or $\hat g$ geodesics cannot reach the complement of $\cU_1$ from $\overline{\cU_0}$ before a
uniformly bounded time, namely the geodesic distance between these two disjoint
sets, one of which is compact, and the other closed.

We prove below that there is a  
diffeomorphism $\psi:M\to M$ (defined on a larger region in
$\tilde M$ as a diffeomorphism), fixing $\pa M$ pointwise so that $g=\psi^* \hat g$. 
 We do it step by step (by ``layer stripping'') by going down along the level sets of $\foliation$. At each step, 
the corresponding foliation surface plays the role of $\pa M$ above, and the advance further, we can take small a bit less 
convex surfaces near each point as we did in Section~\ref{sec_Sg}. 
The proof actually show that $\psi$ is a diffeomorphism from $M$ to its image but the a priori assumption that $M$ is connected easily implies that the $\psi$ is surjective, as well.

We start with preliminary observations.
By Lemma~\ref{lemma:normal-coord-tube} (b), 
applied to $g$,
there is a uniform (independent of $\level$) constant $\ep_0>0$ such
that $g$-geodesic normal
coordinates around $H=H_\level=\{\foliation=-\level\}$ are valid on the
$\ep_0$-collar neighborhood, i.e.\ for an open subset $V$ of $H_\level$ containing 
$\overline{\cU_0}\cap H_\level$, the $g$-normal geodesic exponential map 
$\phi: V\times(-\ep_0,\ep_0)\to \tilde M$ is a diffeomorphism onto
its image. By reducing $\ep_0$ if needed, we may assume that the image
is included in $\cU_1$. 
Similarly, by Lemma~\ref{lemma:normal-coord-tube}~(a), 
there is a uniform (independent of
$\level$ as well as $\psi$) constant $\hat\ep_0>0$ such
that for {\em any} diffeomorphism $\psi$ such that $\psi^*\hat g=g$
on one side of $\hat H_\level = \psi(H_\level)$,  
the $\hat g$-normal exponential map 
$\hat \phi: \hat  V \times(-\hat \ep_0,\hat\ep_0) \to \tilde M$ is a \textit{local} diffeomorphism onto its image included in $\cU_1$. 
  It can be made global, i.e., injective for $\hat \ep_0\ll1$ but a priori, 
we do not know that this $\hat \ep_0$ can be chosen uniform, i.e., independent of $\hat H_\level$ 
to achieve the latter because Lemma~\ref{lemma:normal-coord-tube}~(b) requires control over $\psi^*g$  uniformity (on both sides of $\hat H$), and we do not have such a control yet. A priori, $H_\level$ may have points $p,q$ arbitrary close to each other in $M$ even if $K$ is fixed, and $\text{dist}_{H_\level}(p,q)>1/C$. This would reduce the maximal $\hat\ep_0$ we can choose if we want the $\hat g$-normal exponential map to be a diffeomorphism there. 
  For that reason, we work  in $\hat V\times(-\hat\ep_0,\hat\ep_0)$ as an intermediate manifold 
for now, instead of working on its image under the $\hat g$-normal exponential map, see Figure~\ref{fig:loc-bdy-semiglobal_pic2}.

By shrinking $\ep_0$ or $\hat\ep_0$ if necessary, we can assume that they are equal and will denote it by $\ep$.   We denote the
$g$-signed distance function (corresponding to the normal coordinates
around $H_\level$)
by $\tilde x=\tilde x_\level$ as above.

Note also that for $\tilde\delta>0$
there exists
$\delta_0>0$ such that for all $\level$, $\{0\geq \tilde x_\level\geq
-\tilde\delta\}\cap\overline{\cU_0}$ contains $\{-\level\geq\foliation\geq
-\level-\delta_0\}\cap\overline{\cU_0}$; notice that by the
compactness of $\overline{\cU_0}$, $\foliation$ is bounded on
$\overline{\cU_0}$ so we only need to consider a compact set of $\level$'s. But this is
straightforward, for if this does not hold, then there exists a
sequence $\level_j$ and points $p_j\in \overline{\cU_0}$ such that $\tilde
x_{\level_j}(p_j)< -\tilde\delta$ but $-\level_j\geq\foliation(p_j)\geq
-\level_j-1/j$. We may now extract a subsequence indexed by $j_k$ such
that $\level_{j_k}$ as well as $p_{j_k}$ converge to $\level$,
resp.\ $p$; then $\foliation(p)=-\level$ on the one hand, but $\tilde x_{\level}(p)\leq-\tilde\delta$, so $p\notin
H_\level=\{\foliation=-\level\}$, on the other, giving a
contradiction. Hence the desired $\delta_0>0$ exists.

\begin{figure}[!ht] 
  \centering
  \includegraphics[scale=0.62,page=2]{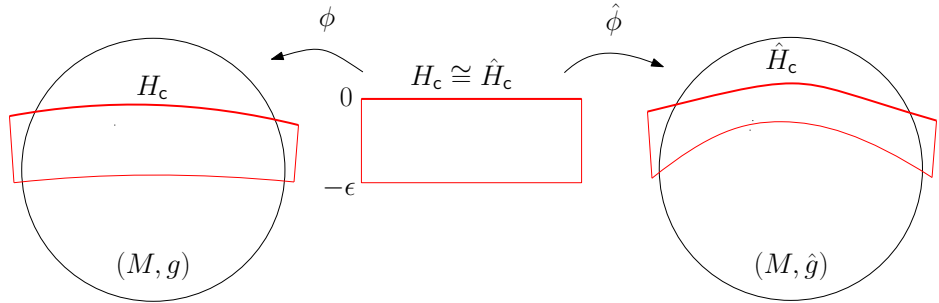}
\caption{The incremental step in the proof of the global rigidity. In
  the middle, $H_\level\times[-\ep,0]$ is shown. A priori,
  $(H_\level,g)$ and  $(\hat H_\level,\hat g)$ are isometric and equal
  outside $M$. The identification between $H_\level$ and $\hat
  H_\level$ is $\psi|_{H_\level}$.}
  \label{fig:loc-bdy-semiglobal_pic2}
\end{figure}

As the next observation, suppose that we have a diffeomorphism
$\psi:U\to\psi(U)\subset\cU_1$, where $U$ is a neighborhood of
$\foliation^{-1}([-\level,\infty))\cap\overline{\cU_0}$, such that
$\psi^*\hat g$ and $g$ agree in
$\{\foliation\geq-\level\}$. This means that if $\phi$ is the
$g$-normal geodesic exponential map around $H=\{\foliation=-\level\}$ (more precisely
around a neighborhood of $H\cap\,\overline{\cU_0}$), then
$\phi^*g=\phi^*\psi^*\hat g$ in $\tilde x=\tilde x_\level\geq 0$, and now both metrics are of the form
$d\tilde x^2+h(\tilde x,y,dy)$ on $V_y\times[0,\ep)_{\tilde x}$, i.e.\ $(\psi\circ\phi\circ( \psi|_{H_\level}^{-1}\times\id))^{-1}$ gives geodesic normal coordinates
for $\hat g$ around $\psi(H\cap\overline{\cU_0})$, at least in
$\tilde x\geq 0$ (here $\psi|_{H_\level}^{-1}$ enters to identify
$\psi(H\cap\overline{\cU_0})$ and $H\cap\overline{\cU_0}$, and in
$\psi|_{H_\level}^{-1}\times\id$, $\id$ is the identity
map on $(-\ep,\ep)$), and thus
is the {\em same} as $\hat\phi^{-1}$ in $\tilde x\geq 0$ (where we use the
notation $\tilde x$ for the first factor variable both for $V\times (-\ep,\ep)$ and
$\hat V\times(-\ep,\ep)$). Since we have a
uniform (independent of $\level$) bound of the collar neighborhood of the geodesic normal coordinates as long
as the second fundamental form, which is diffeomorphism
invariant, is bounded, and is determined from $\tilde x\geq 0$, thus the same as
that of $g$ at $H$, the normal geodesic exponential map gives a uniform
extension of $\psi$, via $\hat\phi\circ
(\psi|_{H_\level}\times\id)\circ\phi^{-1}$, to $\tilde x\geq
-\ep$ (note that by the above remarks the map $\hat\phi\circ
(\psi|_{H_\level}\times\id)\circ\phi^{-1}$ {\em is} $\psi$ in
$\tilde x\geq 0$, so we really have an extension); we continue to denote this by $\psi$. Notice that if $\check\phi$ is the $\psi^* \hat g$-normal
exponential map on $H$ (instead of that of $\hat g$ on $\hat H$, which
is $\hat\phi$), then $\psi\circ\check\phi=\hat\phi\circ(\psi|_H\times\id)$. As explained above, the so extended $\psi$ is
 a local diffeomorphism to its image by construction but a priori, 
we do not know if it is global (i.e.\ if it is injective) due to the appearance of $\hat\phi$ in its definition.
If  $g$ and $\psi^*\hat g$ have the same lens data at $H$, then
$\phi^*g$ and $\phi^* \psi^*\hat
g=(\psi|_{H_\level}\times\id)^*\hat\phi^*\hat g$ have the same data on
$V\times \{0\}$, and are in the normal gauge, i.e.\ are
tangential-tangential tensors plus $d\tilde x^2$. Then the pseudolinearization formula holds, and by \eqref{eq:X-ray-weight-mod-0} and
Lemma~\ref{lemma:microlocal-small-error}, they are the same
within a uniform (independent of $\level$: this uses that in the
semi-product coordinates the metric
depends continuously on $\level$) $\ep$-collar neighborhood around it, or more precisely around
$V$ as above, in respective geodesic normal coordinates, i.e.\
$\phi^*g = \phi^* \psi^* \hat g$   in $V\times (-\ep,\ep)$. 
We show below that $\hat\phi$ is a global (vs.\  just local) diffeomorphism. 
Then  this says exactly that the extension of
$\psi$ which we just gave is indeed an isometry between these two
metrics: $g=\psi^*\hat g$ in $\tilde x\geq -\ep$.

We prove that $\hat\phi$ is a global diffeomorphism from  $\hat V
\times(- \ep,\ep)$  to its image based on two arguments: (1) if it is
not, there should be a hypersurface $S_t: = \hat\phi\big( \hat V
\times \{t\}\big)$  with one piece of it tangent to another one; and
(2) this cannot happen because those pieces are strictly convex and
are touching each other from their concave sides. Below we denote the
variable on $(-\ep,\ep)$ by $t$ (rather than $\tilde x$). Indeed, assume that  there exist pairs of points  $(y_i,t_i)$, $t_i<0$, $y_i\in\hat V$, $i=1,2$ with the same image in $M$ under $\hat\phi$, with $t_1$ and $t_2$ in $[-\ep,0]$. If the set of such pairs is non-empty, we can always restrict $t$ to a slightly smaller closed interval, and $y$ to a compact subset of $\hat V$, and then there, the pairs with the same image would form a compact set. Let $t_0$ be the maximal  value $t_0$ for $\min(t_1,t_2)$. We can assume $t_1=t_0$. 
Then $t_2\ge t_1$ and $t_1$ is the maximal value with  that property. If this inequality is strict, since 
$\hat\phi(y_1,t_1) = \hat\phi(y_2,t_2)$,  we can perturb
$t_1$ and increase it slightly to $t_1'$ {\em and} find a new point
$(y_2',t_2')$ near $(y_2,t_2)$ by the inverse function
theorem (as $\hat\phi$ is a local diffeomorphism) with
$\hat\phi(y'_1,t'_1) = \hat\phi(y_2,t'_2)$ (and $t_1'>t_1=t_0$ still).
This would contradict the maximality property of $t_0$ because $t_1'$ would be a new candidate for it. 
Therefore, $t_1=t_2=t_0$. By the maximality property, $S_{t_0}$ near
$(y_1,t_1)$ (meaning the image $S^{(1)}$ of a neighborhood of $(y_1,t_0)$ under
$\hat\phi$) is tangent to its piece $S^{(2)}$ near $(y_2,t_2)$ (in the same
image sense), which proves (1).  Then $S^{(1)}$ and $S^{(2)}$ have common tangent vectors at $q:= \hat\phi(y_1,t_1) = \hat\phi(y_2,t_2)$, and opposite outer unit normals (along which $t$, say,  decreases, which determines an orientation for each one of them). 
Any geodesic starting from that point in a fixed tangential direction
would stay on the concave side of each piece,  which corresponds to
$\hat\phi\big(\hat V\times (t_0,\ep)\big)$, for a  sufficiently short time. If
$\hat\phi_j$ are the localized $\hat\phi$ near $(y_j,t_j)$, $j=1,2$,
so that they are actually invertible,
then on any such geodesic $\gamma$, the first component of $\hat\phi_j^{-1}$
(which is just the localized signed distance to $\hat V$) will
increase as it leaves $q$. That leads to a contradiction because that means existence
of points (namely $\gamma(s)$ for small $s\neq 0$) with two preimages with $t_j>t_0$.   Therefore, $\hat\phi$ is a global diffeomorphism as stated. Then so is $\hat\phi\circ(\psi|_{H_\level}\times\id)\circ\phi^{-1}$ above and the extended $\psi$ is a diffeomorphism, as well.

Finally, in the step described in the previous paragraph, one cannot encounter the boundary in $(M,\hat g)$ without
encountering it in $(M,g)$, i.e.\ if $\psi(p)\in\pa M$ for some $p\in
M$, with $\psi$ the extended map of the previous paragraph then
$p\in\pa M$, {\em provided} that this property already held for the original map $\psi$ of
that paragraph. Indeed, the lens relations of $(M,g)$ and $(\tilde
M,g)$ being the same plus $\psi$ being a diffeomorphism in a
neighborhood of
$\foliation^{-1}([-\level,\infty))\cap\overline{\cU_0}$, shows that
if for the extended $\psi$ we have $\psi(p)\in\pa M$, then taking in the
normal coordinates a constant-$y$ (normal to $\psi(H_\level)$!)
geodesic segment through $p$, within the range of the $\hat g$-geodesic normal
coordinate map $\tilde\phi$, it will go through a point $q$ in $\psi(H_\level)$. But the
equality of lens relations shows that the $g$-geodesic through $\psi^{-1}(q)$ (again, normal to $H_\level$) will then
also hit $\pa M$ in the range of the $g$-geodesic normal coordinate map $\phi$ since
the two lens relations are the same, and since in
$\foliation^{-1}([-\level,\infty))\cap\overline{\cU_0}$ the metrics
are already the same (thus lens data connecting $H_\level$, resp.\
$\psi(H_\level)$, to $\pa M$, are the same). Correspondingly,
$\psi|_M$ actually maps into $M$, for $\pa M$ separates the interior
of $M$ from $\tilde M\setminus M$. Finally, on the ``illuminated'' part of $\bo$, where $ d\foliation $ makes an acute angle with the outer conormal at $\bo$,  $\psi$ is identity. On the ``un-illuminated'' part of $\bo$ this is still true because the lens relations are the same.

Now we turn to the actual proof.
Let
\begin{equation*}\begin{aligned}
S=\{\level\geq 0:\ &\exists\psi:U\to \psi(U)\subset \cU_1\ \text{diffeo},\
\psi|_{\pa M\cap U}=\id,\\
& U\ \text{neighborhood of}\
\foliation^{-1}([-\level,+\infty))\cap\overline{\cU_0},\\
& \psi^*\hat g|_{\foliation^{-1}([-\level,+\infty))}=g|_{\foliation^{-1}([-\level,+\infty)}\}.
\end{aligned}\end{equation*}
Then $0\in S$ by hypothesis, with $\psi$ the identity map.
By the discussion of the paragraph above,
if $\level\in S$, the $\psi$ that exists by definition of $\level\in
S$ can be extended to a neighborhood of $H_\level\cap\overline{\cU_0}$
so that
$\psi^*\hat g$ and $g$ agree near $H=H_\level$, namely
in $\tilde
x>-c$, $c>0$. Taking into account the observations above, this
means that $\psi$ is defined in $\foliation>-\level-\delta_0$ for some
$\delta_0>0$. Thus, the set $S$ is
open, as $[0,\level+\delta_0)\subset S$.
Finally $S$ is also closed since by the discussion of the paragraph
above, if $\level\in S$, the $\psi$ that exists by definition of $\level\in
S$ can be extended to a {\em uniform} ($\level$-independent) neighborhood of $H_\level\cap\overline{\cU_0}$
so that
$\psi^*\hat g$ and $g$ agree near $H=H_\level$, namely
in $\tilde
x>-c$, $c>0$. The observation above shows then that $g$ and $\psi^*\hat g$ are the
same in $\foliation\geq -\level-\delta_0$, with $\delta_0>0$
independent of $\level$, proving that $S$ is closed (if $\level\notin
S$, $\level_j\in S$, $\level_j\to\level$, then take $j$ such that
$\level_j>\level-\delta_0$ to obtain a contradiction), and thus the theorem.
\end{proof}

Note that the function $\foliation$ need not satisfy the properties
globally on $M$;
in this case a completely analogous argument implies
that if in $\foliation>-T$ the assumptions of the theorem hold, then the
conclusions hold on $\foliation\geq -t$, $t<T$. Moreover, the 0 level set
condition may be replaced by an arbitrary level set (if needed, shift
$\foliation$ by a constant).

Thus, for instance, if $\foliation$ is the
distance function from a point in $M^\circ$, this gives that under the
hypotheses of the theorem, which hold if $g$ has no focal
points, for any $\ep>0$, in $\foliation\geq\ep$, $\hat g$ is the pullback of
$g$ by a diffeomorphism. In particular, this proves Theorem~\ref{thm:semiglobal}.

\section{The foliation condition and corollaries} \label{AppendixB}
The assumption of an existence of a strictly convex function appears also in some works on Carleman estimates, see, e.g., \cite{TriggianiY} and the references there. Existence of such a function is also assumed in the recent work \cite{PSUZ} on integral geometry. We will connect such functions with our foliation condition below. 

A $C^2$ function $f$ on $M$ is called strictly convex on some set, if $\textrm{Hess}\, f>0$ as a  form on that set, where $\textrm{Hess}$ is the Riemannian Hessian defined through covariant derivatives. Such a function can have at most one critical point which is a local minimum. It was shown in \cite{PSUZ} that if the foliation condition   holds with $\{\foliation=0\}=\bo$, then there exists a strictly convex function $f$ in $M$. We will show that the converse is true, which is actually an easier statement to prove. 

\begin{lemma}
Let $f$ be a strictly convex function on $(M,g)$ near a non-critical point $\x=\x_0$. Then the level hypersurfaces $f(\x)=c$ are strictly convex near $x_0$ when viewed from $f>c$. 
\end{lemma}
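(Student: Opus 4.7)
The plan is to compute the second fundamental form of the level hypersurface $\Sigma_c=\{f=c\}$ directly in terms of the Hessian of $f$, and to read off the sign from the strict positivity of $\mathrm{Hess}\,f$. Since $x_0$ is not a critical point of $f$, we have $\nabla f\neq 0$ in a neighborhood of $x_0$, so $\Sigma_c$ (with $c=f(x_0)$) is a smooth embedded hypersurface there, and
\[
N=\frac{\nabla f}{|\nabla f|_g}
\]
is a smooth unit normal vector field along $\Sigma_c$ pointing into $\{f>c\}$.

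The key step is the following identity, obtained by extending any tangent vectors $X,Y\in T_p\Sigma_c$ to vector fields tangent to $\Sigma_c$ and using that $Yf\equiv 0$ along $\Sigma_c$. By the intrinsic definition of the Hessian,
\[
\mathrm{Hess}\,f(X,Y)=X(Yf)-(\nabla_XY)(f)=-\langle\nabla_XY,\nabla f\rangle_g=-|\nabla f|_g\,\langle\nabla_XY,N\rangle_g.
\]
Thus, with the standard convention $\mathrm{II}(X,Y)=\langle\nabla_XY,N\rangle_g$ for the second fundamental form with respect to the unit normal $N$ (pointing into $\{f>c\}$),
\[
\mathrm{II}(X,Y)=-\frac{1}{|\nabla f|_g}\,\mathrm{Hess}\,f(X,Y).
\]

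Since $\mathrm{Hess}\,f$ is positive definite on the full tangent space, its restriction to the codimension-one subspace $T_p\Sigma_c$ is also positive definite, and $|\nabla f|_g>0$. Therefore $\mathrm{II}$ with respect to $N$ is negative definite; equivalently, $\mathrm{II}$ with respect to $-N$ (which points into $\{f<c\}$) is positive definite. This is precisely the statement that near $x_0$, the hypersurface $\Sigma_c$ is strictly convex as the boundary of the locally strictly convex region $\{f\le c\}$, i.e.\ strictly convex when viewed from $\{f>c\}$. The only thing to be careful about is keeping the sign convention for ``convex from a given side'' consistent with the definition used in the paper (the paper's ``strictly concave from superlevel sets'' for $\foliation$ corresponds to taking $\foliation=-f$ for a strictly convex $f$); once that is fixed, the argument is entirely the one-line computation above and there is no substantive obstacle.
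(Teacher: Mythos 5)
Your argument is correct, and it packages the same underlying fact as the paper --- that strict positivity of $\mathrm{Hess}\,f$ restricted to the tangent space of a level set forces a definite sign of the second fundamental form --- but through a different route. You compute the second fundamental form of $\{f=c\}$ directly from the Levi--Civita connection, via the identity $\mathrm{Hess}\,f(X,Y)=-\langle\nabla_XY,\nabla f\rangle_g$ for $X,Y$ tangent to the level set, obtaining $\mathrm{II}=-|\nabla f|_g^{-1}\,\mathrm{Hess}\,f$ with respect to the normal pointing into $\{f>c\}$; the sign bookkeeping you do at the end is consistent with the paper's convention (tangent geodesics bending into the superlevel set). The paper instead never writes down $\mathrm{II}$ explicitly: it evaluates $\frac{d^2}{dt^2}f(\gamma(t))=\mathrm{Hess}\,f(\dot\gamma,\dot\gamma)\geq c_0>0$ along ambient geodesics tangent to the level set, deduces $f(\gamma(t))\geq (c_0/4)t^2$, and then observes that this quadratic lower bound is invariant under replacing $f$ by any orientation-preserving defining function $\tilde f=fh$, $h>0$, in particular by the signed distance, for which the second derivative along tangent geodesics \emph{is} the second fundamental form up to a positive factor. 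The two proofs buy slightly different things: yours gives the explicit formula relating $\mathrm{II}$ to $\mathrm{Hess}\,f$ in one line, while the paper's defining-function argument meshes directly with how convexity is used elsewhere in the paper (lower bounds for $\frac{d^2}{dt^2}(\tilde x\circ\gamma)$ along nearly tangent geodesics). One cosmetic remark: you treat only the level $c=f(x_0)$, whereas the lemma asserts convexity of all nearby levels $\{f=c\}$ near $x_0$; this is harmless since your computation applies verbatim at every point near $x_0$ where $\nabla f\neq 0$ and $\mathrm{Hess}\,f>0$, but it is worth saying explicitly.
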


\begin{proof}
We have
\be{B1}
\frac{d^2}{dt^2} f(\gamma(t)) = \text{Hess}\, (f)(\dot\gamma,\dot\gamma)) \ge c_0>0 
\ee
for any geodesic $\gamma$ as long as $\gamma(t)$ is close to $\x_0$ where $f$ is strictly convex. We can always assume $f(\x_0)=0$; we will prove strict convexity of $S:= \{f=0\}$ near $\x_0$.   Take $\gamma(0)=\x_0$, with $\dot\gamma(0)$ tangent to the level set $f(\x)=0$. Then 
\begin{equation}\label{fg}
f(\gamma(t))\ge  (c_0/4)t^2 \quad \text{for $|t|\ll1$} 
\end{equation}
for any such tangent geodesic through $\x_0$. 
Since $f$ is a defining function of $S$, (\ref{fg}) implies strict convexity of the latter. Indeed, \r{B1} when $(d/dt)f(\gamma(t))=0$ at $t=0$ is preserved for any other defining function $\tilde f$ of $S$ preserving the orientation, which is easy to check, since $\tilde f=fh$ with $h>0$ on $S$. If we take $\tilde f$ to be the signed distance to $S$, positive on $\{f>c\}$, \r{B1} becomes just the second fundamental form of $S$, up to a positive multiplier. 
\end{proof}

Therefore,  existence of a strictly convex function implies our foliation condition away from the possibly unique  critical point (when $M$ is connected).  In particular, if the sectional curvature is positive or negative,  our foliation condition is satisfied on $M\setminus\{x_0\}$ by  \cite[section~2]{PSUZ} , where $x_0$ is the critical point, if exists; otherwise, on $M$. 

We show next that existence of a critical point of $f$ still allows us to prove global lens rigidity. 

\begin{thm}\label{thm:convex-critical-pt}
Let $(M,g)$ be a compact $n$-dimensional Riemannian manifold, $n\ge3$, with a strictly convex boundary so that there exists a strictly convex function $f$ on $M$ with $\{f=0\}=\bo$. Let $\hat  g$ be another Riemannian metric on $g$, and assume that $\bo$ is strictly convex w.r.t. $\hat g$ as well. If $g$ and $\hat g$ have the same lens relations, then there exists a diffeomorphism $\psi$ on $M$ fixing $\bo$ pointwise such that $g=\psi^*\hat g$. 
\end{thm}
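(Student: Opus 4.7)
The plan is to extend the argument of Theorem~\ref{thm:global} past the (possible) critical point of $f$ as a removable singularity. Because $f$ is strictly convex on the compact $M$ with $\{f=0\}=\partial M$, the condition $\mathrm{Hess}\,f>0$ forbids interior maxima, so $\max f=0$ is attained on $\partial M$; compactness then forces $f$ to attain its minimum in $M^\circ$, at a unique critical point $x_0$, with $f(x_0)<0$. On $M\setminus\{x_0\}$ the function $f$ has nonvanishing differential and, by the preceding lemma, its level sets are strictly convex, so the foliation hypothesis of Theorem~\ref{thm:semiglobal} holds on every shell $\{f\ge f(x_0)+\delta\}\cap M$, $\delta>0$. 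Running the layer-stripping continuation argument from the proof of Theorem~\ref{thm:global} with $\foliation=f$ on the open set $\{f>f(x_0)\}\cap M=M\setminus\{x_0\}$, we obtain a diffeomorphism $\psi:M\setminus\{x_0\}\to\psi(M\setminus\{x_0\})\subset M$ fixing $\partial M$ pointwise and satisfying $\psi^*\hat g=g$.

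The main step is extending $\psi$ smoothly across $x_0$. Since $\psi$ is a Riemannian isometry of $M^\circ\setminus\{x_0\}$ (between the two metrics), it is locally distance-preserving from $d_g$ to $d_{\hat g}$, hence Lipschitz on compact subsets. For any sequence $p_n\to x_0$ the images $\psi(p_n)$ are $d_{\hat g}$-Cauchy, and different sequences yield the same limit, so $\psi$ extends continuously to $M$ by setting $\psi(x_0)=:y_0$. Because $\psi$ fixes $\partial M$ pointwise and preserves $d_g$, one has $d_{\hat g}(y_0,\partial M)=d_g(x_0,\partial M)>0$, so $y_0\in M^\circ$ and $\hat g$ is smooth in a neighborhood of $y_0$.

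For smoothness at $x_0$, work in Riemann normal coordinates. Set $\tilde\psi=(\exp^{\hat g}_{y_0})^{-1}\circ\psi\circ\exp^g_{x_0}$, a continuous map on a ball $B\subset T_{x_0}M\cong\RR^n$, smooth on $B\setminus\{0\}$, with $\tilde\psi(0)=0$. Since $\psi$ sends $g$-geodesics through $x_0$ to $\hat g$-geodesics through $y_0$ (by continuity and the geodesic-preserving property of the smooth isometry off $x_0$), $\tilde\psi$ carries radial rays to radial rays: $\tilde\psi(r\omega)=r\tilde L(\omega)$ for some continuous $\tilde L:\sphere^{n-1}\to\sphere^{n-1}$. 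The Gauss lemma gives expansions for the pulled-back metrics $\tilde g$ and $\tilde{\hat g}$ of the form $dr^2+r^2 g_\omega+O(r^4)$, with $g_\omega$ the round metric; equating $\tilde\psi^*\tilde{\hat g}=\tilde g$ on angular vectors, dividing by $r^2$, and letting $r\to 0$ forces $\tilde L$ to be an isometry of $(\sphere^{n-1},g_\omega)$, which is the restriction of a linear orthogonal map $L:\RR^n\to\RR^n$. Consequently $\tilde\psi(\xi)=L\xi$ on $B$, so $\psi=\exp^{\hat g}_{y_0}\circ L\circ(\exp^g_{x_0})^{-1}$ is smooth near $x_0$. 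Since $\psi^*\hat g=g$ holds on the dense open set $M\setminus\{x_0\}$ and both sides are continuous under this smooth extension, the equality persists on all of $M$. The principal obstacle is this removable-singularity step at $x_0$; the rest is a direct application of Theorem~\ref{thm:semiglobal} and the foliation produced by the strictly convex function $f$.
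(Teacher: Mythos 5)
Your argument is correct in outline but takes a genuinely different route from the paper's at the crucial step. The paper does not remove the singularity at $x_0$ by hand: it excises a small sublevel set $M_0=\{f\le f(x_0)+\epsilon\}$, applies the semiglobal Theorem~\ref{thm:semiglobal} only on $\overline{M\setminus M_0}$, and then notes that for $\epsilon\ll1$ the set $M_0$ is simple and carries a strictly convex foliation with no critical point in its closure (e.g.\ Euclidean spheres centered at a nearby point off the center), so Theorem~\ref{thm:global} can be applied a second time to $(M_0,g)$ and $(\hat M_0,\hat g)$; the two isometries are then glued using the fact that a diffeomorphism fixing a hypersurface and intertwining the metrics is determined near it by boundary normal coordinates. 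You instead push the layer stripping to all of $M\setminus\{x_0\}$ and remove the point singularity of the resulting isometry by a Myers--Steenrod type argument (radial geodesics, Gauss lemma, rigidity of isometries of the round sphere). This is a legitimate alternative: it is more elementary at the last step and avoids re-verifying the hypotheses of Theorem~\ref{thm:global} on the small ball and transferring the lens data to it, while the paper's version avoids any analysis at the critical point by reusing the machinery already established.

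Two points in your removable-singularity step need more care. First, your justification that $y_0=\lim\psi(p_n)$ lies in $M^\circ$ --- ``because $\psi$ preserves $d_g$'' --- is not available as stated: a priori $\psi$ is only a Riemannian isometry onto its (unknown) image, so the length-of-curves argument gives only the one-sided bound $d_{\hat g}(\psi(p),\psi(q))\le d_g(p,q)$, which is the wrong direction for excluding $y_0\in\partial M$. The conclusion is true and easily repaired: near $\partial M$, $\psi$ intertwines the $g$- and $\hat g$-normal exponential maps of $\partial M$, hence maps the collar $\{d_g(\cdot,\partial M)<\delta\}$ bijectively onto $\{d_{\hat g}(\cdot,\partial M)<\delta\}$; by injectivity of $\psi$ on $M\setminus\{x_0\}$, points with $d_g(\cdot,\partial M)\ge\delta$ map to points with $d_{\hat g}(\cdot,\partial M)\ge\delta$, and choosing $\delta<d_g(x_0,\partial M)$ gives $d_{\hat g}(y_0,\partial M)\ge\delta>0$. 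Second, you should justify that each image of a radial geodesic extends as an $\hat g$-geodesic through $y_0$ (a unit-speed geodesic on $(0,r)$ with a limit point extends by compactness of the unit sphere bundle and ODE uniqueness), and say a word on why the smoothly extended map is injective and surjective onto $M$ (the same brief remark the paper makes in the proof of Theorem~\ref{thm:global}). With these additions your proof goes through.
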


\begin{proof}
The interesting case we have not covered  so far is when $f$ can have a critical point, $x_0$, in (the interior of) $M$, which is also the minimum of $f$ in $M$. For $0<\epsilon\ll1$, let $M_0 = \{\x|\; f(\x) \le f(\x_0)+\epsilon\}$. If $\ep\ll1$, then $M_0$ can be covered by a single chart and it is diffeomorphic to a closed ball. By the semiglobal Theorem~\ref{thm:semiglobal},  $(\overline{M\setminus M_0},g)$ is isometric to $(\overline{M\setminus \hat M_0}, \hat g)$, with some compact connected $\hat M_0$ with smooth boundary in the interior of $M$, 
 and the diffeomorphism realizing the isometry fixes $\bo$ pointwise. If $\epsilon\ll1$, then $M_0$ is simple and it can  be foliated by strictly convex surfaces without a critical point in its closure, for example by the Euclidean spheres centered at a point a bit away from its boundary. Then by our global Theorem~\ref{thm:global}, $(M_0,g)$ and $(\hat  M_0,\hat  g)$ are isometric. Since one can perturb $\epsilon$ a bit, the diffeomorphism from outside can be extended a bit inside. On the other hand,   if two metrics are isometric near the boundary, with a diffeomorphism fixing the latter, that diffeomorphism is determined uniquely near  the boundary by identifying boundary normal coordinates. Therefore, the two diffeomorphisms coincide in the overlapping region. 
\end{proof}

This result implies Corollary~\ref{cor_B} of the introduction:

\begin{proof}[Proof of Corollary~\ref{cor_B}]
The proof follows directly from \cite{PSUZ}, where it is shown that under either of those conditions, there exists a smooth strictly convex function $\foliation$ with $\{\foliation=0\}=\bo$. 
\end{proof}

Finally, we give some sufficient conditions for the foliation condition to hold. As shown in \cite{SUV_localrigidity, SUV_elastic}, for metrics $c^{-2}d x^2$ in a domain in $\R^n$, the generalized  Herglotz \cite{Herglotz} and Wiechert and Zoeppritz \cite{WZ}  condition $\partial_r(r/c(r\omega))>0$, where $r,\omega$ are polar coordinates (compare to \r{Her}), is equivalent to the requirement that the Euclidean spheres $|x|=C$ are strictly convex in the metric $c^{-2}d x^2$. If $M$ is given locally by $x^n>0$, if $\partial_{x^n}c>0$, then the hyperplanes $x^n=C\ge0$ form a strictly convex foliation.  Then our results prove rigidity  for such metrics in the class of all metrics, not necessarily conformal to the Euclidean.

\appendix

\section{An improvement of a lemma from \cite{SUV:Tensor}.}

We need a new version of Lemma~4.13 of \cite{SUV:Tensor} which is
lossless in terms of decay in order to apply the perturbation
argument above in Section~\ref{sec:Fredholm} culminating in the proof
of Theorem~\ref{thm:X-ray-weights}, namely that the X-ray transform of $g$ with  
weights (as opposed to the standard X-ray transform) is invertible, in
the sense of a left inverse on $\Omega$,
when the weight is close to the identity. Recall that this lemma gives
an estimate of $u$ in terms of $\dsymmw u$ on
$\Omega_1\setminus\Omega$ for $u$ vanishing at $\pa_\inter\Omega_1$,
but not necessarily at $\pa_\inter\Omega$ (i.e.\ $\pa
M\cap\Omega$). The loss of the lemma is in the decay at $\pa X$, which
we now fix.
In order to obtain this improved version, we first prove a similar lemma for the
symmetric gradient of a scattering metric.

\subsection{A lossless estimate for scattering metrics}
Thus, we consider
scattering metrics of the form $g_{\scl}=\frac{dx^2}{x^4}+\frac{h}{x^2}$ with respect to a product
decomposition of a neighborhood of the boundary $x=0$, where $h$ is a
metric on the boundary: $h=h(y,dy)$, and let $\dsymmsc$ be the
symmetric gradient of $g_{\scl}$, and let
$$
\dsymmscw=e^{-\digamma/x}\dsymmsc
e^{\digamma/x}:\Hsc^{s,r}(X;\Tsc^*X)\to\Hsc^{s-1,r}(X;\Sym^2\Tsc^*X).
$$
Then we have a lossless estimate for expressing $u$ in terms of
$\dsymmscw u$:

\begin{lemma}\label{lemma:local-ds-inverse-forms-improved-sc}
Let $\Hscd^{1,0}(\Omega_1\setminus\Omega)$ be as in
Lemma~4.12 of \cite{SUV:Tensor}, but with values in one-forms, and let
$\rho_{\Omega_1\setminus\Omega}$ be a defining function of
$\pa_\inter\Omega$ as a boundary of $\Omega_1\setminus\Omega$, i.e.\
it is positive in the latter set. 
Suppose that
$\pa_x\rho_{\Omega_1\setminus\Omega}> 0$ at $\pa_\inter\Omega$ (with
$\pa_x$ understood with respect to the product decomposition); note
that this is independent of the choice of
$\rho_{\Omega_1\setminus\Omega}$ satisfying the previous criteria (so this is a statement on $x$ being
increasing as one leaves $\Omega$ at $\pa_\inter\Omega$). Then there
exists $\digamma_0>0$ such that for $\digamma\geq\digamma_0$, on one-forms
the map
$$
\dsymmscw:\Hscd^{1,r}(\Omega_1\setminus\Omega)\to \Hsc^{0,r}(\Omega_1\setminus\Omega)
$$
is injective, with a continuous left inverse
$P_{\scl,\Omega_1\setminus\Omega}:\Hsc^{0,r}(\Omega_1\setminus\Omega)\to
\Hscd^{1,r}(\Omega_1\setminus\Omega)$.

Moreover, for $\digamma\geq \digamma_0$, the norms of
$\digamma P_{\scl,\Omega_1\setminus\Omega}:\Hsc^{0,r}(\Omega_1\setminus\Omega)\to
\Hsc^{0,r}(\Omega_1\setminus\Omega)$, $P_{\scl,\Omega_1\setminus\Omega}:\Hsc^{0,r}(\Omega_1\setminus\Omega)\to
\Hsc^{1,r}(\Omega_1\setminus\Omega)$ are uniformly bounded.
\end{lemma}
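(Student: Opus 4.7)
The plan is to prove the estimate by a positive-commutator/energy argument that exploits the especially clean form of $\dsymmsc$ in the scattering frame adapted to the warped-product structure $g_\scl=x^{-4}\,dx^2+x^{-2}h(y,dy)$. First I would compute $\dsymmsc u$ componentwise in the dual frame $\frac{dx}{x^2},\frac{dy_j}{x}$: writing $u=u_0\,\frac{dx}{x^2}+u'\cdot\frac{dy}{x}$, the warped-product Christoffel symbols in the scattering frame are smooth down to $x=0$ and give expressions of the schematic form
\begin{equation*}
(\dsymmsc u)_{NN}=x^2\pa_x u_0,\qquad (\dsymmsc u)_{NT}=\tfrac12(x^2\pa_x u'+x\pa_y u_0)+R_1u',\qquad (\dsymmsc u)_{TT}=\dsymmY u'+R_0u_0,
\end{equation*}
with $R_0,R_1\in\Diffsc^0$ independent of $\digamma$, and with $\dsymmY$ the symmetric gradient on the boundary slice. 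Conjugation by $e^{\digamma/x}$ replaces every factor of $x^2\pa_x$ by $x^2\pa_x-\digamma$ and leaves the tangential pieces unchanged.

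Next I would pair $\dsymmscw u$ with $u$ componentwise against the scattering density, weighted with $x^{-2r}$ corresponding to the $\Hsc^{0,r}$-norm. The decisive point is that the $-\digamma u_0$, $-\digamma u'$ contributions coming from the conjugated $x^2\pa_x$ factors give a coercive term $\digamma\|u\|_{0,r}^2$ on both components. The integrations by parts to move $x^2\pa_x$ off of $u$ produce (i) a boundary term at $\pa_\inter\Omega_1$ that vanishes by the Dirichlet condition $u\in\Hscd^{1,r}$, (ii) a boundary term at $\pa_\inter\Omega$ that appears with the correct sign due to the hypothesis $\pa_x\rho_{\Omega_1\setminus\Omega}>0$ (so the outward pointing direction at $\pa_\inter\Omega$ decreases $x$, producing a favorable sign), and (iii) a zeroth-order commutator term from the weight $x^{-2r}$ that is bounded by a $\digamma$-independent constant times $\|u\|_{0,r}^2$. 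Choosing $\digamma_0$ large enough to dominate this commutator and the zeroth-order $R_0,R_1$ contributions yields, after Cauchy--Schwartz,
\begin{equation*}
\digamma\,\|u\|_{\Hsc^{0,r}}^2\leq C\,\|\dsymmscw u\|_{\Hsc^{0,r}}\|u\|_{\Hsc^{0,r}},\qquad\text{hence}\qquad \|u\|_{\Hsc^{0,r}}\leq \frac{C}{\digamma}\|\dsymmscw u\|_{\Hsc^{0,r}},
\end{equation*}
which is lossless in $r$ (the weight $e^{\digamma/x}$ never enters the computation explicitly, only through the favorable sign it produces).

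Once the $\Hsc^{0,r}$ estimate is in hand, the $\Hsc^{1,r}$ estimate follows almost mechanically: the $NN$ equation recovers $x^2\pa_x u_0$ in $\Hsc^{0,r}$ directly from $\dsymmscw u$ and $u_0$; the $NT$ equation then gives $x^2\pa_x u'$ once $x\pa_y u_0$ is controlled, and tangential derivatives of $u_0$ are controlled by differentiating the $NN$ equation tangentially. For the tangential derivatives of $u'$ one has only $\dsymmY u'$ in $\Hsc^{0,r}$ (from the $TT$ equation), which is not elliptic on $u'$, but this is precisely where the $\digamma$-coercive $L^2$-control of $u'$ itself obviates any need for a Korn-type inequality on the slice: the tangential derivatives of $u'$ one misses are the Killing-field directions, and these are harmlessly absorbed into the zeroth-order term on the right, consistent with the (non-elliptic) first-order mapping property claimed. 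Injectivity follows from the $\digamma>0$ coercivity, and the uniform boundedness of $\digamma P_{\scl,\Omega_1\setminus\Omega}$ on $\Hsc^{0,r}$ and of $P_{\scl,\Omega_1\setminus\Omega}:\Hsc^{0,r}\to\Hsc^{1,r}$ is exactly what the two displayed estimates above encode. The main obstacle, and the reason the original Lemma~4.13 was lossy, was the need in the non-scattering (b-type) setting to commute through weights $x^{-\alpha}$ using $x$-derivatives that themselves cost a power of $x$; in the scattering setting, $x^2\pa_x$ itself commutes with $x^{-r}$ up to a $\digamma$-independent bounded multiplier, which is exactly what preserves the decay order $r$.
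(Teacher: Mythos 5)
There is a genuine gap in the $L^2$ step. Your componentwise pairing does not decouple: by the block structure of $\dsymmsc$ (cf.\ \eqref{eq:g-grad-symbol-sc}), the $NT$ component is $\tfrac12\big((x^2\pa_x-\digamma)u'+x\pa_y u_0\big)$ plus lower order, so pairing it with $u'$ produces the \emph{first-order} cross term $\tfrac12\langle x\pa_y u_0,u'\rangle$, which is not one of the ``zeroth-order $R_0,R_1$ contributions'' you propose to absorb by taking $\digamma$ large, and it is not controlled by $\digamma\|u\|_{0,r}^2$ either. If you try to remove it by a tangential integration by parts (possibly playing it off against the trace of the $TT$ equation, which contains $\dsymmY u'$), you trade it for uncontrolled tangential derivatives of $u'$ and, more seriously, for lateral boundary terms of the form $\int_{\pa_\inter\Omega}u_0\,\bar u'\,\nu_{\mathrm{tan}}$ which have no sign; the hypothesis $\pa_x\rho_{\Omega_1\setminus\Omega}>0$ only gives a favorable sign for the pure quadratic terms $|u_0|^2$, $|u'|^2$ coming from the normal integration by parts, and near $\pa X$ the conormal of $\pa_\inter\Omega$ is largely tangential, so these cross boundary terms are not dominated by the good ones. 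The alternative of first solving the $NN$ transport equation for $u_0$ and substituting into the $NT$ equation costs a tangential derivative of the data, i.e.\ exactly the kind of loss the lemma is designed to avoid. The paper's proof avoids the coupling altogether: it uses the scalar identity $\sX\,\iota_\sX v=(\dsymmsc v)(\sX,\sX)$ along finitely many families of $g_\scl$-geodesics whose (scattering) tangent vectors span $\Tsc_qX$ at every point and which cross $\pa_\inter\Omega$ with $\sX x\le -Cx^2$, so each family yields a decoupled transport estimate with a boundary term of definite sign; the careful geometric construction of these families (limiting boundary geodesics of the scattering metric, the perturbed hypersurface $S$, radial geodesics away from $\pa X$) is the real content of the argument and has no counterpart in your proposal.

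The $H^1$ step is also not right as stated. What $\dsymmY u'$ fails to control is the antisymmetric part of the tangential gradient of $u'$, not ``Killing-field directions'' (Killing fields form the finite-dimensional kernel of $\dsymm$, not missing derivative components), and recovering the full gradient from the symmetric gradient plus $L^2$ is precisely a Korn-type inequality --- nontrivial on a domain with boundary and not something that the $\digamma$-coercive $L^2$ bound ``obviates''. The paper invokes the modified Korn inequality (Lemma~4.5 of \cite{SUV:Tensor}), noting that the $0$th order discrepancies produced by the conjugation are $O(\digamma)$ and can be absorbed exactly because the $L^2$ estimate has the factor $\digamma$ on the left; some such ingredient is needed in your argument as well.
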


\begin{proof}
In \cite[Proof of Lemma~4.13]{SUV:Tensor} the following formula from
\cite[Chapter~3.3]{Sh-book} played a key role:
\begin{equation}\label{eq:int-along-curves}
\sum_i [v(\gamma(s))]_i\dot\gamma^i(s)=\int_0^s \sum_{ij}[\dsymmsc v(\gamma(t))]_{ij}\dot\gamma^i(t)\dot\gamma^j(t)\,dt,
\end{equation}
where $\gamma$ is a unit speed geodesic of the metric
whose symmetric gradient we are considering (so the scattering metric
$g_\scl$ in the present case) with
$\gamma(0)\in\pa_\inter\Omega_1$ (so $v(\gamma(0))$
vanishes) and $\gamma(\tau)\in\pa_\inter\Omega\cup\pa X$, with
$\gamma|_{(0,\tau)}$ in $\Omega_1\setminus\overline{\Omega}$. 
The identity (\ref{eq:int-along-curves}) is just an application of the Fundamental Theorem of Calculus with the $s$-derivative of the l.h.s.\ computed using the rules of covariant differentiation. 
In this formula we use
$[\dsymmsc v(\gamma(t))]_{ij}$ for the components in the symmetric
2-cotensors corresponding to the standard cotangent
bundle, and similarly for $[v(\gamma(s))]_i$. Notice that this formula
gives an explicit left inverse for $\dsymmscw$.

Here we use the differential version of this (i.e.\ prior to an
application of the fundamental theorem of calculus):
\begin{equation*}\label{eq:diff-along-curves}
\frac{d}{ds}\sum_i [v(\gamma(s))]_i\dot\gamma^i(s)=\sum_{ij}[\dsymmsc v(\gamma(s))]_{ij}\dot\gamma^i(s)\dot\gamma^j(s),
\end{equation*}
and note that the left hand side is simply
$$
\dot\gamma(.)\Big(\sum_i [v(\gamma(.))]_i\dot\gamma^i(.)\Big)\Big|_s,
$$
with the first $\dot\gamma$ considered as a vector field
differentiating the function to which it is applied. Thus, taking any
smooth family of such geodesics emanating from
$\pa_\inter\Omega_1$, parameterized by $\pa_\inter\Omega_1$, and
letting their tangent vectors define a vector field $\sX$ on
$\Omega_1$, we have on $\Omega_1\setminus\Omega$:
$$
\sX\iota_\sX v=(\dsymmsc v)(\sX,\sX),
$$
which we consider a PDE for $\tilde u=\iota_\sX v$.
We can then proceed 
as in Lemma~4.12 of \cite{SUV:Tensor}. 

We first need to discuss the geometry.
For a general scattering metric, see \cite[Lemma~2]{RBMZw}, the limiting geodesics on $\pa X$
(which make sense directly as projected integral curves of the
rescaled Hamilton vector field $\scH_{g_\scl}$ on $\Tsc^*X$) are geodesics on $\pa X$
connecting distance $\pi$ points (i.e.\ they have length $\pi$). More
precisely, the projection of these integral curves in $\Tsc^*_{\pa
  X}X$ is either a single point, or a length $\pi$ $h$-geodesic. (Note
that in the case of Euclidean space this is simply the statement that
geodesics at infinity tend to antipodal points on the sphere at
infinity, and this remains true if the geodesics move uniformly to
infinity.) Since
our metric $g_{\scl}$ is homogeneous of degree $-2$ under dilations in $x$, the
analogous statement remains true for {\em all} geodesics, i.e.\ they
are either radial, so that $y$ is fixed along them, or their
projection to the $\pa X$ factor of $[0,\delta)_x\times\pa X_y$ is a
length $\pi$ geodesic of $h$, with appropriate behavior in $x$.

Now, recalling the basic b- and sc- objects (vector fields, bundles,
etc.) from Section~\ref{sec:normal-gauge}, the
tangent vector of a reparameterized (corresponding to the
renormalization of the Hamilton vector field, by a factor $x^{-1}$,
used to define $\scH_{g_\scl}$) geodesic, considered as a point in
$\Tb_{\gamma(s)} X$, is the pushforward of the rescaled
Hamilton vector field $\scH_{g_\scl}$ (which is a vector field on
$\Tsc^*X$ tangent to its boundary) under the bundle projection
$\Tsc^*X\to X$. The actual tangent vector to the geodesic is an
element of $\Tsc_{\gamma(s)} X$ (corresponding to reinserting the
$x$-factor). If coordinates on $\Tsc^*X$ are written as $(x,y,\xi,\eta)$,
corresponding to 1-forms being written as
$\xi\frac{dx}{x^2}+\sum_j\eta_j\frac{dy_j}{x}$, then the explicit formula for this pushed forward vector field is $\xi (x^2\pa_x)+\sum h^{ij}(y)\eta_i(x\pa_{y_j})$  modulo terms that push forward to $0$, 
see \cite[Equation~(8.17)]{RBMSpec}.  The second term is coming from
the Hamilton vector field of the dual boundary metric $h^{-1}$, and
$\xi^2+|\eta|^2_{h_y}=1$ by virtue of the geodesic flow being the
Hamilton flow on the unit cosphere bundle (a factor of 2 has been removed from the
vector field to make the
geodesics unit speed).

\begin{figure}[ht]
\begin{center}
\includegraphics[scale=0.9]{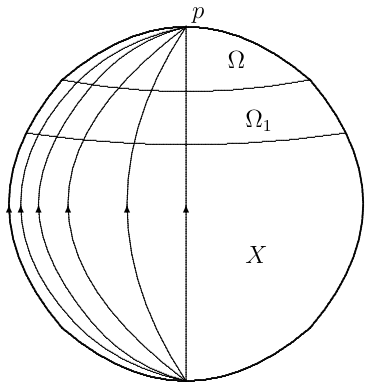}
\end{center}
\caption{Geodesics of $g_{\scl}$ tending towards the point $p$, 
  including the limiting boundary geodesic.}
\label{fig:sc-geod-1}
\end{figure}

For instance, as an illustration (we use a different family below for
the actual proof) take 
geodesics tending to a fixed point $p\in\pa X\cap\Omega^\circ$
(corresponding to a family of parallel lines in Euclidean space). They give
a family of geodesics we could consider below in many cases, e.g.\ if
we are working in a suitable small neighborhood of a point on $\pa M$ (see Figure~\ref{fig:sc-geod-1}). Then $-\xi$ (thus the $x^2\pa_x$
component of the tangent vector) is cosine of the distance from
$\gamma(s)$ to $p$ within (i.e.\ for the projection to) the $\pa X$ factor, while
$-\eta$ is the tangent vector of the $h$-geodesic given by the $\pa X$
projection times $(1-\xi^2)^{1/2}$ (i.e.\ sine of the distance
within $\pa X$); see again \cite[Lemma~2]{RBMZw}. We consider cases when $\pa
X$ is large metrically but $\pa
X\cap\overline{\Omega_1}$ is small, so all points in $\pa
X\cap\overline{\Omega_1}$ are distance $<\tilde\ep<\pi/2$ distance from each
other; this is relevant because of the length $\pi$-behavior of the
projected geodesics and the appearance of sine and cosine above. In
this case, varying $p$, taking finitely many appropriate nearby choices
gives rise to geodesics whose tangent vectors span $\Tsc_qX$ for each
$q$ as is immediate from the above discussion.  For instance if $h$ is
the flat metric, the $\eta$ component is simply the unit vector (up to
sign) from the projection of $q$ to $\pa X$ to $p$ times the sine of
the distance, and the $-\xi$ component is, as always, the cosine of the
distance, so it is straightforward to arrange finitely many choices of $p$'s with spanning
geodesic tangent vectors.  In general for $\tilde\ep>0$ small, a similar
conclusion holds.

\begin{figure}[ht]
\begin{center}
\includegraphics[scale=0.9]{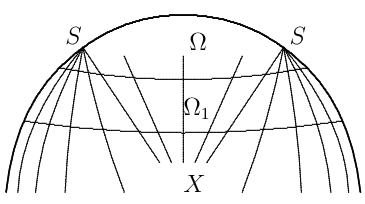}
\end{center}
\caption{Geodesics of $g_{\scl}$ tending towards the submanifold $S$
  (here shown as 2 points), with the family extended by radial 
  geodesics to cover $\pa_\inter\Omega^\circ$. For $n\geq 3$ (as is
  the case here), for a better illustration, the picture should be imagined rotationally symmetric
around the vertical axis through the middle of the figure, so the
indicated two points on $S$ are in the same rotation orbit.}
\label{fig:sc-geod-2}
\end{figure}

In fact, for the general considerations below (as opposed to certain
special cases), it is best to
take a codimension $1$ submanifold $S$ in $\pa X\cap\Omega$ near
$\pa_\inter\Omega$, namely a slight inward perturbation of
$\pa_\inter\Omega$, e.g.\ a short time flow by the $h$-normal
geodesics on $\pa X$ from $\pa_\inter\Omega$, and use a 1-dimensional family of
geodesics tending to each of the points on it locally near $\pa X$ (for a
total $(n-1)$-dimensional family). For example, one can  pick a vector field on $S$
close to the $h$-normal vector field of $S$, and use geodesics whose $\pa
X$-projection is a length $\pi$ $h$-geodesic with this given tangent
vector at the end point in $S$; see Figure~\ref{fig:sc-geod-2}. These form a one parameter family since the
normal to $\pa X$ component of the tangent vector is arbitrary (but we
will take it relatively small). Then the geodesics all intersect
$\pa_\inter\Omega$ close to their limiting point on $S$ (close e.g.\
in the sense that the affine parameter in the projection to $\pa X$,
when considered as a unit speed $h$-geodesic, is close to that on $S$,
i.e.\ the $h$-geodesic segment is short) and in
particular near $\pa X$.  Thus they  do so transversally, so the
derivative of $\rho_{\Omega_1\setminus\Omega}$ along the tangent
vector of the geodesics (when rescaled by $x^{-1}$) has a definite (negative) sign at $\pa_\inter\Omega$.  (The actual tangent
vector of the $g_{\scl}$-geodesic will give a derivative $\leq -Cx$,
$C>0$, corresponding to the $x\pa_y$-component of the pushforward of $\scH_{g_{\scl}}$.) One
can then smoothly combine this with geodesics crossing
$\pa_\inter\Omega$ farther away from $\pa X$ (e.g.\ specifying their
tangent vectors at $\pa_\inter\Omega$ smoothly extending the already specified
tangent vectors near $\pa X\cap\pa_\inter\Omega$) to obtain the full
$(n-1)$-dimensional family of geodesics in such a manner that,  when rescaled by $x^{-1}$,  the
derivative of $\rho_{\Omega_1\setminus\Omega}$ along the family has a
negative definite sign at
$\pa_\inter\Omega$. For instance, one can use radial geodesics or
their small perturbations (changing the direction at
$\pa_\inter\Omega^\circ$ slightly)  in the
extension, i.e.\ ones in which the $\pa X$ component is constant;
these behave as desired due to the assumption on $\pa_x\rho_{\Omega_1\setminus\Omega}$. We then eventually
take finitely many such families of geodesics as discussed above to
span the scattering tangent space (starting by varying the vector
field specified on $S$). Note that the latter is just the
standard tangent space away from $\pa X$, hence the usual
considerations apply there. On the other hand,  near $\pa X$ our previous discussion
applied to geodesics close to the initial point (now on $S$)  applies, with only the $h$-distance along the $\pa
X$-projections of {\em these} geodesics from the initial point to
$\pa_\inter\Omega_1$ required to be small  (so for any $h$, if
$\Omega_1$ is chosen so that $\Omega_1\setminus\Omega$ is small, the
construction works). (This contrasts with the discussion of the
previous paragraph, where
geodesics tending to a single fixed point $p$ were used, in which the
$h$-diameter of $\Omega_1$ had to be small.)

Now, to use these observations, first notice that as we consider geodesics of a scattering metric, $\sX\in\Diffsc^1$.
Thus, let $V=\frac{1}{i}\sX$, $P=e^{-\digamma/x}Ve^{\digamma/x}\in\Diffsc^1$ and
consider $\|P u\|^2$ again keeping in mind that we
need to be careful at $\pa_\inter\Omega$ since $u$ does not vanish
there (though it {\em does} vanish at $\pa_\inter\Omega_1$). Thus, there is an integration by parts boundary term, which we
express in terms of the characteristic function
$\chi_{\Omega_1\setminus\Omega}$:
\begin{equation*}\begin{aligned}
\|Pu\|^2_{L^2(\Omega_1\setminus\Omega)}&=\langle\chi_{\Omega_1\setminus\Omega}
Pu,Pu\rangle_{L^2(\Omega_1)}
=\langle P^*\chi_{\Omega_1\setminus\Omega}
Pu,u\rangle_{L^2(\Omega_1)}\\
&=\langle P^*Pu,u\rangle_{L^2(\Omega_1\setminus\Omega)}+\langle[P^*,\chi_{\Omega_1\setminus\Omega}] Pu,u\rangle_{L^2(\Omega_1)}.
\end{aligned}\end{equation*}
Writing $P=P_R+iP_I$ (as in Lemma~4.2 of \cite{SUV:Tensor}), $P_R=\frac{P+P^*}{2}$,
\begin{equation*}\begin{aligned}
\|P_R u\|^2_{L^2(\Omega_1\setminus\Omega)}=\langle P_R^*P_Ru,u\rangle_{L^2(\Omega_1\setminus\Omega)}+\langle[P_R^*,\chi_{\Omega_1\setminus\Omega}] P_Ru,u\rangle_{L^2(\Omega_1)}.
\end{aligned}\end{equation*}
On the other hand, with $P_I=\frac{P-P^*}{2i}$ being $0$-th order, the commutator term
vanishes for it. Correspondingly,
\begin{equation*}\begin{aligned}
\|Pu\|^2_{L^2(\Omega_1\setminus\Omega)}&=\langle
P^*Pu,u\rangle_{L^2(\Omega_1\setminus\Omega)}+\langle[P^*,\chi_{\Omega_1\setminus\Omega}]
Pu,u\rangle_{L^2(\Omega_1)}\\
&=\langle
P_R^*P_Ru,u\rangle_{L^2(\Omega_1\setminus\Omega)}+\langle
P_I^*P_Iu,u\rangle_{L^2(\Omega_1\setminus\Omega)}+\langle
i[P_R,P_I]u,u\rangle_{L^2(\Omega_1\setminus\Omega)}\\
&\qquad\qquad+\langle[P^*,\chi_{\Omega_1\setminus\Omega}]
Pu,u\rangle_{L^2(\Omega_1)}\\
&=\|P_Ru\|^2_{L^2(\Omega_1\setminus\Omega)}+\|P_Iu\|^2_{L^2(\Omega_1\setminus\Omega)}+\langle
i[P_R,P_I]u,u\rangle_{L^2(\Omega_1\setminus\Omega)}\\
&\qquad\qquad+\langle[P^*,\chi_{\Omega_1\setminus\Omega}]
Pu,u\rangle_{L^2(\Omega_1)}-\langle[P_R^*,\chi_{\Omega_1\setminus\Omega}]
P_Ru,u\rangle_{L^2(\Omega_1)}.
\end{aligned}\end{equation*}
Now, as $P-P_R$ is $0$-th order,
$[P^*,\chi_{\Omega_1\setminus\Omega}]=[P_R^*,\chi_{\Omega_1\setminus\Omega}]$,
so the last two terms on the right hand side give
\begin{equation}\label{eq:d-inj-annulus}
\langle[P^*,\chi_{\Omega_1\setminus\Omega}]
i P_Iu,u\rangle_{L^2(\Omega_1)}=\langle (\sX\chi_{\Omega_1\setminus\Omega})P_Iu,u\rangle_{L^2(\Omega_1)}.
\end{equation}
Now, $P=V-\digamma x^{-2}Vx$ with $V-V^*\in x\Diffsc^0$ (since it has
real principal symbol in the full scattering sense), and hence
$P_I=\digamma x^{-2}\sX x+a$, $a\in x\CI$. Thus, \eqref{eq:d-inj-annulus}
is non-negative, at least if $x$ is sufficiently small (or
$\digamma$ large) on $\pa_\inter\Omega$ since
$\chi_{\Omega_1\setminus\Omega}$ is
$\chi_{(0,\infty)}\circ\rho_{\Omega_1\setminus\Omega}$ times a similar
composite function of the defining function of $\pa_\inter\Omega_1$
(which however plays no role as $u$ vanishes there by assumption),
$\sX\rho_{\Omega_1\setminus\Omega}$ and
$\sX x$ can be arranged to be negative (i.e.\ $x$ decreasing
along the geodesics being considered) in the strong $\leq -Cx^{2}$
sense (with $C>0$). Correspondingly, this term
can be dropped. In addition, $[P_R,P_I]\in x\CI$, so the corresponding
term can be absorbed into the $\|P_I u\|^2$ terms, and one obtains
\begin{equation}\label{eq:ds-basic-L2-est-sc}
\|u\|_{L^2(\Omega_1\setminus\Omega)}\leq C\|Pu\|_{L^2(\Omega_1\setminus\Omega)},
\end{equation}
at
least if $x$ is small on $\Omega_1$ just as in the proof of
\cite[Lemma~4.2]{SUV:Tensor}.  (In fact, $\digamma$ large also works as
$[P_R,P_I]=O(\digamma)$, while $\|P_I u\|^2$ gives an upper bound for
$c^2\digamma^2\|u\|^2$ if $\digamma\geq\digamma_0$, $\digamma_0>0$
sufficiently large, see below for more detail.) This in turn gives with
$u=e^{-\digamma/x} \tilde u$,
$$
\|e^{-\digamma/x}\tilde u\|_{L^2(\Omega_1\setminus\Omega)}\leq C\|P
e^{-\digamma/x}\tilde
u\|_{L^2(\Omega_1\setminus\Omega)}=C\|e^{-\digamma/x}\sX\tilde u\|_{L^2(\Omega_1\setminus\Omega)}
$$
i.e.\ with $\tilde u=\iota_\sX v$, using $\sX\iota_\sX v=(\dsymmsc v)(\sX,\sX)$,
\begin{equation*}\begin{aligned}
&\|\iota_\sX(e^{-\digamma/x}
v)\|_{L^2(\Omega_1\setminus\Omega)}=\|e^{-\digamma/x}\iota_\sX
v\|_{L^2(\Omega_1\setminus\Omega)}\\
&\leq C\|e^{-\digamma/x}(\dsymmsc v)(\sX,\sX)\|_{L^2(\Omega_1\setminus\Omega)}=C\|\dsymmscw (e^{-\digamma/x} v)\|_{L^2(\Omega_1\setminus\Omega)}
\end{aligned}\end{equation*}
in this case.
The case of $x$ not
necessarily small on $\Omega_1$ (though small on $\Omega$) follows
exactly as in \cite[Lemma~4.13]{SUV:Tensor} discussed above, using the
standard Poincar\'e inequality, and even the case where $x$ is not
small on $\Omega$ can be handled similarly since one now has an extra
term at $\pa_\inter\Omega$, away from $x=0$, which one can control
using the standard Poincar\'e inequality. (Again, one can instead
simply take $\digamma$ sufficiently large.)

Taking a finite number of families of geodesics with tangent vectors
spanning $\Tsc^*X$ then gives, with $\tilde v=e^{-\digamma/x}
v$,
\begin{equation}\label{eq:local-Poincare-tensor}
\|\tilde v\|_{L^2(\Omega_1\setminus\Omega)}\leq C\|\dsymmscw \tilde v\|_{L^2(\Omega_1\setminus\Omega)}.
\end{equation}
To obtain the $H^1$ estimate, we use Lemma~4.5 of \cite{SUV:Tensor}. It is stated there for $\dsymmw$ (symmetric gradient with
respect to a standard metric) but it works equally well for
$\dsymmscw$ since it treats the $0$-th order term, by which
these symmetric gradients differ from that of a flat metric, as an
error term, which in both cases is a $0$-th order scattering differential
operator between the appropriate bundles; see below for more detail in
the large parameter discussion. This
gives, for $\tilde v\in\Hscb^{1,r}(\Omega_1\setminus\Omega)$,
$$
\|\tilde v\|^2_{\Hscb^{1,r}(\Omega_1\setminus\Omega)}\leq
C(\|\dsymmscw \tilde
v\|^2_{\Hsc^{0,r}(\Omega_1\setminus\Omega)}+\|\tilde v\|^2_{\Hsc^{0,r}(\Omega_1\setminus\Omega)}),
$$
which combined with \eqref{eq:local-Poincare-tensor} proves
$$
\|\tilde v\|_{\Hscd^{1,r}(\Omega_1\setminus\Omega)}\leq C\|\dsymmscw
\tilde v\|_{\Hsc^{0,r} (\Omega_1\setminus\Omega)},\qquad \tilde v\in\Hscd^{1,r}(\Omega_1\setminus\Omega),
$$
where recall that our notation is that membership of
$\Hscd^{1,r}(\Omega_1\setminus\Omega)$ only implies vanishing at
$\pa_\inter\Omega_1$, not at $\pa_\inter\Omega$.
In particular, this
shows the claimed injectivity of $\dsymmscw$. Further, this gives a continuous
inverse from the range of $\dsymmscw$, which is closed in
$L^2(\Omega_1\setminus\Omega)$; one can use an orthogonal projection
to this space to define the left inverse
$P_{\Omega_1\setminus\Omega}$, completing the proof when $k=0$.

For general $k$, one can proceed as in
\cite[Lemma~4.4]{SUV:Tensor}, conjugating
$\dsymmscw$ by $x^k$, which changes it by $x$ times a smooth
one form; this changes $P$ by an element of $x\CI(X)$,
with the only effect of modifying the $a$ term in
\eqref{eq:d-inj-annulus}, which does not affect the proof.

To see the final claim, observe that $\|P_I u\|^2\geq
c^2\digamma^2\|u\|^2_{L^2(\Omega_1\setminus\Omega)}$ for some $c>0$, when $\digamma\geq\digamma_0$,
and thus the estimate \eqref{eq:ds-basic-L2-est-sc} actually holds
with $c\digamma\|u\|_{L^2(\Omega_1\setminus\Omega)}$ on the
left hand side, which in turn gives
the estimate
\begin{equation*}\label{eq:local-Poincare-tensor-sc}
\digamma\|\tilde v\|_{L^2(\Omega_1\setminus\Omega)}\leq
C\|\dsymmscw \tilde v\|_{L^2(\Omega_1\setminus\Omega)}.
\end{equation*}
Finally, the proof of the modified Korn's inequality, Lemma~4.5 of
\cite{SUV:Tensor}, gives the estimate, for $u\in \Hscb^{1,r}(\Omega_1\setminus\Omega)$,
$$
\|u\|_{\Hscb^{1,r}(\Omega_1\setminus\Omega)}\leq C(\|\dsymmscw u\|_{\Hsc^{0,r}(\Omega_1\setminus\Omega)}+\digamma\|u\|_{\Hsc^{0,r}(\Omega_1\setminus\Omega)}). 
$$
Indeed the proof there has a direct estimate for the symmetric
gradient of the flat metric and then regards the $0$-th order terms, by
which a general symmetric gradient differs from this flat symmetric gradient as error terms to
be absorbed into the second term on the right hand side.  In our case
these $0$-th order terms have $C\digamma$ bounds (corresponding
to the exponential conjugation), so the conclusion follows, proving
the claim. Applying it in our setting we have
\begin{equation*}\label{eq:local-Poincare-tensor-sc-1}
\|\tilde v\|_{\Hsc^{1,0}(\Omega_1\setminus\Omega)}\leq
C\|\dsymmscw \tilde v\|_{L^2(\Omega_1\setminus\Omega)}.
\end{equation*}
Again, adding polynomial weights proceeds without difficulties.
\end{proof}

\subsection{The extension of the results to `standard' metrics}
Now, a straightforward calculation of the Christoffel symbols shows
that they do not contribute to
the full principal symbol of the gradient relative to $g_{\scl}$, in
$\Diffsc^1(X;\Tsc^*X;\Tsc^*X\otimes\Tsc^*X)$, and thus this
principal symbol
is, as a map from one-forms to 2-tensors (which we
write in the four block form as before) is
\begin{equation*}\label{eq:g-grad-symbol-sc}
\begin{pmatrix} \xi&0\\\eta\otimes&0\\0&\xi\\0&\eta\otimes\end{pmatrix},
\end{equation*}
and thus
that of $\dsymmsc$ in $\Diffsc^1(X;\Tsc^*X;\Sym^2\Tsc^*X)$ (with symmetric 2-tensors considered as a subspace of
2-tensors) is
$$
\begin{pmatrix} \xi&0\\\frac{1}{2}\eta\otimes&\frac{1}{2}\xi\\\frac{1}{2}\eta\otimes&\frac{1}{2}\xi\\0&\eta\otimes_s\end{pmatrix}.
$$
Thus the symbol of $\dsymmscw=e^{-\digamma/x}\dsymmsc e^{\digamma/x}$,
which conjugation effectively replaces $\xi$ by $\xi+i\digamma$ (as
$e^{-\digamma/x} x^2D_x e^{\digamma/x}=x^2D_x+i\digamma$), is
$$
\begin{pmatrix} \xi+i\digamma&0\\\frac{1}{2}\eta\otimes&\frac{1}{2}(\xi+i\digamma)\\\frac{1}{2}\eta\otimes&\frac{1}{2}(\xi+i\digamma)\\0&\eta\otimes_s\end{pmatrix}.
$$

It is useful to consider this as a semiclassical operator with
Planck's constant $h=\digamma^{-1}$, i.e.\ to analyze what happens
when $h$ is small, i.e.\ $\digamma$ is large. Thus, consider the
semiclassical operator $h\dsymmsc=\digamma^{-1}\dsymmsc$; its full
(i.e.\ at $h=0$, fiber infinity and base infinity all included)
semiclassical principal symbol (since it only depends on $\digamma$
via this explicit prefactor) is, writing $\xi_h=h\xi=\xi/\digamma$
and $\eta_h=h\eta=\eta/\digamma$ as the semiclassical variables
$$
\begin{pmatrix} \xi_h&0\\\frac{1}{2}\eta_h\otimes&\frac{1}{2}\xi_h\\\frac{1}{2}\eta_h\otimes&\frac{1}{2}\xi_h\\0&\eta_h\otimes_s\end{pmatrix}.
$$
Correspondingly, the full
(i.e.\ at $h=0$, fiber infinity and base infinity all included)
semiclassical principal symbol of $h\dsymmscw$ is
$$
\begin{pmatrix} \xi_h+i&0\\\frac{1}{2}\eta_h\otimes&\frac{1}{2}(\xi_h+i)\\\frac{1}{2}\eta_h\otimes&\frac{1}{2}(\xi_h+i)\\0&\eta_h\otimes_s\end{pmatrix}.
$$

On the other hand, the proof of Lemma~3.2 of \cite{SUV:Tensor} shows
that the full principal symbol of $\dsymm$, relative to a standard metric
$g$, in $\Diffsc^1(X;\Tsc^*X;\Sym^2\Tsc^*X)$ is
$$
\begin{pmatrix} \xi&0\\\frac{1}{2}\eta\otimes&\frac{1}{2}\xi\\\frac{1}{2}\eta\otimes&\frac{1}{2}\xi\\a&\eta\otimes_s\end{pmatrix},
$$
with $a$ a symmetric 2-tensor,
so the full semiclassical principal symbol of $h\dsymm=\digamma^{-1} \dsymm$ is
$$
\begin{pmatrix} \xi_h&0\\\frac{1}{2}\eta_h\otimes&\frac{1}{2}\xi_h\\\frac{1}{2}\eta_h\otimes&\frac{1}{2}\xi_h\\ha&\eta_h\otimes_s\end{pmatrix}.
$$
and thus that
of $h\dsymmw=e^{-\digamma/x} h\dsymm e^{\digamma/x}$ is
$$
\begin{pmatrix} \xi_h+i&0\\\frac{1}{2}\eta_h\otimes&\frac{1}{2}(\xi_h+i)\\\frac{1}{2}\eta_h\otimes&\frac{1}{2}(\xi_h+i)\\ha&\eta_h\otimes_s\end{pmatrix}.
$$

This proves that, with the subscript $h$ on $\Diffsc$ denoting
semiclassical operators,
\begin{equation}\label{eq:symm-gr-diff-def}
R=h\dsymmw-h\dsymmscw\in h\Diff_{\scl,h}^0(X;\Tsc^*X;\Sym^2\Tsc^*X).
\end{equation}

This allows us to prove the following sharp form of Lemma~4.13 of \cite{SUV:Tensor}:

\begin{lemma}\label{lemma:local-ds-inverse-forms-improved}
Let $\Hscd^{1,0}(\Omega_1\setminus\Omega)$ be as in
Lemma~4.12 of \cite{SUV:Tensor}, i.e.\ with dot implying vanishing at
$\pa_\inter\Omega_1$ only, but with values in one-forms, and let
$\rho_{\Omega_1\setminus\Omega}$ be a defining function of
$\pa_\inter\Omega$ as a boundary of $\Omega_1\setminus\Omega$, i.e.\
it is positive in the latter set. 
Suppose that
$\pa_x\rho_{\Omega_1\setminus\Omega}> 0$ at $\pa_\inter\Omega$ (with
$\pa_x$ defined relative to the product decomposition reflecting the
warped product structure of $g_\scl$); note
that this is independent of the choice of
$\rho_{\Omega_1\setminus\Omega}$ satisfying the previous criteria (so this is a statement on $x$ being
increasing as one leaves $\Omega$ at $\pa_\inter\Omega$). Then there
exists $\digamma_0>0$, such that for $\digamma\geq\digamma_0$,
the map
$$
\dsymmw:\Hscd^{1,r}(\Omega_1\setminus\Omega)\to \Hsc^{0,r}(\Omega_1\setminus\Omega)
$$
is injective, with a continuous left inverse $P_{\Omega_1\setminus\Omega}:\Hsc^{0,r}(\Omega_1\setminus\Omega)\to \Hscd^{1,r}(\Omega_1\setminus\Omega)$.
\end{lemma}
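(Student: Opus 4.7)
The plan is to deduce the lemma from the scattering metric version, Lemma~\ref{lemma:local-ds-inverse-forms-improved-sc}, by treating the difference $\dsymmw-\dsymmscw$ as a perturbation that is controllable for sufficiently large $\digamma$. The key observation is encoded in \eqref{eq:symm-gr-diff-def}: the difference
\[
A:=\dsymmw-\dsymmscw=h^{-1}R\in\Diffsc^0(X;\Tsc^*X,\Sym^2\Tsc^*X)
\]
is a zeroth order scattering operator, i.e.\ a smooth bundle map. Crucially, inspection of the full symbols of $\dsymmw$ and $\dsymmscw$ displayed above \eqref{eq:symm-gr-diff-def} shows that the only entry in which they differ is the bottom-left block, where $\dsymmw$ has an extra zeroth order bundle map $a$, \emph{independent of} $\digamma$. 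Hence $A$ is a $\digamma$-independent bounded multiplication operator; in particular, there exists $C_0>0$ such that
\[
\|Av\|_{\Hsc^{0,r}(\Omega_1\setminus\Omega)}\leq C_0\|v\|_{\Hsc^{0,r}(\Omega_1\setminus\Omega)},\qquad v\in\Hscd^{1,r}(\Omega_1\setminus\Omega),
\]
uniformly in $\digamma\geq 1$.

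First I would establish the $L^2$ Poincar\'e-type estimate. By Lemma~\ref{lemma:local-ds-inverse-forms-improved-sc} applied with weight $r$, for $\digamma\geq\digamma_0'$ sufficiently large,
\[
\digamma\|v\|_{\Hsc^{0,r}(\Omega_1\setminus\Omega)}\leq C\|\dsymmscw v\|_{\Hsc^{0,r}(\Omega_1\setminus\Omega)}\leq C\|\dsymmw v\|_{\Hsc^{0,r}(\Omega_1\setminus\Omega)}+CC_0\|v\|_{\Hsc^{0,r}(\Omega_1\setminus\Omega)}.
\]
Choosing $\digamma_0\geq\max(\digamma_0',2CC_0)$, the last term is absorbed into the left side, giving
\begin{equation}\label{eq:L2-Poincare-new}
\|v\|_{\Hsc^{0,r}(\Omega_1\setminus\Omega)}\leq \frac{2C}{\digamma}\|\dsymmw v\|_{\Hsc^{0,r}(\Omega_1\setminus\Omega)}.
\end{equation}

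Next I would promote this to the full $\Hscd^{1,r}$ estimate using the Korn-type inequality from Lemma~4.5 of \cite{SUV:Tensor}, which applies to $\dsymmw$ exactly as in the proof of Lemma~\ref{lemma:local-ds-inverse-forms-improved-sc}, because there the bundle map difference between symmetric gradients of distinct metrics is the error term being absorbed. This yields
\[
\|v\|_{\Hscb^{1,r}(\Omega_1\setminus\Omega)}\leq C'\bigl(\|\dsymmw v\|_{\Hsc^{0,r}(\Omega_1\setminus\Omega)}+\digamma\|v\|_{\Hsc^{0,r}(\Omega_1\setminus\Omega)}\bigr),
\]
and combining with \eqref{eq:L2-Poincare-new} gives, for $\digamma\geq\digamma_0$,
\[
\|v\|_{\Hscd^{1,r}(\Omega_1\setminus\Omega)}\leq C''\|\dsymmw v\|_{\Hsc^{0,r}(\Omega_1\setminus\Omega)},\qquad v\in\Hscd^{1,r}(\Omega_1\setminus\Omega).
\]
This proves injectivity of $\dsymmw$ and closedness of its range in $\Hsc^{0,r}(\Omega_1\setminus\Omega)$. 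The continuous left inverse $P_{\Omega_1\setminus\Omega}$ is then produced exactly as in the proof of Lemma~\ref{lemma:local-ds-inverse-forms-improved-sc}: the open mapping theorem gives a bounded inverse from the closed range $\cR\subset\Hsc^{0,r}(\Omega_1\setminus\Omega)$ back to $\Hscd^{1,r}(\Omega_1\setminus\Omega)$, and precomposing with the orthogonal projection $\Hsc^{0,r}\to\cR$ yields the desired left inverse.

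The only real subtlety is ensuring that the Korn-type inequality and the scattering gradient estimate of Lemma~\ref{lemma:local-ds-inverse-forms-improved-sc} combine compatibly under the weight $r$, since the latter was proven by absorbing an $x^k$ factor into the geometry. However, as noted at the end of the proof of Lemma~\ref{lemma:local-ds-inverse-forms-improved-sc}, adding polynomial weights proceeds without difficulty because the conjugation by $x^r$ only modifies the zeroth order part of $P$, which is exactly where one has flexibility in choosing $\digamma_0$ large. Thus no essential new obstacle arises, and the lemma follows.
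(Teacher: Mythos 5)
Your proof is correct, and it rests on the same two ingredients as the paper's own argument: Lemma~\ref{lemma:local-ds-inverse-forms-improved-sc} with its uniform-in-$\digamma$ bounds, and the observation \eqref{eq:symm-gr-diff-def} that $\dsymmw-\dsymmscw$ is a $\digamma$-independent zeroth order (bundle map) error. Where you genuinely diverge is in how the perturbation is processed. The paper works at the operator level: it writes $P_{\scl;\Omega_1\setminus\Omega}\dsymmw=\Id+P_{\scl;\Omega_1\setminus\Omega}h^{-1}R$ with $h=\digamma^{-1}$, notes that $P_{\scl;\Omega_1\setminus\Omega}h^{-1}R=h\,(h^{-1}P_{\scl;\Omega_1\setminus\Omega})(h^{-1}R)$ has norm $O(h)$ on $\dot H_{\scl,h}^{1,r}(\Omega_1\setminus\Omega)$, inverts by Neumann series, and sets $P_{\Omega_1\setminus\Omega}=(\Id+P_{\scl;\Omega_1\setminus\Omega}h^{-1}R)^{-1}P_{\scl;\Omega_1\setminus\Omega}$, so the left inverse is produced explicitly and no estimate for $\dsymmw$ itself has to be re-derived. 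You instead absorb the error at the level of the coercive estimate ($\digamma\|v\|\le C\|\dsymmscw v\|\le C\|\dsymmw v\|+CC_0\|v\|$), re-run the Korn step for $\dsymmw$ (legitimate, since that is the form in which Lemma~4.5 of \cite{SUV:Tensor} is stated), and then build the left inverse abstractly via closed range, orthogonal projection and the open mapping theorem, exactly as in the proof of Lemma~\ref{lemma:local-ds-inverse-forms-improved-sc}. Both routes exploit the identical smallness ($\digamma^{-1}$ against a fixed zeroth order term); the paper's version yields an explicit formula for $P_{\Omega_1\setminus\Omega}$ together with the uniform bound on $\digamma P_{\Omega_1\setminus\Omega}$ as a by-product, while yours delivers the quantitative estimate $\|v\|_{\Hscd^{1,r}}\le C\|\dsymmw v\|_{\Hsc^{0,r}}$ directly. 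One small precision: the matrices displayed above \eqref{eq:symm-gr-diff-def} are full \emph{principal} symbols, so ``the only entry in which they differ is the bottom-left block'' holds only modulo lower order terms; what your argument actually uses---and what \eqref{eq:symm-gr-diff-def} supplies---is merely that $\dsymmw-\dsymmscw$ is a smooth bundle map, unaffected by the exponential conjugation and hence bounded on $\Hsc^{0,r}(\Omega_1\setminus\Omega)$ uniformly in $\digamma$, so this does not affect the validity of the proof.
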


\begin{proof}
We let $P_{\scl;\Omega_1\setminus\Omega}$ be the
left inverse given in
Lemma~\ref{lemma:local-ds-inverse-forms-improved-sc}; then with $R$ as
in \eqref{eq:symm-gr-diff-def},
$$
P_{\scl;\Omega_1\setminus\Omega}\dsymmw=P_{\scl;\Omega_1\setminus\Omega}\dsymmscw+P_{\scl;\Omega_1\setminus\Omega}h^{-1}R=\Id+P_{\scl;\Omega_1\setminus\Omega}h^{-1}R.
$$
Now
$$
h^{-1}P_{\scl;\Omega_1\setminus\Omega}=\digamma
P_{\scl;\Omega_1\setminus\Omega}:
\Hsc^{0,r}(\Omega_1\setminus\Omega)\to
\Hsc^{0,r}(\Omega_1\setminus\Omega)
$$
and
$$
P_{\scl;\Omega_1\setminus\Omega}:
\Hsc^{0,r}(\Omega_1\setminus\Omega)\to
\Hscd^{1,r}(\Omega_1\setminus\Omega)
$$
are uniformly bounded in $\digamma\geq\digamma_0$ by
Lemma~\ref{lemma:local-ds-inverse-forms-improved-sc}, which means in
terms of semiclassical Sobolev spaces (recall that $\Hsc^{0,r}(\Omega_1\setminus\Omega)=H_{\scl,h}^{0,r}(\Omega_1\setminus\Omega)$) that
$$
h^{-1}P_{\scl;\Omega_1\setminus\Omega}=\digamma
P_{\scl;\Omega_1\setminus\Omega}:
\Hsc^{0,r}(\Omega_1\setminus\Omega)\to
\dot H_{\scl,h}^{1,r}(\Omega_1\setminus\Omega).
$$
On the other hand, $R\in
h\Diff_{\scl,h}^0(X;\Tsc^*X;\Sym^2\Tsc^*X)$ shows that $h^{-1}R$
bounded $\dot H_{\scl,h}^{1,r}(\Omega_1\setminus\Omega)\to
\dot H_{\scl,h}^{1,r}(\Omega_1\setminus\Omega)$. In combination,
$P_{\scl;\Omega_1\setminus\Omega}h^{-1}R=h(h^{-1}P_{\scl;\Omega_1\setminus\Omega})(h^{-1}R)$ is bounded by $Ch$ as a map $\dot H_{\scl,h}^{1,r} (\Omega_1\setminus\Omega)\to
\dot H_{\scl,h}^{1,r} (\Omega_1\setminus\Omega)$, and thus
$\Id+P_{\scl;\Omega_1\setminus\Omega}h^{-1}R$ is invertible for $h>0$
sufficiently small. Then
$$
P_{\Omega_1\setminus\Omega}=(\Id+P_{\scl;\Omega_1\setminus\Omega}h^{-1}R)^{-1}P_{\scl;\Omega_1\setminus\Omega}
$$
gives the desired left inverse for $\dsymmw$ with the bound
$$
h^{-1}P_{\Omega_1\setminus\Omega}: \Hsc^{0,r}(\Omega_1\setminus\Omega)\to
\dot H_{\scl,h}^{1,r}(\Omega_1\setminus\Omega),
$$
which in particular means for finite (sufficiently large) $\digamma$
that
$$
P_{\Omega_1\setminus\Omega}: \Hsc^{0,r}(\Omega_1\setminus\Omega)\to
\Hscd^{1,r}(\Omega_1\setminus\Omega)
$$
is bounded, proving the lemma.
\end{proof}


\begin{thebibliography}{88}



\bibitem{BCG}{G. Besson, G. Courtois, and S. Gallot}. 
Entropies et rigidit\'es des espaces localement sym\'etriques
de courbure strictment n\'egative.
\newblock
{\em Geom. Funct. Anal.}, \textbf{5} (1995), 731--799.

\bibitem{BI} D. Burago and S. Ivanov, Boundary rigidity and filling volume minimality of metrics close to a flat one. 
\newblock{\em Ann. Math.}, {\bf 171} (2010), 1183--1211.

\bibitem{Creager}
K.~C. Creager.
\newblock Anisotropy of the inner core from differential travel times of the
  phases {PKP} and {PKIPK}.
\newblock {\em Nature}, 356:309--414, 1992.

\bibitem{Croke90}
C.~B. Croke.
\newblock Rigidity for surfaces of nonpositive curvature.
\newblock {\em Comment. Math. Helv.}, 65(1):150--169, 1990.

\bibitem{Croke04}
C.~B. Croke.
\newblock Rigidity theorems in {R}iemannian geometry.
\newblock In {\em Geometric methods in inverse problems and PDE control},
  volume 137 of {\em IMA Vol. Math. Appl.}, pages 47--72. Springer, New York,
  2004.


\bibitem{Croke_scatteringrigidity}
C.~Croke.
\newblock Scattering rigidity with trapped geodesics.
\newblock {\em Ergodic Theory Dynam. Systems}, 34(3):826--836, 2014.

\bibitem{CDS}
C.~B. Croke, N.~S. Dairbekov, and V.~A. Sharafutdinov.
\newblock Local boundary rigidity of a compact {R}iemannian manifold with
  curvature bounded above.
\newblock {\em Trans. Amer. Math. Soc.}, 352(9):3937--3956, 2000.


\bibitem{Gr} M. Gromov. Filling Riemannian manifolds. \newblock {\em J. Diff. Geometry} \textbf{18} (1983),  1--148. 

\bibitem{Herglotz}
G.~Herglotz.
\newblock {\"U}ber die {E}lastizitaet der {E}rde bei {B}eruecksichtigung ihrer
  variablen {D}ichte.
\newblock {\em Zeitschr. f\"ur Math. Phys.}, 52:275--299, 1905.

\bibitem{Hor}
L.~H\"ormander.
\newblock {\em The Analysis of Linear Partial Differential Operators, {\rm vol.
  1-4}}.
\newblock Springer-Verlag, 1983.


\bibitem{Colin14}
C.~Guillarmou.
\newblock Lens rigidity for manifolds with hyperbolic trapped set.
\newblock {\em J. Amer. Math. Soc.}, 30:561--599, 2017.


\bibitem{Ikawa}
M.~Ikawa, editor.
\newblock {\em Spectral and scattering theory}.
\newblock Marcel Dekker, 1994.

\bibitem{I} S. Ivanov. Volume comparison via boundary distances. {\em Proceedings of the International Congress of Mathematicians, vol. II}, 769--784, New Delhi, 2010.

\bibitem{Jost:Riemannian}
\newblock J.~Jost.
\newblock {\em Riemannian Geometry and Geometric Analysis}.
\newblock Springer-Verlag, Berlin, 1998.


\bibitem{LassasSU}
M.~Lassas, V.~Sharafutdinov, and G.~Uhlmann.
\newblock Semiglobal boundary rigidity for {R}iemannian metrics.
\newblock {\em Math. Ann.}, 325(4):767--793, 2003.

\bibitem{Mazzeo:Edge}
R.~Mazzeo.
\newblock Elliptic theory of differential edge operators. {I}.
\newblock {\em Comm. Partial Differential Equations},
16(10):1615--1664 (1991).
 
\bibitem{Mu2}
R.~G. Muhometov.
\newblock On a problem of reconstructing {R}iemannian metrics.
\newblock {\em Sibirsk. Mat. Zh.}, 22(3):119--135, 237, 1981.



\bibitem{MuRo}
R.~G. Muhometov and V.~G. Romanov.
\newblock On the problem of finding an isotropic {R}iemannian metric in an
  {$n$}-dimensional space.
\newblock {\em Dokl. Akad. Nauk SSSR}, 243(1):41--44, 1978.


\bibitem{RBMSpec}
R.~B. Melrose.
\newblock {\em Spectral and scattering theory for the Laplacian on
  asymptotically Euclidian spaces}.
\newblock In Ikawa \cite{Ikawa}, 1994.

\bibitem{RBMZw}
R.~B. Melrose and M.~Zworski.
\newblock Scattering metrics and geodesic flow at infinity.
\newblock {\em Inventiones Mathematicae}, 124:389--436, 1996.

\bibitem{Michel}
R.~Michel.
\newblock Sur la rigidit\'e impos\'ee par la longueur des g\'eod\'esiques.
\newblock {\em Invent. Math.}, 65(1):71--83, 1981/82.

\bibitem{Otal90}
J-P.~Otal,
\newblock Sur les longueurs des g\'{e}od\'{e}siques d'une m\'{e}trique \`a courbure
              n\'{e}gative dans le disque.
\newblock  {\em Comment. Math. Helv.}, 65(2):334--347, 1990.

\bibitem{PSUZ} G.~Paternain, M.~Salo, G.~Uhlmann and H.~Zhou.
\newblock The geodesic X-ray transform with matrix weights. 
\newblock {\em Amer. J. Math.}, 141:1707--1750, 2019.

\bibitem{Pa}
{C. Parenti}, {Operatori pseudo-differenziali in {$R^{n}$} e applicazioni},
 \textit{Ann. Mat. Pura Appl.} (4), \textbf{93} (1972), 359--389.


\bibitem{PestovU}
L.~Pestov and G.~Uhlmann.
\newblock Two dimensional compact simple {R}iemannian manifolds are boundary
  distance rigid.
\newblock {\em Ann. of Math. (2)}, 161(2):1093--1110, 2005.

\bibitem{Sh-book}
V.~A. Sharafutdinov.
\newblock {\em Integral geometry of tensor fields}.
\newblock Inverse and Ill-posed Problems Series. VSP, Utrecht, 1994.


\bibitem{Sh}
{M. A. Shubin},
\textit{Pseudodifferential Operators in {$R^{n}$}},
{Dokl. Akad. Nauk SSSR},
\textbf{196} (1971), 316--319.


\bibitem{S-Serdica}
P.~Stefanov.
\newblock Microlocal approach to tensor tomography and boundary and lens
  rigidity.
\newblock {\em Serdica Math. J.}, 34(1):67--112, 2008.


\bibitem{SU-Duke}
P.~Stefanov and G.~Uhlmann.
\newblock Stability estimates for the {X}-ray transform of tensor fields and
  boundary rigidity.
\newblock {\em Duke Math. J.}, 123(3):445--467, 2004.


\bibitem{SU-JAMS}
P.~Stefanov and G.~Uhlmann.
\newblock Boundary rigidity and stability for generic simple metrics.
\newblock {\em J. Amer. Math. Soc.}, 18(4):975--1003, 2005.


\bibitem{SU-Kawai}
P.~Stefanov and G.~Uhlmann.
\newblock Boundary and lens rigidity, tensor tomography and analytic microlocal
  analysis.
\newblock In {\em Algebraic Analysis of Differential Equations}. Springer,
  2008.


\bibitem{SU-MRL}
P.~Stefanov and G.~Uhlmann.
\newblock Rigidity for metrics with the same lengths of geodesics.
\newblock {\em Math. Res. Lett.}, 5(1-2):83--96, 1998.



\bibitem{SU-lens}
P.~Stefanov and G.~Uhlmann.
\newblock Local lens rigidity with incomplete data for a class of non-simple
  {R}iemannian manifolds.
\newblock {\em J. Differential Geom.}, 82(2):383--409, 2009.


\bibitem{SUV_localrigidity}
P.~Stefanov, G.~Uhlmann, and A.~Vasy.
\newblock Boundary rigidity with partial data.
\newblock {\em J. Amer. Math. Soc.,} 29:299-332 (2016), {\em arxiv.1306.2995}.

\bibitem{SUV_elastic}
P.~Stefanov, G.~Uhlmann, and A.~Vasy.
\newblock Local recovery of the compressional and shear speeds from the hyperbolic DN map.
\newblock {\em Inverse Problems}, 34 014003, 2018.

\bibitem{SUV:Tensor}
P.~Stefanov, G.~Uhlmann, and A.~Vasy.
\newblock Inverting the local geodesic X-ray transform on tensors.
\newblock {\em Journal d'Analyse Math\'ematique}, 136: 151--208, 2018.

\bibitem{TriggianiY}
R.~Triggiani and P.~F. Yao.
\newblock Carleman estimates with no lower-order terms for general {R}iemann
  wave equations. {G}lobal uniqueness and observability in one shot.
\newblock {\em Appl. Math. Optim.}, 46(2-3):331--375, 2002.
\newblock Special issue dedicated to the memory of Jacques-Louis Lions.

\bibitem{UV:local}
G.~Uhlmann and A.~Vasy.
\newblock The inverse problem for the local geodesic ray transform.
\newblock {\em Inventiones Math}, 205:83-120 (2016). 

\bibitem{V} J. Vargo. A proof of lens rigidity in the category of analytic metrics, {\it Math. Research Letters}, {\bf 16} (2009), 1057--1069.

\bibitem{Vasy:Minicourse}
A.~Vasy.
\newblock A minicourse on microlocal analysis for wave propagation.
\newblock {\em Asymptotic Analysis in General Relativity}.
\newblock London Mathematical Society Lecture Note Series, Cambridge
University Press, 443:219--374, 2018.

\bibitem{Vasy-Dyatlov:Microlocal-Kerr}
A.~Vasy.
\newblock Microlocal analysis of asymptotically hyperbolic and {K}err-de
  {S}itter spaces.
\newblock {\em Inventiones Math.}, 194:381--513, 2013.
\newblock With an appendix by S. Dyatlov.

\bibitem{Wen_2015}
H.~Wen.
\newblock Simple {R}iemannian surfaces are scattering rigid.
\newblock {\em Geom. Topol.}, 19(4):2329--2357, 2015.


\bibitem{WZ}
E.~Wiechert and K.~Zoeppritz.
\newblock {\"U}ber {E}rdbebenwellen.
\newblock {\em Nachr. Koenigl. Geselschaft Wiss. G\"ottingen}, 4:415--549,
  1907.

\end{thebibliography}
\end{document}